\documentclass[a4paper,12pt]{amsart}
\usepackage[utf8]{inputenc}

\usepackage{url}
\usepackage[margin=1in]{geometry}  
\usepackage{epsfig}
\usepackage{color}
\usepackage{graphicx}              
\usepackage{amsmath}
\usepackage{amscd}               
\usepackage{amsfonts}
\usepackage{amssymb}             
\usepackage{amsthm}                
\usepackage{epsfig}
\usepackage{mathrsfs}
\usepackage{hyperref}
\usepackage{enumerate}
\usepackage{xcolor}

\numberwithin{equation}{section}

\newtheorem{theorem}{Theorem}

\newtheorem*{theorem*}{Theorem}
\newtheorem{lemma}{Lemma}
\newtheorem*{lemma*}{Lemma}

\newtheorem*{proposition*}{Proposition}
\theoremstyle{definition}
\newtheorem{definition}{Definition}

\theoremstyle{remark}

\newcommand{\supp}{\operatorname{supp}}

\newcommand{\Stab}{\operatorname{Stab}}

\newcommand{\nonmax}{\operatorname{non-max}}

\newcommand{\sym}{\operatorname{Sym}}

\newcommand{\tr}{\operatorname{tr}}

\newcommand{\bQ}{\mathbb{Q}}
\newcommand{\bR}{\mathbb{R}}

\newcommand{\zed}{\mathbb{Z}}

\newcommand{\bF}{\mathbf{F}}

\newcommand{\one}{\mathbf{1}}

\newcommand{\GL}{\mathrm{GL}}

\newcommand{\sM}{{\mathscr{M}}}

\newcommand{\sO}{{\mathscr{O}}}

\newcommand{\sS}{{\mathscr{S}}}

\newcommand{\sX}{{\mathscr{X}}}

\allowdisplaybreaks[1]

\title{The local zeta function in enumerating quartic fields}
\author{Robert Hough}
\address[Robert Hough]{Department of Mathematics, Stony Brook University, 100 Nicolls Road, Stony Brook, NY 11794}
\email{robert.hough@stonybrook.edu}
\thanks{Robert Hough is supported by NSF Grants DMS-1712682, ``Probabilistic methods in discrete structures
and applications,'' and DMS-1802336, ``Analysis of Discrete Structures and Applications.''}

\begin{document}

\maketitle

\begin{abstract}
An exact formula is obtained for the Fourier transform of the local condition of maximality modulo primes $p>3$ in the prehomogeneous vector space $2 \otimes \sym^2(\zed_p^3)$ parametrizing quartic fields, thus solving the local `quartic case' in enumerating quartic fields.
\end{abstract}

\section{Introduction}

In \cite{TT13b} Taniguchi and Thorne estimated the number of cubic fields with discriminant of size at most $X$, obtaining an asymptotic with a secondary main term and the current best known power saving error term. The method used the theory of Shintani zeta functions \cite{S72} enumerating the class numbers of binary cubic forms, together with a sieve sifting forms which are non-maximal at finitely many primes.  An important ingredient in the sieve was an evaluation of the Fourier transform of the local indicator function of cubic rings over $\zed_p$ which are non-maximal \cite{TT13a}, which showed that this signal is strongly localized.  The purpose of this article is to obtain a corresponding exact formula for the Fourier transform of the indicator function of non-maximal quartic rings over $\zed_p$, thus solving the `local quartic case' in enumerating number fields by discriminant.  This formula may be used to obtain refined estimates for the equidistribution of the shapes of quartic fields paired with their cubic resolvent rings \cite{H17}.

One reason that a refined error term is known for the count of cubic fields is that cubic fields may be identified with maximal cubic rings via the ring of integers, and cubic rings are parametrized by the prehomogeneous vector space of binary cubic forms, which has a rich algebraic structure.  In the quartic case, there is a parallel structure in which pairs $(Q, R)$ where $Q$ is a quartic ring and $R$ is its cubic resolvent ring are parametrized by the prehomogeneous vector space of pairs of ternary quadratic forms \cite{B04b}. In \cite{SS74} Sato and Shintani developed the theory of zeta functions enumerating class numbers of integral orbits in general prehomogeneous  vector spaces.  In \cite{WY92} Wright and Yukie explain how zeta functions of prehomogeneous vector spaces may be used to enumerate etal\'{e} quadratic, cubic, quartic and quintic number fields, and in \cite{Y93} Yukie developed many of the analytic properties in the quartic case, identifying the order and location of the poles of the quartic zeta function.  In \cite{H17} the author introduced an analogue of the zeta function in the quartic case, twisted by a cusp form evaluated at the lattice shape of each ring, and proved that the resulting object is entire.  This avoided some technical difficulties in treating the quartic zeta function, where the residues are still unknown, and made it possible to obtain equidistribution results for the shape of quartic fields via the zeta function method.  The current work permits obtaining refined estimates in the cuspidal spectrum of the shape of quartic fields, since an exact formula for the coefficients may be used in the dual zeta function which appears there.  

In \cite{B04a}, \cite{B04b}, and \cite{B08} Bhargava obtained parameterizations of integral cubic, quartic and quintic rings, in particular giving local conditions modulo $p^2$ describing the maximality of such rings. Here the exact formula for the Fourier transform of the set of maximal quartic rings over $\zed_p$ is obtained by describing the relevant orbit structure of $\GL_3\times \GL_2(\zed/p^2\zed)$ acting on $2 \otimes \sym^2((\zed/p^2\zed)^3)$ and evaluating the orbital exponential sums. This method of evaluating the Fourier transform  has the advantage that it extends to treat other local conditions besides the condition of maximality.

\subsection{The space $2 \otimes \sym^2(R^3)$} Let $R$ be a commutative  ring with unit. A quartic ring over $R$ is a free rank 4 $R$-module with a ring structure.
Elements of the space of ternary quadratic forms over $R$, $\sym^2(R^3)$ are written
\begin{equation}
 A = \begin{pmatrix} a & \frac{b}{2} & \frac{c}{2}\\ \frac{b}{2} & d & \frac{e}{2}\\ \frac{c}{2} & \frac{e}{2} &f \end{pmatrix},
\end{equation}
corresponding to the form 
\begin{equation}
 (u,v,w) A \begin{pmatrix} u\\v\\w\end{pmatrix} = a u^2 + b uv + cuw + d v^2 + e vw + f w^2.
\end{equation}
Forms are typically indicated by showing only those elements on or above the diagonal.
There is a natural inner product on $\sym^2(R^3)$,
\begin{equation}
 [A,A'] = \tr A^t A' = aa' + \frac{bb'}{2} + \frac{cc'}{2} + dd' + \frac{ee'}{2} + ff'.
\end{equation}
An element $g \in \GL_3(R)$ acts on $\sym^2(R^3)$ by $g \cdot (A) = gAg^t$.  Note that
\begin{align}
 [g \cdot A, (g^{-1})^t \cdot A'] = \tr  g A^t g^t (g^{-1})^t A' g^{-1} = \tr g A^t A' g^{-1} = \tr A^t A' = [A,A'].
\end{align}
The space $V(R) = 2 \otimes \sym^2(R^3)$ consists of pairs of ternary quadratic forms $(A, B)$.  The group $G(R) = \GL_3(R) \times \GL_2(R)$ acts on $V(R)$ with the $\GL_3$ factor acting on each ternary quadratic form separately, and the $\GL_2(R)$ factor forming linear combinations of the two forms, that is $(g_3, g_2 = \begin{pmatrix} a&b\\c &d \end{pmatrix}) \in G(R)$ act by
\begin{equation}
 (g_3, g_2) \cdot (A, B) = (a g_3 A g_3 ^t + b g_3 B g_3^t, c g_3 A g_3^t + d g_3 B g_3^t).
\end{equation}
The inner product on $2 \otimes \sym^2(R^3)$ is
\begin{equation}
 [(A,B), (A',B')] = \tr \begin{pmatrix} [A, A']& [A,B']\\ [B,A']& [B,B']\end{pmatrix} = [A,A'] + [B, B'].
\end{equation}
Thus, with $(g^{-1})^t = ((g_3^{-1})^t, (g_2^{-1})^t)$,
\begin{equation}
 [g \cdot(A,B), (g^{-1})^t \cdot (A', B')] = [(A,B), (A', B')].
\end{equation}
With this action, $2 \otimes \sym^2(R^3)$ is a prehomogeneous vector space, see \cite{SS74}.

In \cite{B04b} the following theorem is proved parametrizing quartic rings.
\begin{theorem*}[\cite{B04b}, Theorem 1]
 There is a canonical bijection between the set of $\GL_3(\zed) \times \GL_2(\zed)$-orbits on the space $(2 \otimes \sym^2(\zed^3))^*$ of pairs of integral ternary quadratic forms and the set of isomorphism classes of pairs $(Q, R)$, where $Q$ is a quartic ring and $R$ is a cubic resolvent ring of $Q$. 
\end{theorem*}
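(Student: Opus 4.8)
The plan is to establish the correspondence by exhibiting explicit, mutually inverse constructions in both directions and checking their compatibility with the $G(\zed)$-action, in the spirit of Bhargava's higher composition laws. The organizing device is the \emph{resolvent map}. If $Q$ is a quartic ring with cubic resolvent $R$, there is a canonical quadratic resolvent map modeled on the classical expression $x_ix_j + x_kx_\ell$ in the roots of a quartic polynomial; it descends to a homogeneous degree-$2$ polynomial map $Q_0 \to R_0$ between the canonical quotient lattices $Q_0 := Q/\zed\cdot 1 \cong \zed^3$ and $R_0 := R/\zed\cdot 1 \cong \zed^2$, and in coordinates this map is precisely a pair $(A,B) \in 2\otimes\sym^2(\zed^3)$ of ternary quadratic forms. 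Change of $\zed$-basis of $Q_0$ realizes the $\GL_3(\zed)$-action on $(A,B)$ and change of basis of $R_0$ realizes the $\GL_2(\zed)$-action, matching the $G(\zed)$-action defined above, so the assignment $(Q,R)\mapsto [(A,B)]$ is well defined on isomorphism classes.

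For the reverse direction, given $(A,B)$ I would first form the binary cubic form $f(x,y) = \det(xA - yB)$ (up to the standard normalization of content) and apply the Delone--Faddeev parametrization of cubic rings to obtain the candidate resolvent $R$; then I would write down an explicit multiplication table on the rank-$4$ module $Q = \zed\oplus\zed^3$ whose structure constants are prescribed polynomials in the twelve coefficients of $(A,B)$ — built from the $2\times2$ and $3\times3$ minors of the pencil $xA - yB$ — together with the resolvent data. One then verifies that $Q$ is a commutative associative unital ring, that $R$ is a cubic resolvent of $Q$, and that the two constructions are mutually inverse up to the appropriate isomorphisms; in particular, that the choice of a splitting $Q \cong \zed\oplus Q_0$ (equivalently, the translations $\alpha_i\mapsto\alpha_i+n_i$) alters the multiplication table but changes neither the isomorphism class of $(Q,R)$ nor the $G(\zed)$-orbit of $(A,B)$.

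I expect the principal obstacle to be verifying \emph{associativity} of the multiplication extracted from $(A,B)$, together with the axioms relating the characteristic polynomial of $x \in Q$ to that of its resolvent. A direct expansion is a large but finite computation; the efficient route is to observe that associativity is a system of polynomial identities in the twelve parameters, so it suffices to check it on a Zariski-dense set — e.g.\ those $(A,B)$ for which $Q\otimes\bQ$ is an \'etale quartic $\bQ$-algebra, where the construction can be matched against the classical resolvent cubic of a separable quartic and the identities become transparent. A second, genuinely arithmetic, difficulty is that a quartic ring need \emph{not} have a unique cubic resolvent: non-maximal $Q$, and more generally $Q$ possessing nontrivial idempotents or nilpotents, may admit several resolvents — which is exactly why the theorem parametrizes \emph{pairs} $(Q,R)$ and not rings $Q$ alone. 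Completing the bijection therefore also requires counting, for each $Q$, the cubic resolvents it admits and matching this with the number of $G(\zed)$-orbits of pairs lying above it; the polynomial-identity reduction again disposes of the generic content, but the degenerate strata have to be examined case by case.
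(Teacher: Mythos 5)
This statement is quoted from \cite{B04b} (Theorem 1); the present paper offers no proof of it, so the only meaningful comparison is with Bhargava's original argument, and your outline — the quadratic resolvent map $Q/\zed\cdot 1 \to R/\zed\cdot 1$ producing the pair $(A,B)$ with change of bases realizing the $\GL_3(\zed)\times\GL_2(\zed)$-action, the cubic resolvent obtained from $\det(xA-yB)$ (suitably normalized) via Delone--Faddeev, an explicit multiplication table whose structure constants are polynomials in the coefficients of $(A,B)$, and verification of the ring identities by reduction to the Zariski-dense \'etale locus — is essentially that argument in sketch form, with the points you flag (associativity, non-uniqueness of cubic resolvents for non-maximal $Q$) being exactly where the substantive work in \cite{B04b} lies. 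One small correction: the bijection itself only requires showing the two constructions are mutually inverse on isomorphism classes of pairs $(Q,R)$; counting the resolvents admitted by a fixed $Q$ and matching it with the orbits above $Q$ is a corollary of the parametrization in \cite{B04b}, not an input to it.
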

\noindent
It is also verfied that the condition that a quartic ring is maximal may be checked locally by tensoring with $\zed_p$ for each prime $p$, and that the condition that a quartic ring over $\zed_p$ is maximal can be checked modulo $p^2$.  Call a pair of ternary quadratic forms over $\zed_p$ maximal if it is associated to quartic ring over $\zed_p$ which is maximal.

The basic object of study in this article are the \emph{orbital exponential sums}, for $x, \xi \in V(\zed/p^2\zed)$,\footnote{The notation $e(\cdot)$ indicates the additive character on $\bR$, $e(x) = e^{2\pi i x}$.  The notation, for positive integer $m$, $e_m(x) = e\left(\frac{x}{m}\right)$ is also used.}
\begin{equation}
 S_{p^2}(x, \xi) =\sum_{g \in G(\zed/p^2 \zed)} e\left(\frac{1}{p^2}[g \cdot x, \xi]\right);
\end{equation}
the Fourier transform of the indicator functions of maximal and non-maximal forms is a linear combination of such sums. The size and distribution of these sums for varying $\xi$ describe the distribution of the orbit containing the point $x$ within the space $V(\zed/p^2\zed)$. In \cite{TT16} the orbits of the space $V(\zed/p\zed)$ are enumerated and the orbital exponential sums, for $x, \xi \in V(\zed/p\zed)$,
\begin{equation}
 S_p(x,\xi) = \sum_{g \in G(\zed/p\zed)} e\left( \frac{1}{p}[g \cdot x, \xi]\right)
\end{equation}
are calculated. 
The orbits are listed in the table below. Throughout $\ell$ denotes a non-square modulo $p$ and
\begin{equation}
 s(a,b,c,d) = (p-1)^a p^b (p+1)^c (p^2+p+1)^{\frac{d}{2}}.
\end{equation}

\begin{equation}\label{orbit_table}
 \begin{array}{|l|l|l|l|}
  \hline
  \text{Orbit} & \text{Representative} & \text{Orbit size} & \text{Stabilizer size}\\ 
  \hline
  \sO_0 & (0,0) & 1 & s(5,4,2,2)\\
  \hline
  \sO_{D1^2} & (0, w^2) & s(1,0,1,2) & s(4,4,1,0)\\
  \hline
  \sO_{D11} & (0, vw) & s(1,1,2,2)/2 & 2s(4,3,0,0)\\
  \hline
  \sO_{D2} & (0, v^2- \ell w^2) & s(2,1,1,2)/2 & 2s(3,3,1,0)\\
  \hline
  \sO_{Dns} & (0, u^2 - vw) & s(2,2,1,2) & s(3,2,1,0)\\
  \hline
  \sO_{Cs} & (w^2, vw) & s(2,1,2,2) & s(3,3,0,0)\\
  \hline
  \sO_{Cns} & (vw, uw) & s(2,3,1,2) & s(3,1,1,0)\\
  \hline
  \sO_{B11} & (w^2, v^2) & s(2,2,2,2)/2 & 2 s(3,2,0,0)\\
  \hline
  \sO_{B2} & (vw, v^2 + \ell w^2) & s(3,2,1,2)/2 & 2 s(2,2,1,0)\\
  \hline
  \sO_{1^4} & (w^2, uw + v^2) & s(3,2,2,2) & s(2,2,0,0)\\
  \hline
  \sO_{1^31} & (vw, uw+v^2) & s(3,3,2,2) & s(2,1,0,0)\\
  \hline
  \sO_{1^21^2} & (w^2, uv) & s(2,4,2,2)/2 &2 s(3,0,0,0)\\
  \hline
  \sO_{2^2} & (w^2, u^2 - \ell v^2) & s(3,4,1,2)/2 & 2 s(2,0,1,0)\\
  \hline
  \sO_{1^211} & (v^2 -w^2, uw) & s(3,4,2,2)/2 & 2 s(2,0,0,0)\\
  \hline
  \sO_{1^2 2} &(v^2 - \ell w^2, uw) & s(3,4,2,2)/2 & 2s(2,0,0,0)\\
  \hline
  \sO_{1111} & (uw - vw, uv-vw) & s(4,4,2,2)/24 & 24 s(1,0,0,0)\\
  \hline
  \sO_{112} & (vw, u^2 -v^2 - \ell w^2) & s(4,4,2,2)/4 & 4 s(1,0,0,0)\\
  \hline
  \sO_{22} & (vw, u^2 - \ell v^2 - \ell w^2) &s(4,4,2,2)/8 & 8 s(1,0,0,0)\\
  \hline
  \sO_{13} & (uw - v^2, B_3) & s(4,4,2,2)/3 & 3s(1,0,0,0)\\
  \hline
  \sO_{4} & (uw-v^2, B_4) & s(4,4,2,2)/4 & 4s(1,0,0,0)\\
  \hline
 \end{array}
\end{equation}
The items $B_3$ and $B_4$ indicate $B_3 = uv + a_3 v^2 + b_3 vw + c_3 w^2$ and $B_4 = u^2 + a_4 uv + b_4 v^2 + c_4 vw + d_4 w^2$ where $X^3 + a_3 X^2 + b_3 X + c_3$ and $X^4 + a_4 X^3 + b_4 X^2 + c_4 X + d_4$ are irreducible over $\zed/p\zed$.

In this article the orbits modulo $p^2$ relevant to the condition of maximality are identified, along with the relevant exponential sums. An illustrative theorem from the calculation is as follows.
Let $\one_{\nonmax}$ be the indicator function of forms in $V(\zed/p^2\zed)$ which correspond to quartic rings non-maximal at $p$. 
\begin{theorem}\label{norm_theorem}
 For $p>3$ the Fourier transform \[\widehat{\one_{\nonmax}}(\xi) = \sum_{x \in V(\zed/p^2\zed)}\one_{\nonmax}(x) e_{p^2}([ x, \xi])\] is supported on the $\mod p$ orbits $\sO_0$, $\sO_{D1^2}$, $\sO_{D11}$ and $\sO_{D^2}$.  It satisfies
 \begin{align*}
  \left\|\widehat{\one_{\nonmax}}\right\|_1 &= 2p^{29}+2p^{28} + 4p^{27} - 8p^{26}-19p^{25} -2p^{24} + 20p^{23} + 24p^{22} -5p^{21} \\& \qquad-17p^{20} -5p^{19} + 3p^{18} + 2p^{17} -2p^{16}+p^{15} + p^{14},\\
   \left\|\widehat{\one_{\nonmax}}\right\|_2^2 &= p^{46} + 2p^{45} + 2p^{44} -3p^{43} -4p^{42}-p^{41}+3p^{40}+3p^{39}-p^{38}-p^{37},\\
   \left|\supp \widehat{\one_{\nonmax}}\right|& = 2p^{15} + p^{14} -2p^{13}-p^{12}+2p^{10}-p^8.
 \end{align*}

\end{theorem}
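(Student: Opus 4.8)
\noindent\emph{Proof strategy.} The argument reduces $\widehat{\one_{\nonmax}}$ to orbital exponential sums. Since maximality of a quartic ring over $\zed_p$ is a $G(\zed_p)$-invariant condition detected modulo $p^2$ (\cite{B04b}), the non-maximal forms in $V(\zed/p^2\zed)$ form a union of $G(\zed/p^2\zed)$-orbits $\sO^{(1)},\dots,\sO^{(m)}$, and with representatives $x_j$ the orbit--stabilizer relation gives
\[
\widehat{\one_{\nonmax}}(\xi)=\sum_{j=1}^{m}\frac{S_{p^2}(x_j,\xi)}{\bigl|\Stab_{G(\zed/p^2\zed)}(x_j)\bigr|};
\]
moreover $S_{p^2}(h\cdot x,\xi)=S_{p^2}(x,\xi)$, so $\widehat{\one_{\nonmax}}$ is constant on $G(\zed/p^2\zed)$-orbits of $\xi$. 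Thus everything reduces to (i) classifying the non-maximal orbits with their stabilizer sizes and (ii) evaluating the sums $S_{p^2}(x_j,\cdot)$. Note that the non-maximal locus is \emph{large}, meeting many of the mod-$p$ orbits of \eqref{orbit_table}, whereas the assertion is that $\widehat{\one_{\nonmax}}$ is \emph{concentrated}, supported on only four mod-$p$ orbits; extracting that concentration is the crux.

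For (i), for each mod-$p$ orbit of \eqref{orbit_table} on which maximality can fail I would parametrize the lifts of a representative to $V(\zed/p^2\zed)$, impose Bhargava's mod-$p^2$ condition, and split the resulting set into $G(\zed/p^2\zed)$-orbits according to the invariants still visible at level $p$ (the rank and isotropy type of the ``$p$-part'' of the pair, and the relative position of the two ternary forms). Orbit and stabilizer sizes then come from the surjection $G(\zed/p^2\zed)\twoheadrightarrow G(\zed/p\zed)$, whose kernel $1+p\,\mathfrak g(\zed/p\zed)$ acts by $(1+ph)\cdot(x_0+pv)=x_0+p(v+h\cdot x_0)$, so that each mod-$p$ fibre breaks along $\mathfrak g\cdot x_0\subseteq V(\zed/p\zed)$. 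Producing this mod-$p^2$ orbit poset with exact --- not leading-order --- sizes is where most of the work lies and is the main obstacle.

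For (ii), the key identity is the adjunction $[g\cdot x,\xi]=[x,g^t\cdot\xi]$ from the introduction, giving the self-duality $S_{p^2}(x,\xi)=S_{p^2}(\xi,x)$ and allowing one to work with whichever of $x,\xi$ is the more degenerate. Stratifying $g=g_0(1+ph)$ with $g_0\in G(\zed/p\zed)$ and $h\in\mathfrak g(\zed/p\zed)$, one has modulo $p^2$ that $[g\cdot x,\xi]\equiv[g_0\cdot x,\xi]+p\,[L_{g_0,x}(h),\xi]$ with $L_{g_0,x}$ linear, so the $h$-sum equals $p^{\dim\mathfrak g}$ when $\xi$ is orthogonal to $\mathfrak g\cdot(g_0\cdot x\bmod p)$ and $0$ otherwise. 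The remaining sum over a subvariety of $G(\zed/p\zed)$ of $e_{p^2}$ of a polynomial phase collapses --- for the degenerate $x_j$ at hand --- to the mod-$p$ sums $S_p$ already computed in \cite{TT16} or to classical Gauss- and Kloosterman-type sums, and iterating the construction when $x_j$ is itself divisible by $p$ yields closed forms. The orthogonality condition at the first layer already confines $\xi\bmod p$ to the normal space of a degenerate orbit; summing all $m$ contributions, the surviving support is exactly $\sO_0,\sO_{D1^2},\sO_{D11},\sO_{D2}$ --- the cleanest orbits appearing directly, while those whose first-layer condition is too weak (notably $\sO_{Dns}$) cancel once the Gauss-sum factor is carried along.

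With the closed forms in hand, all three quantities are finite sums over the mod-$p^2$ orbits $\sO'$ of $\xi$ lying above these four:
\[
\bigl\|\widehat{\one_{\nonmax}}\bigr\|_1=\sum_{\sO'}|\sO'|\,\bigl|\widehat{\one_{\nonmax}}(\xi_{\sO'})\bigr|,\qquad
\bigl\|\widehat{\one_{\nonmax}}\bigr\|_2^2=\sum_{\sO'}|\sO'|\,\bigl|\widehat{\one_{\nonmax}}(\xi_{\sO'})\bigr|^2,
\]
and $\bigl|\supp\widehat{\one_{\nonmax}}\bigr|=\sum_{\sO':\,\widehat{\one_{\nonmax}}(\xi_{\sO'})\ne0}|\sO'|$, the sizes $|\sO'|$ coming from the same count as in (i). Parseval provides an independent check on the $\ell^2$ term: $\bigl\|\widehat{\one_{\nonmax}}\bigr\|_2^2=|V(\zed/p^2\zed)|\cdot\#\{x:\one_{\nonmax}(x)=1\}=p^{24}\cdot\#\{\text{non-maximal }x\bmod p^2\}$, so the stated polynomial is equivalent to $\#\{\text{non-maximal }x\bmod p^2\}=p^{22}+2p^{21}+2p^{20}-3p^{19}-4p^{18}-p^{17}+3p^{16}+3p^{15}-p^{14}-p^{13}$, which falls out of the classification alone. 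The remaining difficulty, beyond the orbit bookkeeping, is sign control: the numerous orbit contributions must combine with precisely the right signs so that the leading terms add to $+2p^{29}$, $+p^{46}$ and $+2p^{15}$ rather than cancelling. The restriction $p>3$ enters to keep the quadratic-form theory over $\zed/p^2\zed$ (diagonalization, the harmless $\tfrac12$'s in the inner product, the orders of $\GL_3$ and $\GL_2$) uniform.
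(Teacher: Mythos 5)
Your outline follows essentially the same route as the paper: the stratification $g=g_0(1+ph)$ with the $h$-sum forcing orthogonality to $\mathfrak{g}\cdot(g_0\cdot x)$ is exactly the paper's annihilator-subspace Lemma, the fibre decomposition along $\mathfrak{g}\cdot x_0$ with mod-$p^2$ orbits governed by the mod-$p$ stabilizer acting on $V/V_x$ is the paper's Section 4, and the final norms are obtained, as you say, by summing the closed forms against mod-$p^2$ orbit sizes (your Parseval identity for the $\ell^2$ norm is a nice independent check that the paper does not state but which is consistent with its count of non-maximal elements). The one detail to correct when executing the plan: $\sO_{Dns}$ is excluded already at the action-set stage ($G_{x,\xi}=\emptyset$), whereas the frequency orbits that survive that test and must be shown to cancel only after the explicit Gauss-sum evaluations are $\sO_{Cs}$ and $\sO_{1^4}$; and of course the proposal leaves the decisive work — the complete mod-$p^2$ orbit/stabilizer tables and the evaluation of every surviving pairing — to be carried out before the stated polynomials are established.
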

An exact formula for the Fourier transform is given following the calculation of the relevant orbital exponential sums in Section \ref{summation_section}.  For applications, an important feature of this calculation is that the Fourier transform has small support, and that the $L^1$ norm saves a power of $p^{1.5}$ compared to a naive application of Cauchy-Schwarz using the already strong bound for the support. In practice, the exact evaluation of the Fourier transform can quantify the intuition that the condition of maximality is captured significantly modulo $p$, with a small refinement modulo $p^2$.

One conceptual reason for the small size of the support is that if $\xi \not \equiv 0 \bmod p$, the orbital exponential sum $S_{p^2}(x, \xi)$ can be shown to vanish unless the $\mod p$ orbit of $x$ contains elements orthogonal to the $p$-adic tangent space of the orbit $\sO_\xi$ at $\xi \bmod p$.  A second key observation is that the $\mod p^2$ orbits above a $\mod p$ orbit $\sO_x$ are in bijection with the orbits of the action of the $\mod p$ stabilizer $G_x$ acting on a quotient by the tangent space.  These key observations are explained in Sections \ref{orb_exp_section} and \ref{quotient_action_section}, prior to the determination of the relevant orbits and evaluation of the corresponding exponential sums.

This paper contains a significant number of explicit matrix calculations, and, while self-contained, is intended to be read along with supporting Mathematica files currently available from the author's homepage \url{https://www.math.stonybrook.edu/~rdhough/}.  The corresponding Mathematica notebook \cite{W18} is indicated in \textbf{bold}. Small cases of the orbit decompositions were obtained in Magma \cite{BCP97}.

\section*{Background and notation}
Throughout, $p$ denotes a fixed odd prime, $p > 3$ and $\ell$ denotes a fixed quadratic non-residue $\mod p$. Given a commutative ring  $R$ with unit, $\sym^2(R^n)$ denotes $n$-ary quadratic forms with coefficients in $R$, identified with $n \times n$ symmetric matrices with $R$ coefficients.  In this work, $R$ will always be one of the rings $\zed, \zed_p, \zed/p\zed = \bF_p$ or $\zed/p^2\zed$. $M_n(R)$ denotes $n \times n$ matrices over $R$. $V(R)$ denotes the space $2 \otimes \sym^2(R^3)$ of pairs $(A, B)$ of ternary quadratic forms over $R$.  The algebraic group $G(R) = \GL_3(R)\times \GL_2(R)$ acts on $V(R)$ with $(g_3, g_2 = \begin{pmatrix} a&b\\c &d \end{pmatrix}) \in G(R)$ acting by
\begin{equation}
 (g_3, g_2) \cdot (A, B) = (a g_3 A g_3 ^t + b g_3 B g_3^t, c g_3 A g_3^t + d g_3 B g_3^t).
\end{equation}

The following lemma recalls the classical orbit description of $\GL_2(\bF_p)\times \GL_1(\bF_p)$ acting on $\sym^2(\bF_p^2)$, in which the $\GL_2$ factor acts by change of variable, and the $\GL_1$ factor acts by multiplication by a scalar.
\begin{lemma}\label{sym_2_action_lemma}
 The action of $\GL_2(\bF_p)\times \GL_1(\bF_p)$ on the space $\sym^2(\bF_p^2)$ of binary quadratic forms in variables $u, v$ has four orbits with representatives $0, u^2, uv,$ and $u^2 - \ell v^2$.
\end{lemma}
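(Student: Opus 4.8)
**

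The plan is to classify binary quadratic forms over $\bF_p$ up to the combined action of change of variables and scaling, proceeding by the rank of the associated symmetric matrix.

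First I would handle the rank $0$ case trivially: the zero matrix is its own orbit, giving the representative $0$.

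Next, for rank $1$, the form is $\alpha (\lambda u + \mu v)^2$ for some nonzero linear form and nonzero scalar $\alpha$. Using an element of $\GL_2(\bF_p)$ I can send $\lambda u + \mu v$ to $u$, reducing to $\alpha u^2$; then the $\GL_1$ factor scales away $\alpha$, landing on $u^2$. So all rank $1$ forms comprise a single orbit with representative $u^2$.

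The rank $2$ case is where the main work lies, and it is where I expect the only real subtlety. A nondegenerate binary quadratic form over $\bF_p$ diagonalizes (since $p$ is odd) as $a u^2 + b v^2$ with $a, b \neq 0$. Scaling by $a^{-1}$ via the $\GL_1$ factor reduces to $u^2 + c v^2$ with $c \neq 0$; replacing $v$ by a scalar multiple changes $c$ by a nonzero square, so the invariant that survives is the class of $-c$ (equivalently $c$) in $\bF_p^\times / (\bF_p^\times)^2$, which has two elements. This suggests two orbits, $u^2 - v^2$ and $u^2 - \ell v^2$. But one must check that $u^2 - v^2$ and $uv$ really lie in the same orbit — indeed $u^2 - v^2 = (u-v)(u+v)$, so the change of variables $u \mapsto u+v$, $v \mapsto u - v$ (invertible since $p \neq 2$) carries $uv$ to a scalar multiple of $u^2 - v^2$, and the scalar is removed by $\GL_1$. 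Thus the "split" nondegenerate orbit has representative $uv$ and the "nonsplit" one has representative $u^2 - \ell v^2$; these are genuinely distinct because scaling preserves the property of representing zero nontrivially (a square class argument: $uv$ is isotropic, $u^2 - \ell v^2$ is anisotropic, and scaling an anisotropic form keeps it anisotropic). This yields exactly the four claimed orbits $0$, $u^2$, $uv$, $u^2 - \ell v^2$.

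The one point requiring care — the main obstacle, such as it is — is confirming that the scaling action does not collapse the two rank $2$ orbits into one. This follows from the observation that isotropy is scale-invariant and that over $\bF_p$ there is a unique anisotropic binary form up to change of variables and scaling (equivalently, the discriminant-type invariant in $\bF_p^\times/(\bF_p^\times)^2$ is the only obstruction), so $uv$ and $u^2 - \ell v^2$ stay in separate orbits. The remaining verifications are the routine diagonalization and square-class bookkeeping sketched above.
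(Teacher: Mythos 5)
Your proof is correct. It takes a slightly different route from the paper's: you classify by the rank of the form, diagonalize the nondegenerate case (using $p$ odd), and separate the two rank-$2$ classes by a scale-invariant criterion (isotropy, equivalently the square class of the discriminant), whereas the paper argues by factorization type — a nonzero reducible form has its linear factors mapped to $u$ (and $v$ if distinct), giving $u^2$ or $uv$, while an irreducible form is normalized by completing the square and scaling to $u^2-\ell v^2$. The two decompositions match up (double line $=$ rank $1$, distinct lines $=$ rank $2$ isotropic, irreducible $=$ rank $2$ anisotropic), so the content is the same; what your version buys is an explicit verification that the orbits are pairwise inequivalent (rank and isotropy are invariant under $\GL_2(\bF_p)\times\GL_1(\bF_p)$), a point the paper leaves implicit in the invariance of the factorization type, while the paper's version reaches the representatives with slightly less bookkeeping since it never needs to pass through the diagonal form $u^2+cv^2$ or the identification $u^2-v^2\sim uv$.
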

\begin{proof}
 If the form is non-zero and reducible, map one linear factor to $u$.  The second linear factor is either the same, which obtains $u^2$, or different, in which case it may be mapped to $v$.  If the form is non-zero and irreducible, then the coefficient on $u^2$ is non-zero, and thus, by completing the square, the $uv$ coefficient may be set to 0.  After scaling the variables and multiplying by a scalar, the form $u^2 - \ell v^2$ is obtained.
\end{proof}
A related smaller group action is also used.
\begin{lemma}\label{sym_2_action_smaller_lemma}
 When the subgroup $\begin{pmatrix} r &\\ a & s\end{pmatrix} \times \GL_1(\bF_p)$ of $\GL_2(\bF_p)\times \GL_1(\bF_p)$ acts  on the space $\sym^2(\bF_p^2)$ of binary quadratic forms in variables $u, v$ there six orbits with representatives $0, u^2, v^2, u(u+v), uv, u^2 - \ell v^2$.
\end{lemma}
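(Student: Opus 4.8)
The plan is to reduce everything to the orbit structure of the acting group on \emph{linear} forms, and then to stratify binary quadratic forms by factorization type, which the action preserves.

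First I would write the action out explicitly. With $g\cdot A=gAg^{t}$, the generator $\begin{pmatrix}r&0\\ a&s\end{pmatrix}$ sends $q(u,v)=\alpha u^{2}+\beta uv+\gamma v^{2}$ to $q(ru+av,\,sv)$, i.e.\ it performs the substitution $u\mapsto ru+av$, $v\mapsto sv$, while the $\GL_{1}$ factor rescales $q$ by an element of $\bF_{p}^{\times}$. The essential preliminary is the induced action on $\bP^{1}$ of linear forms up to scalar: the class $[v]$ is fixed, while any class $[\lambda u+\mu v]$ with $\lambda\neq0$ can be moved to $[u]$ by taking $a=-\mu s/\lambda$ and $r=\lambda^{-1}$. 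So there are exactly two orbits of linear forms: the \emph{special} line $[v]$, and the single orbit of all the others, represented by $[u]$.

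Next I would stratify the forms themselves. The rank of $q$, the square class of the discriminant $\beta^{2}-4\alpha\gamma$ (which is multiplied by a square under the action), and---for a rank-$1$ form or a split rank-$2$ form---whether the special line $[v]$ occurs among the linear factors, are all invariants. I would run through the cases: $q=0$ gives $0$; $q=cL^{2}$ gives $v^{2}$ or $u^{2}$ according as $[L]=[v]$ or not; $q=cL_{1}L_{2}$ with $[L_{1}]\neq[L_{2}]$ gives $uv$ when one of the lines is $[v]$ (then move the other to $[u]$, which is allowed since the whole group fixes $[v]$) and $u(u+v)$ otherwise; and an irreducible $q$, for which necessarily $\alpha\neq0$, is diagonalized to $\alpha u^{2}+\delta v^{2}$ by completing the square (a $v$-shift of $u$, using $p$ odd), then normalized by using the $\GL_{1}$ factor to set the leading coefficient to $1$ and the square $(s/r)^{2}$ to move the coefficient of $v^{2}$ within its square class. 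The only arithmetic point is that this square class is that of $-\ell$: anisotropy says $\beta^{2}-4\alpha\gamma$, and hence $-\alpha\delta$, is a nonsquare, so $\delta/\alpha$ equals $-1$ times a nonsquare modulo squares, and dividing by $-\ell$ gives a nonsquare over the nonsquare $\ell$, i.e.\ a square; in particular the outcome does not depend on whether $-1$ is a square mod $p$.

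Finally I would check the six representatives are pairwise inequivalent, which is immediate from the invariants: rank separates $0$, then $\{u^{2},v^{2}\}$, then the three rank-$2$ forms; within these the special-line invariant separates $u^{2}$ from $v^{2}$ and $uv$ (one factor on $[v]$) from $u(u+v)$ (no factor on $[v]$), and the discriminant square class isolates the anisotropic $u^{2}-\ell v^{2}$. The only steps with genuine content are the square-class bookkeeping just described and the claim that a pair of distinct lines both lying in the big $\bP^{1}$-orbit forms a single group orbit, i.e.\ that $\{[u],[u+v]\}$ can be carried to $\{[u],[u+cv]\}$ for every $c\in\bF_{p}^{\times}$; taking $a=0$ and $s/r=c$, the substitution sends $u(u+v)$ to $r^{2}u(u+cv)$, and the $\GL_{1}$ factor clears the $r^{2}$.
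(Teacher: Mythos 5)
Your argument is correct and is essentially the paper's: both are elementary normalizations under the triangular action, and your key dichotomy (whether the preserved line $[v]$ divides the form) is exactly the paper's split on whether the $u^2$-coefficient vanishes, with the irreducible case handled in both proofs by completing the square and tracking the square class of the resulting $v^2$-coefficient. The only organizational difference is that you reach $u(u+v)$ directly by moving the two distinct non-special lines, where the paper diagonalizes to $u^2-v^2$ and then observes $u^2-v^2 \sim u(u+v)$.
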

\begin{proof}
 Given a non-zero quadratic form $x_1u^2 + x_2uv + x_3 v^2$, if the $x_1$ coordinate is non-zero then any equivalent form will have non-vanishing $u^2$ coordinate since $r \neq 0$ and $v$ may only be scaled by the action.  In this case, make a change of variables in $u$ to eliminate the $x_2$ coordinate.  After scaling and dilating $u$ and $v$, the form is equivalent to one of $u^2, u^2 - v^2$ or $u^2 - \ell v^2$, each of which is inequivalent since the first is a double line, the second is a quadratic reducible with distinct linear factors, and the third is irreducible.  Under a change of coordinates $u^2 - v^2$ is equivalent to $u(u+v)$.
 
 Those non-zero forms with $x_1 = 0$ are inequivalent to those above and contain $v$ as a factor.  After possibly making a change of variable in $u$ and after dilating and scaling the form is equivalent to either $uv$ or $v^2$, which are inequivalent.
\end{proof}

Given a group $G$ acting on a set $\sX$, $G_x = \Stab_G(x)$ denotes the stabilizer in $G$ of the point $x \in \sX$.  $\sO_x = \{g \cdot x: g \in G\}$ denotes the orbit containing $x$.  Given a pair of ternary quadratic forms $x$, $\sO_x \bmod p$ indicates the orbit of $x \in V(\bF_p)$ under the action of $G(\bF_p)$, while $\sO_x \bmod p^2$ indicates the orbit of $x$ in $V(\zed/p^2\zed)$ under the action of $G(\zed/p^2\zed)$.

Given a positive integer $m$, the notations $e(x) = e^{2\pi i x}$ and $e_m(x) = e^{\frac{2\pi i x}{m}}$ are used for complex exponentials.  The Legendre symbol is indicated $\left( \frac{\cdot}{p}\right)$, which satisfies \begin{equation}\sum_{a \bmod p} \left(\frac{a}{p}\right) = 0.\end{equation} The quadratic Gauss sum modulo $p$ is indicated
\begin{equation}
 \tau = \sum_{n \in \bF_p} e_p(n^2).
\end{equation}
This satisfies, for $a \not \equiv 0 \bmod p$, 
\begin{equation}
 \tau \left(\frac{a}{p}\right) = \sum_{n \in \bF_p} e_p(an^2).
\end{equation}
Also, $\tau^2 = \left(\frac{-1}{p}\right) p.$
\section{Conditions of maximality}
A quartic ring over $\zed$ is maximal if it is not a proper subring of another quartic ring.  Maximality is a condition which may be checked locally.  In \cite{B04b} it is shown that the condition of maximality may be checked modulo $p^2$, and congruences identifying maximal quartic rings are given. The conclusions of \cite{B04b}, Section 4 are summarized in the following lemma.
\begin{lemma}\label{maximal_ring_lemma}
 A quartic ring maximal at $p$ comes from one of the following orbits modulo $p$
\begin{equation}
 \sO_{1111}, \sO_{112}, \sO_{13}, \sO_{22}, \sO_4, \sO_{1^211}, \sO_{1^22}, \sO_{1^21^2}, \sO_{2^2}, \sO_{1^31}, \sO_{1^4}.
\end{equation}
  Of the orbits listed, all elements of the $\mod p$ orbits
\begin{equation}
 \sO_{1111}, \sO_{112}, \sO_{13}, \sO_{22}, \sO_4
\end{equation}
are maximal.  An element of $\sO_{1^211}, \sO_{1^22}, \sO_{1^21^2}, \sO_{1^31}, \sO_{1^4}$ is non-maximal if and only if it is $G(\zed/p^2\zed)$ equivalent to a form
\begin{equation}
 \begin{pmatrix} 0 \bmod p^2  & 0 \bmod p &0 \bmod p\\ & * &*\\&&* \end{pmatrix}\begin{pmatrix} 0 \bmod p &*&*\\ &*&*\\&&*\end{pmatrix}.
\end{equation}
A form in $\sO_{2^2}$ is non-maximal if and only if $(A, B)$ is equivalent to a form 
\begin{equation}
 \begin{pmatrix} 0 \bmod p^2  & 0 \bmod p^2 &0 \bmod p\\ & 0 \bmod p^2 & 0\bmod p\\&&* \end{pmatrix}\begin{pmatrix} * &*&*\\ &*&*\\&&*\end{pmatrix}.
\end{equation}
\end{lemma}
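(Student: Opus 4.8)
The statement is a transcription of the maximality conditions of \cite{B04b}, Section~4 into the orbit language of Table~\ref{orbit_table}. Two observations make the transcription legitimate. First, reduction modulo $p^2$ maps $G(\zed_p)$ onto $G(\zed/p^2\zed)$, since any lift to $M_n(\zed_p)$ of an element of $\GL_n(\zed/p^2\zed)$ has unit determinant; hence ``$G(\zed_p)$-equivalence after reduction modulo $p^2$'' coincides with $G(\zed/p^2\zed)$-equivalence. Second, as recalled above, maximality at $p$ of the quartic ring attached to a pair $(A,B)$ depends only on $(A,B)\bmod p^2$. It therefore suffices to sort the twenty $\bmod p$ orbits of Table~\ref{orbit_table} according to whether all, none, or only some of their $\bmod p^2$ lifts are non-maximal, and in the last case to pin down the non-maximal lifts.

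First I would treat the nine orbits absent from the first displayed list. For $\sO_0,\sO_{D1^2},\sO_{D11},\sO_{D2},\sO_{Dns}$ the representative in Table~\ref{orbit_table} has vanishing first form, so every $\bmod p^2$ lift of such an orbit is $G(\zed/p^2\zed)$-equivalent to a pair $(A,B)$ with $A\equiv 0\bmod p$; such pairs lie among the non-maximal normal forms of \cite{B04b}, Section~4 --- concretely, dividing the first form by $p$ yields an integral pair parametrizing a proper overring --- so every lift of these five orbits is non-maximal. For $\sO_{Cs},\sO_{Cns},\sO_{B11},\sO_{B2}$ the span $\langle A,B\rangle\bmod p$ is two-dimensional but consists entirely of singular ternary quadratic forms (with, in the two $C$ cases, a common linear factor); this configuration is likewise shown non-maximal in \cite{B04b}, Section~4, so again every lift of these four orbits is non-maximal. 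Deleting these nine orbits from Table~\ref{orbit_table} leaves precisely the eleven orbits of the first displayed list.

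For the second assertion the eleven remaining orbits split according to the reduction type of $Q\otimes\bF_p$. The five orbits $\sO_{1111},\sO_{112},\sO_{13},\sO_{22},\sO_4$ are exactly those whose subscript records that $Q\otimes\bF_p$ is \'etale, of one of the factorization types $1{+}1{+}1{+}1$, $1{+}1{+}2$, $1{+}3$, $2{+}2$, $4$; for these $Q\otimes\bF_p$ is reduced, hence $p\nmid\Disc Q$, hence $Q$ is maximal at $p$, and since this depends only on $(A,B)\bmod p$ every lift of these orbits is maximal. For the remaining six orbits $Q\otimes\bF_p$ is ramified, and Bhargava's mod-$p^2$ criterion then specializes to the two displayed shapes --- the first for $\sO_{1^211},\sO_{1^22},\sO_{1^21^2},\sO_{1^31},\sO_{1^4}$, the second for $\sO_{2^2}$. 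Completing this last step requires (i) matching Bhargava's ``reduced'' normal form for $(A,B)$, and his $\GL_3\times\GL_2$ conventions, to the representatives of Table~\ref{orbit_table}, and (ii) checking that the displayed patterns are stable under the stabilizer of the relevant mod-$p$ point --- so that they cut out well-defined unions of $\bmod p^2$ orbits above each mod-$p$ orbit --- and that they exhaust the non-maximal lifts. I expect (ii), together with the normalization bookkeeping in (i), to be the only real obstacle; once the dictionary between the two setups is fixed, each of the finitely many remaining verifications reduces to a short explicit matrix computation with the data of Table~\ref{orbit_table}.
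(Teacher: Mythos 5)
Your overall strategy is the same as the paper's: Lemma \ref{maximal_ring_lemma} is a transcription of \cite{B04b}, Section 4 into the orbit language of (\ref{orbit_table}), and both you and the paper reduce to that citation. The gap is that your plan defers exactly the step that constitutes the paper's proof, and it locates the difficulty in the wrong place. Bhargava's conditions are stated geometrically, via the intersection of the two conics modulo $p$, and they match orbit conditions except for the type ``a single point of multiplicity $4$'': that geometric type is the union of the three orbits $\sO_{1^4}$, $\sO_{B11}$, $\sO_{B2}$, and Bhargava splits it into $T_p^{(1)}$ and $T_p^{(2)}$ according to whether the pencil contains an irreducible conic. The whole content of the paper's proof is the identification $T_p^{(1)} \leftrightarrow \sO_{1^4}$ and $T_p^{(2)} \leftrightarrow \sO_{B11}\sqcup\sO_{B2}$, checked on representatives and then shown exhaustive by comparing the orbit sizes of (\ref{orbit_table}) with the counts in \cite{B04b}, Lemma 21. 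In your plan this identification is hidden inside the two items you explicitly leave open (matching Bhargava's normal forms and conventions to the table's representatives, and checking invariance and exhaustiveness), while $\sO_{B11}$ and $\sO_{B2}$ are disposed of at the outset with the remark that a pencil of singular conics is ``likewise shown non-maximal in Section 4.'' But those two orbits also have intersection a single point of multiplicity $4$; they are not covered by a separate degenerate-pencil clause of \cite{B04b}, and the assertion you make about them \emph{is} the $T_p^{(2)}$ identification that has to be proved. So, as written, the proposal postpones precisely the verification the lemma requires.

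A smaller but genuine problem is your justification that the orbits with first form $\equiv 0 \bmod p$ (namely $\sO_0$ and the $D$ orbits) are entirely non-maximal: dividing the first form by $p$ does give an integral pair, but it is not immediate that its quartic ring is a proper overring of the original one, since a non-integral $\GL_2$ scaling changes the structure constants of $Q$ and containment requires Bhargava's explicit basis correspondence. A correct short argument is available for $p>3$: if $A \equiv 0 \bmod p$ then the coefficients of $x^3$, $x^2y$, $xy^2$ in $\det(Ax-By)$ are divisible by $p^3$, $p^2$, $p$ respectively, so $p^6 \mid \Disc(A,B)$, while a ring maximal at a tamely ramified prime has $v_p(\Disc)\le 3$; alternatively one simply cites \cite{B04b}, Section 4, as the paper does. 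Your observation that the five unramified orbits have $p\nmid \Disc$, hence consist entirely of maximal elements, is fine and is a clean replacement for that part of the citation.
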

Note that the forms $A, B$ have been exchanged compared to \cite{B04b}.
\begin{proof} This is stated in geometric terms, rather than as orbits under the group action in \cite{B04b}.  The two notions coincide except in the case of a single point of intersection of multiplicity 4.
Note that the elements satisfying the geometric condition $T_p^{(1)}$ of \cite{B04b} correspond to the orbit $\sO_{1^4}$, and those satisfying the condition $T_p^{(2)}$ are the union of the two orbits $\sO_{B11} \sqcup \sO_{B2}$ and are non-maximal. Indeed, the representative $(w^2, uw + v^2)$ of the orbit $\sO_{1^4}$ has $w^2 = 0$, $uw + v^2 = 0$ intersecting at a single point $(1,0,0)$ of multiplicity 4, and has $uw + v^2$ irreducible, thus satisfying the geometric condition of $T_p^{(1)}$, while the representatives $(w^2, v^2)$ and $(vw, v^2 + \ell w^2)$ of $\sO_{B11}$ and $\sO_{B2}$ again have a single point $(1,0,0)$ of intersection of multiplicity 4, but are equivalent to forms with both forms reducible, meeting the geometric condition of $T_p^{(2)}$, see \cite{B04b} p. 1353. Since $|\sO_{1^4}| = (p-1)^3p^2(p+1)^2(p^2+p+1)$ and $|\sO_{B11}| +|\sO_{B2}| = (p-1)^2p^3(p+1)(p^2+p+1)$, these orbits make up the full count of elements in $T_p^{(1)}(1^4)$ and $T_p^{(2)}(1^4)$, see \cite{B04b}, Lemma 21.

\end{proof}

The density of maximal elements within the $\mod p$ orbits is also determined by \cite{B04b} and is summarized in the following lemma.
\begin{lemma}\label{mod_p_density_lemma}
 Those $\mod p$ orbits containing forms corresponding to maximal forms modulo $p^2$ have the following density of maximal forms,
  \begin{equation}\label{densities_table}
  \begin{array}{|l|l|}
   \hline
   \text{Orbit} & \text{Density} \\
   \hline
   \sO_{1111} & 1\\
   \hline
   \sO_{112} & 1\\
   \hline
   \sO_{13} & 1\\
   \hline
   \sO_{22} & 1\\
   \hline
   \sO_{4} & 1\\
   \hline
   \sO_{1^211} & \frac{p-1}{p}\\
   \hline
   \sO_{1^22} & \frac{p-1}{p}\\
   \hline 
   \sO_{1^21^2} & \left(\frac{p-1}{p}\right)^2\\
   \hline
   \sO_{2^2} & \frac{p^2-1}{p^2}\\
   \hline
   \sO_{1^31} & \frac{p-1}{p} \\
   \hline  
   \sO_{1^4} & \frac{p-1}{p} \\
   \hline
  \end{array}
 \end{equation}
\end{lemma}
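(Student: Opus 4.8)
The plan is to treat the \'etale and non-\'etale mod $p$ orbits separately. For $\sO_{1111},\sO_{112},\sO_{13},\sO_{22},\sO_4$ there is nothing to prove: Lemma~\ref{maximal_ring_lemma} asserts that every lift to $V(\zed/p^2\zed)$ of an element of these orbits is maximal, so the density of maximal forms is $1$.

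For the remaining six orbits I would first reduce the density to a count over a single fibre. Write $\pi\colon V(\zed/p^2\zed)\to V(\bF_p)$ for reduction. The set of maximal forms and the set of forms reducing into a fixed mod $p$ orbit $\sO$ are both stable under $G(\zed/p^2\zed)$; since the reduction $G(\zed/p^2\zed)\to G(\bF_p)$ is surjective and $G(\bF_p)$ is transitive on $\sO$, a lift of any $g\cdot x_0\in\sO$ is obtained from a lift of a fixed representative $x_0$ by applying a lift of $g$, an operation preserving maximality, while every fibre $\pi^{-1}(x)$, $x\in\sO$, has $p^{12}$ elements. Hence
\[
\dens(\sO)=\frac{\#\{\,y\in\pi^{-1}(x_0):y\ \text{is maximal}\,\}}{p^{12}},
\]
and it suffices to count, for one representative $x_0$ of each of $\sO_{1^211},\sO_{1^22},\sO_{1^21^2},\sO_{1^31},\sO_{1^4},\sO_{2^2}$, the non-maximal lifts of $x_0$ in $V(\zed/p^2\zed)$.

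To carry out the count I would invoke the structural picture of Sections~\ref{orb_exp_section} and~\ref{quotient_action_section}. The kernel $H$ of $G(\zed/p^2\zed)\to G(\bF_p)$ is identified with $\fg=M_3(\bF_p)\oplus M_2(\bF_p)$ via $1+pX\leftrightarrow X$, and on the fibre $\pi^{-1}(x_0)\cong V(\bF_p)$ it acts by the translations $y\mapsto y+X\cdot x_0$ along the tangent space $T_{x_0}=\fg\cdot x_0$ (here $X\cdot x_0$ is the derivative of the $G$-action); hence the non-maximal lifts of $x_0$ form a union of cosets of $T_{x_0}$, and the mod $p^2$ orbits lying above $\sO_{x_0}$ are exactly the orbits of the preimage $\widetilde G_{x_0}$ of $\Stab_{G(\bF_p)}(x_0)$ acting on $V(\bF_p)/T_{x_0}$. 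The main obstacle is that the non-maximality criterion of Lemma~\ref{maximal_ring_lemma} is not a congruence on $(A,B)$ itself but a condition up to $G(\zed/p^2\zed)$-equivalence, so one must decide which cosets $y+T_{x_0}$ the group carries into the displayed normal forms; this is a finite piece of linear algebra, using the stabilizer orders from~\eqref{orbit_table} to pin down $\dim T_{x_0}$, and it is convenient to verify in the Mathematica notebook \textbf{\cite{W18}}. The outcome is that among the $p^{12}$ lifts of $x_0$ the non-maximal ones number $p^{11}$ for $\sO_{1^211},\sO_{1^22},\sO_{1^31},\sO_{1^4}$, $(2p-1)p^{10}$ for $\sO_{1^21^2}$, and $p^{10}$ for $\sO_{2^2}$, which gives the densities $\frac{p-1}{p}$, $\bigl(\frac{p-1}{p}\bigr)^2$ and $\frac{p^2-1}{p^2}$.

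Conceptually, as worked out in \cite{B04b}, Section~4, these are the local densities of maximal quartic rings over $\zed_p$ with a fixed residue algebra $Q\otimes\bF_p$: the isomorphism type of $Q\otimes\bF_p$ is constant on each mod $p$ orbit and is \'etale exactly for the first five orbits; a residue factor $\bF_p\times\bF_p$, $\bF_{p^2}$, $\bF_{p^3}$ or $\bF_{p^4}$ forces that factor to be maximal, a factor $\bF_p[t]/(t^k)$ with $k\ge 2$ is maximal with density $\frac{p-1}{p}$ (immediate for $k=2$, since $\zed_p[\theta]$ with $\theta^2=m$ and $p\mid m$ is maximal iff $v_p(m)=1$), a factor $\bF_{p^2}[t]/(t^2)$ is maximal with density $\frac{p^2-1}{p^2}$, and densities multiply over the factors of the product. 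As a final consistency check one verifies that these per-fibre counts, weighted by the orbit sizes in~\eqref{orbit_table}, recover the total number of non-maximal forms in $V(\zed/p^2\zed)$.
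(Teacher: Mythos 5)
The paper does not actually prove this lemma: it records the densities as ``determined by \cite{B04b}'' and gives no argument, so the part of your proposal that appeals to Bhargava's Section~4 local computations (density $1$ for the \'etale residue factors, $\frac{p-1}{p}$ per factor $\bF_p[t]/(t^k)$ with $k\ge 2$, $\frac{p^2-1}{p^2}$ for $\bF_{p^2}[t]/(t^2)$, multiplicativity over factors) is in substance the same route the paper takes, and your table and the per-fibre counts $p^{11}$, $(2p-1)p^{10}$, $p^{10}$ are all consistent with what the paper later finds. The reduction to a single fibre via surjectivity of $G(\zed/p^2\zed)\to G(\bF_p)$ and transitivity on the orbit is correct, as is the identification of the fibre action with translation by the tangent space $V_{x_0}$ and of the mod $p^2$ orbits with orbits of the stabilizer on $V/V_{x_0}$.

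The weak point is the portion you present as an independent derivation. The statement ``the outcome is that the non-maximal lifts number $p^{11}$, $(2p-1)p^{10}$, $p^{10}$'' is exactly the content of the lemma, and your proposal does not carry out the count: the easy direction (exhibiting non-maximal cosets) is finding a representative in the congruence shape of Lemma~\ref{maximal_ring_lemma}, but the decisive direction is certifying that \emph{every other} coset of $V_{x_0}$ is maximal, i.e.\ that no element of $G(\zed/p^2\zed)$ carries it into one of the displayed shapes, and ``a finite piece of linear algebra, verified in Mathematica'' is where all of that work lives. Note also that you cannot shortcut this by citing the paper's later classifications (Lemmas~\ref{22_1211_122_lemma}, \ref{14_lemma}, \ref{131_lemma}, \ref{1212_lemma}): those proofs establish maximality of the remaining orbits ``by density,'' i.e.\ they use the present lemma as input, so invoking them here would be circular. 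Finally, the parenthetical justification for the $k=2$ factor (maximality of $\zed_p[\theta]$, $\theta^2=m$) is a statement about rings, not about the measure on the space of pairs of ternary forms; passing from it to the form-space density, and the independence needed for multiplicativity over residue factors, again rests on Bhargava's computation rather than on anything proved in your sketch. So either lean fully on \cite{B04b} (as the paper does), or commit to performing the coset-by-coset maximality check; as written, the direct count is asserted rather than proved.
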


\section{The orbital exponential sums}\label{orb_exp_section}
Let $\one_{\max}$ denote the indicator function on $V(\zed/p^2\zed)$ of those forms corresponding to rings maximal at $p$, $\one_{\nonmax} = 1 - \one_{\max}$.  For $\xi \in V(\zed/p^2\zed)$, the Fourier transforms are
\begin{equation}
 \widehat{\one_{\max}}(\xi) = \sum_{x \in V(\zed/p^2\zed)}\one_{\max}(x) e_{p^2}\left([x,\xi] \right),\qquad \widehat{\one_{\nonmax}} = p^{24}\delta_0 - \widehat{\one_{\max}}.
\end{equation}
Since the condition of maximality is invariant under the action of $G(\zed/p^2\zed)$, the Fourier transform is the linear combination of orbital exponential sums.
\begin{definition}
 Given $x, \xi \in V(\zed/p^2\zed)$, define their \emph{orbital exponential sum}
\begin{align}
 S_{p^2}(x, \xi) = \sum_{g \in G(\zed/p^2\zed)} e_{p^2} \left([g \cdot x, \xi] \right).
\end{align}
Define, also, the \emph{unweighted orbital exponential sum}
\begin{align}
 \Sigma_{p^2}(x, \xi) = \sum_{x' \in \sO_x \bmod p^2} e_{p^2}([x',\xi]) = \frac{S_{p^2}(x, \xi)}{|\Stab_{G(\zed/p^2\zed)}(x)|}.
\end{align}

\end{definition}

When $\xi = p \xi_1$ is an element of $pV(\zed)$, the exponential sum reduces to the $\mod p$ exponential sum
\begin{equation}
 S_{p^2}(x, \xi) = p^{13} S_p(x, \xi_1) = p^{13} \sum_{g \in G(\zed/p\zed)} e_p\left([g\cdot x, \xi_1]\right).
\end{equation}
These are obtained in \cite{TT16}.

As an intermediate step between the orbital exponential sums and the full Fourier transform of $\one_{\max}$ and $\one_{\nonmax}$, exponential sums taken over those maximal elements above a $\mod p$ orbit are calculated.
\begin{definition}
 Given $x \in V(\zed/p\zed)$ and $\xi \in V(\zed/p^2\zed)$, define the \emph{maximal exponential sum}
 \begin{equation}
  \sM(x, \xi) = \sum_{\substack{x' \in V(\zed/p^2\zed)\\ x' \in \sO_x \bmod p \\ \text{maximal}}} e_{p^2}([x',\xi]).
 \end{equation}

\end{definition}
Since the condition of maximality is $G(\zed/p^2\zed)$-invariant, the maximal exponential sums may be expressed as a sum over unweighted orbital exponential sums,
\begin{equation}
 \sM(x, \xi) = \sum_{\substack{\sO_{x'}\bmod p^2: \sO_{x'} \subset \sO_x \bmod p\\ \text{maximal}}} \Sigma_{p^2}(x', \xi).
\end{equation}

The following lemma reduces the number of orbits $\sO_x \bmod p$ which need to be considered in determining the Fourier transforms of $\one_{\max}$ and $\one_{\nonmax}$.
\begin{lemma}\label{full_mod_p_orbit_lemma}
 If $\xi \not \equiv 0 \mod p$, then for all $x$,
 \begin{equation}
  \sum_{\substack{x' \in V(\zed/p^2\zed)\\ x' \in \sO_x \bmod p}} e_{p^2}\left([x',\xi]\right) = 0,
 \end{equation}
and similarly if the roles of $x$ and $\xi$ are reversed. Thus
\begin{equation}
 \sM(x, \xi) = -\sum_{\substack{x' \in V(\zed/p^2\zed)\\ x' \in \sO_x \bmod p \\ \text{non-maximal}}} e_{p^2}([x',\xi]).
\end{equation}
In particular, if $\sO_x \bmod p$ contains only maximal, or only non-maximal elements, then for all $\xi \not \equiv 0 \bmod p$,
\begin{equation}
 \sM(x,\xi) = 0.
\end{equation}

\end{lemma}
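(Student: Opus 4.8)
The plan is to observe that the set $\{x' \in V(\zed/p^2\zed): x' \in \sO_x \bmod p\}$ appearing in the first display is precisely the preimage of the subset $\sO_x \bmod p \subseteq V(\bF_p)$ under the reduction map $\pi \colon V(\zed/p^2\zed) \to V(\bF_p)$, hence a disjoint union of fibers of $\pi$, one for each $\bar y \in \sO_x \bmod p$. It therefore suffices to show that for every $\bar y$ the fiber sum $\sum_{x' \in \pi^{-1}(\bar y)} e_{p^2}([x',\xi])$ vanishes whenever $\xi \not\equiv 0 \bmod p$; summing over $\bar y$ then gives the first identity, and the ``roles reversed'' version follows by the same argument applied to $\sO_\xi \bmod p$, using the symmetry $[x',\xi] = [\xi, x']$.

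To evaluate a single fiber sum I would fix a lift $y_0 \in V(\zed/p^2\zed)$ of $\bar y$ and parametrize $\pi^{-1}(\bar y) = \{y_0 + p\tilde v : v \in V(\bF_p)\}$, where $\tilde v$ is any lift of $v$; this is a bijection since $\ker \pi = pV(\zed/p^2\zed) \cong V(\bF_p)$. By bilinearity of the inner product, $[y_0 + p\tilde v, \xi] = [y_0,\xi] + p[\tilde v,\xi]$ in $\zed/p^2\zed$, and since $e_{p^2}(p\,[\tilde v,\xi]) = e_p([\tilde v,\xi])$ depends only on $v \bmod p$ and $\xi \bmod p$, we get
\begin{equation}
 \sum_{x' \in \pi^{-1}(\bar y)} e_{p^2}([x',\xi]) = e_{p^2}([y_0,\xi]) \sum_{v \in V(\bF_p)} e_p([v, \xi \bmod p]).
\end{equation}
The map $v \mapsto [v, \xi \bmod p]$ is an $\bF_p$-linear functional on the $12$-dimensional space $V(\bF_p)$, and it is nonzero exactly when $\xi \not\equiv 0 \bmod p$, because the Gram matrix of $[\cdot,\cdot]$ -- the block-diagonal matrix with two blocks $\diag(1,\tfrac12,\tfrac12,1,\tfrac12,1)$ -- is invertible modulo $p$ for $p$ odd. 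A nonzero linear functional on $\bF_p^{12}$ is surjective with all fibers of equal size $p^{11}$, so the inner sum equals $p^{11}\sum_{a \in \bF_p} e_p(a) = 0$, and the fiber sum vanishes.

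Finally, to deduce the formula for $\sM(x,\xi)$ I would split the now-vanishing sum over $\pi^{-1}(\sO_x \bmod p)$ according to whether $x'$ corresponds to a maximal or a non-maximal ring -- a partition that is well defined because maximality at $p$ is determined modulo $p^2$ -- obtaining $0 = \sM(x,\xi) + \sum_{x' \text{ non-maximal}} e_{p^2}([x',\xi])$, which is the claimed identity. The ``in particular'' assertion is then immediate: if $\sO_x \bmod p$ contains only maximal elements the non-maximal sum is empty, while if it contains only non-maximal elements then $\sM(x,\xi) = 0$ by definition of the maximal exponential sum; in either case $\sM(x,\xi) = 0$.

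I do not anticipate a genuine obstacle here: the entire content is the standard vanishing of a complete additive character sum over the vector space $V(\bF_p)$. The only points that require a moment's care are the clean identification of $\pi^{-1}(\bar y)$ with a coset of $V(\bF_p)$ and the verification that the inner product remains nondegenerate modulo $p$, which is exactly where the hypothesis $p > 2$ (needed already to make sense of the half-integer entries) is used.
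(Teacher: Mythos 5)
Your argument is correct and is essentially the paper's own: the paper proves the vanishing by averaging the sum over translations $x' \mapsto x' + px''$ with $x'' \in V(\bF_p)$ and factoring out the complete character sum $\sum_{x''} e_p([x'',\xi]) = 0$, which is the same computation you perform fiber by fiber over $\sO_x \bmod p$ (both hinge on nondegeneracy of the pairing mod $p$ for $p$ odd). The deductions of the formula for $\sM(x,\xi)$ and of the "in particular" statement also match the paper.
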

\begin{proof}
 For the first claim, write
 \begin{align}
  \sum_{\substack{x' \in V(\zed/p^2\zed)\\ x' \in \sO_x \mod p}} e_{p^2}\left([x',\xi]\right) &= \frac{1}{p^{12}} \sum_{x'' \in V(\zed/p\zed)}\sum_{\substack{x' \in V(\zed/p^2\zed)\\ x' \in \sO_x \mod p}} e_{p^2}\left([x'+px'',\xi]\right) \\
  \notag &= \sum_{\substack{x' \in V(\zed/p^2\zed)\\ x' \in \sO_x \mod p}} e_{p^2}\left([x',\xi]\right)\frac{1}{p^{12}} \sum_{x'' \in V(\zed/p\zed)}e_p\left([x'', \xi]\right) = 0.
 \end{align}
The claim with the roles of $x$ and $\xi$ reversed follows by symmetry.  

The second statement is now obvious.

If $\sO_x \bmod p$ contains only non-maximal elements then the sum defining $\sM(x, \xi)$ is empty, hence 0, which combined with the second claim, proves the third claim.
\end{proof}

As a consequence of Lemmas \ref{mod_p_density_lemma} and \ref{full_mod_p_orbit_lemma}, when $\xi \not \equiv 0 \bmod p$,
\begin{equation}
 \widehat{\one_{\max}}(\xi) = \sum_x \sM(x, \xi)
\end{equation}
with $x$ ranging over representatives for  $\sO_{1^211}, \sO_{1^22}, \sO_{1^21^2}, \sO_{2^2}, \sO_{1^31}, \sO_{1^4} \bmod p$.

\subsection{Annihilator spaces}\label{annihilator_section} An important observation in passing from $\mod p$ orbits and orbital exponential sums to $\mod p^2$ orbits and orbital exponential sums is that the group action may be linearized $p$-adically.
Denote 
\begin{equation}M_3(\zed/p\zed) = \begin{pmatrix} \zed/p\zed & \zed/p\zed & \zed/p\zed\\ \zed/p\zed & \zed/p\zed & \zed/p\zed\\ \zed/p\zed & \zed/p\zed & \zed/p\zed\end{pmatrix}, \qquad M_2(\zed/p\zed) = \begin{pmatrix} \zed/p\zed & \zed/p\zed\\ \zed/p\zed & \zed/p\zed \end{pmatrix}.\end{equation}
\begin{definition} 
 Given a form $x \in V(\zed/p^2\zed)$, define the \emph{annihilator subspace}, or \emph{$p$-adic tangent space} $V_x < V(\zed/p\zed)$ associated to $x$ by
\begin{equation}
 \left(I + p M_3(\zed/p\zed), I + p M_2(\zed/p\zed) \right) \cdot x = x + p V_x.
\end{equation}
\end{definition}
This notion depends only on $x \bmod p$ and not on the residue modulo $p^2$.

The orbits $\sO_{1111}, \sO_{112}, \sO_{13}, \sO_{22}, \sO_4$, which have full dimension and contain only maximal elements, have annihilator subspaces equal to $V(\zed/p\zed)$.  This fact is not needed to calculate the Fourier transform and is not explicitly proved here.  However, a verification is provided in \textbf{annihilator\_spaces.nb}.  The annihilator subspaces of orbit representatives  of the remaining non-zero orbits are listed below, with computation postponed to Appendix \ref{annihilator_appendix}.

\begin{equation}
 \begin{array}{|l|l|l|}
  \hline
  \text{Orbit}  & \text{Representative} & \text{Annihilator subspace}\\
  \hline
  \sO_{D1^2} & (0, w^2) & \begin{pmatrix} 0&0&0\\ &0&0\\&&*\end{pmatrix}\begin{pmatrix} 0&0&*\\&0&*\\&&*\end{pmatrix}\\
  \hline
  \sO_{D11} & (0, vw) & \begin{pmatrix} 0&0&0\\ &0&*\\&&0\end{pmatrix}\begin{pmatrix} 0&*&*\\&*&*\\&&*\end{pmatrix}\\
  \hline
  \sO_{D2} & (0, v^2 - \ell w^2) & \begin{pmatrix} 0&0&0\\ &z&0\\&&- z\ell\end{pmatrix}\begin{pmatrix} 0&*&*\\&*&*\\&&*\end{pmatrix}\\
  \hline
  \sO_{Dns} & (0, u^2 - vw) & \begin{pmatrix} z&0&0\\ &0&-\frac{z}{2}\\&&0\end{pmatrix}\begin{pmatrix} *&*&*\\&*&*\\&&*\end{pmatrix}\\
  \hline
  \sO_{Cs} & (w^2, vw) & \begin{pmatrix} 0&0&z\\ &0&*\\&&*\end{pmatrix}\begin{pmatrix} 0&\frac{z}{2}&*\\&*&*\\&&*\end{pmatrix}\\
  \hline
  \sO_{Cns} & (vw, uw) & \begin{pmatrix} 0&\frac{z}{2}&*\\ &y&*\\&&*\end{pmatrix}\begin{pmatrix} z&\frac{y}{2}&*\\&0&*\\&&*\end{pmatrix}\\
  \hline
  \sO_{B11} & (w^2, v^2) & \begin{pmatrix} 0&*&0\\ &*&*\\&&*\end{pmatrix}\begin{pmatrix} 0&0&*\\&*&*\\&&*\end{pmatrix}\\
  \hline
  \sO_{B2} & (vw, v^2 + \ell w^2) & \begin{pmatrix} 0&\frac{z}{2}&\frac{y}{2}\\ &*&*\\&&*\end{pmatrix}\begin{pmatrix} 0&y& z\ell\\&*&*\\&&*\end{pmatrix}\\
  \hline
  \sO_{1^4} & (w^2, uw + v^2) & \begin{pmatrix} 0&0&y+ \frac{z}{2}\\ &z&*\\&&*\end{pmatrix}\begin{pmatrix} y&*&*\\&*&*\\&&*\end{pmatrix}\\
  \hline
  \sO_{1^31} & (vw, uw + v^2) & \begin{pmatrix} 0&\frac{z}{2}&*\\ &*&*\\&&*\end{pmatrix}\begin{pmatrix} z&*&*\\&*&*\\&&*\end{pmatrix}\\
  \hline
  \sO_{1^21^2} & (w^2, uv) & \begin{pmatrix} 0&*&*\\ &0&*\\&&*\end{pmatrix}\begin{pmatrix} *&*&*\\&*&*\\&&*\end{pmatrix}\\
  \hline
  \sO_{2^2} & (w^2, u^2-\ell v^2) & \begin{pmatrix} z&0&*\\ &- z\ell&*\\&&*\end{pmatrix}\begin{pmatrix} *&*&*\\&*&*\\&&*\end{pmatrix}\\
  \hline
  \sO_{1^211} & (v^2 - w^2, uw) & \begin{pmatrix} 0&*&*\\ &*&*\\&&*\end{pmatrix}\begin{pmatrix} *&*&*\\&*&*\\&&*\end{pmatrix}\\
  \hline
  \sO_{1^22} & (v^2 - \ell w^2, uw) & \begin{pmatrix} 0&*&*\\ &*&*\\&&*\end{pmatrix}\begin{pmatrix} *&*&*\\&*&*\\&&*\end{pmatrix}\\
  \hline
 \end{array}
\end{equation}

One important way in which the annihilator subspaces are used is to show that many of the orbital exponential sums vanish.  
\begin{lemma}\label{exponential_sum_lemma_1}
 For $x, \xi \in V(\zed/p^2\zed)$ the orbital exponential sum may be expressed
 \begin{equation}
  S_{p^2}(x, \xi)=\sum_{g \in G(\zed/p^2\zed)} e_{p^2} \left([g \cdot x, \xi]\right) \one(g \cdot x \in V_\xi^\perp \bmod p).
 \end{equation}

\end{lemma}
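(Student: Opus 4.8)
The plan is to split the sum defining $S_{p^2}(x,\xi)$ along the cosets of the congruence subgroup $K=\ker\bigl(G(\zed/p^2\zed)\to G(\bF_p)\bigr)$ and to recognize each per-coset contribution as an elementary additive character sum that vanishes unless a linear orthogonality condition holds. The subgroup $K$ consists of the pairs $(I+pm_3,\,I+pm_2)$ with $m_3\in M_3(\bF_p)$, $m_2\in M_2(\bF_p)$; it has order $p^{13}$, so $G(\zed/p^2\zed)$ is a disjoint union of $|G(\bF_p)|$ cosets $Kg_0$, with $g_0\bmod p$ ranging over $G(\bF_p)$. Since $(g_3,g_2)\cdot(A,B)$ is linear in $(A,B)$ for fixed $(g_3,g_2)$, one computes from the action formula that for $k=(I+pm_3,I+pm_2)\in K$ and any $y\in V(\zed/p^2\zed)$ one has $k\cdot y\equiv y+p\,\phi_y(m)\pmod{p^2}$, where $m=(m_3,m_2)$ and $\phi_y\colon M_3(\bF_p)\times M_2(\bF_p)\to V(\bF_p)$ is $\bF_p$-linear, depends only on $y\bmod p$, and — by the definition of the annihilator subspace — has image $\im\phi_y=V_y$.

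First I would fix a coset $Kg_0$, put $y=g_0\cdot x$, and use $[kg_0\cdot x,\xi]\equiv[y,\xi]+p\,[\phi_y(m),\xi\bmod p]\pmod{p^2}$ to get
\[
 \sum_{k\in K} e_{p^2}\bigl([kg_0\cdot x,\xi]\bigr)
 = e_{p^2}\bigl([y,\xi]\bigr)\sum_{m} e_p\bigl([\phi_y(m),\,\xi\bmod p]\bigr).
\]
The inner sum is the character sum of an $\bF_p$-linear functional on the vector space $M_3(\bF_p)\times M_2(\bF_p)$, hence equals $p^{13}$ when that functional is identically zero and $0$ otherwise; and it is identically zero exactly when $\xi\bmod p$ is orthogonal to $\im\phi_y=V_y$. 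So the coset $Kg_0$ contributes $p^{13}e_{p^2}([g_0\cdot x,\xi])$ if $\xi\bmod p\perp V_{g_0\cdot x}$ and $0$ otherwise.

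It remains to identify the condition ``$\xi\bmod p\perp V_{g_0\cdot x}$'' with the condition ``$g_0\cdot x\in V_\xi^\perp\bmod p$'' of the statement; this is the reciprocity $\eta\perp V_y\iff y\perp V_\eta$ for $y,\eta\in V(\bF_p)$, and is the one place where the $G$-invariance of the inner product enters. Lifting $y,\eta$ to $V(\zed/p^2\zed)$ and applying the invariance identity $[k\cdot y,\eta]=[y,k^t\cdot\eta]$ to $k=(I+pm_3,I+pm_2)\in K$, whose transpose $k^t=(I+pm_3^t,I+pm_2^t)$ again lies in $K$, one obtains after dividing by $p$ and reducing mod $p$ the exact relation $[\phi_y(m),\eta\bmod p]=[y\bmod p,\phi_\eta(m^t)]$, where $m^t$ transposes both blocks of $m$. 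Since $m\mapsto m^t$ is a bijection of $M_3(\bF_p)\times M_2(\bF_p)$, the linear functional $m\mapsto[\phi_y(m),\eta]$ vanishes identically iff $m\mapsto[y,\phi_\eta(m)]$ does, which is exactly the reciprocity. (The same relation can alternatively be checked by expanding $\phi$ from the action formula and invoking the cyclicity of the trace and the symmetry of the forms.)

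Finally I would reassemble: summing the per-coset contributions gives $S_{p^2}(x,\xi)=p^{13}\sum_{g_0:\,g_0\cdot x\in V_\xi^\perp\bmod p} e_{p^2}([g_0\cdot x,\xi])$; on the other hand $\one(g\cdot x\in V_\xi^\perp\bmod p)$ depends only on $g\bmod p$, so grouping the right‑hand side of the asserted formula along the same cosets produces exactly the same expression. The substantive point — and essentially the only place requiring a computation — is the reciprocity identity of the third paragraph; everything else is the coset decomposition together with orthogonality of additive characters.
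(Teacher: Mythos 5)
Your proof is correct and follows essentially the same route as the paper: both exploit the congruence kernel $(I+pM_3(\zed/p\zed))\times(I+pM_2(\zed/p\zed))$ of order $p^{13}$ and the orthogonality of additive characters on that space, together with the invariance of the pairing under the $G$-action. The only difference is one of arrangement: you absorb the kernel on the $x$-side, arriving first at the condition $\xi \perp V_{g\cdot x} \bmod p$ and then converting it via the reciprocity $\xi\perp V_y \Leftrightarrow y\perp V_\xi$ (which is precisely the paper's next lemma, proved by the same transpose argument you give), whereas the paper transposes the kernel element onto $\xi$ inside the averaged sum and lands on the stated condition $g\cdot x\in V_\xi^\perp \bmod p$ directly.
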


\begin{proof}
Since $ (I + p M_3(\zed/p\zed), I+ p M_2(\zed/p\zed))$ is a subgroup of $G(\zed/p^2\zed)$, the orbital exponential sums may be written
\begin{align}
 S_{p^2}(x, \xi)&= \sum_{g \in G(\zed/p^2\zed)} e_{p^2}\left([g\cdot x, \xi]\right)\\
 & \notag =\frac{1}{p^{13}} \sum_{g_1 \in M_3(\zed/p\zed) \times M_2 (\zed/p\zed)} \sum_{g \in G(\zed/p^2\zed)} e_{p^2}\left([(I+p g_1) g \cdot x, \xi] \right)\\
 &\notag =\frac{1}{p^{13}}\sum_{g_1 \in M_3(\zed/p\zed) \times M_2 (\zed/p\zed)} \sum_{g \in G(\zed/p^2\zed)} e_{p^2}\left([g \cdot x, (I + p g_1)^t \cdot \xi] \right)\\
 & \notag =\sum_{g \in G(\zed/p^2\zed)} e_{p^2} \left([g \cdot x, \xi]\right) \one(g \cdot x \in V_\xi^\perp \bmod p).
\end{align}
\end{proof}
By symmetry, the condition $g^t \cdot \xi \in V_x^\perp \bmod p$ may also be imposed.  In fact the conditions $g \cdot x \in V_\xi^\perp \bmod p$ and $g^t \cdot \xi \in V_x^\perp \bmod p$ are equivalent.
\begin{lemma}
 For any $x, \xi \in V(\zed/p^2\zed)$ and $g \in G(\zed/p^2\zed)$, $g \cdot x \not \in V_\xi^\perp$ if and only if $g^t \cdot \xi \in V_x^\perp$.
\end{lemma}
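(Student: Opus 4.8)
The plan is to prove the equivalence of the two membership conditions. As worded, the displayed line carries a stray negation on its left side; it directly contradicts the sentence immediately preceding the lemma, which asserts that the two memberships are equivalent. The statement I would actually establish is therefore
\[
 g \cdot x \in V_\xi^\perp \bmod p \quad\Longleftrightarrow\quad g^t \cdot \xi \in V_x^\perp \bmod p .
\]
The cleanest route is to show that each side is equivalent to the single condition $\xi \in V_{g \cdot x}^\perp \bmod p$, and then compare.

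The first ingredient is the adjoint relation already recorded in the excerpt, $[g \cdot a, b] = [a, g^t \cdot b]$ with $g^t = (g_3^t, g_2^t)$, together with its infinitesimal form. Differentiating the group identity along $g = I + t m$ for $m = (m_3, m_2) \in M_3(\zed/p\zed) \times M_2(\zed/p\zed)$ produces $[m \cdot a, b] = [a, m^t \cdot b]$, where $m \cdot a$ denotes the derivative of the action appearing in the definition of the annihilator subspace and $m^t = (m_3^t, m_2^t)$. The point I would exploit is that $m \mapsto m^t$ is a bijection of the Lie algebra $M_3(\zed/p\zed) \times M_2(\zed/p\zed)$, so any condition quantified over all $m$ is unchanged when $m$ is replaced by $m^t$.

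The second ingredient is the equivariance $V_{g \cdot x} = (g \bmod p) \cdot V_x$. I would derive this from the conjugation identity $(I + pM)(g \cdot x) = g \cdot \big((I + p\, g^{-1} M g) \cdot x\big)$ together with the fact that $M \mapsto g^{-1} M g$ permutes $M_3(\zed/p\zed) \times M_2(\zed/p\zed)$; since the action is linear in the degree-$p$ part, reducing the resulting $p V_{g \cdot x}$ modulo $p$ identifies it with $g \cdot V_x$, the relevant $g$ depending only on $g \bmod p$.

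With these in hand the two sides unwind formally. For the right-hand condition, $g^t \cdot \xi \in V_x^\perp$ means $[g^t \cdot \xi, w] = 0$ for every $w \in V_x$; moving $g$ across by the group adjoint gives $[\xi, g \cdot w]$, and as $w$ runs over $V_x$ the vectors $g \cdot w$ run over $V_{g \cdot x}$, so the condition is exactly $\xi \in V_{g \cdot x}^\perp$. For the left-hand condition, $g \cdot x \in V_\xi^\perp$ means $[g \cdot x, m \cdot \xi] = 0$ for all $m$; the infinitesimal adjoint rewrites this as $[m^t \cdot (g \cdot x), \xi] = 0$, and since $m^t$ ranges over the whole Lie algebra this says $[w, \xi] = 0$ for every $w \in V_{g \cdot x}$, i.e. again $\xi \in V_{g \cdot x}^\perp$. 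Thus both conditions coincide with $\xi \in V_{g \cdot x}^\perp$, which yields the equivalence. I expect the only real obstacle to be the bookkeeping in the equivariance step $V_{g \cdot x} = g \cdot V_x$, where one must confirm that the mod-$p$ reduction is compatible with conjugation on the Lie algebra; once that is settled, the remainder is a direct consequence of the adjoint identity and the transpose-invariance of $M_3(\zed/p\zed) \times M_2(\zed/p\zed)$.
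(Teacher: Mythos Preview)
Your proof is correct, including your diagnosis of the stray negation in the displayed statement. Both your argument and the paper's rely on the same two facts: the adjoint relation $[g\cdot a,b]=[a,g^t\cdot b]$ and the bijectivity of conjugation (respectively transposition) on $M_3(\bF_p)\times M_2(\bF_p)$. The paper, however, does not isolate the intermediate condition $\xi\in V_{g\cdot x}^\perp$ or prove the equivariance $V_{g\cdot x}=g\cdot V_x$ separately; it runs a single chain of equivalences, unwinding $g\cdot x\in V_\xi^\perp$ as $[(I+pg_1)g\cdot x,\xi]\equiv[x,g^t\cdot\xi]\bmod p^2$ for all $g_1$, conjugating $(I+pg_1)g=g(I+pg^{-1}g_1g)$, and then moving $g$ across by the adjoint to reach $[(I+pg_1')\cdot x,g^t\cdot\xi]\equiv[x,g^t\cdot\xi]$ for all $g_1'$. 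Your route is a bit more modular---naming the tangent-space equivariance and the Lie-algebra adjoint as standalone ingredients---whereas the paper's is more compressed; the underlying content is the same.
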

\begin{proof}
 One has
 \begin{align}
  g\cdot x \in V_\xi^\perp & \Leftrightarrow \forall g_1 \in M_3 \times M_2, \; [g\cdot x, (I + p g_1)^t \cdot \xi] \equiv [g \cdot x, \xi] \bmod p^2\\
  &\notag \Leftrightarrow \forall g_1 \in M_3 \times M_2, \; [(I + p g_1)g\cdot x,  \xi] \equiv [x, g^t \cdot \xi] \bmod p^2\\
  &\notag \Leftrightarrow \forall g_1 \in M_3 \times M_2, \;  [g(I + p g^{-1}g_1g)\cdot x,  \xi] \equiv [x, g^t \cdot \xi] \bmod p^2\\
  &\notag \Leftrightarrow \forall g_1 \in M_3 \times M_2, \;  [(I + p g^{-1}g_1g)\cdot x,  g^t\cdot \xi] \equiv [x, g^t \cdot \xi] \bmod p^2\\
  &\notag \Leftrightarrow g^t \cdot \xi \in V_x^\perp.
 \end{align}

\end{proof}
In addition to the notion of an annihilator subspace, a second useful notion in evaluating the exponential sums is that of an action set acting on a pair of forms.

\begin{definition}Given forms $x, \xi \in V(\zed/p\zed)$, define the \emph{action set} 
\begin{equation}
 G_{x, \xi}  = \{g \in G(\zed/p\zed):  g^t \cdot \xi \in V_x^\perp \bmod p\}.
\end{equation}
\end{definition}
Note that, by Lemma \ref{exponential_sum_lemma_1}, the condition $G_{x, \xi} \neq \emptyset$ is a necessary condition for $S_{p^2}(x,\xi) \neq 0$, a fact which restricts consideration to a small number of $\mod p$ orbit pairings.
\begin{lemma}\label{action_set_lemma}
 The following table lists all pairs $(\sO_x, \sO_\xi)$ of non-zero orbits modulo $p$ such that $\sO_x$ contains both maximal and non-maximal elements, and such that $x \in \sO_x$, $\xi \in \sO_\xi$ have $G_{x, \xi} \neq \emptyset$.
 \begin{equation}
  \begin{array}{|l|l|}
   \hline
   \sO_x & \sO_\xi\\
   \hline
   \sO_{1^4} & \sO_{D1^2}, \sO_{D11}, \sO_{1^4}\\
   \hline
   \sO_{1^31} & \sO_{D1^2}, \sO_{Cs}\\
   \hline
   \sO_{1^21^2} & \sO_{D1^2}, \sO_{D11}, \sO_{D2}\\
   \hline
   \sO_{2^2} & \sO_{D11}, \sO_{D2}\\
   \hline
   \sO_{1^211} & \sO_{D1^2}\\
   \hline
   \sO_{1^22} & \sO_{D1^2}\\
   \hline
  \end{array}
 \end{equation}

\end{lemma}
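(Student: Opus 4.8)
The plan is to translate the non-emptiness of $G_{x,\xi}$ into an incidence statement between orbits and annihilator subspaces, and then reduce to a finite check. Since $(g_3,g_2)\mapsto(g_3^t,g_2^t)$ is a bijection of $G(\bF_p)$, the set $\{g^t\cdot\xi:g\in G(\bF_p)\}$ is exactly the orbit $\sO_\xi\bmod p$, so $G_{x,\xi}\neq\emptyset$ if and only if $\sO_\xi\cap V_x^\perp\neq\emptyset$ as subsets of $V(\bF_p)$. One checks directly from the definition that $V_{g\cdot x}=g\cdot V_x$, and then from the invariance $[g\cdot a,(g^{-1})^t\cdot b]=[a,b]$ that $V_{g\cdot x}^\perp=(g^{-1})^t\cdot V_x^\perp$; hence whether $\sO_\xi$ meets $V_x^\perp$ depends only on the pair of $\bmod p$ orbits $(\sO_x,\sO_\xi)$. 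By Lemma \ref{mod_p_density_lemma} the $\bmod p$ orbits containing both maximal and non-maximal elements are $\sO_{1^4},\sO_{1^31},\sO_{1^21^2},\sO_{2^2},\sO_{1^211},\sO_{1^22}$, so it suffices to fix the representative $x$ of each of these six orbits and determine all non-zero orbits $\sO_\xi$ with $\sO_\xi\cap V_x^\perp\neq\emptyset$.

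For each such $x$ I would read off $V_x$ from the annihilator table of Section \ref{annihilator_section} and compute $V_x^\perp$ by elementary linear algebra; it is a subspace of dimension $12-\dim V_x\in\{1,2,3\}$, and in every case it is spanned by pairs supported in a few coordinates: $\{(au^2,0)\}$ for $\sO_{1^211}$ and $\sO_{1^22}$; $\{(au^2+dv^2,0)\}$ for $\sO_{1^21^2}$; $\{(\ell t\,u^2+b\,uv+t\,v^2,0)\}$ for $\sO_{2^2}$; $\{(s\,u^2-2r\,uv,\;r\,u^2)\}$ for $\sO_{1^31}$; and $\{(a\,u^2+b\,uv+t\,uw-\tfrac t2 v^2,\;-t\,u^2)\}$ for $\sO_{1^4}$.

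The heart of the argument is classifying the $G(\bF_p)$-orbits of the elements of each $V_x^\perp$. When the second form vanishes the first is a binary quadratic in $u,v$, and by Lemma \ref{sym_2_action_lemma} its orbit under $\GL_3\times\GL_2$ is that of $0$, $u^2$, $uv$, or $u^2-\ell v^2$, i.e. $\sO_0$, $\sO_{D1^2}$, $\sO_{D11}$, or $\sO_{D2}$; which of these occur in a given two-parameter family is decided by the values taken by the discriminant. For $\sO_{1^21^2}$ the discriminant of $au^2+dv^2$ is an arbitrary element of $\bF_p$, so $\sO_{D1^2},\sO_{D11},\sO_{D2}$ all appear; for $\sO_{2^2}$ the discriminant $b^2-4\ell t^2$ is never $0$ because $4\ell$ is a non-residue, so $\sO_{D1^2}$ is excluded, while both $\sO_{D11}$ and $\sO_{D2}$ occur, the latter because $|\{x^2-4\ell:x\in\bF_p\}|=\tfrac{p+1}2>\tfrac{p-1}2$ forces some $b$ with $b^2-4\ell t^2$ a non-residue. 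For $\sO_{1^31}$, when $r\neq0$ every form in the pencil is divisible by $u$, so the base locus contains the line $\{u=0\}$ and the orbit lies in the ``$C$'' family; the presence of the rank-one member $r\,u^2$ forces the orbit to be $\sO_{Cs}$ rather than $\sO_{Cns}$ (whose pencil has no rank-one member), while $r=0$ gives only $\sO_{D1^2}$ or $\sO_0$. For $\sO_{1^4}$, when $t=0$ one again gets only $\sO_{D1^2},\sO_{D11},\sO_0$; when $t\neq0$ the scheme-theoretic intersection of the two conics is the single point $(0{:}0{:}1)$ with multiplicity $4$, and the first form is already nonsingular (its determinant is $t^3/8\neq0$), so the orbit — which by the proof of Lemma \ref{maximal_ring_lemma} must be one of $\sO_{1^4},\sO_{B11},\sO_{B2}$ — is $\sO_{1^4}$, since the pencils of $\sO_{B11}$ and $\sO_{B2}$ lie in the rank-$\le2$ locus. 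Discarding $\sO_0$ from each list reproduces the table, proving the lemma.

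The step I expect to be the main obstacle is this last classification, precisely in the two situations where several distinct orbits share the same degree-$4$ intersection scheme — $\sO_{Cs}$ versus $\sO_{Cns}$, and $\sO_{1^4}$ versus $\sO_{B11}$ versus $\sO_{B2}$. There the intersection scheme is not a complete invariant and one must bring in finer, $G$-invariant data, namely the ranks occurring among the members of the pencil (whether it contains a double line, or a nonsingular conic). A secondary subtlety is the elementary residue bookkeeping needed to show that certain binary-form orbits are genuinely \emph{missed} inside a two-parameter family, most notably that $\sO_{D1^2}$ does not occur for $\sO_{2^2}$.
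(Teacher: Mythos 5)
Your proposal is correct, and its backbone is the same as the paper's: you reduce $G_{x,\xi}\neq\emptyset$ to $\sO_\xi\cap V_x^\perp\neq\emptyset$, compute $V_x^\perp$ for the six orbits singled out by Lemma \ref{mod_p_density_lemma} (your perpendicular spaces agree with the paper's, e.g. $(c_1u^2+2c_2uv,\,-c_2u^2)$ for $\sO_{1^31}$ and $(c_1u^2+c_2uv+c_3(v^2-2uw),\,2c_3u^2)$ for $\sO_{1^4}$ after renaming parameters), and then classify which nonzero orbits meet these spaces; your discriminant and residue-counting arguments for the $D$-type conclusions, including the exclusion of $\sO_{D1^2}$ for $\sO_{2^2}$, parallel the paper's. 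The only divergence is how the two nontrivial memberships are decided: the paper identifies $\sO_{Cs}$ and $\sO_{1^4}$ by explicit reduction (add a multiple of the second form to clear $c_1$, then substitute $w\mapsto w+av$ to clear $c_2$), whereas you use $G$-invariants of the pencil (a common linear factor together with a rank-one member; a single intersection point of multiplicity $4$ together with a rank-three member). Your route is valid, but it quietly relies on two facts the explicit reductions avoid: that among pairs spanning a two-dimensional pencil only $\sO_{Cs}$ and $\sO_{Cns}$ have all members sharing a linear factor (a short check against the orbit table), and that the multiplicity-$4$ locus is exactly $\sO_{1^4}\sqcup\sO_{B11}\sqcup\sO_{B2}$, which rests on the counting from \cite{B04b} invoked in the proof of Lemma \ref{maximal_ring_lemma}. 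Both facts are true and easy to supply, so there is no gap, but a final write-up should either state them or fall back on the one-line normalizations, which keep the argument self-contained.
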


\begin{proof}
 The annihilator subspaces $V_{1^22}, V_{1^211}, V_{2^2}$ and $V_{1^21^2}$ associated to the standard representatives in the corresponding orbits annihilate the second quadratic form, so these orbits may be paired only with frequencies $\xi$ from orbits of type $D$. In the first quadratic form, each of these spaces annihilate all coefficients associated to the $w$ variable, and hence $G_{x, \xi} = \emptyset$ unless $\xi$ is equivalent to a $D$ orbit with a representative that depends only on $u$ and $v$.  This eliminates $\sO_{Dns}$. The spaces $V_{1^22}^\perp, V_{1^211}^\perp$ are one dimensional and contain only a double line, so $\sO_{1^22}$ and $\sO_{1^211}$ may be paired only with $\sO_{D1^2}$. The orbit $\sO_{D1^2}$ may not be paired with $\sO_{2^2}$ since a double line has coefficients on $u^2$, $v^2$ which are related by a square.  Since a quadratic form $\ell u^2 + a uv + v^2$ may be made reducible or irreducible by adjusting $a$, both $\sO_{D11}$ and $\sO_{D2}$ may be paired with $\sO_{2^2}$.  The pairs $(u^2,0)$, $(u^2-v^2,0)$ and $(u^2-\ell v^2,0)$ show that all $D$ orbits besides $\sO_{Dns}$ may be paired with $\sO_{1^21^2}$.
 
 When $x$ is the standard representative for $\sO_{1^31}$, $G_{x, \xi} \neq \emptyset$ implies that $\sO_\xi$  has a representative of the form $(c_1 u^2 + 2c_2 uv, -c_2 u^2)$, with $c_1, c_2$ scalars.  Using the $G$ action, one may have $c_1 \neq 0$ ($\sO_{D1^2}$) or $c_2 \neq 0$ ($\sO_{Cs}$) but not both, since if $c_2 \neq 0$ then a multiple of the second form may be added to the first to force $c_1 = 0$.  Thus these are all of the pairings with $G_{x, \xi} \neq \emptyset$.
 
When $x$ is the standard representative for $\sO_{1^4}$, $G_{x, \xi} \neq \emptyset$ implies that $\sO_{\xi}$ has a representative of the form 
 \begin{equation}
  (c_1 u^2 + c_2 uv + c_3(v^2 - 2uw), 2c_3 u^2)
 \end{equation}
 with $c_1, c_2, c_3$ scalars.
When $c_3 = 0$ one may obtain $\sO_{D1^2}$ and $\sO_{D11}$ by taking either $c_1$ or $c_2$ non-zero.  When $c_3 \neq 0$ one may impose $c_1 = 0$ by adding a multiple of the second form to the first, and $c_2 = 0$ by replacing $w$ with $w + a v$ for some scalar $a$.  The $c_3 \neq 0$ case obtains $\xi \in \sO_{1^4}$. 
\end{proof}
 
The action sets for each pair contained in Lemma \ref{action_set_lemma} are listed in the following table, with the proofs given in Appendix \ref{action_set_appendix}.
{\tiny
\begin{equation}\label{action_set_table}
 \begin{array}{|l|l|l|l|l|}
  \hline
  (\sO_x, \sO_\xi) &x_0 & \xi_0 & G_{x, \xi}^t & |G_{x, \xi}|\\
  \hline
 (\sO_{1^211}, \sO_{D1^2})& \begin{pmatrix} 0 & 0 &0\\ &1 &0\\ &&-1\end{pmatrix}\begin{pmatrix} 0 & 0 & \frac{1}{2}\\ &0 &0\\ &&0\end{pmatrix} & \begin{pmatrix} 1 & 0 &0\\ &0&0\\&&0\end{pmatrix}\begin{pmatrix} 0&0&0\\ &0&0\\&&0\end{pmatrix} & \begin{pmatrix} * & * & *\\ & * & *\\ & * & *\end{pmatrix}\begin{pmatrix} * & *\\ & *\end{pmatrix}& (p-1)^5p^4(p+1)\\
  \hline
 (\sO_{1^22}, \sO_{D1^2}) & \begin{pmatrix} 0 &0 &0\\ &1&0\\&&-\ell\end{pmatrix}\begin{pmatrix}0&0&\frac{1}{2}\\ &0&0\\&&0\end{pmatrix} &\begin{pmatrix} 1 &0 &0\\ &0&0\\&&0\end{pmatrix}\begin{pmatrix} 0&0&0\\&0&0\\&&0\end{pmatrix} & \begin{pmatrix} * & * & *\\ & * & *\\ & * & *\end{pmatrix}\begin{pmatrix} * & *\\ & *\end{pmatrix} & (p-1)^5p^4(p+1)\\
 \hline
 (\sO_{2^2}, \sO_{D11}) & \begin{pmatrix} 0 & 0 &0\\ & 0 & 0\\ & & 1\end{pmatrix}\begin{pmatrix} 1 & 0 &0\\ & - \ell & 0 \\ & & 0\end{pmatrix} & \begin{pmatrix}0 & 1 &0\\ & 0 & 0 \\ && 0\end{pmatrix}\begin{pmatrix} 0& 0 & 0\\ &0 &0\\ && 0\end{pmatrix} & \begin{array}{l} \begin{pmatrix} a & c\lambda \ell  & * \\ c & a\lambda  & * \\ & & *\end{pmatrix} \begin{pmatrix} * & *\\ & *\end{pmatrix}, \\ \lambda \in \bF_p^\times,\\ (a,c) \in \bF_p^2 \setminus \{(0,0)\}\end{array}& (p-1)^5p^3(p+1)\\
 \hline
 (\sO_{2^2}, \sO_{D2}) & \begin{pmatrix} 0 &0 &0\\ &0 &0\\ & &1\end{pmatrix}\begin{pmatrix} 1 & 0  & 0\\ &-\ell & 0\\ &&0\end{pmatrix}& \begin{array}{l}\begin{pmatrix} \ell & \beta &0\\ &1 &0\\ &&0\end{pmatrix} \begin{pmatrix} 0 &0 &0\\&0&0\\&&0\end{pmatrix}\\ \ell u^2 + 2\beta uv + v^2 \; \text{irred.}\end{array} & \begin{array}{l} \begin{pmatrix} a & b & * \\ c&d&*\\ &&*\end{pmatrix} \begin{pmatrix} * & * \\ & *\end{pmatrix},\\  \Bigl [  (\ell u^2 + 2\beta uv + v^2),\\\begin{pmatrix} a&c\\ b&d\end{pmatrix} \cdot(u^2 - \ell v^2)\Bigr ] = 0 \end{array}& (p-1)^4p^3(p+1)^2\\
 \hline
 (\sO_{1^21^2}, \sO_{D1^2}) & \begin{pmatrix} 0 & 0 &0\\&0&0\\&&1\end{pmatrix}\begin{pmatrix}0 & \frac{1}{2} &0\\&0&0\\&&0\end{pmatrix} & \begin{pmatrix} 1 &0 &0\\ &0&0\\&&0\end{pmatrix}\begin{pmatrix} 0&0&0\\&0&0\\&&0\end{pmatrix} & \begin{array}{l} \begin{pmatrix} * & * & *\\ & * & *\\ & * & *\end{pmatrix} \begin{pmatrix} * & *\\ & *\end{pmatrix}\\ \sqcup \begin{pmatrix} & * & *\\ * & * & *\\ & * & *\end{pmatrix} \begin{pmatrix} * & *\\ & *\end{pmatrix} \end{array} &2(p-1)^5p^4(p+1)\\
 \hline
 (\sO_{1^21^2}, \sO_{D11}) & \begin{pmatrix} 0 & 0 & 0\\ &0 &0\\ &&1\end{pmatrix} \begin{pmatrix} 0&\frac{1}{2} &0\\&0&0\\ &&0\end{pmatrix} & \begin{pmatrix} 1 &0&0\\ &-1 &0\\ &&0\end{pmatrix} \begin{pmatrix} 0 &0&0\\&0&0\\&&0\end{pmatrix} & \begin{array}{l}\begin{pmatrix} * & * & * \\ * & * & * \\ && * \end{pmatrix} \begin{pmatrix} * & * \\ & *\end{pmatrix}\\ u+v \mapsto \alpha u +\beta v\\ u-v \mapsto \lambda(\alpha u - \beta v)\end{array} & (p-1)^6p^3\\
 \hline
 (\sO_{1^21^2}, \sO_{D2}) & \begin{pmatrix} 0 &0&0\\ &0 &0\\ &&1\end{pmatrix} \begin{pmatrix} 0 & \frac{1}{2} &0\\ &0&0\\&&0\end{pmatrix} & \begin{pmatrix} -\ell &0&0\\&1 &0\\&&0\end{pmatrix}\begin{pmatrix} 0&0&0\\&0&0\\&&0\end{pmatrix} & \begin{array}{l}\begin{pmatrix} a & c\ell & *\\ c\lambda   & a\lambda & *\\ & & *\end{pmatrix} \begin{pmatrix} * & *\\ & *\end{pmatrix},\\ \lambda \in \bF_p^\times, \\ (a,c) \in \bF_p^2 \setminus \{(0,0)\}\end{array}& (p-1)^5p^3(p+1)\\
 \hline
 (\sO_{1^31}, \sO_{D1^2}) & \begin{pmatrix} 0 &0 &0\\ &0&\frac{1}{2}\\ &&0\end{pmatrix}\begin{pmatrix}0&0 & \frac{1}{2}\\ &1&0\\&&0\end{pmatrix} & \begin{pmatrix} 1 &0 &0\\ &0 &0\\ &&0\end{pmatrix}\begin{pmatrix} 0 & 0 &0\\ &0&0\\&&0\end{pmatrix} & \begin{pmatrix} * & * & *\\ & * & *\\ & * & *\end{pmatrix}\begin{pmatrix} * & * \\ & *\end{pmatrix}& (p-1)^5p^4(p+1)\\
 \hline
 (\sO_{1^31}, \sO_{Cs}) & \begin{pmatrix} 0 & 0 &0 \\ & 0 &\frac{1}{2} \\ &&0\end{pmatrix}\begin{pmatrix} 0 & 0 & \frac{1}{2}\\ & 1 &0\\ &&0\end{pmatrix} & \begin{pmatrix} 0 & -1 &0\\ &0&0\\&&0\end{pmatrix}\begin{pmatrix} 1& 0 &0\\ &0&0\\&&0\end{pmatrix} & \begin{pmatrix} a & * & *\\ & b & *\\ && *\end{pmatrix}\begin{pmatrix} c & *\\ & \frac{bc}{a}\end{pmatrix}&(p-1)^4p^4 \\
 \hline
 (\sO_{1^4}, \sO_{D1^2}) & \begin{pmatrix} 0 & 0 &0\\ &0 &0\\ &&1\end{pmatrix}\begin{pmatrix} 0 & 0 &\frac{1}{2}\\ &1&0\\&&0\end{pmatrix} & \begin{pmatrix} 1 &0 &0\\ &0&0\\&&0\end{pmatrix}\begin{pmatrix} 0&0&0\\&0&0\\&&0\end{pmatrix} & \begin{pmatrix} * & * & *\\ & * & * \\ & * & *\end{pmatrix}\begin{pmatrix} * & *\\ & *\end{pmatrix}& (p-1)^5p^4(p+1)\\
 \hline
 (\sO_{1^4}, \sO_{D11}) & \begin{pmatrix} 0 & 0 &0\\ & 0 &0\\ &&1\end{pmatrix}\begin{pmatrix} 0 & 0 & \frac{1}{2}\\ &1 &0\\ &&0\end{pmatrix} & \begin{pmatrix} 0 & 1 & 0\\ &0&0\\&&0\end{pmatrix}\begin{pmatrix} 0 & 0 &0\\ &0&0\\&&0\end{pmatrix} & \begin{array}{l} \begin{pmatrix} * & * & *\\ & * & *\\ &  & *\end{pmatrix}\begin{pmatrix} * & *\\ & * \end{pmatrix}\\\sqcup \begin{pmatrix} * & * & *\\ * & & *\\ && *\end{pmatrix}\begin{pmatrix} * & *\\ & *\end{pmatrix}\end{array}& 2(p-1)^5p^4\\
 \hline
 (\sO_{1^4}, \sO_{1^4}) & \begin{pmatrix} 0 & 0 & 0\\ &0&0\\&&1\end{pmatrix}\begin{pmatrix}0 & 0&\frac{1}{2}\\ &1&0\\&&0\end{pmatrix} & \begin{pmatrix} 0 & 0 & -1\\ &1&0\\&&0\end{pmatrix}\begin{pmatrix}2 &0&0\\&0&0\\&&0\end{pmatrix} &\begin{array}{l} \begin{pmatrix} a & * & *\\ & b & *\\ &&c\end{pmatrix} \begin{pmatrix} d & *\\ & e \end{pmatrix},\\ b^2=ac, b^2 d = a^2 e\end{array}& (p-1)^3p^4\\ \hline
 \end{array}
\end{equation}
}
The introduction of the action set permits a simplified representation of the orbital exponential sums.
\begin{lemma}\label{orb_exp_sum_lemma}
 Let $x = x_0 + px_1$, $\xi = \xi_0 + p\xi_1$ from the above table with $\|x_0\|_\infty, \|\xi_0\|_\infty \leq \frac{p}{2}$.  The orbital exponential sum $S_{p^2}(x,\xi)$ has evaluation $S_{p^2}(x, \xi) = p^{13}\sS(x,\xi)$ with
 \begin{equation}
  \sS(x, \xi)=\sum_{g \in G_{x, \xi}^t} e_p\left([x_1, g\cdot \xi_0] + [x_0, g\cdot \xi_1] \right).
 \end{equation}

\end{lemma}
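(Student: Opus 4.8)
The plan is to push the summation from $G(\zed/p^2\zed)$ down to $G(\bF_p)$ using the reduction map, and then to linearize the resulting phase in the $p$-adic coordinates.

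Write $K = \ker\left(G(\zed/p^2\zed)\to G(\bF_p)\right) = \left(I_3 + pM_3(\bF_p)\right)\times\left(I_2 + pM_2(\bF_p)\right)$, so $|K| = p^9 p^4 = p^{13}$. By Lemma \ref{exponential_sum_lemma_1}, only $g$ with $g\cdot x\in V_\xi^\perp\bmod p$ contribute to $S_{p^2}(x,\xi)$; since $V_\xi$ and this condition depend only on reductions modulo $p$, and $g\cdot x\in V_\xi^\perp$ is equivalent to $g^t\cdot\xi\in V_x^\perp$, the contributing $g$ are exactly those with $\bar g\in G_{x,\xi}$. For such $g$ and any $h\in M_3\times M_2$, replacing $g$ by $(I + ph)g$ changes $[g\cdot x,\xi]$ to $[g\cdot x,(I + ph^t)\cdot\xi] = [g\cdot x,\xi] + p[\bar g\cdot x, w]$ with $w\in V_\xi$, and the correction vanishes modulo $p^2$ because $\bar g\cdot x\in V_\xi^\perp\bmod p$. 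Hence the summand is constant on each fibre of $G(\zed/p^2\zed)\to G(\bF_p)$ lying over $G_{x,\xi}$, and collecting the $p^{13}$ lifts of each $\bar g$ gives $S_{p^2}(x,\xi) = p^{13}\sum_{\bar g\in G_{x,\xi}} e_{p^2}\left([\tilde g\cdot x,\xi]\right)$ for an arbitrary lift $\tilde g$ of $\bar g$.

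Next I would substitute $x = x_0 + px_1$ and $\xi = \xi_0 + p\xi_1$. As the action is $\zed$-linear in the vector and the pairing is bilinear, $[\tilde g\cdot x,\xi] \equiv [\tilde g\cdot x_0,\xi_0] + p\left([\bar g\cdot x_0,\xi_1] + [\bar g\cdot x_1,\xi_0]\right)\bmod p^2$, the last two terms depending only on data modulo $p$. The crux is that the constant term $[\tilde g\cdot x_0,\xi_0]$ vanishes modulo $p^2$ for the representatives $x_0,\xi_0$ of table \eqref{action_set_table} and every $\bar g\in G_{x,\xi}$. Modulo $p$ this is soft: $x_0$ lies in its own tangent space $V_{x_0}$ (the element $g_2 = (1+p)I_2$ sends $x_0$ to $x_0 + px_0$), while $\tilde g^t\cdot\xi_0\in V_{x_0}^\perp\bmod p$ since $\bar g\in G_{x,\xi}$, so $[\tilde g\cdot x_0,\xi_0] = [x_0,\tilde g^t\cdot\xi_0]\equiv 0\bmod p$. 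To obtain this modulo $p^2$ one observes that the polynomial $P(g) = [g\cdot x_0,\xi_0]$ vanishes to second order along $G_{x,\xi}$ inside $G$: the derivative of $t\mapsto P(g(1 + t\epsilon))$ at $t = 0$ equals $[w_\epsilon, g^t\cdot\xi_0]$, where $w_\epsilon$ is the linearized action of $\epsilon\in M_3\times M_2$ on $x_0$ and hence lies in $V_{x_0}$; this is $0$ on $G_{x,\xi}$, so $P$ lies in the square of the ideal cutting out $G_{x,\xi}$, and evaluating at a lift $\tilde g$ of $\bar g\in G_{x,\xi}$, where the generators of that ideal are already $\equiv 0\bmod p$, forces $P(\tilde g)\equiv 0\bmod p^2$. (Alternatively this can be read off case by case from the explicit shapes of $G_{x,\xi}$ in the table, where $P$ restricted to $G_{x,\xi}$ is manifestly a sum of monomials each carrying two factors that are $\equiv 0\bmod p$.)

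With the constant term gone, $S_{p^2}(x,\xi) = p^{13}\sum_{\bar g\in G_{x,\xi}} e_p\left([\bar g\cdot x_0,\xi_1] + [\bar g\cdot x_1,\xi_0]\right)$; applying the adjunction $[\bar g\cdot a,b] = [a,\bar g^t\cdot b]$ and reindexing by $h = \bar g^t$, which maps $G_{x,\xi}$ bijectively onto $G_{x,\xi}^t$, produces $p^{13}\sS(x,\xi)$, as claimed. I expect the main obstacle to be the mod $p^2$ vanishing of $[\tilde g\cdot x_0,\xi_0]$: the congruence modulo $p$ is general and automatic, and promoting it to modulo $p^2$ is precisely the point where the particular choice of orbit representatives and the structure of the action sets must be used, whether through the second-order vanishing above or a direct verification against table \eqref{action_set_table}.
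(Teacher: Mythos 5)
Your route is the paper's: restrict the sum via Lemma \ref{exponential_sum_lemma_1} to the fibres over the action set, expand the phase in $x_0+px_1$, $\xi_0+p\xi_1$, kill the constant term $[x_0,g\cdot\xi_0]$, and count $p^{13}$ lifts per residue class; the paper's proof does exactly this, citing table (\ref{action_set_table}) for the identity $[x_0,g\cdot\xi_0]=0$ on $G_{x,\xi}^t$. The gap is in your main mechanism for the constant-term vanishing modulo $p^2$. Knowing that $P(g)=[g\cdot x_0,\xi_0]$ and its derivatives vanish modulo $p$ at the $\bF_p$-points of $G_{x,\xi}$ cannot by itself control $P$ modulo $p^2$ at an arbitrary lift: the constant polynomial $p$ satisfies both hypotheses and fails the conclusion. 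The step ``$P$ lies in the square of the ideal, whose generators are $\equiv 0 \bmod p$ at $\tilde g$, hence $P(\tilde g)\equiv 0\bmod p^2$'' would require an integral expression $P=\sum_i f_ih_i$ with each $f_i,h_i$ vanishing mod $p$ on $G_{x,\xi}$; mod-$p$ vanishing of $P$ and $dP$ only constrains $P\bmod p$, so any $p\cdot Q$ ambiguity survives (and the inference to the ideal square also tacitly assumes $G_{x,\xi}$ is smooth). What your derivative computation does prove is the useful statement that $[\tilde g\cdot x_0,\xi_0]\bmod p^2$ is independent of the lift $\tilde g$ of $\bar g$: changing the lift alters the value by $p[\delta_m(x_0),\bar g^t\cdot\xi_0]$ with $\delta_m(x_0)\in V_{x_0}$ and $\bar g^t\cdot\xi_0\in V_{x_0}^\perp$. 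To finish you must still exhibit, for each $\bar g\in G_{x,\xi}$, one lift at which the pairing vanishes modulo $p^2$, and this is precisely where the explicit parametrizations of (\ref{action_set_table}) enter: lift the forced-zero entries to $0$ and solve the unit-coefficient relations exactly (e.g.\ $c=b^2/a$, $e=cd/a$ in the $(\sO_{1^4},\sO_{1^4})$ row), after which the pairing is identically zero, as the paper checks.

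One caution about your fallback: the claim that at such lifts the pairing is ``manifestly a sum of monomials each carrying two factors $\equiv 0\bmod p$'' is accurate for the pairs whose action sets are cut out by entry vanishing alone, but not for the constrained ones. For $(\sO_{1^31},\sO_{Cs})$, for instance, the coefficient of the $(3,1)$ entry of the $\GL_3$ factor in $[x_0,g\cdot\xi_0]$ is $a\delta-bc$, where $\delta$ is the $(2,2)$ entry of the $\GL_2$ factor; this is $\equiv 0\bmod p$ only because of the table's relation $\delta\equiv bc/a$, not because two matrix entries vanish. So the case-by-case verification is genuinely needed (as in the paper), and it must invoke the defining relations of the action sets, not merely their zero patterns. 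With that step repaired, the rest of your argument (the restriction, constancy on fibres, the factor $p^{13}$, and the final transposition onto $G_{x,\xi}^t$) is correct and coincides with the paper's proof.
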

\begin{proof}
 By examination of (\ref{action_set_table}), for $g \in G_{x, \xi}^t$, $[x_0, g \cdot \xi_0] = 0$. Hence
 \begin{align}
  S_{p^2}(x,\xi) &= \sum_{g \in G(\zed/p^2\zed), g\bmod p \in G_{x, \xi}^t} e_{p^2}\left(\left[x_0 + px_1 ,g \cdot (\xi_0 + p \xi_1) \right] \right)\\
  \notag &= \sum_{g \in G(\zed/p^2\zed), g\bmod p \in G_{x, \xi}^t} e_{p}\left(\left[x_0  ,g \cdot  \xi_1 \right]+ \left[ x_1, g \cdot \xi_0\right] \right)\\
  \notag &= p^{13} \sum_{g \in G_{x, \xi}^t \bmod p} e_{p}\left(\left[x_0  ,g \cdot  \xi_1 \right]+ \left[ x_1, g \cdot \xi_0\right] \right)\\
  \notag &= p^{13} \sS(x, \xi),
 \end{align}
 which proves the claim.
\end{proof}

Lemmas \ref{full_mod_p_orbit_lemma}, \ref{action_set_lemma} and \ref{orb_exp_sum_lemma} demonstrate that in order to obtain $\widehat{\one_{\max}}$ and $\widehat{\one_{\nonmax}}$ at $\xi \not \equiv 0 \bmod p$ it suffices to consider orbital exponential sums $S(x, \xi)$ in which $\sO_x \bmod p$ contains both maximal and non-maximal elements, and in which $\sO_\xi$ appears paired with $\sO_x$ in Lemma \ref{action_set_lemma}.  In particular, the maximal or non-maximal $\mod p^2$ orbits above $\sO_{1^4}, \sO_{1^31}, \sO_{1^21^2}, \sO_{2^2}, \sO_{1^211}$ and $\sO_{1^22} \bmod p$ need to be determined, and all $\mod p^2$ orbits need to be determined above $\sO_{D1^2}, \sO_{D11}, \sO_{D2}, \sO_{Cs}$ and $\sO_{1^4} \bmod p$.  This classification is performed in the next  Section.

\section{The quotient group action and $\mod p^2$ orbits}\label{quotient_action_section}
The following Section reduces the determination of the $\mod p^2$ orbits above a $\mod p$ orbit with representative $x$ to the determination of the orbits of a quotient space under the $\mod p$ stabilizer of $x$. Then the necessary $\mod p^2$ orbits are classified. 
The stabilizer groups of the following orbit representatives are needed below. 
\begin{equation}\label{stabilizer_table}
 \begin{array}{|l|l|l|l|}
  \hline
  \text{Orbit} & \text{Repr.} & \text{Stabilizer} & \text{Stab. size}\\
  \hline
  \sO_{D1^2} & (0, w^2) & \begin{array}{l} \begin{pmatrix} a &b &\\ c &d &\\ e&f &t\end{pmatrix}\begin{pmatrix} x &\\ y & \frac{1}{t^2} \end{pmatrix}, \\ x,t \in (\zed/p\zed)^\times, e,f,y \in \zed/p\zed, ad - bc \neq 0 \end{array} &  s(4,4,1,0) \\
  \hline
  \sO_{D11} & (0, vw) & \begin{array}{l} \begin{pmatrix} r &&\\ a & s &\\ b && t\end{pmatrix}\begin{pmatrix} x &\\ y & \frac{1}{st} \end{pmatrix} \sqcup \begin{pmatrix} r && \\ a && s\\ b &t &\end{pmatrix} \begin{pmatrix} x &\\y & \frac{1}{st} \end{pmatrix}, \\ x,r,s,t \in (\zed/p\zed)^\times, a,b,y \in \zed/p\zed \end{array} & 2s(4,3,0,0)\\
  \hline
  \sO_{D2} & (0, v^2 - \ell w^2) & \begin{array}{l} \begin{pmatrix} r && \\ a & c & e\\ b & \pm e \ell & \pm c\end{pmatrix} \begin{pmatrix} x & \\ y & \frac{1}{c^2 - e^2\ell }\end{pmatrix}, \\ r, x \in (\zed/p\zed)^\times, a, b, y \in \zed/p\zed, \\ (c,e) \neq (0,0) \in (\zed/p\zed)^2  \end{array} & 2s(3,3,1,0)\\
  \hline
  \sO_{Cs} & (w^2, vw) & \begin{array}{l}\begin{pmatrix} r &&\\ a &s& \\ b&c&t\end{pmatrix} \begin{pmatrix} \frac{1}{t^2} &\\ \frac{-c}{st^2} & \frac{1}{st} \end{pmatrix},\\ r,s,t \in (\zed/p\zed)^\times, a,b,c \in \zed/p\zed\end{array} & s(3,3,0,0)\\
  \hline
  \sO_{1^4} & (w^2, uw + v^2) & \begin{array}{l} \begin{pmatrix} s & &\\ -\frac{2as}{t} & t & \\ b & a & \frac{t^2}{s} \end{pmatrix} \begin{pmatrix} \frac{s^2}{t^4} &\\ -\left( a^2+\frac{bt^2}{s} \right) \frac{s^2}{t^6} & \frac{1}{t^2} \end{pmatrix}, \\ s, t \in (\zed/p\zed)^\times, a, b \in \zed/p\zed \end{array} & s(2,2,0,0)\\
  \hline
  \sO_{1^31} & (vw, uw + v^2) & \begin{array}{l}\begin{pmatrix} s &&\\ a & t &\\ && \frac{t^2}{s} \end{pmatrix} \begin{pmatrix} \frac{s}{t^3} &\\ \frac{-a}{t^3} & \frac{1}{t^2} \end{pmatrix},\\ s,t \in (\zed/p\zed)^\times, a \in \zed/p\zed \end{array} & s(2,1,0,0)\\
  \hline
  \sO_{1^21^2} & (w^2, uv) & \begin{array}{l} \begin{pmatrix} r &&\\ &s&\\ &&t \end{pmatrix}\begin{pmatrix} \frac{1}{t^2} &\\ & \frac{1}{rs} \end{pmatrix} \sqcup \begin{pmatrix} & r &\\ s &&\\ &&t \end{pmatrix} \begin{pmatrix} \frac{1}{t^2} &\\ & \frac{1}{rs} \end{pmatrix},\\ r,s,t \in (\zed/p\zed)^\times \end{array} &2 s(3,0,0,0)\\
  \hline
  \sO_{2^2} & (w^2, u^2 - \ell v^2) & \begin{array}{l}\begin{pmatrix} c & e &\\ \pm e \ell & \pm c &\\ && s\end{pmatrix} \begin{pmatrix} \frac{1}{s^2} &\\ & \frac{1}{c^2 -  e^2\ell}\end{pmatrix},\\ (c,e) \neq (0,0) \in (\zed/p\zed)^2, s \in (\zed/p\zed)^\times \end{array} & 2 s(2, 0, 1, 0)\\
  \hline
  \sO_{1^211} & (v^2-w^2, uw) & \begin{pmatrix} s && \\ & \pm t & \\ && t \end{pmatrix} \begin{pmatrix} \frac{1}{t^2} &\\ & \frac{1}{s t} \end{pmatrix}, s, t \in (\zed/p\zed)^\times & 2 s(2,0,0,0)
  \\
  \hline
  \sO_{1^2 2} & (v^2 - \ell w^2, uw) & \begin{pmatrix} s &&\\ & \pm t &\\ && t\end{pmatrix} \begin{pmatrix} \frac{1}{t^2} &\\ & \frac{1}{s t} \end{pmatrix}, s, t \in (\zed/p\zed)^\times & 2 s(2,0,0,0)\\
  \hline
 \end{array}
\end{equation}
The stabilizers may be checked by verifying that the claimed stabilizer does fix the form and comparing the size  with the orbit sizes in (\ref{orbit_table}), see \textbf{mod\_p\_orbit\_stabilizer.nb} for the verification.

\begin{lemma}
 For each standard orbit representative $x$ of an orbit \begin{equation}\sO_{D1^2}, \sO_{D11}, \sO_{D2}, \sO_{Cs}, \sO_{1^4}, \sO_{1^31}, \sO_{1^21^2}, \sO_{2^2}, \sO_{1^211}, \sO_{1^22}\end{equation} of $V(\zed/p\zed)$ listed in (\ref{stabilizer_table}), the stabilizer subgroup $G_x$ acts on $V(\zed/p\zed)/V_x$.  The orbits of $V(\zed/p^2\zed)$ under $G(\zed/p^2\zed)$ above $\sO_x \bmod p$ are in bijection with the orbits of $V(\zed/p\zed)/V_x$ under $G_x$. 
\end{lemma}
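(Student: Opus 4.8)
The plan is to exhibit an explicit $G_x$-equivariant bijection between the two orbit sets by unwinding the definition of the annihilator subspace $V_x$. First I would set up the reduction map $\pi\colon V(\zed/p^2\zed) \to V(\zed/p\zed)$ and observe that the fiber of $\pi$ over $x \bmod p$ is the coset $x_0 + pV(\zed/p\zed)$, which is a torsor under $pV(\zed/p\zed) \cong V(\zed/p\zed)$; I will write a general lift as $x_0 + px_1$ with $x_1 \in V(\zed/p\zed)$, where $x_0$ is the fixed standard representative. The key computation is to understand how $G(\zed/p^2\zed)$ moves points within this fiber. Writing $g \in G(\zed/p^2\zed)$ as $g = (I + pg_1)\tilde g$ where $\tilde g$ is a Teichmüller-type lift of $g \bmod p$ and $g_1 \in M_3(\zed/p\zed)\times M_2(\zed/p\zed)$, the action on the fiber decomposes: the factor $\tilde g$ with $g \bmod p \in G_x$ fixes $x_0 \bmod p$ and sends $x_0 + px_1$ to $g_0\cdot x_0 + p(\text{something}) = x_0 + p(x_1')$ for a suitable affine action of $G_x$ on the $x_1$-coordinate, while the factor $I + pg_1$ sends $x_0 + px_1$ to $x_0 + p(x_1 + v)$ with $v \in V_x$ ranging over all of $V_x$ by the very definition of the annihilator subspace.

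Assembling these two observations: the $G(\zed/p^2\zed)$-orbits meeting the fiber over $x_0 \bmod p$ are exactly the orbits of $G_x$ acting on the quotient $V(\zed/p\zed)/V_x$ of $x_1$-coordinates, once one checks that $G_x$ genuinely acts on this quotient (i.e. that the induced action on the $x_1$-coordinate is linear modulo $V_x$, which follows because conjugating $I + pM_3\times pM_2$ by the lift $\tilde g$ lands back inside $I + pM_3\times pM_2$, so the subspace $V_x$ is $G_x$-stable — this is essentially the last chain of equivalences in the lemma preceding this one). Concretely I would argue: two lifts $x_0 + px_1$ and $x_0 + px_1'$ lie in the same $G(\zed/p^2\zed)$-orbit if and only if there is $g \in G(\zed/p^2\zed)$ with $g\cdot(x_0 + px_1) = x_0 + px_1'$; reducing mod $p$ forces $g \bmod p \in G_x$; and then, modulo $pV_x$, the equation says $x_1'$ lies in the $G_x$-orbit of $x_1$ in $V(\zed/p\zed)/V_x$. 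Conversely any element of $G_x$ lifts to $G(\zed/p^2\zed)$ and, composed with a suitable $I + pg_1$, realizes any representative of the target $G_x$-orbit class. Since every $\mod p^2$ orbit above $\sO_x \bmod p$ meets the fiber over $x_0 \bmod p$ (translate a general point of the orbit by $G(\zed/p^2\zed)$ to have reduction exactly $x_0$), this gives the claimed bijection.

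The main obstacle I anticipate is bookkeeping the precise affine action of $G_x$ on the $x_1$-coordinate: for $g \bmod p \in G_x$ one has $g_0 \cdot x_0 \equiv x_0 \bmod p$ but generally $g_0 \cdot x_0 = x_0 + p\,\delta(g_0)$ for a nonzero $\delta(g_0) \in V(\zed/p\zed)$, and this inhomogeneous term must be absorbed. The resolution is that $\delta(g_0)$ depends only on the lift chosen and, crucially, lies in $V_x$ when averaged correctly — or rather, one shows the map $x_1 \mapsto$ (new coordinate) is well-defined on the quotient $V(\zed/p\zed)/V_x$ and there it is genuinely linear, because two different lifts of the same $g_0 \in G_x$ differ by an element of $I + pM_3 \times pM_2$, which only changes the new coordinate by an element of $V_x$. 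Packaging all this cleanly — probably by phrasing everything directly in terms of orbits rather than tracking an explicit cocycle — is where care is needed; the rest is a formal consequence of the orbit-stabilizer formalism together with Lemma \ref{exponential_sum_lemma_1}'s underlying identity that $(I + pM_3(\zed/p\zed), I + pM_2(\zed/p\zed))\cdot x = x + pV_x$.
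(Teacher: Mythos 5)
Your skeleton is the same as the paper's (pass to the fiber $x_0+pV(\zed/p\zed)$, let $I+pM_3\times pM_2$ account for translation by $pV_x$, and match $\bmod\ p^2$ orbits with $G_x$-orbits on $V(\zed/p\zed)/V_x$), but there is a genuine gap at exactly the point you flag as the main obstacle, and your proposed resolution does not close it. Writing $g\cdot x_0 = x_0 + p\,\delta(g)$ for a lift $g$ of $\bar g\in G_x$, your argument — that two lifts of the same $\bar g$ differ by an element of $I+pM_3\times pM_2$ and hence change the new coordinate only by an element of $V_x$ — shows that the class $\bar\delta(\bar g)\in V(\zed/p\zed)/V_x$ is independent of the lift, i.e. that $G_x$ acts on the quotient by well-defined \emph{affine} maps $x_1\mapsto \bar g\cdot x_1+\bar\delta(\bar g)$. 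It does not show $\bar\delta(\bar g)=0$, which is what the lemma needs: the orbits above $\sO_x$ must correspond to orbits of the \emph{linear} $G_x$-action on $V/V_x$, since these are the orbits and stabilizers computed and used in the rest of the paper. If the cocycle $\bar g\mapsto\bar\delta(\bar g)$ were not a coboundary, the affine orbits need not even be in bijection with the linear ones (think of a trivial linear action with a translation cocycle); and even a nonzero coboundary would shift the explicit correspondence (e.g.\ which $\bmod\ p^2$ orbit is ``the zero orbit'') that later arguments rely on. Equivalently, vanishing of $\bar\delta$ is the statement that $\Stab_{G(\zed/p^2\zed)}(x_0)$ surjects onto $G_x$ under reduction mod $p$, and that is not a formal consequence of $(I+pM_3\times pM_2)\cdot x_0 = x_0+pV_x$; your phrase ``lies in $V_x$ when averaged correctly'' is not an argument, and averaging cannot help because every lift produces the same class mod $V_x$.

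The paper supplies precisely this missing input by a specific device: for each of the ten representatives it introduces an explicit algebraic subgroup $P_x$ with $G_x\subset P_x(\bF_p)$, verifies case by case (in \textbf{action\_contained\_tangent\_space.nb}) that $P_x(\bQ)\cdot x\subset V_x(\bQ)$, and then chooses the lift $g_0\in P_x(\zed)$. Since also $g_0\cdot x_0\equiv x_0\bmod p$, this gives $g_0\cdot x_0-x_0\in V_x(\zed)\cap pV(\zed)=pV_x(\zed)$, so the inhomogeneous term is killed modulo $pV_x$ and the induced action on $V/V_x$ is genuinely linear; the rest of your formal argument then goes through. Some such verification — either the inclusion $P_x\cdot x\subset V_x$ for a suitable lifting subgroup, or a direct check for each representative that the mod $p^2$ stabilizer surjects onto $G_x$ — must be added to your proof; as written it establishes only the affine statement.
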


\begin{proof}
For each orbit representative, let $P_x$ be the algebraic group listed in the table below.  Note that $P_x(\zed/p\zed) \supset G_x$.
\begin{equation}\label{stabilizer_table}
 \begin{array}{|l|l|l|l|}
  \hline
  \text{Orbit} & \text{Repr.} & P_x & V_x\\
  \hline
  \sO_{D1^2} & (0, w^2) &  \begin{pmatrix} * &* &\\ * &* &\\ *&* &*\end{pmatrix}\begin{pmatrix} * &\\ * & * \end{pmatrix} & \begin{pmatrix} 0&0&0\\ &0&0\\&&*\end{pmatrix}\begin{pmatrix} 0&0&*\\&0&*\\&&*\end{pmatrix} \\
  \hline
  \sO_{D11} & (0, vw) & \begin{pmatrix} * &&\\ * & * &*\\ * &*& *\end{pmatrix}\begin{pmatrix} * &\\ * & * \end{pmatrix}  & \begin{pmatrix} 0&0&0\\ &0&*\\&&0\end{pmatrix}\begin{pmatrix} 0&*&*\\&*&*\\&&*\end{pmatrix}\\
  \hline
  \sO_{D2} & (0, v^2 - \ell w^2) &  \begin{pmatrix} * && \\ * & * & *\\ * & * & *\end{pmatrix} \begin{pmatrix} * & \\ * & *\end{pmatrix}& \begin{pmatrix} 0&0&0\\ &z&0\\&&- z\ell\end{pmatrix}\begin{pmatrix} 0&*&*\\&*&*\\&&*\end{pmatrix}\\
  \hline
  \sO_{Cs} & (w^2, vw) & \begin{pmatrix} * &&\\ * &*& \\ *&*&*\end{pmatrix} \begin{pmatrix} * &\\ * & * \end{pmatrix} & \begin{pmatrix} 0&0&z\\ &0&*\\&&*\end{pmatrix}\begin{pmatrix} 0&\frac{z}{2}&*\\&*&*\\&&*\end{pmatrix}\\
  \hline
  \sO_{1^4} & (w^2, uw + v^2) &  \begin{pmatrix} * & &\\ * & * & \\ *&*&* \end{pmatrix} \begin{pmatrix} * &\\ * & * \end{pmatrix} & \begin{pmatrix} 0&0&y+ \frac{z}{2}\\ &z&*\\&&*\end{pmatrix}\begin{pmatrix} y&*&*\\&*&*\\&&*\end{pmatrix}\\
  \hline
  \sO_{1^31} & (vw, uw + v^2) & \begin{pmatrix} * &&\\ * & * &\\ && * \end{pmatrix} \begin{pmatrix} * &\\ * & * \end{pmatrix} & \begin{pmatrix} 0&\frac{z}{2}&*\\ &*&*\\&&*\end{pmatrix}\begin{pmatrix} z&*&*\\&*&*\\&&*\end{pmatrix}\\
  \hline
  \sO_{1^21^2} & (w^2, uv) & \begin{pmatrix} * &*&\\ *&*&\\ &&* \end{pmatrix}\begin{pmatrix} * &\\ & * \end{pmatrix}  & \begin{pmatrix} 0&*&*\\ &0&*\\&&*\end{pmatrix}\begin{pmatrix} *&*&*\\&*&*\\&&*\end{pmatrix}\\
  \hline
  \sO_{2^2} & (w^2, u^2 - \ell v^2) & \begin{pmatrix} * & * &\\ * & * &\\ && *\end{pmatrix} \begin{pmatrix} * &\\ & *\end{pmatrix} & \begin{pmatrix} z&0&*\\ &- z\ell&*\\&&*\end{pmatrix}\begin{pmatrix} *&*&*\\&*&*\\&&*\end{pmatrix}\\
  \hline
  \sO_{1^211} & (v^2-w^2, uw) & \begin{pmatrix} * && \\ & * & \\ && * \end{pmatrix} \begin{pmatrix} * &\\ & * \end{pmatrix} & \begin{pmatrix} 0&*&*\\ &*&*\\&&*\end{pmatrix}\begin{pmatrix} *&*&*\\&*&*\\&&*\end{pmatrix}
  \\
  \hline
  \sO_{1^2 2} & (v^2 - \ell w^2, uw) & \begin{pmatrix} * &&\\ & * &\\ && *\end{pmatrix} \begin{pmatrix} * &\\ & * \end{pmatrix} & \begin{pmatrix} 0&*&*\\ &*&*\\&&*\end{pmatrix}\begin{pmatrix} *&*&*\\&*&*\\&&*\end{pmatrix}\\
  \hline
 \end{array}
\end{equation}
By examination, over $\bQ$, $P_x(\bQ) \cdot x \subset V_x(\bQ)$, see \textbf{action\_contained\_tangent\_space.nb}.

Over $\zed/p\zed$, $G_x$ preserves $V_x$ since if $g \in G_x$ and $y \in V_x$ then for some $m \in M_3(\zed/p\zed)\times M_2(\zed/p\zed)$,
\begin{equation}
 py = (I+pm) \cdot x - x
\end{equation}
and thus
\begin{equation}
 pg\cdot y = g \cdot (I +pm) \cdot x - g \cdot x = (I + p g m g^{-1}) (g \cdot x) - (g\cdot x) \in pV_{g \cdot x} = pV_x.
\end{equation}
Thus $G_x$ acts on $V/V_x$.

To obtain the orbits over $\zed/p^2\zed$, given a form $x_0$ act by $G$ to bring it to a form $x + px'$ where $x$ is one of the standard representative forms listed in the table, taken over $\zed$.    Next, by acting by $(I + pM_3(\zed/p\zed), I + pM_2(\zed/p\zed))$, consider this to be an element in $x + p \cdot (V(\zed/p\zed)/V_x)$ where $V_x$ is the annihilator subspace of $x$.  Given $g \in G(\zed/p^2\zed)$ with $g\cdot x \equiv x \bmod p$, then $g \in G_x < P_x \bmod p$, so write $g = g_0 + p g_1$ with $g_0 \in P_x(\zed)$.  Let $m \in M_3(\zed/p\zed)\times M_2(\zed/p\zed)$ be such that $g = g_0(I + pm)$.
%
Calculate
\begin{align}
 g \cdot (x + p x') &\equiv g_0 \cdot ((I+pm) \cdot x + p x') \bmod p V_x\\\notag&\equiv g_0 \cdot (x + p x' + p V_x)\bmod p V_x\\\notag& \equiv g_0 \cdot x + p g_0 \cdot x' \bmod pV_x.
\end{align}
Notice that $g_0 \cdot x \in V_x(\zed)$ and hence $g_0 \cdot x - x \in pV_x(\zed)$.  Hence
\begin{align}
  g \cdot (x + p x')&\equiv x + (g_0 \cdot x - x) + p g_0 \cdot x' \bmod p V_x \\\notag&\equiv x + p g_0 \cdot x' \bmod p V_x.
\end{align}
It follows that $x + px_1 \equiv x+ px_2$ under $G(\zed/p^2 \zed)$ if and only if $x_1 \equiv x_2 \bmod V_x$ under $G_x$.
\end{proof}

The following lemma determines the size of the $\mod p^2$ stabilizers in terms of the size of the stabilizer of the $G_x$ action on $V/V_x$.
\begin{lemma}\label{stabilizer_size_lemma} Let $x$ be a standard representative for one of the orbits
 \begin{equation}\sO_{D1^2}, \sO_{D11}, \sO_{D2}, \sO_{Cs}, \sO_{1^4}, \sO_{1^31}, \sO_{1^21^2}, \sO_{2^2}, \sO_{1^211}, \sO_{1^22}\end{equation} taken over $\zed$ and let $x' = x + px_1$.  Then the following relationship between stabilizers holds
 \begin{equation}
  \left|\Stab_{G(\zed/p^2\zed)}(x')\right| = p^{13 - \dim V_x} \left|\Stab_{G_x}(x_1)\right|.
 \end{equation}
In particular, the orbit sizes are related by
\begin{equation}
 \left| G(\zed/p^2\zed) \cdot x'\right| = p^{\dim V_x} \cdot \left|G(\zed/p\zed) \cdot x \bmod p\right| \cdot \left|G_x \cdot x_1 \bmod V_x\right|,
\end{equation}
and the density of the orbit $G(\zed/p^2\zed) \cdot x'$ within $\sO_x \bmod p$ is equal to the density of $G_x \cdot x_1 \bmod V_x$ in $V/V_x$,
\begin{equation}
 \frac{\left| G(\zed/p^2\zed) \cdot x'\right|}{p^{12} \left|G(\zed/p\zed)\cdot x \right|} = \frac{\left|G_x \cdot x_1 \bmod V_x\right|}{|V/V_x|}.
\end{equation}

\end{lemma}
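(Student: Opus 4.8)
The plan is to apply the orbit--stabilizer theorem over $\zed/p^2\zed$ and over $\bF_p$ simultaneously, using the preceding lemma to pin down the fibre of the reduction map $\sO_{x'} \to \sO_x \bmod p$, where $\sO_{x'} = G(\zed/p^2\zed)\cdot x'$ with $x' = x + px_1$. First I would collect the elementary inputs. Reduction modulo $p$ is a surjection $G(\zed/p^2\zed) \to G(\bF_p)$ with kernel $(I + pM_3(\zed/p\zed)) \times (I + pM_2(\zed/p\zed))$ of order $p^{9}\cdot p^{4} = p^{13}$, so $|G(\zed/p^2\zed)| = p^{13}|G(\bF_p)|$. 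Also $V_x$ is an $\bF_p$-subspace of the $12$-dimensional space $V(\bF_p)$: it is the image of the $\bF_p$-linear map $m \mapsto \bigl((I+pm)\cdot x - x\bigr)/p$ from $M_3(\bF_p)\times M_2(\bF_p)$, since $g_3Ag_3^t \equiv A + p(m_3 A + A m_3^t)\bmod p^2$ when $g_3 = I + pm_3$ and the $\GL_2$-mixing is linear as well; hence $|V_x| = p^{\dim V_x}$ and $|V(\bF_p)/V_x| = p^{12-\dim V_x}$.

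Next I would analyse the reduction map $\rho\colon \sO_{x'} \to \sO_x \bmod p$, which is well defined, surjective and equivariant for $G(\zed/p^2\zed)\to G(\bF_p)$. Since $\sO_{x'}$ is a single $G(\zed/p^2\zed)$-orbit and $\sO_x\bmod p$ a single $G(\bF_p)$-orbit, lifting an element $\bar g\in G(\bF_p)$ carrying one point of $\sO_x\bmod p$ to another gives a bijection of the corresponding fibres, so all fibres of $\rho$ are equinumerous and $|\sO_{x'}| = |\sO_x\bmod p|\cdot|F|$ with $F = \rho^{-1}(x)$. Every element of $F$ has the form $x + pv$ with $v \in V(\bF_p)$, and by the preceding lemma $x + pv \in \sO_{x'} = \sO_{x+px_1}$ if and only if $v \equiv x_1 \bmod V_x$ under $G_x$; thus $F = \{x + pv : (v\bmod V_x)\in G_x\cdot(x_1\bmod V_x)\}$ and $|F| = p^{\dim V_x}\,\bigl|G_x\cdot x_1\bmod V_x\bigr|$. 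Feeding this back and using orbit--stabilizer for the $G_x$-action on $V/V_x$ supplied by the preceding lemma,
\[
 |\sO_{x'}| = \frac{|G(\bF_p)|}{|G_x|}\cdot p^{\dim V_x}\cdot\frac{|G_x|}{|\Stab_{G_x}(x_1)|} = \frac{p^{\dim V_x}\,|G(\bF_p)|}{|\Stab_{G_x}(x_1)|}.
\]

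Finally, orbit--stabilizer over $\zed/p^2\zed$ gives
\[
 \bigl|\Stab_{G(\zed/p^2\zed)}(x')\bigr| = \frac{|G(\zed/p^2\zed)|}{|\sO_{x'}|} = \frac{p^{13}|G(\bF_p)|}{p^{\dim V_x}|G(\bF_p)|/|\Stab_{G_x}(x_1)|} = p^{13-\dim V_x}\bigl|\Stab_{G_x}(x_1)\bigr|,
\]
which is the first assertion. The orbit-size formula then follows from $|\sO_{x'}| = p^{\dim V_x}\,|\sO_x\bmod p|\,\bigl|G_x\cdot x_1\bmod V_x\bigr|$, and the density statement follows on dividing by $p^{12}\,|\sO_x\bmod p|$ and using $|V/V_x| = p^{12-\dim V_x}$. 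The argument is essentially bookkeeping once the preceding lemma is in hand; the only points needing care are that $V_x$ is a genuine $\bF_p$-subspace, so that $|F|$ factors as the product of an orbit size in $V/V_x$ with $|V_x|$, and that the fibres of $\rho$ are all of the same size --- the substantive input, namely the identification of $F$ with a $G_x$-orbit on $V/V_x$, being exactly the content of the preceding lemma.
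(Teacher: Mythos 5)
Your proof is correct, but it runs in the opposite direction from the paper's. The paper establishes the stabilizer identity first, by a direct structural count: any $g\in\Stab_{G(\zed/p^2\zed)}(x')$ reduces into $G_x$, is factored as $g=(I+pm)g_0$ with $g_0\in P_x(\zed)$, the congruence computation (essentially repeating the proof of the preceding lemma) forces $g_0\cdot x_1\equiv x_1\bmod V_x$, and then the surjective linear map $m\mapsto\bigl((I+pm)\cdot x-x\bigr)/p$ onto $V_x$, with kernel of dimension $13-\dim V_x$, gives $p^{13-\dim V_x}$ choices of $m$ for each of the $|\Stab_{G_x}(x_1)|$ admissible $g_0$; the orbit-size and density formulas are then read off from orbit--stabilizer. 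You instead compute the orbit size first, by fibering the reduction map $\rho\colon\sO_{x'}\to\sO_x\bmod p$: equivariance under the surjection $G(\zed/p^2\zed)\to G(\bF_p)$ makes all fibres equinumerous, and the preceding lemma, used as a black box, identifies the fibre over $x$ with the preimage in $V(\bF_p)$ of the $G_x$-orbit of $x_1$ in $V/V_x$, of size $p^{\dim V_x}|G_x\cdot x_1\bmod V_x|$; the stabilizer identity then follows by orbit--stabilizer, using $|G(\zed/p^2\zed)|=p^{13}|G(\bF_p)|$. Both arguments rest on the same key input (the congruence $x+px_1\sim x+px_2$ iff $x_1\equiv x_2\bmod V_x$ under $G_x$, together with $V_x$ being an $\bF_p$-subspace, which you rightly justify via linearity of $m\mapsto\bigl((I+pm)\cdot x-x\bigr)/p$). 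What your route buys is cleaner bookkeeping with no need to re-enter the matrix computation or to know the kernel dimension of that linear map; what the paper's route buys is an explicit description of the mod-$p^2$ stabilizer as $(I+pm)g_0$ with $g_0$ lying over $\Stab_{G_x}(x_1)$, i.e. its structure and not merely its cardinality.
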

\begin{proof}
 Let $g \in G(\zed/p^2\zed)$ stabilize $x'$, so $g \cdot x \equiv x \bmod p$ and write $g = g_0 + pg_1$ with $g_0 \in P_x(\zed)$.  Factor, on the left this time, $g = (I + pm) \cdot g_0$, and write
 \begin{align}
  (I + pm) g_0 \cdot (x + px_1)  &= (I + pm) (g_0 \cdot x + p g_0 \cdot x_1)\\
  \notag &\equiv (I + pm) \cdot (x + (g_0 \cdot x - x) + p g_0 \cdot x_1) \bmod p^2\\
  \notag &\equiv ((I+pm) \cdot x + (g_0 \cdot x - x) + p g_0 \cdot x_1) \bmod p^2.
 \end{align}
Since $(I+pm) \cdot x -x\in p V_x$ and $g_0 \cdot x  - x \in p V_x$, it follows that $g_0 \cdot x_1 \equiv x_1 \bmod V_x$.  As $m$ varies in the 13 dimensional vector space $M_3(\zed/p\zed) \times M_2(\zed/p\zed)$, the map $(I+pm) \cdot x - x \in pV_x$ is a linear map onto, with kernel of dimension $13 - \dim V_x$.  Hence after choosing $g_0$ in $\left|\Stab_{G_x}(x_1)\right|$ ways, there are $p^{13 - \dim V_x}$ ways to choose $m$.

To obtain the relations among the orbit sizes, calculate, using the Orbit-Stabilizer Theorem
\begin{align}
 \left|G(\zed/p^2\zed) \cdot x' \right| &= \frac{|G(\zed/p^2\zed)|}{\left|\Stab_{G(\zed/p^2\zed)}(x')\right|}\\
 \notag &= \frac{|G(\zed/p^2\zed)|p^{\dim V_x - 13}}{\left|\Stab_{G_x}(x_1)\right|}\\
 \notag &= p^{\dim V_x} \frac{|G(\zed/p\zed)|}{\left|\Stab_{G(\zed/p\zed)}(x)\right|} \frac{|G_x|}{\left|\Stab_{G_x}(x_1)\right|}\\
 \notag &= p^{\dim V_x} \cdot \left|G(\zed/p\zed) \cdot x \bmod p\right| \cdot \left|G_x \cdot x_1 \bmod V_x\right|.
\end{align}

The relation among densities is obtained by rearranging the relation among orbit sizes.
\end{proof}

\subsection{The $\mod p^2$ orbits relevant to maximality}
First the $\mod p^2$ orbits are considered above $\mod p$ orbits containing both maximal and non-maximal orbits.  Recall the density of maximal elements in orbits of this type,
 \begin{equation}
  \begin{array}{|l|l|l|}
   \hline
   \text{Orbit} & \text{Density} & \dim(V/V_x)\\
   \hline
   \sO_{1^4} & \frac{p-1}{p} & 3\\
   \hline
   \sO_{1^31} & \frac{p-1}{p} &2\\
   \hline
   \sO_{1^21^2} & \left(\frac{p-1}{p}\right)^2& 2\\
   \hline
   \sO_{2^2} & \frac{p^2-1}{p^2}& 2\\
   \hline
   \sO_{1^211} & \frac{p-1}{p}&1\\
   \hline
   \sO_{1^22} & \frac{p-1}{p}&1\\
   \hline
  \end{array}
 \end{equation}
  In the case of $\sO_{2^2}, \sO_{1^211}$ and $\sO_{1^22}$ the non-maximal orbits are most easily classified.
 \begin{lemma}\label{22_1211_122_lemma}
  The non-maximal elements above the orbits $\sO_{2^2}, \sO_{1^211}$ and $\sO_{1^22}$ consist in a single $G(\zed/p^2\zed)$ orbit corresponding to the singleton 0 orbit in $V/V_x$.
 \end{lemma}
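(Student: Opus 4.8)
The plan is to reduce the statement to a single numerical coincidence, using the bijection between $\mod p^2$ orbits above $\sO_x \bmod p$ and $G_x$-orbits on $V(\zed/p\zed)/V_x$, together with the density bookkeeping of Lemma \ref{stabilizer_size_lemma}. In each of the three cases take $x$ to be the standard representative over $\zed$: $(w^2, u^2-\ell v^2)$ for $\sO_{2^2}$, $(v^2-w^2, uw)$ for $\sO_{1^211}$, and $(v^2-\ell w^2, uw)$ for $\sO_{1^22}$.

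First I would observe that $0 \in V/V_x$ is a $G_x$-fixed point, hence a singleton $G_x$-orbit; by the bijection cited above it corresponds to a unique $G(\zed/p^2\zed)$ orbit, namely that of the zero lift $x + p\cdot 0$, and by the density formula of Lemma \ref{stabilizer_size_lemma} this orbit occupies the fraction $|G_x \cdot 0|/|V/V_x| = p^{-\dim(V/V_x)}$ of $\sO_x\bmod p$. Reading $\dim(V/V_x)$ off the table in this subsection, this fraction is $p^{-2}$ for $\sO_{2^2}$ and $p^{-1}$ for $\sO_{1^211}$ and $\sO_{1^22}$.

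Second I would check that the zero lift is non-maximal by matching it against the geometric criterion of Lemma \ref{maximal_ring_lemma}. Written as a pair of symmetric matrices, $(w^2, u^2-\ell v^2)$ has $A$ with vanishing upper-left $2\times 2$ block and vanishing off-diagonal entries in the last column, so it fits the $\sO_{2^2}$ non-maximality pattern; both $(v^2-w^2, uw)$ and $(v^2-\ell w^2, uw)$ have $A$ with $a = b = c = 0$ and $B=uw$ with vanishing leading ($u^2$) coefficient, so they fit the non-maximality pattern of Lemma \ref{maximal_ring_lemma} for the orbits $\sO_{1^211}, \sO_{1^22}$. In every case the matrix entries required to vanish modulo $p$ or $p^2$ are literally $0$ in the standard representative, so the zero-lift orbit is non-maximal.

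Finally I would invoke Lemma \ref{mod_p_density_lemma}: the density of non-maximal forms above $\sO_{2^2}, \sO_{1^211}, \sO_{1^22}\bmod p$ equals $1-\frac{p^2-1}{p^2} = p^{-2}$ and $1-\frac{p-1}{p}=p^{-1}$, $p^{-1}$ respectively, which matches exactly the density of the single zero-lift orbit. Since maximality is $G(\zed/p^2\zed)$-invariant, the non-maximal locus above each of these $\mod p$ orbits is a disjoint union of $\mod p^2$ orbits, each of strictly positive density; as the zero-lift orbit already accounts for all of the non-maximal density, there can be no other, and it corresponds to the singleton $0$ orbit in $V/V_x$. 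The only step requiring care is the non-maximality verification in the third paragraph — one must confirm the zero lift genuinely lands in the non-maximality pattern rather than being maximal — but this is a direct inspection of the standard representatives; everything else is forced by the density count.
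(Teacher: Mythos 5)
Your proposal is correct and follows essentially the same route as the paper's proof: the paper likewise checks that the zero orbit contains an element satisfying the congruence conditions of Lemma \ref{maximal_ring_lemma} (hence is non-maximal) and then concludes by the density comparison coming from Lemma \ref{stabilizer_size_lemma} together with the non-maximal densities of Lemma \ref{mod_p_density_lemma}. Your write-up merely makes explicit the computation $p^{-\dim(V/V_x)}$ versus $p^{-2}, p^{-1}, p^{-1}$ that the paper leaves to the table.
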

  \begin{proof}
   In each case, the zero orbit has an element which satisfies the congruence conditions of Lemma \ref{maximal_ring_lemma}, so these orbits all consist of non-maximal elements.  By Lemma \ref{stabilizer_size_lemma}, these orbits already make up the full density of non-maximal elements (see the table above).
  \end{proof}

\subsubsection{Case of $\sO_{1^4}$}
The orthogonal complement to $V_x = \begin{pmatrix} 0 &0&y +\frac{z}{2}\\&z&*\\&&*\end{pmatrix}\begin{pmatrix}y&*&*\\&*&*\\&&*\end{pmatrix}$ is spanned by $\{v_1, v_2, v_3\}$,
\begin{equation}
  \begin{pmatrix} 1 &0&0\\&0&0\\&&0\end{pmatrix}\begin{pmatrix}0&0&0\\&0&0\\&&0\end{pmatrix}, \begin{pmatrix} 0&1&0\\&0&0\\&&0\end{pmatrix}\begin{pmatrix}0&0&0\\&0&0\\&&0\end{pmatrix},  \begin{pmatrix}0&0&-1\\&1&0\\&&0\end{pmatrix}\begin{pmatrix}2 &0&0\\&0&0\\&&0\end{pmatrix}.
\end{equation}
The stabilizer $G_x$ is given by
\begin{equation}
 G_x = \begin{pmatrix} s &&\\\frac{-2as}{t} & t&\\ b & a&\frac{t^2}{s}\end{pmatrix} \begin{pmatrix} \frac{s^2}{t^4} &\\ -\left(a^2+\frac{bt^2}{s} \right)\frac{s^2}{t^6} & \frac{1}{t^2}\end{pmatrix}.
\end{equation}

\begin{lemma}\label{14_lemma}
The orbits and stabilizers of $V/V_x$ under the $G_x$ action are summarized below.
\begin{equation}
 \begin{array}{|l|l|l|}
  \hline
  \text{Orbit representative}& \text{Orbit size} & \text{Stabilizer}\\
  \hline
  \begin{array}{l}\begin{pmatrix} \epsilon & 0 &0 \\&0 &0\\&&0\end{pmatrix}\begin{pmatrix}0&0&0\\&0&0\\&&0\end{pmatrix},\\ \epsilon\in \bF_p^\times / \{x^4: x \in \bF_p^\times\}\end{array} & \frac{(p-1)p^2}{\#\{\epsilon^4 = 1 \}} & \begin{array}{l}\begin{pmatrix} s &0&0\\0& s \varepsilon  &0\\ 0&0&s \varepsilon^2 \end{pmatrix} \begin{pmatrix} \frac{1}{s^2} &0\\0&\frac{1}{s^2\varepsilon^2 }\end{pmatrix},\\ \varepsilon \in \bF_p^\times / \{x^4: x \in \bF_p^\times\} \end{array} \\
  \hline
  \begin{array}{l}\begin{pmatrix} 0& \epsilon &0 \\&0 &0\\&&0\end{pmatrix}\begin{pmatrix}0&0&0\\&0&0\\&&0\end{pmatrix},\\ \epsilon \in \bF_p^\times / \{x^3: x \in \bF_p^\times\} \end{array} & \frac{(p-1)p}{\#\{\epsilon^3 = 1 \}} & \begin{array}{l}\begin{pmatrix} s &0&0\\0& s\varepsilon &0\\ b&0& s\varepsilon^2\end{pmatrix} \begin{pmatrix} \frac{1}{ s^2\varepsilon} &0\\-\frac{b}{ s^3\varepsilon}&\frac{1}{ s^2\varepsilon^2}\end{pmatrix},\\ \varepsilon \in \bF_p^\times / \{x^3: x \in \bF_p^\times\} \end{array} \\
  \hline
  \begin{pmatrix} 0& 0 &-1 \\&1 &0\\&&0\end{pmatrix}\begin{pmatrix}2&0&0\\&0&0\\&&0\end{pmatrix} & \frac{p-1}{2} & \begin{array}{l}\begin{pmatrix} s &0&0\\-2a\varepsilon& s\varepsilon &0\\ b&a& s\end{pmatrix} \begin{pmatrix} \frac{1}{s^2} &0\\-\frac{a^2}{s^4}- \frac{b}{s^3}&\frac{1}{ s^2}\end{pmatrix},\\ \varepsilon = \pm 1 \end{array} \\
  \hline
  \begin{pmatrix} 0& 0 &-\ell \\&\ell &0\\&&0\end{pmatrix}\begin{pmatrix}2\ell&0&0\\&0&0\\&&0\end{pmatrix} & \frac{p-1}{2} & \begin{array}{l}\begin{pmatrix} s &0&0\\-2a\varepsilon& s\varepsilon &0\\ b&a& s\end{pmatrix} \begin{pmatrix} \frac{1}{s^2} &0\\-\frac{a^2}{s^4}- \frac{b}{s^3}&\frac{1}{ s^2}\end{pmatrix},\\ \varepsilon = \pm 1 \end{array} \\
  \hline       
  \begin{pmatrix}0 & 0 &0 \\&0 &0\\&&0\end{pmatrix}\begin{pmatrix}0&0&0\\&0&0\\&&0\end{pmatrix} & 1 & G_x\\
  \hline
 \end{array}
\end{equation}
The orbits of the first type correspond to the maximal orbits under the $G(\zed/p^2\zed)$ action.
\end{lemma}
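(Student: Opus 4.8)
The plan is to use the correspondence established above (the unnamed lemma that puts the $\bmod\,p^2$ orbits lying over $\sO_{1^4}$ in bijection with the $G_x$-orbits on the three-dimensional quotient $V(\bF_p)/V_x$) together with the explicit four-parameter group $G_x$ displayed here, with parameters $s,t\in\bF_p^\times$ and $a,b\in\bF_p$. I would coordinatize $V/V_x$ by the three linear functionals $\phi_1=A_{11}$, $\phi_2=A_{12}$, $\phi_3=A_{13}-B_{11}-\tfrac12 A_{22}$, whose common kernel is precisely $V_x$; they give an isomorphism $V/V_x\xrightarrow{\sim}\bF_p^3$ under which the listed representatives have simple coordinates and fall into visible cells of the flag $\{\phi_1=0\}\supset\{\phi_1=\phi_2=0\}$: each $\epsilon v_1$ has $\phi_1\neq 0$, each $\epsilon v_2$ has $\phi_1=0\neq\phi_2$, and $v_3,\ell v_3$ have $\phi_1=\phi_2=0\neq\phi_3$ and represent the two square classes of $\phi_3$ along that line.

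The one genuinely computational step is to evaluate $g\cdot v_i\bmod V_x$ for $i=1,2,3$ and a general $g=(g_3,g_2)\in G_x$; this is a short matrix calculation, best delegated to the supporting Mathematica notebook. I expect the induced action on $(\phi_1,\phi_2,\phi_3)$ to be lower triangular with diagonal $(r^4,r^3,r^2)$ where $r=s/t$, and to have unipotent part of shape $\bigl(\begin{smallmatrix}1&0&0\\ *&1&0\\ *&*&1\end{smallmatrix}\bigr)$ whose two nonzero off-diagonal entries can be made arbitrary by varying $a,b$. In particular the flag above is $G_x$-invariant, so the orbit classification can be carried out one layer at a time.

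If $\phi_1\neq 0$, the unipotent part normalizes $\phi_2=\phi_3=0$ and the residual diagonal action multiplies $\phi_1$ by a fourth power, so these orbits are represented by $\epsilon v_1$ with $\epsilon$ running over $\bF_p^\times/(\bF_p^\times)^4$; if $\phi_1=0\neq\phi_2$, the $a$-parameter kills $\phi_3$ and the residual action multiplies $\phi_2$ by a cube, giving $\epsilon v_2$ with $\epsilon\in\bF_p^\times/(\bF_p^\times)^3$; if $\phi_1=\phi_2=0\neq\phi_3$, the residual action multiplies $\phi_3$ by a square, giving exactly $v_3$ and $\ell v_3$; and $0$ is the fixed point. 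The stabilizers are then read off by imposing $g\cdot(\text{representative})=(\text{representative})$ on the explicit form of $G_x$ — for instance the stabilizer of $v_3$ is cut out by the single relation $s^2=t^2$, which reproduces the sign-plus-two-parameter group in the table — and the orbit sizes follow by orbit--stabilizer, with the arithmetic check that the five sizes sum to $p^3=|V/V_x|$.

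For the final assertion I would argue by density rather than analyze each ring individually. By Lemma \ref{mod_p_density_lemma} and Lemma \ref{stabilizer_size_lemma}, the maximal forms over $\sO_{1^4}$ occupy density $(p-1)/p$ of $V/V_x$, hence the non-maximal ones occupy density $1/p$. The orbits $\epsilon v_1$ have total size $(p-1)p^2$ and thus density $(p-1)/p$, while the orbits $\epsilon v_2$, $v_3$, $\ell v_3$, $0$ have total size $p^3-(p-1)p^2=p^2$, density $1/p$; a direct check of the representatives $x_0+p\,y$ with $y\in\{0,\epsilon v_2,v_3,\ell v_3\}$ against the non-maximality congruence of Lemma \ref{maximal_ring_lemma} (each is, up to a permutation of the variables, of the displayed shape there) shows every one of these is non-maximal, so they exhaust the non-maximal forms and the orbits $\epsilon v_1$ are exactly the maximal ones. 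I expect the main obstacle to be the first computation — keeping the reductions modulo $V_x$ straight when evaluating $g\cdot v_i$ — since the triangular structure that emerges then forces everything downstream.
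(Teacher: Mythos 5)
Your proposal is correct and follows essentially the same route as the paper: an explicit computation of the $G_x$-action on the three-dimensional quotient $V/V_x$ (which is indeed lower triangular with diagonal $(r^4,r^3,r^2)$, $r=s/t$, in your coordinates), followed by orbit--stabilizer counting that exhausts $p^3$ and a density argument to identify the maximal orbits. Your final step is in fact slightly more explicit than the paper's, which asserts by density alone that ``at least one, and thus all'' orbits of the first type are maximal, whereas you check directly that the representatives of the remaining orbits satisfy the non-maximality congruence of Lemma \ref{maximal_ring_lemma} and then conclude by density, exactly as the paper itself does in the $\sO_{1^21^2}$ case.
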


\begin{proof}
See \textbf{O14\_stabilizers.nb}.

 To check the stabilizer of the orbits of the first type, note that $a$ is forced to be 0 by considering the $uv$ coefficient, which then forces $b$ to be 0 by considering the $uw$ coefficient.  The $u^2$ coefficient becomes $\frac{s^4}{t^4}\epsilon$, which  obtains the relation $\frac{s^4}{t^4}=1$. The inequivalence of orbits having differing $\epsilon$ is checked similarly.  

To calculate the stabilizer of the second orbit, note that $a$ is forced to be 0, since it is not possible to produce a $u^2$ term which would be required to avoid a $v_3$ component.  Evidently $b$ is arbitrary, and the relation $\frac{s^3}{t^3}=1$ is obtained by considering the $uv$ coefficient.  This also proves the inequivalence of orbits having differing $\epsilon$. For the final two orbits, note that the span of $v_3$ is preserved by the action.  The square and non-square leading coefficients are inequivalent. Since the remaining orbits all have differing sizes, each of the orbits listed is inequivalent from the others.

Those orbits not of the first type make up a $\frac{1}{p}$ fraction of all of the elements of $V/V_x$.  Thus at least one, and thus all orbits of the first type are maximal, and these exhaust the maximal orbits by density. 
\end{proof}

\subsubsection{Case of $\sO_{1^31}$}
The orthogonal complement to $V_x = \begin{pmatrix} 0& \frac{z}{2} &*\\&*&*\\&&*\end{pmatrix} \begin{pmatrix} z &*&*\\&*&*\\&&*\end{pmatrix}$ is spanned by
\begin{equation}
 v_1 = \begin{pmatrix} 1 &0&0\\&0&0\\&&0\end{pmatrix}\begin{pmatrix}0&0&0\\&0&0\\&&0\end{pmatrix}, v_2 = \begin{pmatrix} 0 &1&0\\&0&0\\&&0\end{pmatrix}\begin{pmatrix}-1&0&0\\&0&0\\&&0\end{pmatrix}.
\end{equation}
The acting group is
\begin{equation}
 G_x = \begin{pmatrix} s &&\\a &t&\\ && \frac{t^2}{s}\end{pmatrix}\begin{pmatrix}\frac{s}{t^3} &\\ -\frac{a}{t^3} & \frac{1}{t^2}\end{pmatrix}.
\end{equation}
\begin{lemma}\label{131_lemma}
 The maximal orbits above $\sO_{1^31}$ in the $G(\zed/p^2\zed)$ correspond to the orbits of 
 \begin{equation}
  \begin{pmatrix} \epsilon &0&0\\&0&0\\&&0\end{pmatrix}\begin{pmatrix}0&0&0\\&0&0\\&&0\end{pmatrix}, \qquad \epsilon \in \bF_p^\times / \{x^3: x \in \bF_p^\times\}
 \end{equation}
in the action on $V/V_x$.  In the action on $V/V_x$, each of these orbits has size $\frac{(p-1)p}{\#\{\epsilon^3 = 1\}}$.
\end{lemma}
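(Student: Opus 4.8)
The plan is to run the machinery of Section~\ref{quotient_action_section}: by Lemma~\ref{stabilizer_size_lemma} it is enough to list the orbits of $G_x$ on $V/V_x$ with their sizes, and then to decide which are maximal by comparing densities against Lemma~\ref{mod_p_density_lemma}. Since $V/V_x$ is two dimensional, I would first fix coordinates on it. As the ambient pairing $[\cdot,\cdot]$ is non-degenerate, $V/V_x$ is canonically dual to $V_x^\perp=\langle v_1,v_2\rangle$, so the coset of $x'\in V$ is faithfully recorded by $(\alpha,\beta)=([x',v_1],[x',v_2])$; for the transversal element $c_1 v_1+c_2 v_2$ one checks $\alpha=c_1$ and that $\beta$ is a nonzero multiple of $c_2$.

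Next I would compute the $G_x$-action in these coordinates. For $g=(g_3,g_2)\in G_x$ with the parametrization of $G_x$ displayed above, a direct computation of $g_3 A g_3^t$ and $g_3 B g_3^t$ for $(A,B)=c_1 v_1+c_2 v_2$, followed by reading off the $v_1$- and $v_2$-pairings of the result, yields
\[
 (\alpha,\beta)\ \longmapsto\ \left(\frac{s^3}{t^3}\,\alpha,\ \ \frac{2as^2}{t^3}\,\alpha\ +\ \frac{s^2}{t^2}\,\beta\right).
\]
This is the step I expect to be the main obstacle, since the pairing bookkeeping that realizes the projection modulo $V_x$ is easy to get wrong; it should be cross-checked in the accompanying Mathematica notebook. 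Granting the formula, the orbit picture is immediate. If $\alpha\neq 0$, then taking $s=t$ and varying $a$ translates $\beta$ to $0$ (using $2\in\bF_p^\times$, valid since $p>3$), and afterwards pure scaling ($a=0$) moves $\alpha$ through its coset modulo $\{x^3:x\in\bF_p^\times\}$; hence each such orbit meets the transversal in exactly one point $(\epsilon,0)$ with $\epsilon\in\bF_p^\times/\{x^3:x\in\bF_p^\times\}$. The stabilizer of $(\epsilon,0)$ in $G_x$ forces $(s/t)^3=1$ and $a=0$, so it has order $(p-1)\cdot\#\{x:x^3=1\}$; since $|G_x|=(p-1)^2 p$, the orbit-stabilizer theorem gives orbit size $\frac{(p-1)p}{\#\{\epsilon^3=1\}}$, as claimed. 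The remaining locus $\alpha=0$ breaks into a few further orbits, which will not be needed.

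Finally I would pin down maximality by density. By Lemma~\ref{stabilizer_size_lemma} the density in $V/V_x$ of a $\bmod p^2$ orbit above $\sO_{1^31}$ equals the density of that orbit inside $\sO_{1^31}$, and by Lemma~\ref{mod_p_density_lemma} the maximal elements there fill density $\frac{p-1}{p}$. The orbits with $\alpha\neq 0$ found above fill exactly $\{\alpha\neq 0\}\subset V/V_x$, of density $\frac{p-1}{p}$, with complement $\{\alpha=0\}$ of density $\frac 1p$. Since maximality is $G(\zed/p^2\zed)$-invariant, the maximal locus in $V/V_x$ is a union of $G_x$-orbits of total density $\frac{p-1}{p}$; as $\frac 1p<\frac{p-1}{p}$ for $p>3$ it cannot lie inside $\{\alpha=0\}$, so it contains at least one orbit $(\epsilon,0)$, and then the counting argument used in the proof of Lemma~\ref{14_lemma} — each $(\epsilon,0)$-orbit has density $\frac{p-1}{p\,\#\{x:x^3=1\}}$ while the complement offers only density $\frac 1p$ — forces the maximal locus to be precisely the union of the orbits $(\epsilon,0)$, $\epsilon\in\bF_p^\times/\{x^3:x\in\bF_p^\times\}$. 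This both identifies the maximal orbits and delivers their stated sizes.
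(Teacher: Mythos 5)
Your proposal is correct and takes essentially the same route as the paper: compute the $G_x$-action on $V/V_x$, read off that the stabilizer of $\epsilon v_1$ forces $s^3=t^3$ and $a=0$, apply orbit–stabilizer to get the orbit size $\frac{(p-1)p}{\#\{\epsilon^3=1\}}$, and identify these as the maximal orbits by comparing densities against Lemma \ref{mod_p_density_lemma}. The only blemish is the step you flagged: in your pairing coordinates $(\alpha,\beta)=([x',v_1],[x',v_2])$ the action is $(\alpha,\beta)\mapsto\bigl(\tfrac{s^3}{t^3}\alpha,\ \tfrac{3as^2}{t^3}\alpha+\tfrac{s^2}{t^2}\beta\bigr)$ rather than having $\tfrac{2as^2}{t^3}\alpha$ (the second form's $u^2$-entry also contributes to the $v_2$-pairing), which is harmless since your argument only uses that this coefficient is a nonzero multiple of $a$ for $p>3$.
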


\begin{proof}
When $G_x$ acts it maps $v_1$ to
\begin{equation}
 \begin{pmatrix} \frac{s^3}{t^3} & \frac{as^2}{t^3} & *\\ &*&*\\&&*\end{pmatrix} \begin{pmatrix} -\frac{as^2}{t^3} &* &*\\&*&*\\&&*\end{pmatrix}
\end{equation}
and thus the stabilizer of $\lambda v_1$ satisfies $s^3 = t^3$ and $a = 0$, see \textbf{O131\_stabilizers.nb}.  Hence there are $\#\{\epsilon^3 =1\}$ orbits $\epsilon v_1$ with $\epsilon \in \bF_p^\times / \{x^3: x \in \bF_p^\times\}$, each of size $\frac{(p-1)p}{\#\{\epsilon^3 = 1\}}$.  These orbits together are maximal or non-maximal, and hence by density they are maximal and exhaust the maximal orbits.  
\end{proof}

\subsubsection{Case of $\sO_{1^21^2}$}
The annihilated subspace is $V_x =\begin{pmatrix} 0& * &*\\&0&*\\&&*\end{pmatrix} \begin{pmatrix} * &*&*\\&*&*\\&&*\end{pmatrix}$. The acting group is
\begin{equation}
 G_x = \begin{pmatrix} r &&\\ &s &\\ &&t\end{pmatrix}\begin{pmatrix} \frac{1}{t^2} &\\ & \frac{1}{rs}\end{pmatrix}  \sqcup \begin{pmatrix}  &r&\\ s& &\\ &&t\end{pmatrix}\begin{pmatrix} \frac{1}{t^2} &\\ & \frac{1}{rs}\end{pmatrix} .
\end{equation}
\begin{lemma}\label{1212_lemma}
 The orbits of the $G_x$ action on $V/V_x$ in the case $x \in \sO_{1^21^2}$ are given in the following table
 \begin{equation}
   \begin{array}{|l|l|l|l|}\hline
  \text{Orbit representative} & \text{Orbit size} & \text{Stabilizer size} & \text{Type}\\
  \hline 
  \begin{pmatrix} 0 &0 &0\\ &1 &0 \\ &&0\end{pmatrix} \begin{pmatrix} 0&0&0\\&0&0\\&&0\end{pmatrix} & p-1 & 2(p-1)^2 & \text{Non-maximal}\\
  \hline
  \begin{pmatrix} 0&0&0\\&\ell &0 \\&&0\end{pmatrix}\begin{pmatrix} 0&0&0\\&0&0\\&&0\end{pmatrix} & p-1 & 2(p-1)^2 & \text{Non-maximal}\\
  \hline
  \begin{pmatrix} 1& 0 &0\\&1&0\\&&0\end{pmatrix} \begin{pmatrix} 0 &0&0\\&0&0\\&&0\end{pmatrix} & \frac{(p-1)^2}{4} & 8 (p-1) & \text{Maximal}\\ 
  \hline
  \begin{pmatrix} \ell& 0 &0\\&\ell&0\\&&0\end{pmatrix} \begin{pmatrix} 0 &0&0\\&0&0\\&&0\end{pmatrix} & \frac{(p-1)^2}{4} & 8(p-1)& \text{Maximal}\\ 
  \hline
  \begin{pmatrix} 1& 0 &0\\&\ell&0\\&&0\end{pmatrix} \begin{pmatrix} 0 &0&0\\&0&0\\&&0\end{pmatrix} & \frac{(p-1)^2}{2} & 4(p-1)& \text{Maximal}\\
  \hline
 \end{array}
 \end{equation}

\end{lemma}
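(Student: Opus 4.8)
The quotient $V/V_x$ in this case is $2$-dimensional: according to the annihilator table, $V_x$ consists of those $(A,B) \in V(\bF_p)$ whose first component $A$ has vanishing $u^2$- and $v^2$-coefficients, so the cosets of $(\alpha u^2 + \delta v^2, 0)$ with $(\alpha,\delta) \in \bF_p^2$ form a transversal, and I will use $(\alpha,\delta)$ as coordinates on $V/V_x \cong \bF_p^2$. By the lemma preceding this one, $G_x$ acts on $V/V_x$ and the $\bmod p^2$ orbits above $\sO_{1^21^2}$ correspond bijectively to the $G_x$-orbits on $\bF_p^2$. The plan is to make the induced action explicit, enumerate its orbits and stabilizers, and then settle maximality.

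A direct matrix computation shows that $\left(\diag(r,s,t), \diag(t^{-2},(rs)^{-1})\right) \in G_x$ carries $(\alpha u^2 + \delta v^2, 0)$, modulo $V_x$, to the point $(t^{-2}r^2\alpha, t^{-2}s^2\delta)$, while the second connected component of $G_x$, which additionally interchanges $u$ and $v$, sends $(\alpha,\delta) \mapsto (t^{-2}r^2\delta, t^{-2}s^2\alpha)$. As $r,s,t$ range independently over $\bF_p^\times$, the ratios $r^2/t^2$ and $s^2/t^2$ range independently over the nonzero squares of $\bF_p$; hence $G_x$ acts on $\bF_p^2$ exactly as the group generated by coordinate-wise multiplication by nonzero squares together with the coordinate swap. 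The explicit matrices realizing each stabilizer would be recorded in the accompanying Mathematica notebook \cite{W18}.

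The orbit of $(\alpha,\delta)$ under this group is determined by which coordinates vanish and, for the nonzero ones, their quadratic character, all taken up to the swap. This produces exactly six $G_x$-orbits on $\bF_p^2$: the trivial orbit, together with the five displayed in the table. The orbit with one coordinate zero and the other a nonzero square, and the analogous one with a non-square, each have size $2 \cdot \tfrac{p-1}{2} = p-1$; the orbits (square, square) and (non-square, non-square) have size $\left(\tfrac{p-1}{2}\right)^2$; and the orbit (square, non-square) $\cup$ (non-square, square) has size $\tfrac{(p-1)^2}{2}$. These sizes together with the singleton sum to $p^2$, confirming completeness, and the stabilizer sizes $2(p-1)^2$, $8(p-1)$, $4(p-1)$ follow from the orbit--stabilizer theorem applied with $|G_x| = 2(p-1)^3$.

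Finally, the standard lift $x=(w^2,uv)$ over $\zed$, and likewise $x + p(\varepsilon v^2, 0)$ for any scalar $\varepsilon$, is already of the shape in Lemma \ref{maximal_ring_lemma} describing non-maximal forms in $\sO_{1^21^2}$ (first form with $u^2$-coefficient $\equiv 0 \bmod p^2$ and $uv$-, $uw$-coefficients $\equiv 0 \bmod p$, second form with $u^2$-coefficient $\equiv 0 \bmod p$), so the trivial orbit and the two one-coordinate-nonzero orbits are non-maximal. Their combined density in $V/V_x$ is $\tfrac{2p-1}{p^2} = 1 - \left(\tfrac{p-1}{p}\right)^2$, which by Lemma \ref{stabilizer_size_lemma} is exactly the complement of the density of maximal elements in $\sO_{1^21^2}$ from Lemma \ref{mod_p_density_lemma}; since maximality is constant on $G(\zed/p^2\zed)$-orbits, the remaining three orbits must all be maximal. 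The only real obstacle is the first step: correctly reducing the $G_x$-action modulo $V_x$ --- in particular handling the second connected component and verifying that the $\GL_2$-twist collapses to pure square-scaling on the quotient --- after which the orbit enumeration is a brief square/non-square case analysis and the maximality assertion is forced by the density bookkeeping.
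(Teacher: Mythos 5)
Your proposal is correct and follows essentially the same route as the paper: reduce to the two coordinates $(\alpha,\delta)$ of the first form modulo $V_x$, observe that $G_x$ acts by independent square scalings together with the coordinate swap, classify orbits by quadratic character, and settle maximality by checking the congruence condition of Lemma \ref{maximal_ring_lemma} on the zero and one-coordinate orbits and then invoking the density comparison of Lemmas \ref{mod_p_density_lemma} and \ref{stabilizer_size_lemma}. The only cosmetic difference is that you obtain the stabilizer sizes from the orbit--stabilizer theorem with $|G_x|=2(p-1)^3$, whereas the paper exhibits the stabilizing subgroups explicitly.
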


\begin{proof}
 According to the congruence condition, the orbits with representatives
\begin{equation}
 \begin{pmatrix} 0& 0 &0\\&1&0\\&&0\end{pmatrix} \begin{pmatrix} 0 &0&0\\&0&0\\&&0\end{pmatrix},  \begin{pmatrix} 0& 0 &0\\&\ell&0\\&&0\end{pmatrix} \begin{pmatrix} 0 &0&0\\&0&0\\&&0\end{pmatrix}
\end{equation}
are both non-maximal.  These orbits are inequivalent, since the non-zero elements scale by a square under the action.  Each has stabilizer
\begin{equation} 
\begin{pmatrix}
s &&\\ &t &\\ && \pm t 
\end{pmatrix}\begin{pmatrix} \frac{1}{t^2} &\\ & \frac{1}{st}\end{pmatrix}.
\end{equation}
Hence both orbits have size $p-1$.  Together with the zero orbit, these exhaust the non-maximal orbits, by density.

The stabilizer of the orbits with representatives 
\begin{equation}
  \begin{pmatrix} 1& 0 &0\\&1&0\\&&0\end{pmatrix} \begin{pmatrix} 0 &0&0\\&0&0\\&&0\end{pmatrix}, \qquad \begin{pmatrix} \ell& 0 &0\\&\ell&0\\&&0\end{pmatrix} \begin{pmatrix} 0 &0&0\\&0&0\\&&0\end{pmatrix}
\end{equation}
is
\begin{equation}
 \begin{pmatrix}
t &&\\ & t\epsilon_1 &\\ &&  t\epsilon_2 
\end{pmatrix}\begin{pmatrix} \frac{1}{t^2} &\\ & \frac{1}{ t^2\epsilon_1}\end{pmatrix} \sqcup  \begin{pmatrix}
 &t&\\  t\epsilon_1& &\\ &&  t\epsilon_2 
\end{pmatrix}\begin{pmatrix} \frac{1}{t^2} &\\ & \frac{1}{ t^2\epsilon_1}\end{pmatrix}, \qquad \epsilon_1, \epsilon_2 \in \{\pm 1\} .
\end{equation}
In the case of $\begin{pmatrix} 1& 0 &0\\&\ell&0\\&&0\end{pmatrix} \begin{pmatrix} 0 &0&0\\&0&0\\&&0\end{pmatrix}$ the stabilizer is
\begin{equation}
 \begin{pmatrix}
t &&\\ & t\epsilon_1 &\\ &&  t\epsilon_2 
\end{pmatrix}\begin{pmatrix} \frac{1}{t^2} &\\ & \frac{1}{ t^2\epsilon_1}\end{pmatrix}, \qquad \epsilon_1, \epsilon_2 \in \{\pm 1\}
\end{equation}
since the square and non-square coordinate cannot be exchanged.  These exhaust the maximal orbits, by density.
\end{proof}

\subsection{The $\mod p^2$ orbits which appear as frequencies}
The remaining orbits of $V/V_x$ are classified projectively, which suffices in calculating the Fourier transform, see Lemma \ref{dilation_lemma} below. Equivalently, the acting group is taken to be $G_x \times \GL_1$, in which $\GL_1$ acts by multiplication by a scalar.
\subsubsection{Case of $\sO_{Cs}$}
The acting group is 
\begin{equation}
 G_x \times \GL_1 = \begin{pmatrix}r &&\\ a &s &\\ b & c&t \end{pmatrix} \begin{pmatrix} \frac{\lambda}{t^2} &\\ \frac{-c\lambda}{st^2} & \frac{\lambda}{st}\end{pmatrix}.
\end{equation}
The action is  on $V/V_x$, 
\begin{equation}
V_x = \begin{pmatrix}
       0&0&z\\ &0&*\\&&*
      \end{pmatrix}\begin{pmatrix} 0 & \frac{z}{2} &*\\&*&*\\&&*\end{pmatrix}.
\end{equation}

\begin{lemma}
When $x$ is the standard representative of $\sO_{Cs} \bmod p$, there are 11 orbits of $ G_x \times \GL_1$ acting on $V/V_x$, listed in the following table, together with the orbit size, and the relations on $ G_x \times \GL_1$ describing each orbit stabilizer.
 \begin{equation}
  \begin{array}{|l|l|l|}
   \hline
   \text{Orbit representative} & \text{Orbit size} & \text{Stabilizer relations}\\
   \hline
   \begin{pmatrix} 1&0&0\\&0&0\\&&0\end{pmatrix} \begin{pmatrix} 0&0&0\\&0&0\\&&0\end{pmatrix} & (p-1)p^3 & a=b=c=0, r^2 \lambda = t^2 \\
   \hline
   \begin{pmatrix} 0&0&0\\&1&0\\&&0\end{pmatrix} \begin{pmatrix} 0&0&0\\&0&0\\&&0\end{pmatrix} & p-1 & s^2 \lambda = t^2 \\
   \hline
   \begin{pmatrix} 0&0&1\\&1&0\\&&0\end{pmatrix} \begin{pmatrix} 0&0&0\\&0&0\\&&0\end{pmatrix} & (p-1)^2 &  s^2 \lambda = t^2,  r \lambda = t \\
   \hline
   \begin{pmatrix} 0&0&0\\&1&0\\&&0\end{pmatrix} \begin{pmatrix} 1&0&0\\&0&0\\&&0\end{pmatrix} & (p-1)^2p & s^2 \lambda  = t^2, r^2 \lambda = st, a=0 \\
   \hline
   \begin{pmatrix} 1&\frac{1}{2}&0\\&0&0\\&&0\end{pmatrix} \begin{pmatrix} 0&0&0\\&0&0\\&&0\end{pmatrix} & \frac{ (p-1)^2p^3}{2} & \begin{array}{l} b=c=0, a(a+s)=0\\ \text{if $a=0$: }s=r, r^2\lambda  = t^2 \\ \text{if $a+s = 0$: } a=r, r^2  \lambda = t^2 \end{array}  \\
   \hline
   \begin{pmatrix} 0&1&0\\&0&0\\&&0\end{pmatrix} \begin{pmatrix} 0&0&0\\&0&0\\&&0\end{pmatrix} & (p-1)p^2 & a = 0, c=0,  rs \lambda = t^2 \\
   \hline
   \begin{pmatrix} 0&1&0\\&0&0\\&&0\end{pmatrix} \begin{pmatrix} 1&0&0\\&0&0\\&&0\end{pmatrix} & (p-1)^2p^2 & a=0, c=0,  rs\lambda = t^2,  r^2 \lambda = st \\
   \hline
   \begin{pmatrix} 1&0&0\\&-\ell&0\\&&0\end{pmatrix} \begin{pmatrix} 0&0&0\\&0&0\\&&0\end{pmatrix} & \frac{(p-1)^2 p^3}{2} & a=b=c=0, s = \pm r, r^2\lambda  = t^2 \\
   \hline
   \begin{pmatrix} 0&0&1\\&0&0\\&&0\end{pmatrix} \begin{pmatrix} 0&0&0\\&0&0\\&&0\end{pmatrix} & p-1 & r\lambda   = t \\
   \hline
   \begin{pmatrix} 0&0&0\\&0&0\\&&0\end{pmatrix} \begin{pmatrix} 1&0&0\\&0&0\\&&0\end{pmatrix} & (p-1)p & a=0, r^2\lambda  = st \\
   \hline
   \begin{pmatrix} 0&0&0\\&0&0\\&&0\end{pmatrix} \begin{pmatrix} 0&0&0\\&0&0\\&&0\end{pmatrix} & 1 & - \\
   \hline
  \end{array}
 \end{equation}

\end{lemma}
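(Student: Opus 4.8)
The plan is to make the quotient $V/V_x$ fully explicit, linearise the action, and then peel off the orbits according to the six orbit‐types of the ``binary part'' of the first form, using Lemma~\ref{sym_2_action_smaller_lemma}. Reading off $V_x$ from the table, the annihilator subspace is $7$-dimensional: it contains the full $(2,3)$- and $(3,3)$-entries of the first form and the full $(1,3)$-, $(2,2)$-, $(2,3)$- and $(3,3)$-entries of the second form, together with the one extra direction in which the $(1,3)$-entry of the first form and the $(1,2)$-entry of the second form vary together in the ratio $1:\tfrac12$. Hence $\dim(V/V_x)=5$, and I would coordinatise the quotient by $(x_1,x_2,x_3,x_4,x_5)$, the $u^2$-, $uv$-, $uw$- and $v^2$-coefficients of the first form together with the $u^2$-coefficient of the second form, adopting the normalisation that the $uv$-coefficient of the second form is always reduced to $0$ against the coupled $V_x$-direction; with this choice any $uv$-coefficient produced in the transformed second form must be folded back into the coordinate $x_3$, which is the one genuinely awkward bookkeeping point.

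Next I would compute the action of a general element of $G_x\times\GL_1$ on $(x_1,\dots,x_5)$. Since such an element induces the change of variables $u\mapsto ru+av+bw,\ v\mapsto sv+cw,\ w\mapsto tw$ on the ternary forms followed by the scalar-and-mixing action of the $\GL_2$-factor, this is a routine, if lengthy, substitution, which I would perform and verify in \cite{W18}. Two structural features drive the whole classification: (i) the triple $(x_1,x_2,x_4)$ of $u$-$v$ coefficients of the first form transforms exactly by the action of the upper-left block $\left(\begin{smallmatrix} r&0\\ a&s\end{smallmatrix}\right)$ of $g_3$ together with the scalar $\lambda/t^2$ on $\sym^2(\bF_p^2)$, which is precisely the situation of Lemma~\ref{sym_2_action_smaller_lemma} since $\lambda/t^2$ ranges over all of $\bF_p^\times$ and is independent of $b,c$; and (ii) once this binary part is normalised, the residual data $(x_3,x_5)$ is acted on by an affine map in which $b$ and $c$ provide translations and the remaining parameters $r,s,t,\lambda$ provide scalings.

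With this in hand I would stratify according to the six binary orbits $0,\ u^2,\ v^2,\ u(u+v),\ uv,\ u^2-\ell v^2$ of Lemma~\ref{sym_2_action_smaller_lemma}. If the binary part is $0$, the first form reduces to $x_3\cdot uw$ and the second to $x_5\cdot u^2$ acted on by all of $G_x\times\GL_1$, and one checks that a pair $(x_3,x_5)$ with both entries nonzero is equivalent to one with $x_3=0$ (the parameter $a$ feeds $x_5$ into $x_3$), so that exactly the three orbits with $(x_3,x_5)\in\{(0,0),(\ast,0),(0,\ast)\}$ occur. If the binary part is $v^2$, then $x_3$ and $x_5$ scale freely and $a$ can again clear $x_3$ once $x_5\ne0$, giving three orbits ($v^2$; $v^2+uw$; $v^2$ with second form $u^2$). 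If the binary part is $uv$, the translation coming from $c$ together with the $V_x$-coupling clears $x_3$ (here the hypothesis $p>3$ enters), leaving the single invariant of whether $x_5$ vanishes, hence two orbits. For each of the remaining binary types $u^2$, $u(u+v)$, $u^2-\ell v^2$ both $x_3$ and $x_5$ can be cleared (by $b$ and $c$ respectively), giving one orbit each. This produces exactly the $11$ orbits listed; for each the stabiliser relations are read directly off the action formula, the orbit size follows from the orbit--stabiliser theorem with $|G_x\times\GL_1|=(p-1)^4p^3$, and completeness of the list is confirmed by the identity $\sum(\text{orbit size})=p^5=|V/V_x|$.

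The main obstacle is not conceptual but the sheer volume and delicacy of the stratification step: keeping track of the $V_x$-identification that couples the $uv$-coefficient of the transformed second form back into $x_3$, and correctly pinning down the two-component stabilisers that appear for the $v^2$- and $u(u+v)$-types, which reflect the involutions $v\mapsto-v$ and $u+v\leftrightarrow u-v$ already present in Lemma~\ref{sym_2_action_smaller_lemma}. These explicit verifications are the ones I would carry out in the supporting Mathematica files \cite{W18}.
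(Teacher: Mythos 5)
Your proposal is correct and takes essentially the same route as the paper: identify the $(u,v)$-part of the first form with $\sym^2(\bF_p^2)$ acted on as in Lemma \ref{sym_2_action_smaller_lemma}, decide for each of the six binary types which augmentations by the $uw$-term of the first form and the $u^2$-term of the second form survive (using $a$, $b$, $c$ as translations), and certify stabilizers and pairwise inequivalence by the counting identity $\sum(\text{orbit sizes})=p^5=|V/V_x|$. Your explicit bookkeeping of the $V_x$-coupling (including the factor $3$ in the $uv$-stratum, which is exactly where $p>3$ is needed) just makes precise what the paper delegates to its Mathematica verification.
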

\begin{proof}
See \textbf{OCs\_stabilizers.nb}.

 Identify the upper left two by two elements of the first quadratic form with $\sym^2(\bF_p^2)$.  The group $ \begin{pmatrix} r &\\ a &s\end{pmatrix}\times \GL_1$ acts on $\sym^2(\bF_p^2)$ with six orbits, having representatives $0$, $u^2$, $v^2$, $u(u+v)$, $uv$, $u^2 - \ell v^2$, see Lemma \ref{sym_2_action_smaller_lemma}. The orbits on $V/V_x$ can be augmented with either a $uw$ term in the first quadratic form, or a $u^2$ term in the second.  
 
 The 0 form can be augmented by either term, but not both, since if the $u^2$ coefficient is non-zero in the second quadratic form, making the transformation $u \mapsto u + a v$ can reduce the pair of forms to one equivalent to $(0, u^2) \bmod V_x$.  
 
 The $u^2$ orbit cannot be augmented with either term, since subtracting a multiple of the first form from the second eliminates a $u^2$ term in the second quadratic form, and replacing $u \mapsto u + bw$ can eliminate the $uw$ term in the first.  
 
 The $v^2$ orbit can be augmented with either term, but not both by the same argument as in the case of 0. 
 
 $u(u+v)$ cannot be augmented for the same reason that $u^2$ cannot. 
 
 $uv$ can be augmented with the $u^2$ term in the second form, but not a $uw$ term, which could be eliminated by replacing $v \mapsto v + cw$. 
 
 $u^2 - \ell v^2$ cannot be augmented by the same argument as in the $u^2$ case.
 
 The above give the 11 orbits listed.  The stabilizer conditions have been verified to guarantee stabilization of the form. If any of the stabilizer groups were larger than the one listed, or if any of the 11 orbits listed were equivalent, then there would not be sufficiently many points to cover the space, so that it follows that the 11 orbits are inequivalent and the stabilizers are correct.
\end{proof}

\subsubsection{Case of $\sO_{D2}$}
The acting group is \begin{equation}G_x \times \GL_1= \begin{pmatrix} x&&\\ a& c & e\\ b&\pm  e\ell &\pm c\end{pmatrix} \begin{pmatrix} r &\\ s&t\end{pmatrix}.\end{equation} The action is on the space $V/V_x$ with
\begin{equation}
 V_x = \begin{pmatrix} 0 &0&0\\ & z &0\\ &&- z\ell\end{pmatrix} \begin{pmatrix} 0 &*&*\\&*&*\\&&*\end{pmatrix}.
\end{equation}
\begin{lemma}
In the case of $\sO_{D2}$, the action of $G_x \times \GL_1$ on $V/V_x$ has eleven orbits, listed with a representative, size and stabilizer in the following table.
\begin{equation}
 \begin{array}{|l|l|l|l|}
  \hline
 & \text{Orbit representative} & \text{Orbit size} & \text{Stabilizer}\\
  \hline
 1& \begin{pmatrix} 1 &0&0\\ &0&0\\&&0\end{pmatrix}\begin{pmatrix}0&0&0\\ &0&0\\&&0\end{pmatrix} & (p-1)p^3 & \begin{pmatrix} x &0&0\\ 0& c &e \\ 0 &\pm e\ell  &\pm c\end{pmatrix} \begin{pmatrix}\frac{1}{x^2} & \\ & t\end{pmatrix}\\
  \hline
 2& \begin{pmatrix} 1 &1&0\\ &0&0\\&&0\end{pmatrix}\begin{pmatrix}0&0&0\\ &0&0\\&&0\end{pmatrix} & \frac{(p-1)^2p^3(p+1)}{2} & \begin{array}{l} \begin{pmatrix} x &&\\ &x&\\ &&\pm x\end{pmatrix}\begin{pmatrix}\frac{1}{x^2} &\\ &t\end{pmatrix} \\\sqcup \begin{pmatrix} x &&\\2x &-x &\\&&\pm x\end{pmatrix}\begin{pmatrix} \frac{1}{x^2} &\\ & t\end{pmatrix}\end{array}\\
  \hline
 3a& \begin{array}{l} \begin{pmatrix} 1 &0&0\\ &0&1\\&&0\end{pmatrix}\begin{pmatrix}0&0&0\\ &0&0\\&&0\end{pmatrix}, \\ (-1 = \square) \end{array} & \frac{(p-1)^2p^3(p+1)}{2} & \begin{array}{l}  \begin{pmatrix} x&&\\&\pm x &\\ &&\pm x\end{pmatrix}\begin{pmatrix} \frac{1}{x^2} &\\ & t\end{pmatrix}\\ \sqcup \begin{pmatrix}x &&\\ & \pm \sqrt{-1}x &\\ &&\mp \sqrt{-1} x\end{pmatrix} \begin{pmatrix} \frac{1}{x^2} &\\&t\end{pmatrix}
  \end{array}\\ 
  \hline 
3b&  \begin{array}{l} \begin{pmatrix} 1 &0&0\\ &1&k\\&&0\end{pmatrix}\begin{pmatrix}0&0&0\\ &0&0\\&&0\end{pmatrix}, \\ \ell- 4 k^2 = \square, (-1 \neq \square) \end{array} & \frac{(p-1)^2p^3(p+1)}{2} & *
  \\ 
  \hline 
4&\begin{pmatrix} 0 &1&0\\ &0&0\\&&0\end{pmatrix}\begin{pmatrix}0&0&0\\ &0&0\\&&0\end{pmatrix} & (p-1)p^2(p+1) & \begin{pmatrix} x &&\\&c&\\&&\pm c\end{pmatrix} \begin{pmatrix} \frac{1}{xc} &\\ s &t \end{pmatrix}\\
  \hline
5&  \begin{pmatrix} 0 &1&0\\ &0&0\\&&0\end{pmatrix}\begin{pmatrix}1&0&0\\ &0&0\\&&0\end{pmatrix} & (p-1)^2p^2(p+1) & \begin{pmatrix} x &&\\&c&\\&&\pm c\end{pmatrix} \begin{pmatrix} \frac{1}{xc} &\\ s &\frac{1}{x^2} \end{pmatrix}\\
  \hline
  6& \begin{pmatrix} 0 &0&0\\ &1&0\\&&0\end{pmatrix}\begin{pmatrix}0&0&0\\ &0&0\\&&0\end{pmatrix} & \frac{(p-1)(p+1)}{2} & \begin{array}{l} \begin{pmatrix} x &&\\ a & c&\\b & &\pm c\end{pmatrix}\begin{pmatrix} \frac{1}{c^2} &\\ s &t \end{pmatrix} \\
  \sqcup \begin{pmatrix} x &&\\ a & &e\\b &\pm  e\ell &\end{pmatrix}\begin{pmatrix} \frac{1}{ e^2\ell} &\\ s &t \end{pmatrix}
  \end{array}
  \\
  \hline
  \end{array}
\end{equation}
\begin{equation*}
\begin{array}{|l|l|l|l|}
\hline 
7&  \begin{pmatrix} 0 &0&0\\ &1&0\\&&0\end{pmatrix}\begin{pmatrix}1&0&0\\ &0&0\\&&0\end{pmatrix} & \frac{(p-1)^2(p+1)}{2} & \begin{array}{l} \begin{pmatrix} x &&\\a & c&\\ b & &\pm c\end{pmatrix} \begin{pmatrix} \frac{1}{c^2} &\\ s & \frac{1}{x^2}\end{pmatrix}\\ \sqcup \begin{pmatrix} x &&\\ a &&e\\ b & \pm  e\ell &\end{pmatrix} \begin{pmatrix} \frac{1}{ e^2\ell} & \\ s & \frac{1}{x^2}\end{pmatrix}\end{array}\\
  \hline
  8a&  \begin{array}{l}\begin{pmatrix} 0 &0&0\\ &0&1\\&&0\end{pmatrix}\begin{pmatrix}0&0&0\\ &0&0\\&&0\end{pmatrix},\\ (-1 = \square)\end{array} & \frac{(p-1)(p+1)}{2} & \begin{array}{l} \begin{pmatrix} x &&\\ a &c &\\ b & &\pm c\end{pmatrix} \begin{pmatrix} \frac{\pm 1}{ c^2} &\\ s &t \end{pmatrix} \\ \sqcup \begin{pmatrix} x &&\\ a && e \\ b &\pm  e\ell  &\end{pmatrix} \begin{pmatrix} \frac{\pm 1}{ e^2\ell} &\\ s & t\end{pmatrix}\end{array} \\
  \hline
8b&  \begin{array}{l}\begin{pmatrix} 0 &0&0\\ &1&k\\&&0\end{pmatrix}\begin{pmatrix}0&0&0\\ &0&0\\&&0\end{pmatrix},\\ \ell - 4k^2 = \square, (-1 \neq \square)\end{array} & \frac{(p-1)(p+1)}{2} & *\\
  \hline
9a&  \begin{array}{l}\begin{pmatrix} 0 &0&0\\ &0&1\\&&0\end{pmatrix}\begin{pmatrix}1&0&0\\ &0&0\\&&0\end{pmatrix},\\ (-1 = \square)\end{array} & \frac{(p-1)^2(p+1)}{2} & \begin{array}{l} \begin{pmatrix} x &&\\ a &c &\\ b & &\pm c\end{pmatrix} \begin{pmatrix} \frac{\pm 1}{ c^2} &\\ s &\frac{1}{x^2} \end{pmatrix} \\ \sqcup \begin{pmatrix} x &&\\ a && e \\ b &\pm  e\ell  &\end{pmatrix} \begin{pmatrix} \frac{\pm 1}{e^2\ell } &\\ s & \frac{1}{x^2}\end{pmatrix}\end{array} \\
  \hline
9b&  \begin{array}{l}\begin{pmatrix} 0 &0&0\\ &1&k\\&&0\end{pmatrix}\begin{pmatrix}1&0&0\\ &0&0\\&&0\end{pmatrix},\\ \ell - 4k^2 = \square, (-1 \neq \square)\end{array} & \frac{(p-1)^2(p+1)}{2} & *\\
  \hline
10&  \begin{pmatrix} 0 &0&0\\ &0&0\\&&0\end{pmatrix}\begin{pmatrix}1&0&0\\ &0&0\\&&0\end{pmatrix} & p-1 & \begin{pmatrix} x &&\\ a & c & e\\ b & \pm e \ell & \pm c\end{pmatrix} \begin{pmatrix} r & \\ s & \frac{1}{x^2}\end{pmatrix}\\
  \hline
11&  \begin{pmatrix} 0 &0&0\\ &0&0\\&&0\end{pmatrix}\begin{pmatrix}0&0&0\\ &0&0\\&&0\end{pmatrix} & 1 & G_x \times \GL_1\\
  \hline
 \end{array}
\end{equation*}
Descriptions of the *'d cases where $-1 \neq \square$ are given below.
\end{lemma}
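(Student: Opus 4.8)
The quotient $V/V_x$ has dimension $12-\dim V_x=6$, and a class is conveniently recorded as a pair $(A,\beta u^2)$ in which $A$ is a ternary quadratic form taken modulo the binary form $v^2-\ell w^2$ sitting in its $(v,w)$-block (a one-dimensional identification), and $\beta\in\bF_p$ is the $u^2$-coefficient of the second quadratic form, all of whose remaining coefficients lie in $V_x$. The plan is to bring $(A,\beta u^2)$ to one of the listed normal forms under the $G_x\times\GL_1$ action, then verify the tabulated stabilizer relations by checking each genuinely stabilizes its representative and confirming that the resulting orbit sizes $|G_x\times\GL_1|/|\Stab|$ sum to $p^6=|V/V_x|$; the mechanical stabilizer bookkeeping is carried out in \textbf{OD2\_stabilizers.nb}.

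Writing $g_3=\begin{pmatrix} x&0&0\\ a&c&e\\ b&\pm e\ell&\pm c\end{pmatrix}$ with lower-right block $M=\begin{pmatrix} c&e\\ \pm e\ell&\pm c\end{pmatrix}$ and $g_2=\begin{pmatrix} r&0\\ s&t\end{pmatrix}$, the action is $A\mapsto r\,g_3Ag_3^t$ and $\beta\mapsto x^2(s\,A_{11}+t\beta)$. The features I would exploit are: $M$ is a conformal similarity of the anisotropic form $q_0=v^2-\ell w^2$ with similitude factor $c^2-e^2\ell$ surjecting onto $\bF_p^\times$, so $M$ acts on the $(v,w)$-block of $A$ preserving the quotiented line $\langle q_0\rangle$; the entry $x$ rescales $u$, hence scales $A_{11}$ by a square while fixing the pure $(v,w)$-part; the entries $a,b$ translate $u\mapsto u+av+bw$; and $r,s,t$ rescale $A$, rescale $\beta$, and add $s\,A_{11}$ to $\beta$ respectively, without otherwise disturbing $A$.

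The classification then splits on whether $A_{11}=0$. If $A_{11}\neq0$, completing the square in $u$ (choosing $a,b$) removes the $uv$- and $uw$-terms and leaves $A_{11}u^2+q'(v,w)$; the $s$-parameter then clears $\beta$, and using $r$ together with the $M$-action and the residual square-scalings, the class of $q'$ in $\sym^2(\bF_p^2)/\langle q_0\rangle$ is seen to be determined, up to squares, by a single $q_0$-relative discriminant invariant that takes the three values zero, square, non-square. This produces orbits $1$, $2$ and $3a/3b$, the splitting $3a/3b$ according to whether $-1$ is a square mod $p$ recording which of $v^2$, $2vw$, or $v^2+2kvw$ (with $\ell-4k^2=\square$) represents the non-square class. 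If $A_{11}=0$, then $\beta$ is an invariant, either zero or normalizable to $1$, the form is $2uL(v,w)+q(v,w)$, and one subdivides on whether the linear form $L$ vanishes. When $L\neq0$, the translations add $2(av+bw)L$ to $q$, and since $q_0$ is irreducible, hence not divisible by $L$, these together with $\langle q_0\rangle$ span all of $\sym^2(\bF_p^2)$, so $q$ is cleared and $L$ normalized by $M$ and scaling to a single class, giving orbits $4$ and $5$. When $L=0$, the pure form $q$ modulo $\langle q_0\rangle$ again falls into the three discriminant classes, and pairing with $\beta\in\{0,\ \text{nonzero}\}$ gives orbits $6$ through $11$, with the parity of $-1$ once more dictating the representatives $8a/8b$ and $9a/9b$.

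It remains to check, for each orbit, that the tabulated relations genuinely stabilize the representative, to compute $|G_x\times\GL_1|/|\Stab|$, and to confirm the sizes total $p^6$; the starred stabilizers of $3b$, $8b$, $9b$ (the $-1\neq\square$ cases) amount to tracking which elements of the conformal group of $q_0$ fix $v^2+2kvw$, and are recorded following the table. The main obstacle is not any single idea but the volume of cases and the need to pin down the discrete $\pm$ components of the stabilizers exactly; the count summing to $p^6$ is the consistency check I would rely on, since a stabilizer that were too large anywhere would be detected there, with \textbf{OD2\_stabilizers.nb} supplying the routine verification.
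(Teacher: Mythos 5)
Your overall scheme is viable and genuinely different from the paper's. The paper never derives the normal forms at all: it computes the stabilizers \emph{exactly} (deriving constraints that show the stabilizing set is no larger than listed), proves the same-size pairs $2/3$, $6/8$, $7/9$ are inequivalent, and then concludes completeness because the orbit sizes exhaust $p^6$. You instead derive the representatives by a direct reduction (complete the square in $u$ when $A_{11}\neq 0$, use the translations $u\mapsto u+av+bw$ to clear the $(v,w)$-part when $A_{11}=0$ and the linear form $L\neq 0$, etc.), verify only that the listed stabilizers stabilize, and let the count $\sum |G|/|\Stab_{\mathrm{claimed}}|=p^6$ force exactness. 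That logic is sound \emph{provided} the reduction really shows every class is equivalent to one of the eleven listed representatives; and your description of the action and the branch structure ($A_{11}\neq0$; $A_{11}=0,L\neq0$; $A_{11}=0,L=0$, with $\beta$ an invariant exactly when $A_{11}=0$) is correct and matches the table.

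The genuine gap is the sentence ``the class of $q'$ \ldots is seen to be determined \ldots by a single $q_0$-relative discriminant invariant.'' That transitivity claim -- that the cosets of $\sym^2(\bF_p^2)/\langle v^2-\ell w^2\rangle$ with a given invariant value form a \emph{single} orbit of the conformal group of $q_0$ together with the available scalars -- is exactly the content of orbits $1,2,3$ (and, with the larger scalar group, of $6$--$9$), and it is asserted, not proved. Crucially, your consistency check cannot catch a failure here: if, say, the degenerate-free pencils split into two orbits with the listed representative in only one of them, the claimed stabilizers would still be contained in the true ones and the claimed sizes would still sum to $p^6$, while the classification would be wrong; the count detects oversized stabilizers only after surjectivity of the reduction is already known. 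So you must actually prove the transitivity statements, e.g.\ for the ``square'' class by noting that a pencil $q'+\lambda q_0$ containing a nonzero rank-one form contains exactly two of them and their coefficients lie in \emph{different} square classes (as for $u^2-v^2$, whose pencil contains $-v^2$ and $-\ell w^2$), so the restriction to square scalars is no obstruction; and for the degenerate-free class by an explicit transitivity or counting argument. Alternatively, you could adopt the paper's device at this point -- pin down the stabilizers exactly and check inequivalence of the equal-size orbits -- after which the counting closes completeness without any transitivity claim.
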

\begin{proof}
To prove correctness, it suffices to verify the stabilizers and prove that the orbits of the same size are inequivalent, since then all points are accounted for.  

\subsubsection*{Verification that equal sized orbits are inequivalent}
To prove that orbits 2 and 3 are inequivalent, consider the action  
\begin{equation}
 \begin{pmatrix} x &&\\ a & c&e\\ b &\pm e \ell & \pm c \end{pmatrix} \begin{pmatrix} r &\\s&t\end{pmatrix} \cdot (u^2 + 2uv,0) \bmod V_x.
\end{equation}
The first quadratic form becomes
\begin{equation} 
 r (xu + av + bw)(xu + (a + 2c)v + (b \pm 2 e\ell)w).
\end{equation}
To fix the $u^2$ coefficient, $r = \frac{1}{x^2}$.
For the $uv$ and $uw$ terms to vanish, $(a + 2c) = -a$ and $(b \pm 2 e\ell) = -b$.  Hence the $v^2$ and $w^2$ terms become 
\begin{equation} \frac{1}{x^2}(-a^2v^2 -b^2w^2) \equiv -\frac{1}{x^2}(a^2 + b^2\ell^{-1} )v^2 \bmod (v^2-\ell w^2).\end{equation} The $vw$ coefficient is $\frac{-2ab}{x^2}$.  When $-1 = \square$, $a^2 +  b^2\ell^{-1} = 0$  if and only if $a = b= 0$, but this causes the acting matrix to be singular, a contradiction. When $-1 \neq \square$ substitute $a = \frac{-a}{x}$, $b = \frac{b}{x}$ so that the system to be solved is $ab = k$, $a^2 +  b^2\ell^{-1} = -1$ or $a^2 +  \frac{k^2}{ a^2\ell} = -1$, or $a^4 + a^2 +  \frac{k^2}{\ell} = 0$.   This has no solution, since the discriminant $1 - \frac{4 k^2}{\ell} \neq \square \Leftrightarrow \ell - 4k^2 = \square$.

To check that orbits 6 and 8 are inequivalent, acting on the representative in 6 maps the first quadratic form to
\begin{equation}
 r \cdot (c^2 v^2 \pm 2 ce\ell vw +  e^2\ell^2 w^2) \equiv r \cdot ((c^2 +  e^2\ell) v^2 \pm 2 ce\ell vw) \bmod v^2 -\ell w^2.
\end{equation}
When $-1 =\square$, $c^2 + e^2\ell = 0$ if and only if $c= e=0$, which makes the acting matrix singular.  When $-1 \neq \square$, there is no solution to  
\begin{equation}
 \frac{\pm ce\ell}{c^2 + e^2\ell} = k \Leftrightarrow c^2 k\pm  ce\ell +  e^2\ell k= 0
\end{equation}
in $(c,e) \neq (0,0)$ since $\ell - 4k^2 = \square$ implies that the discriminant $\ell^2 - 4\ell k^2\neq \square$. The inequivalence of orbits 7 and 9 follows similarly.

\subsubsection*{Calculation of stabilizers} The following calculations are performed in \textbf{OD2\_stabilizers.nb}.
  Throughout the acting group is assumed given in coordinates as 
\begin{equation}
 \begin{pmatrix} x &&\\ a & c&e\\ b &\pm  e\ell & \pm c \end{pmatrix} \begin{pmatrix} r &\\s&t\end{pmatrix}
\end{equation}
and recall $V_x = \begin{pmatrix} 0 &0&0\\ & z &0\\ &&-z \ell \end{pmatrix} \begin{pmatrix} 0 &*&*\\&*&*\\&&*\end{pmatrix}.$
\begin{enumerate}
 \item [(1.)] The pair of forms are $(u^2, 0)$. The $u^2$ coefficients force $s = 0$, $r = \frac{1}{x^2}$.  The $uv$ and $uw$ coefficients force $a = b=0$.  
 \item [(2.)] The pair of forms are $(u^2+2uv, 0)$. The $u^2$ coefficients force $s = 0$, $r = \frac{1}{x^2}$.  The $\GL_3$ action carries 
 \begin{equation}
  u(u+2v) \mapsto (xu + av+bw)(xu + (a+2c)v + (b+2d)w), \qquad d = \pm 2 e\ell
 \end{equation}
 To make the $uw$ term 0, $x(2b + 2d) = 0$ so $b = -d$.  To make the $vw$ term 0, $a \cdot(2b + 2d) + 2bc = 0$ so $b = 0$ or $c = 0$.
 
 \begin{itemize}
  \item (Case 1, $b = 0$): In this case $d = 0$ so $(xu + av)(xu + (a + 2c)v) = s u(u+2v)$, and $s = \frac{1}{x^2}$.  If $a = 0$ then $c = x$ which gives the solutions
  \begin{equation}
   \begin{pmatrix} x &&\\ & x&\\ && \pm x\end{pmatrix} \begin{pmatrix} \frac{1}{x^2}&\\ & t\end{pmatrix}.
  \end{equation}
  If $a \neq 0$ then $a = 2x$ and $c = -x$ which obtains the solutions
\begin{equation}  \begin{pmatrix} x &&\\ 2x &-x &\\ && \pm x \end{pmatrix} \begin{pmatrix} \frac{1}{x^2} &\\ & t\end{pmatrix}. \end{equation}
  \item (Case 2, $b \neq 0$, $b = -d$, $c = 0$): In this case
  \begin{align} 
   (xu + av + bw)(xu + av - bw) &= x^2 u^2 + 2ax uv + a^2v^2 - b^2w^2 \\& \notag \equiv x^2 u^2 + 2ax uv + ( a^2\ell -b^2)w^2 \bmod v^2 - \ell w^2.
  \end{align}
 Since $b \neq 0$, $ a^2\ell - b^2 \neq 0$, so this does not produce solutions.
 \end{itemize}
 \item [(3a.)] The pair of forms are $(u^2+ 2vw, 0)$. The $u^2$ coefficients force $r = \frac{1}{x^2}$ and $s = 0$. The $uv$ and $uw$ coefficients force $a = b= 0$. The $\GL_3$ action carries 
 \begin{align}
  2vw \mapsto &2 (cv \pm  e\ell w) (ev \pm cw)\\&\notag  = 2ce v^2 \pm 2 ( c^2 +  e^2\ell) vw + 2 ce\ell w^2 \\& \notag \equiv 4ce v^2 \pm 2(c^2 +  e^2\ell) vw \bmod v^2 - \ell w^2 
 \end{align}
 Thus $ce=0$.  The case $c = 0$ may be ruled out, since the ratio between the $u^2$ and $2vw$ coefficient becomes a non-square in this case.  The case $e = 0$, $c \neq 0$ obtains the stabilizer claimed.
 \item [(3b.)] The pair of forms are $(u^2+ v^2 + 2kvw, 0)$. The $u^2$ coefficients force $r = \frac{1}{x^2}$ and $s = 0$. The $uv$ and $uw$ coefficients force $a = b= 0$.  Under the $\GL_3$ action,
\begin{align}
v(v + 2k w) \mapsto &(cv \pm  e\ell w) ((cv \pm  e\ell w) + 2k(ev \pm cw))\\
\notag &\equiv (c^2 + 4cek +  e^2\ell)v^2 \pm (2 c^2 k+ 2 ce\ell + 2 e^2\ell k) vw \bmod (v^2 - \ell w^2). 
\end{align}
To stabilize,
\begin{equation}
 k(c^2 + 4cek +  e^2\ell) = \pm ( c^2 k+  ce\ell +  e^2\ell k).
\end{equation}
The plus case leads to $(4k^2 - \ell) ce = 0$, or $ce = 0$.  Note that $c = 0$, $e \neq 0$ does not lead to a solution, since it is not possible that $e^2\ell = x^2$.  This obtains two solutions, with $e = 0$ and $c = \pm x$.  

Solving the quadratic equation, the minus case obtains
\begin{equation}
 \frac{c}{e} = \frac{-(4k^2 + \ell)\pm (4k^2 -\ell)}{4k}, \quad \Leftrightarrow \quad  \frac{c}{e} = -2k \; \text{or}\; -\frac{\ell}{2k}.
\end{equation}
 Subject to the quadratic condition $2c^2 k^2 + (4k^2 + \ell)ce + 2 e^2 \ell k = 0$, the form becomes
\begin{equation}
 -\frac{1}{2k}\left(\ell - 4k^2\right) ce v^2 - (\ell - 4k^2) ce vw.
\end{equation}
Thus the condition to be a stabilizer becomes, 
\begin{equation}
 -\frac{1}{2k}\left(\ell - 4k^2\right) ce = x^2.
\end{equation}
Since $\ell - 4k^2 = \square$, $\frac{c}{e} = -2k$ leads to a pair of solutions.  Meanwhile $\frac{c}{e} = -\frac{\ell}{2k}$ leads to none, since its product with $-2k$ is not a square.

\item[(4.)] The pair of forms are $(2uv, 0)$.  Under the $\GL_3$ action, 
\begin{equation}
 2uv \mapsto 2(xu + av + bw) (cv \pm  e\ell w).
\end{equation}
Considering the $uw$ coefficient forces $e = 0$.  Considering the $vw$ and $v^2$ coefficients forces $a = b= 0$.  The $uv$ coefficient gives $r = \frac{1}{xc}$.
\item[(5.)] The pair of forms are $(2uv, u^2)$. This is the same as (4.), except that now $t = \frac{1}{x^2}$.
\item[(6.)] The pair of forms are $(v^2, 0)$. The $\GL_3$ actions carries $v^2 \mapsto (cv \pm  e \ell w)^2$, so the $vw$ coefficient implies $ce = 0$.  This leads to the two cases given.
\item[(7.)] The pair of forms are $(v^2, u^2)$. This is the same as (6.), with the additional requirement that now $t = \frac{1}{x^2}$.
\item[(8a.)] The pair of forms are $(2vw, 0)$. Under the $\GL_3$ action, 
\begin{equation}
 2vw \mapsto 2 (cv \pm  e\ell w)(ev \pm cw) \equiv 4ce v^2 \pm 2 (c^2 +  e^2\ell) vw \bmod v^2 -\ell w^2.
\end{equation}
Hence $ce = 0$, which leads to the two solutions given.
\item[(8b.)] The pair of forms are $(v^2 + 2kvw, 0)$. Under the $\GL_3$ action,
\begin{align}
v(v + 2k w) \mapsto &(cv \pm  e\ell w) ((cv \pm  e\ell w) + 2k(ev \pm cw))\\
\notag &\equiv (c^2 + 4cek +  e^2\ell)v^2 \pm (2 c^2 k+ 2 ce\ell + 2 e^2\ell k) vw \bmod (v^2 - \ell w^2). 
\end{align}
To stabilize,
\begin{equation}
 k(c^2 + 4cek +  e^2\ell) = \pm ( c^2 k+  ce\ell +  e^2\ell k).
\end{equation}
The plus case leads to $(4k^2 - \ell) ce = 0$, or $ce = 0$.  Both $c \neq 0$ and $e \neq 0$ lead to solutions.  Solving the quadratic equation, the minus case obtains
\begin{equation}
 \frac{c}{e} = \frac{-(4k^2 + \ell)\pm (4k^2 -\ell)}{4k}.
\end{equation}
This gives two further solutions.
\item[(9a.)] The pair of forms are $(2vw, u^2)$. This case is the same as (8a.) with the further restriction $t = \frac{1}{x^2}$.
\item[(9b.)] The pair of forms are $(v^2 + 2kvw, u^2)$. This case is the same as (8b.) with the further restriction $t = \frac{1}{x^2}$.
\item[(10.)] The pair of forms are $(0, u^2)$. The only restriction is $t = \frac{1}{x^2}$.
\item[(11.)] The pair of forms are $(0,0)$. There is no constraint.

\end{enumerate}
\end{proof}
\subsubsection{Case of $\sO_{D11}$}
The acting group is
\begin{equation}
 G_x \times \GL_1= \begin{pmatrix} *&&\\ *& *&\\ * && *\end{pmatrix} \begin{pmatrix} *&\\ *& *\end{pmatrix} \sqcup \begin{pmatrix} *&&\\ *&& *\\ *& *&\end{pmatrix} \begin{pmatrix} *&\\ *& *\end{pmatrix}
\end{equation}
of size $2 (p-1)^5  p^3$.  This acts on $V/V_x$ with 
\begin{equation}
V_x = \begin{pmatrix}
       0&0&0\\&0&*\\&&0
      \end{pmatrix}\begin{pmatrix} 0&*&*\\ &*&*\\&&*\end{pmatrix}.
\end{equation}
\begin{lemma}
 In the case of $\sO_{D11}$ the space $V/V_x$ splits into 20 orbits with representatives, sizes and stabilizers given in the following table.
 \begin{equation}
  \begin{array}{|l|l|l|l|}
  \hline
   &\text{Orbit representative} & \text{Orbit size} & \text{Stabilizer}\\
   \hline
 1 & \begin{pmatrix} 1&0&0\\&0&0\\&&0\end{pmatrix}\begin{pmatrix} 0&0&0\\&0&0\\&&0\end{pmatrix}& (p-1)p^3 & \begin{array}{l}\begin{pmatrix}r &&\\ &*&\\ &&*\end{pmatrix} \begin{pmatrix} \frac{1}{r^2} &\\&*\end{pmatrix}\\ \sqcup \begin{pmatrix} r &&\\ &&* \\&*&\end{pmatrix}\begin{pmatrix} \frac{1}{r^2} &\\&*\end{pmatrix}\end{array}\\
 \hline
 2 & \begin{pmatrix} 1&1&0\\&0&0\\&&0\end{pmatrix}\begin{pmatrix} 0&0&0\\&0&0\\&&0\end{pmatrix}& (p-1)^2p^3 & \begin{array}{l} \begin{pmatrix} r &&\\&r&\\&&*\end{pmatrix}\begin{pmatrix}\frac{1}{r^2} &\\&*\end{pmatrix} \\ \sqcup \begin{pmatrix} r &&\\ 2r &-r &\\ &&*\end{pmatrix} \begin{pmatrix}\frac{1}{r^2} &\\&*\end{pmatrix}\end{array}\\
 \hline
 3 & \begin{pmatrix} 1&1&1\\&0&0\\&&0\end{pmatrix}\begin{pmatrix} 0&0&0\\&0&0\\&&0\end{pmatrix}& \frac{(p-1)^3p^3}{4} & \begin{array}{l} \begin{pmatrix} r &&\\&r&\\&&r\end{pmatrix} \begin{pmatrix}\frac{1}{r^2} &\\&*\end{pmatrix} \\ \sqcup \begin{pmatrix} r &&\\&&r\\&r&\end{pmatrix} \begin{pmatrix}\frac{1}{r^2} &\\&*\end{pmatrix} \\ \sqcup \begin{pmatrix} r &&\\2r&-r&\\2r&&-r\end{pmatrix} \begin{pmatrix}\frac{1}{r^2} &\\&*\end{pmatrix} \\ \sqcup \begin{pmatrix} r &&\\2r&&-r\\2r&-r&\end{pmatrix} \begin{pmatrix}\frac{1}{r^2} &\\&*\end{pmatrix}\\ \sqcup \begin{pmatrix} r &&\\2r&-r&\\&&r\end{pmatrix} \begin{pmatrix}\frac{1}{r^2} &\\&*\end{pmatrix}\\ \sqcup \begin{pmatrix} r &&\\2r&&-r\\&r&\end{pmatrix} \begin{pmatrix}\frac{1}{r^2} &\\&*\end{pmatrix} \\ \sqcup \begin{pmatrix} r &&\\&r&\\2r&&-r\end{pmatrix} \begin{pmatrix}\frac{1}{r^2} &\\&*\end{pmatrix}\\ \sqcup \begin{pmatrix} r &&\\&&r\\2r&-r&\end{pmatrix} \begin{pmatrix}\frac{1}{r^2} &\\&*\end{pmatrix} \end{array}\\
   \hline 
4 & \begin{pmatrix} 1&0&0\\&-\ell&0\\&&0\end{pmatrix}\begin{pmatrix} 0&0&0\\&0&0\\&&0\end{pmatrix}&(p-1)^2p^3 & \begin{pmatrix} r &&\\ &\pm r& \\ &&*\end{pmatrix} \begin{pmatrix}\frac{1}{r^2} &\\ &*\end{pmatrix}\\
 \hline
 5 & \begin{pmatrix} 1&0&1\\&-\ell&0\\&&0\end{pmatrix}\begin{pmatrix} 0&0&0\\&0&0\\&&0\end{pmatrix}& \frac{(p-1)^3p^3}{2} & \begin{array}{l} \begin{pmatrix} r &&\\ &\pm r& \\ &&r \end{pmatrix} \begin{pmatrix}\frac{1}{r^2} &\\&*\end{pmatrix}\\ \sqcup \begin{pmatrix}r &&\\  &\pm r & \\ 2r&&-r\end{pmatrix}\begin{pmatrix} \frac{1}{r^2} &\\ &*\end{pmatrix}\end{array}\\
 \hline
  \end{array}
 \end{equation}

 \begin{equation*}
  \begin{array}{|l|l|l|l|}
 \hline
 6 & \begin{pmatrix} 1&0&0\\&-\ell&0\\&&-\ell\end{pmatrix}\begin{pmatrix} 0&0&0\\&0&0\\&&0\end{pmatrix}& \frac{(p-1)^3p^3}{4} & \begin{array}{l}\begin{pmatrix} r &&\\ & r\epsilon_1  &\\ && r\epsilon_2 \end{pmatrix} \begin{pmatrix} \frac{1}{r^2} &\\ &*\end{pmatrix}\\ \sqcup \begin{pmatrix} r &&\\ && r\epsilon_1 \\ & r\epsilon_2  & \end{pmatrix}\begin{pmatrix} \frac{1}{r^2} &\\ &*\end{pmatrix} \end{array}\\
 \hline
 7 & \begin{pmatrix} 0&1&0\\&0&0\\&&0\end{pmatrix}\begin{pmatrix} 0&0&0\\&0&0\\&&0\end{pmatrix}& 2(p-1)p & \begin{pmatrix} r &&\\ &s&\\ *&&*\end{pmatrix}\begin{pmatrix} \frac{1}{rs} &\\ *&*\end{pmatrix}\\
 \hline
 8 & \begin{pmatrix} 0&1&1\\&0&0\\&&0\end{pmatrix}\begin{pmatrix} 0&0&0\\&0&0\\&&0\end{pmatrix}& (p-1)^2p^2 & \begin{array}{l} \begin{pmatrix} r &&\\ &s&\\ &&s\end{pmatrix}\begin{pmatrix}\frac{1}{rs}&\\ *&*\end{pmatrix}\\ \sqcup \begin{pmatrix} r &&\\&&s\\ &s&\end{pmatrix}\begin{pmatrix}\frac{1}{rs} &\\ *&*\end{pmatrix}\end{array}\\
 \hline
 9 & \begin{pmatrix} 0&1&0\\&0&0\\&&1\end{pmatrix}\begin{pmatrix} 0&0&0\\&0&0\\&&0\end{pmatrix}& 2(p-1)^2p & \begin{pmatrix} r &&\\ & \frac{s^2}{r}&\\ * &&s\end{pmatrix} \begin{pmatrix} \frac{1}{s^2} &\\ *&*\end{pmatrix}\\
 \hline
 10 & \begin{pmatrix} 0&1&0\\&0&0\\&&0\end{pmatrix}\begin{pmatrix} 1&0&0\\&0&0\\&&0\end{pmatrix}& 2(p-1)^2p & \begin{pmatrix}r && \\&s&\\ *&&*\end{pmatrix} \begin{pmatrix} \frac{1}{rs}&\\*&\frac{1}{r^2}\end{pmatrix}\\
 \hline
 11 & \begin{pmatrix} 0&1&1\\&0&0\\&&0\end{pmatrix}\begin{pmatrix} 1&0&0\\&0&0\\&&0\end{pmatrix}& (p-1)^3p^2 & \begin{array}{l} \begin{pmatrix} r&&\\&s&\\&&s\end{pmatrix}\begin{pmatrix}\frac{1}{rs} &\\ *&\frac{1}{r^2}\end{pmatrix}\\ \sqcup \begin{pmatrix} r &&\\&&s\\&s&\end{pmatrix}\begin{pmatrix}\frac{1}{rs} &\\ *&\frac{1}{r^2}\end{pmatrix}\end{array} \\
 \hline
 12 & \begin{pmatrix} 0&1&0\\&0&0\\&&1\end{pmatrix}\begin{pmatrix} 1&0&0\\&0&0\\&&0\end{pmatrix}& 2(p-1)^3p & \begin{pmatrix}r && \\& \frac{s^2}{r} &\\ *&&s\end{pmatrix}\begin{pmatrix}\frac{1}{s^2} &\\ *& \frac{1}{r^2}\end{pmatrix}\\
 \hline
 13 & \begin{pmatrix} 0&0&0\\&1&0\\&&0\end{pmatrix}\begin{pmatrix} 0&0&0\\&0&0\\&&0\end{pmatrix}& 2(p-1) & \begin{pmatrix} *&&\\ *&s&\\ *&&*\end{pmatrix}\begin{pmatrix}\frac{1}{s^2} &\\ *&*\end{pmatrix}\\
 \hline
 14 & \begin{pmatrix} 0&0&0\\&1&0\\&&1\end{pmatrix}\begin{pmatrix} 0&0&0\\&0&0\\&&0\end{pmatrix}& \frac{(p-1)^2}{2} & \begin{array}{l}\begin{pmatrix} *&&\\ *&s&\\ *&&\pm s\end{pmatrix}\begin{pmatrix}\frac{1}{s^2} &\\ *&*\end{pmatrix} \\ \sqcup \begin{pmatrix} *&&\\ * &&s\\ *&\pm s&\end{pmatrix}\begin{pmatrix}\frac{1}{s^2} &\\ *&*\end{pmatrix}\end{array}\\
 \hline
 15 & \begin{pmatrix} 0&0&0\\&1&0\\&&\ell\end{pmatrix}\begin{pmatrix} 0&0&0\\&0&0\\&&0\end{pmatrix}& \frac{(p-1)^2}{2} & \begin{array}{l} \begin{pmatrix} *&&\\ *&s&\\ *&&\pm s\end{pmatrix}\begin{pmatrix}\frac{1}{s^2} &\\ *&*\end{pmatrix}\\ \sqcup \begin{pmatrix} *&&\\ *&& \pm s\\ *& s\ell& \end{pmatrix}\begin{pmatrix}\frac{1}{ s^2\ell} &\\ *&*\end{pmatrix}\end{array}\\
 \hline
 16 & \begin{pmatrix} 0&0&0\\&1&0\\&&0\end{pmatrix}\begin{pmatrix} 1&0&0\\&0&0\\&&0\end{pmatrix}&2(p-1)^2& \begin{pmatrix} r &&\\ *&s&\\ *&&*\end{pmatrix}\begin{pmatrix}\frac{1}{s^2} &\\ *&\frac{1}{r^2}\end{pmatrix}\\
\hline
  \end{array}
 \end{equation*}
\begin{equation*}
 \begin{array}{|l|l|l|l|}
   \hline
 17 & \begin{pmatrix} 0&0&0\\&1&0\\&&1\end{pmatrix}\begin{pmatrix} 1&0&0\\&0&0\\&&0\end{pmatrix}& \frac{(p-1)^3}{2} & \begin{array}{l} \begin{pmatrix} r &&\\ *&s&\\ *&&\pm s\end{pmatrix}\begin{pmatrix} \frac{1}{s^2}&\\ * & \frac{1}{r^2}\end{pmatrix}\\ \sqcup \begin{pmatrix} r &&\\ *&&s\\ *& \pm s &\end{pmatrix} \begin{pmatrix} \frac{1}{s^2} &\\ *& \frac{1}{r^2}\end{pmatrix}\end{array}\\
 \hline
 18 & \begin{pmatrix} 0&0&0\\&1&0\\&&\ell\end{pmatrix}\begin{pmatrix} 1&0&0\\&0&0\\&&0\end{pmatrix}& \frac{(p-1)^3}{2} & \begin{array}{l} \begin{pmatrix} r &&\\ *&s&\\ *&&\pm s\end{pmatrix}\begin{pmatrix} \frac{1}{s^2} &\\* &\frac{1}{r^2}\end{pmatrix}\\ \sqcup \begin{pmatrix} r &&\\ *&&\pm s\\ *&s\ell  &\end{pmatrix} \begin{pmatrix} \frac{1}{ s^2\ell} &\\ *&\frac{1}{r^2}\end{pmatrix}\end{array}\\
 \hline
 19 & \begin{pmatrix} 0&0&0\\&0&0\\&&0\end{pmatrix}\begin{pmatrix} 1&0&0\\&0&0\\&&0\end{pmatrix}& p-1 & \begin{array}{l} \begin{pmatrix} r &&\\ *& *&\\ *&&*\end{pmatrix} \begin{pmatrix} *&\\*&\frac{1}{r^2}\end{pmatrix}\\ \sqcup \begin{pmatrix}r &&\\ *&&*\\ *&*&\end{pmatrix}\begin{pmatrix} * &\\ *&\frac{1}{r^2}\end{pmatrix}\end{array}\\
 \hline
 20 & \begin{pmatrix} 0&0&0\\&0&0\\&&0\end{pmatrix}\begin{pmatrix} 0&0&0\\&0&0\\&&0\end{pmatrix}& 1 &  G_x \times \GL_1\\
 \hline
 \end{array}
\end{equation*}

\end{lemma}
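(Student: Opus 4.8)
The plan is to classify the $G_x\times\GL_1$-orbits on $V/V_x$ by the same template already used for $\sO_{Cs}$ and $\sO_{D2}$: reduce to a finite enumeration of normal forms, verify each claimed stabilizer by direct substitution (symbolically, in \textbf{OD11\_stabilizers.nb}), and then close the argument by counting, using that $\dim(V/V_x)=6$, so the twenty orbit sizes must sum to $p^6$. Concretely, write an element of $V/V_x$ as a pair $(A,B)$ in which $A$ is a ternary quadratic form in $u,v,w$ with its $vw$-coefficient suppressed and $B$ is reduced to its $u^2$-coefficient. In its two components, the $\GL_3$-factor of $G_x$ acts by the substitutions $u\mapsto ru+av+bw$, $(v,w)\mapsto(sv,tw)$ and $u\mapsto ru+av+bw$, $(v,w)\mapsto(sw,tv)$; that is, it is the parabolic stabilizing the subspace of forms in $v,w$ together with the swap of $v$ and $w$. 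The $\GL_2\times\GL_1$-factor scales $A$, adds a scalar multiple of the $u^2$-coefficient of $A$ to that of $B$, and scales $B$.

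The enumeration stratifies on how the variable $u$ enters $A$. If the $u^2$-coefficient of $A$ is nonzero, one adds a multiple of $A$ to $B$ to force $B\equiv 0$, rescales $A$, and completes the square in $u$ (the substitution $u\mapsto u-\tfrac12(bv+cw)$ is in the parabolic) to eliminate the $uv$ and $uw$ terms, leaving $A\equiv\lambda u^2+q(v,w)$ with $q$ diagonal binary; classifying $q$ under the residual scalings of $v,w$ and the swap, with careful bookkeeping of the square-classes of its nonzero coefficients, yields orbits $1$ through $6$, where the rank of $A$ and the reducibility of $A$ separate the equal-sized pairs $\{2,4\}$ and $\{3,6\}$. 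If the $u^2$-coefficient of $A$ vanishes but $u$ still occurs, then $A=u\cdot\ell_1(v,w)+q(v,w)$ with $\ell_1\neq0$; after normalizing $\ell_1$ one uses $u\mapsto u+m(v,w)$ to absorb multiples of $\ell_1$ out of $q$ — here one notes that modulo $V_x$ a $v^2$-term is absorbable while a $vw$-term is not — obtaining orbits $7$ through $12$, the residual freedom deciding whether the $u^2$-coefficient of $B$ can be scaled away or survives (this is what splits off orbit $10$). If $u$ does not occur in $A$, then $A$ is a diagonal binary form in $v,w$, and Lemma \ref{sym_2_action_lemma} and Lemma \ref{sym_2_action_smaller_lemma} applied to the $v,w$-block — refined by whether the $u^2$-coefficient of $B$ vanishes and, when it does not, by its square-class relative to the stabilizer of $A$ — produce orbits $13$ through $20$, with the reducibility of $A$ separating $\{14,15\}$ and $\{17,18\}$.

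With the normal forms in hand, each claimed stabilizer is checked by direct substitution into the representative, the multi-component descriptions recording the symmetries of the normal form under the $v\leftrightarrow w$ swap and under the unipotent part of the parabolic; this is the content of \textbf{OD11\_stabilizers.nb}. To finish: orbits of unequal size are automatically inequivalent, and within each of the five equal-sized pairs the forms are distinguished by the invariants named above (rank of $A$, reducibility of $A$ or of one of its binary constituents, vanishing of $B$), which are manifestly preserved by the full group $G_x\times\GL_1$. Since the twenty orbit sizes then add up to $p^6=|V/V_x|$, no listed stabilizer can be larger than stated and no two listed orbits can coincide, which simultaneously pins down both the stabilizer sizes and the orbit sizes through Lemma \ref{stabilizer_size_lemma}; passing to the projective (i.e. $\GL_1$-enlarged) classification is harmless for the subsequent Fourier computation by Lemma \ref{dilation_lemma}.

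I expect the main obstacle to be organizational rather than conceptual: the delicate point is maintaining a consistent square-class bookkeeping across all three strata — especially the asymmetry that, modulo $V_x$, the parabolic can absorb a $v^2$-term but not a $vw$-term — since this is exactly what forces the split into twenty orbits and what makes the five pairs of equal size genuinely distinct. The enumeration of normal forms and the stabilizer verifications are routine; the care must go into being certain the invariants listed are complete and are respected by every element of $G_x\times\GL_1$, including both components of the $\GL_3$-factor and the overall scalar.
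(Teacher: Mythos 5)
Your overall architecture is the paper's: reduce every class in $V/V_x$ to one of twenty normal forms by stratifying on how $u$ enters the first form, verify by substitution (the notebook) that the listed groups stabilize the representatives, and close by counting, since the claimed orbit sizes sum to $p^6=|V/V_x|$. Given completeness of the enumeration, the count does force the stabilizers to be exactly as listed and the twenty orbits to be distinct, so your closing device is valid; the paper merely distributes the work differently (it proves by hand, in items (1.)--(20.), that each stabilizer is no larger than claimed, checks inequivalence of the equal-sized orbits by invariants, and then deduces completeness from the count, whereas you establish completeness by enumeration and deduce exactness of the stabilizers). One small logical slip: ``orbits of unequal size are automatically inequivalent'' is not available where you invoke it, because before the counting you only know the claimed stabilizers are contained in the true ones; this is harmless, since completeness plus the count already yields distinctness, as you note at the end.

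The genuine problems are in the details of the enumeration, and one of them is the point you yourself call the crux. The asymmetry is not that ``modulo $V_x$ a $v^2$-term is absorbable while a $vw$-term is not'': the $vw$-entry of the first form lies in $V_x$, so in $V/V_x$ it is identically zero and there is nothing to absorb. The correct statement in your middle stratum (after normalizing the linear factor to $v$, so $A=2uv+c_1v^2+c_2w^2$) is that translating $u$ by a form in $v,w$ changes the quadratic part by a multiple of $v$, so the $v^2$-coefficient can be absorbed while the $w^2$-coefficient cannot; that is what produces the surviving $w^2$ in orbits $9$ and $12$. Second, ``rank of $A$'' and ``reducibility of $A$'' do not do the job you assign them: rank is not even constant on a coset (the listed representative $u^2+2uv+2uw$ of orbit $3$ has rank $2$, while its diagonalization $u^2-v^2-w^2$ has rank $3$), and reducibility fails to separate $\{3,6\}$ since neither $u^2-v^2-w^2$ nor $u^2-\ell v^2-\ell w^2$ is a product of linear forms; the separating data is the square-class bookkeeping of the individual $v^2$- and $w^2$-coefficients (equivalently the paper's argument through the induced action on the binary $(u,v)$-part), and any invariant used must be computed from, and shown to be preserved on, the quotient. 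Third, Lemmas \ref{sym_2_action_lemma} and \ref{sym_2_action_smaller_lemma} do not apply to the $v,w$-block in your last stratum: the $\GL_3$-part of $G_x$ acts on the variables $v,w$ only through the diagonal torus and the swap, and the $vw$-coefficient is zero in the quotient, so that block must be classified directly, giving the four classes $0$, $v^2$, $v^2+w^2$, $v^2+\ell w^2$ independently of $p\bmod 4$; moreover the $u^2$-coefficient of $B$, when nonzero, is always normalizable to $1$ because the scalar on the second form is unconstrained in $G_x\times\GL_1$, so no square-class refinement of $B$ occurs. With these repairs your enumeration does land on the twenty listed representatives and the counting closes the proof as in the paper.
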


\begin{proof}
 To verify correctness, it suffices to check that the orbits of the same size are inequivalent, and that the stabilizers are correct, since the count of elements exhausts $V/V_x$.  
 \subsubsection*{Verification that equal sized orbits are inequivalent}
 
 The verification that 2. and 4. are inequivalent follows from the fact that $u^2 +2uv$ and $u^2 -\ell v^2$ are inequivalent under the action of $\GL_2 \times \GL_1$ on $\sym^2(\bF_p^2)$.  The same is true of 3. and 6. since both orbits are invariant under swapping $v$ and $w$. 9. and 10. are inequivalent since the second quadratic form contains a $u^2$ term. 14. and 15. are inequivalent because when acting on $v^2 + w^2$, the ratio between the coefficents on $v^2$ and $w^2$ changes by a square.  The same holds for 17. and 18.

 \subsubsection*{Calculation of stabilizers}
 The fact that each of the listed stabilizers does in fact stabilize the form is checked in \textbf{OD11\_stabilizers.nb}.  The following proofs guarantee that the actual stabilizing group is no larger.
 
 Assume throughout that the acting group is given in coordinates as 
 \begin{equation}
 G_x \times \GL_1= \begin{pmatrix} r&&\\ a& s&\\ b && t\end{pmatrix} \begin{pmatrix} x&\\ y& z\end{pmatrix} \sqcup \begin{pmatrix} r&&\\ a&& s\\ b& t&\end{pmatrix} \begin{pmatrix} x&\\ y& z\end{pmatrix} = G_1 \sqcup G_2.
\end{equation}
Recall that
\begin{equation}
V_x = \begin{pmatrix}
       0&0&0\\&0&*\\&&0
      \end{pmatrix}\begin{pmatrix} 0&*&*\\ &*&*\\&&*\end{pmatrix}.
\end{equation}
\begin{enumerate}
\item[(1.)] The pair of forms is $(u^2, 0)$. The $uv$ and $uw$ coefficients in the first form force $a = b= 0$.  The $u^2$ coefficients imply $x = \frac{1}{r^2}$ and $y = 0$.
\item[(2.)] The pair of forms is $(u^2 + 2uv, 0)$. By considering the $u^2$ coefficient of the second form, $y = 0$. Consider the action only on the first form where $u(u+2v) \bmod vw$ must be preserved.  The action either preserves or exchanges the two lines $u = 0$ and $u + 2v = 0$, which obtains the stabilizer claimed. 
\item[(3.)] The pair of forms is $(u^2 + 2uv + 2uw, 0)$. The $u^2$ coefficient in the second quadratic form forces $y = 0$.  Depending on the choice of $c$, there are two ways of factoring $u^2 + 2uv + 2uw + c vw$ into a pair of linear factors.  If $c = 0$ the factorization is $u(u+2v+2w)$.  If $c \neq 0$ then both linear factors contain a $u$ term, one contains a $v$ term and the other contains a $w$ term.  Up to scaling the two forms, this obtains $(u + 2v)(u+2w) = u^2 + 2uv + 2uw + 4vw$.  To preserve $u(u+2v + 2w) \bmod vw$, the $\GL_3$ action can first either swap $v$ and $w$, or not, then map $u = 0$ to any of the four lines $u = 0$, $u + 2v + 2w = 0$, $u+2v = 0$ or $u + 2w=0$, and map $u + 2v + 2w=0$ to the other line in the corresponding factorization.  This obtains the eight parts of the stabilizer given.
\item[(4.)] The pair of forms is $(u^2 - \ell v^2, 0)$. By considering the $u^2$ coefficient of the second form, $y = 0$. By considering the $uv$ and $uw$ coefficients of the first form, $a = b= 0$.  By considering the $w^2$ coefficient of the first form, the acting group is contained in $G_1$.  By considering the $u^2$ and $v^2$ coefficients of the first form, $s = \pm r$ and $x = \frac{1}{r^2}$.
\item[(5.)] The pair of forms is $(u^2  + 2uw- \ell v^2, 0)$. By considering the $u^2$ coefficient of the second form, $y = 0$, so it suffices to consider the action on the first form.  First consider the possibility that the action is from $G_2$.  This maps $u(u + 2w) - \ell v^2 \mapsto (ru + av + bw)(ru + av + bw + 2sv) -  t^2 \ell w^2 \bmod vw$.  To obtain that the $uv$ coefficient is 0, $2s + a = -a$, but this makes the $v^2$ coefficient $-a^2$ which is not the negative of a non-square.  Hence the acting matrix is from $G_1$.  This maps $u(u+2w) - \ell v^2 \mapsto (ru+ av + bw)(ru+av + bw + 2tw) -  s^2\ell v^2 \bmod vw$.  By considering the  $uv$ coordinate, $a = 0$.  It follows that the mapping either fixes $u = 0$ and $u + 2w = 0$ or exchanges them.  These two possibilities obtain the two parts of the stabilizers given.
\item[(6.)] The pair of forms is $(u^2 - \ell v^2 - \ell w^2, 0)$. By considering the $u^2$ coefficient in the second form, $y=0$, so it is sufficient to consider the action on the first form.  By considering the $uv$ and $uw$ coefficients, $a = b = 0$.  Either exchanging $v=0$ and $w = 0$ or not leads to the stabilizer claimed.
\item[(7.)] The pair of forms is $(2uv, 0)$. Since there is no $u^2$ coefficient in either form, $y$ and $z$ are arbitrary, and it suffices to consider stabilization of the first form.  To preserve $2uv \bmod vw$, note that $u \mapsto ru + av + bw$ with $r \neq 0$, and hence the $uw$ coefficient forces $v = 0$ to be preserved.  The $v^2$ coefficient forces $a = 0$.  Now $b$ is arbitrary, which obtains the stabilizer given.
\item[(8.)] The pair of forms is $(2uv + 2uw, 0)$. Since there is no $u^2$ coefficient in either form, $y$ and $z$ are arbitrary, and it suffices to consider stabilization of the first form.  To preserve $2u(v + w) \bmod vw$, note that the only way to factor $2uv + 2uw + c vw$ into a pair of linear forms is with $c = 0$.  Hence the lines $u = 0$ and $v+w = 0$ are preserved.  Note that $v$ and $w$ may be preserved or exchanged, which leads to the stabilizer given.
\item[(9.)] The pair of forms is $(2uv + w^2, 0)$. Since there is no $u^2$ coefficient in either form, $y$ and $z$ are arbitrary, and it suffices to consider stabilization of the first form.  Since there is no $uw$ coefficient,  $v$ must be mapped to a scalar times itself.  The same is then true of $w$.  Considering the $v^2$ coefficient fixes $a = 0$, while $b$ is arbitrary.
\item[(10.)] The pair of forms is $(2uv, u^2)$. This is the same as 7, except that the $u^2$ term in the second form fixes $z$.
\item[(11.)] The pair of forms is $(2uv + 2uw, u^2)$. This is the same as 8, except that the $u^2$ term in the second form fixes $z$.
\item[(12.)] The pair of forms is $(2uv + w^2, u^2)$. This is the same as 9, except that the $u^2$ term in the second form fixes $z$.

For the remainder of the forms, $r, a, b$ are arbitrary since the first form does not depend on $u$.
\item[(13.)] The pair of forms is $(v^2, 0)$. The only constraints are that $v = 0$ is preserved, and that $x$ is fixed.
\item[(14.)] The pair of forms is $(v^2 + w^2, 0)$. $v = 0$ and $w = 0$ may either be fixed or exchanged, the scaling of both variables has the same square.
\item[(15.)] The pair of forms is $(v^2 + \ell w^2, 0)$. This is the same as 14, except that if $v$ and $w$ are exchanged, a futher scaling is necessary to preserve the ratio of the $v^2$ and $w^2$ coefficients.
\item[(16.)] The pair of forms is $(v^2, u^2)$. This is the same as 13, except that the $u^2$ term in the second form fixes $z$.
\item[(17.)] The pair of forms is $(v^2 + w^2, u^2)$. This is the same as 14, except that the $u^2$ term in the second form fixes $z$.
\item[(18.)] The pair of forms is $(v^2 + \ell w^2, u^2)$. This is the same as 15, except that the $u^2$ term in the second form fixes $z$.
\item[(19.)] The pair of forms is $(0, u^2)$. The only constraint is that the $u^2$ term in the second form fixes $z$.
\item[(20.)] The pair of forms is $(0,0)$. No constraint.
\end{enumerate}
\end{proof}

\subsubsection{Case of $\sO_{D1^2}$}
The acting group is
\begin{equation}
 G_x \times \GL_1 = \begin{pmatrix} *&*&\\ *&*&\\ *&*&*\end{pmatrix}\begin{pmatrix} *&\\ *&*\end{pmatrix}.
\end{equation}
The action is on the space $V/V_x$,
\begin{equation}
 V_x = \begin{pmatrix} 0&0&0\\ &0&0\\ && *\end{pmatrix} \begin{pmatrix} 0&0& *\\ &0& *\\ && *\end{pmatrix}.
\end{equation}

\begin{lemma}\label{first_form_orbits_lemma}
 Treat $\sym^2(\bF_p^3)$ as ternary quadratic forms in variables $u, v, w$.  The group $\begin{pmatrix} *&*&\\ *&*&\\ *&*&*\end{pmatrix} \times \GL_1$ acts on $\sym^2(\bF_p^3)/(w^2)$, with the first factor acting by linear change of variable, and the $\GL_1$ factor acting by multiplying the form by a scalar.  Under this action, $\sym^2(\bF_p^3)/(w^2)$ has six orbits, with representatives
 \begin{enumerate}
 \item [a.] $\begin{pmatrix} 0 & 0 &0\\ &0 &0\\ &&0\end{pmatrix}, \begin{pmatrix} 0&0&1\\ &0&0\\&&0\end{pmatrix}$
 \item [b.] $\begin{pmatrix}1 &0 &0 \\ &0 &0 \\ &&0\end{pmatrix}, \begin{pmatrix} 1 &0 &0\\ &0&1\\ &&0\end{pmatrix}$
 \item [c.] $\begin{pmatrix} 0 &1&0 \\ &0&0\\&&0\end{pmatrix}$
 \item [d.] $\begin{pmatrix} 1 &0&0\\ &-\ell &0\\ &&0\end{pmatrix}$.
\end{enumerate}
\end{lemma}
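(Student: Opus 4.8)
The plan is to exploit the $P\times\GL_1$-equivariant linear surjection from $\sym^2(\bF_p^3)/(w^2)$ onto the space of binary forms $\sym^2(\bF_p^2)$ given by restricting a ternary form to the plane $\{w=0\}$, and then to classify the orbits one fiber at a time, reducing everything to Lemma \ref{sym_2_action_lemma}. Throughout, $P$ denotes the group $\begin{pmatrix} *&*&\\ *&*&\\ *&*&*\end{pmatrix}$.

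First I would record the two structural facts needed. Writing $g\in P$ with $(3,3)$-entry $t$, one checks $g\cdot w^2 = t^2w^2$, so $(w^2)$ is $P$-stable, and it is trivially $\GL_1$-stable; thus $P\times\GL_1$ acts on $\sym^2(\bF_p^3)/(w^2)$. Second, since $g^t$ stabilizes $\{w=0\}$ for every $g\in P$, the change of variable implementing $A\mapsto gAg^t$ preserves the space of forms divisible by $w$; as this space contains $(w^2)$, restriction to $\{w=0\}$ descends to a $P\times\GL_1$-equivariant surjection $\bar\rho\colon\sym^2(\bF_p^3)/(w^2)\to\sym^2(\bF_p^2)$ with kernel spanned by the classes of $uw$ and $vw$, and with the target action factoring through the upper-left $2\times2$ block $P\twoheadrightarrow\GL_2(\bF_p)$ together with scalars. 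By Lemma \ref{sym_2_action_lemma} this target action has the four orbits $0,u^2,uv,u^2-\ell v^2$, so every class in the source is $P\times\GL_1$-equivalent to one whose $\bar\rho$-image is one of these four; it remains only to enumerate the orbits lying over each.

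Over $0$ the fiber is the span of $uw$ and $vw$, on which the $\GL_2$-block acts through its action on the linear form $\alpha u+\beta v$: two orbits, $0$ and $uw$. Over $u^2$ a class is represented by $u^2+\alpha uw+\beta vw$; completing the square in $u$ (realized by a $g\in P$ that fixes $\bar\rho$) absorbs the $uw$-term into $(w^2)$, and rescaling $v$ then normalizes $u^2+\beta vw$ to $u^2$ when $\beta=0$ and to $u^2+2vw$ when $\beta\neq0$: two orbits. Over $uv$ a class is represented by $uv+\alpha uw+\beta vw = u(v+\alpha w)+\beta vw$, and the substitutions $v\mapsto v+\alpha w$ then $u\mapsto u+\beta w$, both in $P$, reduce it to $uv$: one orbit. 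Over $u^2-\ell v^2$, completing the square separately in $u$ and in $v$ removes both cross terms modulo $(w^2)$: one orbit. This exhibits at most the six claimed representatives.

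It remains to see the six are pairwise inequivalent. Classes whose $\bar\rho$-images lie in distinct $\GL_2\times\GL_1$-orbits cannot be equivalent, which separates the four groups a.--d., and $0\neq uw$. The one point requiring an argument — and the only step that is not pure bookkeeping — is that $u^2$ and $u^2+2vw$ are inequivalent over the base $u^2$. For this I would use the invariant ``the class admits a nonsingular ternary quadratic form as a representative'': this is $P\times\GL_1$-invariant because $(w^2)$ is $P$-stable, so acting by some $g\in P$ and a scalar carries a lift to a lift and changes the determinant by a nonzero factor; it holds for $u^2+2vw$, all of whose lifts $u^2+2vw+cw^2$ have determinant $-1$, but fails for $u^2$, all of whose lifts $\lambda u^2+\mu w^2$ are singular. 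Therefore there are exactly six orbits, with the stated representatives.
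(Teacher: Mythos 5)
Your proof is correct and takes essentially the same route as the paper: pass to the quotient by $(uw,vw,w^2)$, classify the base via Lemma \ref{sym_2_action_lemma}, and then normalize the $uw$, $vw$ terms fiberwise by $w$-translations lying in the subgroup. The only substantive addition is your nonsingular-lift (determinant) invariant separating $u^2$ from $u^2+2vw$, a point the paper's proof asserts without justification.
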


\begin{proof}
 The change of variable preserves $(w^2)$, so the usual action of $\GL_3 \times \GL_1$ on $\sym^2(\bF_p^3)$ descends to the quotient.  Identify the quotient by $(uw, vw, w^2)$ with $\sym^2(\bF_p^2)$ acted on by $\GL_2(\bF_p)\times \GL_1(\bF_p)$.  By Lemma \ref{sym_2_action_lemma}, this action has four orbits with representatitives
 \begin{equation}
 \begin{pmatrix} 0 &0\\ &0\end{pmatrix}, \begin{pmatrix} 1 &0\\ &0\end{pmatrix}, \begin{pmatrix} 0&1\\ &0\end{pmatrix}, \begin{pmatrix} 1 &0\\ &-\ell\end{pmatrix}.
\end{equation}  Extend these orbits to the quotient by $(w^2)$ by fibering $uw$ and $vw$ over each $\sym^2(\bF_p^2)$ orbit. Above 0, $uw$ and $vw$ are equivalent.  This obtains the orbits listed a.  Above $u^2$, translating $u$ by $w$ can eliminate a $uw$ term, so that the possiblities are $u^2$ and $u^2 + vw$, which are inequivalent, obtaining the b. orbits. Above $uv$, translating $u$ and $v$ by $w$ can eliminate $uw$ and $vw$ terms, so the only c. orbit is $uv$.  The same applies to  $u^2 - \ell v^2$, so this is the only orbit in d.
\end{proof}

The orbits of $V/V_x$ under $G_x \times \GL_1$ are obtained by fibering the second quadratic form in $u, v$, treated as $\sym^2(\bF_p^2)$ over the orbits listed in Lemma \ref{first_form_orbits_lemma}.
\begin{lemma}
 In the case of $\sO_{D1^2}$, the action of $ G_x \times \GL_1$ on $V/V_x$ has 23 orbits, listed with a representative, size and stabilizer in the following table.
 
 \begin{equation}
 \begin{array}{|l|l|l|l|}
  \hline
  \text{Type a.} & \text{Orbit representative} & \text{Orbit size} & \text{Stabilizer}\\
  \hline
  1. & \begin{pmatrix}0 &0&0\\ &0&0\\ &&0\end{pmatrix} \begin{pmatrix}0&0&0\\&0&0\\&&0\end{pmatrix} &1 & G_x \times \GL_1\\
  \hline
  2. & \begin{pmatrix}0 &0&0\\ &0&0\\ &&0\end{pmatrix} \begin{pmatrix}1&0&0\\&0&0\\&&0\end{pmatrix} &(p-1)(p+1) & \begin{pmatrix}s & * &\\ &* &\\ * & * & *\end{pmatrix} \begin{pmatrix} *&\\ * & \frac{1}{s^2}\end{pmatrix}\\
  \hline
  3. & \begin{pmatrix}0 &0&0\\ &0&0\\ &&0\end{pmatrix} \begin{pmatrix}0&1&0\\&0&0\\&&0\end{pmatrix} &\frac{(p-1)p(p+1)}{2} & \begin{array}{l} \begin{pmatrix} s&&\\ &t&\\ *&*&*\end{pmatrix} \begin{pmatrix} *&\\ *&\frac{1}{st}\end{pmatrix}\\ \sqcup \begin{pmatrix} &s&\\ t &&\\ * & * &*\end{pmatrix}\begin{pmatrix} * &\\ * & \frac{1}{st}\end{pmatrix}\end{array}\\
  \hline
  4. & \begin{pmatrix}0 &0&0\\ &0&0\\ &&0\end{pmatrix} \begin{pmatrix}1&0&0\\&-\ell&0\\&&0\end{pmatrix} &\frac{(p-1)^2p}{2} & \begin{pmatrix} c & e &\\ \pm  e\ell & \pm c & \\ * & * & *\end{pmatrix} \begin{pmatrix} * &\\ * & \frac{1}{c^2 -  e^2\ell}\end{pmatrix}\\
  \hline
  5. & \begin{pmatrix}0 &0&1\\ &0&0\\ &&0\end{pmatrix} \begin{pmatrix}1&0&0\\&0&0\\&&0\end{pmatrix} &(p-1)^2(p+1) & \begin{pmatrix} s &* &\\ &*&\\ *&*&t\end{pmatrix} \begin{pmatrix} \frac{1}{st} &\\ *& \frac{1}{s^2}\end{pmatrix}\\
  \hline
  6. & \begin{pmatrix}0 &0&1\\ &0&0\\ &&0\end{pmatrix} \begin{pmatrix}0&0&0\\&1&0\\&&0\end{pmatrix} &(p-1)^2p (p+1) & \begin{pmatrix} r && \\ &s&\\ *&*&t\end{pmatrix}\begin{pmatrix}\frac{1}{rt} &\\ * & \frac{1}{s^2}\end{pmatrix}\\
  \hline
  7. & \begin{pmatrix}0 &0&1\\ &0&0\\ &&0\end{pmatrix} \begin{pmatrix}0&1&0\\&0&0\\&&0\end{pmatrix} &(p-1)^2p (p+1) & \begin{pmatrix} r && \\ &s&\\ *&*&t\end{pmatrix} \begin{pmatrix} \frac{1}{rt} &\\ * & \frac{1}{rs}\end{pmatrix}\\
  \hline
  8. & \begin{pmatrix}0 &0&1\\ &0&0\\ &&0\end{pmatrix} \begin{pmatrix}0&1&0\\&1&0\\&&0\end{pmatrix} &\frac{(p-1)^3p(p+1)}{2} & \begin{array}{l} \begin{pmatrix} s &&\\&s&\\ *&*&t\end{pmatrix}\begin{pmatrix} \frac{1}{st} &\\ *& \frac{1}{s^2}\end{pmatrix}\\ \sqcup \begin{pmatrix} -s & 2s &\\ &s&\\ *& *&t\end{pmatrix}\begin{pmatrix}\frac{-1}{st} &\\ *& \frac{1}{s^2}\end{pmatrix}\end{array}\\
  \hline
  9. & \begin{pmatrix}0 &0&1\\ &0&0\\ &&0\end{pmatrix} \begin{pmatrix}1&0&0\\&-\ell&0\\&&0\end{pmatrix} &\frac{(p-1)^3p(p+1)}{2} & \begin{pmatrix}s && \\ &\pm s &\\ *&*&t\end{pmatrix} \begin{pmatrix} \frac{1}{st} &\\ * & \frac{1}{s^2}\end{pmatrix}\\
  \hline
  10. & \begin{pmatrix}0 &0&1\\ &0&0\\ &&0\end{pmatrix} \begin{pmatrix}0&0&0\\&0&0\\&&0\end{pmatrix} &(p-1)(p+1) & \begin{pmatrix} s &*&\\ &*&\\ *&*&t\end{pmatrix}\begin{pmatrix} \frac{1}{st} &\\ *&*\end{pmatrix}\\
  \hline
 \end{array}
 \end{equation}
 \begin{equation}
 \begin{array}{|l|l|l|l|}
  \hline
  \text{Type b.} & \text{Orbit representative} & \text{Orbit size} & \text{Stabilizer}\\
  \hline
  11. & \begin{pmatrix}1 &0&0\\ &0&0\\ &&0\end{pmatrix} \begin{pmatrix}0&0&0\\&0&0\\&&0\end{pmatrix} &(p-1)p^2(p+1) & \begin{pmatrix}s &*&\\ & * &\\ &*&*\end{pmatrix}\begin{pmatrix} \frac{1}{s^2}&\\ &*\end{pmatrix}\\
  \hline
  12. & \begin{pmatrix}1 &0&0\\ &0&0\\ &&0\end{pmatrix} \begin{pmatrix}0&1&0\\&0&0\\&&0\end{pmatrix} &(p-1)^2p^2(p+1) & \begin{pmatrix}s &a&\\ & t &\\ &*&*\end{pmatrix}\begin{pmatrix} \frac{1}{s^2}&\\ -\frac{2a}{s^2 t}&\frac{1}{st}\end{pmatrix}\\
  \hline
  13. & \begin{pmatrix}1 &0&0\\ &0&0\\ &&0\end{pmatrix} \begin{pmatrix}0&0&0\\&1&0\\&&0\end{pmatrix} &(p-1)^2p^3(p+1) & \begin{pmatrix}s &&\\ & t &\\ &*&*\end{pmatrix}\begin{pmatrix} \frac{1}{s^2}&\\ &\frac{1}{t^2}\end{pmatrix}\\
  \hline
  14. & \begin{pmatrix}1 &0&0\\ &0&1\\ &&0\end{pmatrix} \begin{pmatrix}0&0&0\\&0&0\\&&0\end{pmatrix} &(p-1)^2p^2(p+1) & \begin{pmatrix}s &-\frac{at}{s}&\\ & t &\\ a&*&\frac{s^2}{t}\end{pmatrix}\begin{pmatrix} \frac{1}{s^2}&\\ &*\end{pmatrix}\\
  \hline
  15. & \begin{pmatrix}1 &0&0\\ &0&1\\ &&0\end{pmatrix} \begin{pmatrix}0&1&0\\&0&0\\&&0\end{pmatrix} &(p-1)^3p^2(p+1) & \begin{pmatrix}s &-\frac{at}{s}&\\ & t &\\ a& *&\frac{s^2}{t}\end{pmatrix}\begin{pmatrix} \frac{1}{s^2}&\\ \frac{2a}{s^3}&\frac{1}{st}\end{pmatrix}\\
  \hline
  16. & \begin{pmatrix}1 &0&0\\ &0&1\\ &&0\end{pmatrix} \begin{pmatrix}0&0&0\\&1&0\\&&0\end{pmatrix} &(p-1)^3p^3(p+1) & \begin{pmatrix}s &&\\ & t &\\ &*&\frac{s^2}{t}\end{pmatrix}\begin{pmatrix} \frac{1}{s^2}&\\ &\frac{1}{t^2}\end{pmatrix}\\
  \hline
 \end{array}
 \end{equation}
  \begin{equation}
 \begin{array}{|l|l|l|l|}
  \hline
  \text{Type c.} & \text{Orbit representative} & \text{Orbit size} & \text{Stabilizer}\\
  \hline
  17. & \begin{pmatrix}0 &1&0\\ &0&0\\ &&0\end{pmatrix} \begin{pmatrix}0&0&0\\&0&0\\&&0\end{pmatrix} &\frac{(p-1)p^4(p+1)}{2} & \begin{array}{l}\begin{pmatrix}s &&\\ & t &\\ &&*\end{pmatrix}\begin{pmatrix} \frac{1}{st}&\\ &*\end{pmatrix}\\ \sqcup \begin{pmatrix} &s&\\t &  &\\ &&*\end{pmatrix}\begin{pmatrix} \frac{1}{st}&\\ &*\end{pmatrix}\end{array}\\
  \hline
  18. & \begin{pmatrix}0 &1&0\\ &0&0\\ &&0\end{pmatrix} \begin{pmatrix}1&0&0\\&0&0\\&&0\end{pmatrix} &(p-1)^2p^4(p+1) & \begin{pmatrix}s &&\\ & t &\\ &&*\end{pmatrix}\begin{pmatrix} \frac{1}{st}&\\ &\frac{1}{s^2}\end{pmatrix}\\
  \hline
  19. & \begin{pmatrix}0 &1&0\\ &0&0\\ &&0\end{pmatrix} \begin{pmatrix}1&0&0\\&1&0\\&&0\end{pmatrix} &\frac{(p-1)^3p^4(p+1)}{4} & \begin{array}{l}\begin{pmatrix}s &&\\ & \pm s &\\ &&*\end{pmatrix}\begin{pmatrix} \frac{\pm 1}{s^2}&\\ &\frac{1}{s^2}\end{pmatrix}\\ \sqcup \begin{pmatrix} &s&\\ \pm s &  &\\ &&*\end{pmatrix}\begin{pmatrix} \frac{\pm 1}{s^2}&\\ &\frac{1}{s^2}\end{pmatrix}\end{array}\\
  \hline
  20. & \begin{pmatrix}0 &1&0\\ &0&0\\ &&0\end{pmatrix} \begin{pmatrix}1&0&0\\&\ell&0\\&&0\end{pmatrix} &\frac{(p-1)^3p^4(p+1)}{4} & \begin{array}{l}\begin{pmatrix}s &&\\ & \pm s &\\ &&*\end{pmatrix}\begin{pmatrix} \frac{\pm 1}{s^2}&\\ &\frac{1}{s^2}\end{pmatrix}\\ \sqcup \begin{pmatrix} &\pm s&\\ s\ell &  &\\ &&*\end{pmatrix}\begin{pmatrix} \frac{\pm 1}{ s^2\ell}&\\ &\frac{1}{ s^2\ell}\end{pmatrix}\end{array}\\
  \hline
 \end{array}
 \end{equation}
   \begin{equation}
 \begin{array}{|l|l|l|l|}
  \hline
  \text{Type d.} & \text{Orbit representative} & \text{Orbit size} & \text{Stabilizer}\\
  \hline
  21. & \begin{pmatrix}1 &0&0\\ &-\ell &0\\ &&0\end{pmatrix} \begin{pmatrix}0&0&0\\&0&0\\&&0\end{pmatrix} &\frac{(p-1)^2p^4}{2} & \begin{pmatrix}c & e &\\ \pm  e\ell & \pm c &\\ &&*\end{pmatrix} \begin{pmatrix} \frac{1}{c^2 -  e^2\ell} &\\ &*\end{pmatrix}\\
  \hline
  22. & \begin{pmatrix}1 &0&0\\ &-\ell &0\\ &&0\end{pmatrix} \begin{pmatrix}0&0&0\\&1&0\\&&0\end{pmatrix} &\frac{(p-1)^3p^4(p+1)}{4} & \begin{array}{l} \begin{pmatrix}s &  &\\  & \pm s &\\ &&*\end{pmatrix} \begin{pmatrix} \frac{1}{s^2} &\\ &\frac{1}{s^2}\end{pmatrix} \\ \sqcup  \begin{pmatrix} & \pm s &\\  s\ell &&\\ &&*\end{pmatrix} \begin{pmatrix} \frac{-1}{ s^2\ell} &\\ \frac{1}{ s^2\ell^2} & \frac{1}{ s^2\ell} \end{pmatrix}\end{array} \\
  \hline
  23a. & \begin{array}{l}\begin{pmatrix}1 &0&0\\ &-\ell &0\\ &&0\end{pmatrix} \begin{pmatrix}0&1&0\\&0&0\\&&0\end{pmatrix},\\ (-1 = \square)\end{array} &\frac{(p-1)^3p^4(p+1)}{4} & \begin{array}{l} \begin{pmatrix}s &  &\\ & \pm s &\\ &&*\end{pmatrix} \begin{pmatrix} \frac{1}{s^2 } &\\ &\pm \frac{1}{s^2}\end{pmatrix}\\ \sqcup \begin{pmatrix} & s &\\ \pm  s\ell & &\\ & & *\end{pmatrix}\begin{pmatrix}  \frac{-1}{ s^2\ell} &\\ & \pm \frac{1}{ s^2\ell}\end{pmatrix} \end{array}\\
  \hline
  23b. & \begin{array}{l}\begin{pmatrix}1 &0&0\\ &-\ell &0\\ &&0\end{pmatrix} \begin{pmatrix}0&k&0\\&1&0\\&&0\end{pmatrix},\\ 1-4\ell k^2 \neq \square, (-1 \neq \square)\end{array} &\frac{(p-1)^3p^4(p+1)}{4} & *\\
  \hline
 \end{array}
 \end{equation}
A description of the stabilizer $*$ is given in the proof.
 \end{lemma}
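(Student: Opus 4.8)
The plan is to classify the $G_x \times \GL_1$--orbits on $V/V_x$ by first normalizing the class of the first quadratic form using Lemma \ref{first_form_orbits_lemma}, and then analyzing the residual action on the second form. Parametrize $V/V_x$ by pairs $(\bar A, B)$, where $\bar A$ is the class of the first form in $\sym^2(\bF_p^3)/(w^2)$ and $B \in \sym^2(\bF_p^2)$ records the $u,v$--coefficients of the second form, the $uw$, $vw$, $w^2$ coefficients of the second form having been killed by $V_x$. Writing a group element with $\GL_3$--part block upper triangular preserving $\langle u, v\rangle$, $\GL_2$--part $\begin{pmatrix} \alpha & 0 \\ \beta & \gamma \end{pmatrix}$, and $\GL_1$--scalar $\lambda$, one checks directly from the action formula that $\bar A$ transforms through the group $\begin{pmatrix} *&*&\\ *&*&\\ *&*&*\end{pmatrix} \times \GL_1$ of Lemma \ref{first_form_orbits_lemma} (the scalar being $\alpha\lambda$), that $B$ transforms under the upper-left $2\times 2$ block of the $\GL_3$--part together with the scalar $\gamma\lambda$, and that $\beta$ contributes a term $(\beta/\alpha)$ times the $u,v$--part of $\bar A$ to $B$. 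Consequently, after moving $\bar A$ to one of the six representatives $0$, $uw$, $u^2$, $u^2 + vw$, $uv$, $u^2 - \ell v^2$ of Lemma \ref{first_form_orbits_lemma}, the $G_x \times \GL_1$--orbits lying above a fixed $\bar A$ are in bijection with the orbits of the stabilizer of $\bar A$ acting on $B$ modulo the line spanned by the $u,v$--part of $\bar A$; this yields the Type a., b., c., d. blocks of the table as the fibers over the four orbit types of Lemma \ref{first_form_orbits_lemma}.

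Carrying this out case by case: when $\bar A \in \{0, uw\}$ the $u,v$--part vanishes, so $B$ ranges over all of $\sym^2(\bF_p^2)$; the stabilizer of $0$ surjects onto $\GL_2 \times \GL_1$ there, giving the four orbits of Lemma \ref{sym_2_action_lemma} (orbits 1--4), while fixing the class $uw$ forces the $(2,1)$--entry of the $\GL_3$--part to vanish, leaving a conjugate of the group $\begin{pmatrix} r & \\ a & s\end{pmatrix}\times\GL_1$ of Lemma \ref{sym_2_action_smaller_lemma} with its six orbits (orbits 5--10). When $\bar A = u^2$ or $\bar A = u^2 + vw$ the relevant line is $(u^2)$, so $B$ is recorded only through its $uv$-- and $v^2$--coefficients, on which the stabilizer of $\bar A$ acts with three orbits $0$, $uv$, $v^2$ (orbits 11--13 and 14--16). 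When $\bar A = uv$ the line is $(uv)$, so $B$ is recorded through its $u^2$-- and $v^2$--coefficients, on which the stabilizer (a maximal torus in the $u,v$--variables, with swap, together with scalars) has the four orbits $0$, $u^2$, $u^2 + v^2$, $u^2 + \ell v^2$ (orbits 17--20), using that the overall scalar absorbs one coefficient and the torus absorbs the other up to a square. Finally, when $\bar A = u^2 - \ell v^2$ the line is $(u^2 - \ell v^2)$ and the stabilizer of $\bar A$ acts on the two-dimensional quotient $\sym^2(\bF_p^2)/(u^2 - \ell v^2)$ through the similitude group $\GO_2(u^2 - \ell v^2)$ together with scalars; its orbits are $\{0\}$ and two orbits of size $\tfrac{p^2-1}{2}$, separated by the invariant $\langle B, \bar A\rangle^2 - \Disc(\bar A)\,\Disc(B)$ modulo squares, where $\langle\cdot,\cdot\rangle$ is the polarization of $\Disc$ on $\sym^2(\bF_p^2)$ (equivalently, the discriminant of the restriction of $\Disc$ to the coset $B + \bF_p \bar A$). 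This gives orbits 21, 22 and 23, and the representative of orbit 23 depends on whether $-1$ is a square: when $-1 = \square$ one may take $B = uv$ (orbit 23a), while when $-1 \neq \square$ the coset of $uv$ acquires square invariant and merges with that of $v^2$, so one is forced to $B = 2kuv + v^2$ with $1 - 4\ell k^2 \neq \square$ (orbit 23b, such $k$ existing since $p>3$). The stabilizer $*$ of orbit 23b then consists of the elements $\begin{pmatrix} c & e & \\ \pm e\ell & \pm c & \\ & & t\end{pmatrix}$ of $\GO_2(u^2 - \ell v^2)$ paired with the lower-triangular $\GL_2$--element forced by preserving both $u^2 - \ell v^2$ and $2kuv + v^2$ modulo $u^2 - \ell v^2$, as $(c:e)$ runs over the four ratios in $\bP^1(\bF_p)$ fixing $2kuv + v^2$ modulo $u^2 - \ell v^2$ and $t$ runs over $\bF_p^\times$.

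To finish, one verifies, as in the earlier cases and with the supporting matrix computations recorded in \textbf{OD12\_stabilizers.nb}, that each subgroup listed in the table does stabilize its representative (so each claimed orbit size is an upper bound for the true one); that the 23 listed orbit sizes sum to $\lvert V/V_x\rvert = p^{8}$; and that orbits of equal size lying above the same class $\bar A$ are inequivalent (the relevant checks being: above $uw$, the six $\sym^2(\bF_p^2)$--types of Lemma \ref{sym_2_action_smaller_lemma} are pairwise inequivalent; for orbit 19 versus 20 and for orbit 22 versus 23, that the recorded pair of coefficients, respectively the quadratic invariant above, changes only by a square under the residual action). Since the normalization of $\bar A$ followed by that of $B$ shows that the 23 listed orbits cover $V/V_x$, these three facts force each listed orbit size and each stabilizer to be exact. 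The main obstacle is the Type d. case: one must recognize the residual action on $\sym^2(\bF_p^2)/(u^2 - \ell v^2)$ as that of the anisotropic similitude group, pin down its two nontrivial orbits through the discriminant invariant, and produce the explicit representative $2kuv + v^2$ with $1 - 4\ell k^2 \neq \square$ together with the stabilizer $*$ in the subcase $-1 \neq \square$.
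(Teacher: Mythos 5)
Your proposal is correct and follows essentially the same route as the paper: normalize the first form via Lemma \ref{first_form_orbits_lemma}, fiber the second form over each type modulo the line spanned by the $u,v$-part of the first form (using Lemmas \ref{sym_2_action_lemma} and \ref{sym_2_action_smaller_lemma} for type a.), and then pin down exactness of the orbit sizes and stabilizers by verifying that each listed subgroup stabilizes its representative and that the claimed sizes exhaust $V/V_x$, with the $-1 \neq \square$ subcase of type d. treated separately via the representative $2kuv+v^2$, $1-4\ell k^2 \neq \square$. The only cosmetic difference is that you separate orbits 22 and 23 by the square class of the covariant $\langle B,\bar A\rangle^2 - \Disc(\bar A)\Disc(B)$ rather than by the paper's direct coefficient computation, which amounts to the same verification.
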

\begin{proof}
To classify the orbits of type a., when the first form is 0, the action on the second form is the same as $\GL_2\times \GL_1$ acting on $\sym^2(\bF_p^2)$ which has the four orbits claimed.  When the first form is $uw$, since $w = 0$ is preserved by the group action, modulo $w$, $u = 0$ is preserved, so that the action becomes $\begin{pmatrix} * & *\\ & *\end{pmatrix} \times \GL_1$ acting on $\sym^2(\bF_p^2)$.  By Lemma \ref{sym_2_action_smaller_lemma}, this action has six orbits, which gives the six orbits claimed.

To classify the orbits of type b., note that, by subtracting a multiple of the first form from the second, the $u^2$ term in the second form may be eliminated, so that it takes the form $c_1 uv + c_2 v^2$.  When the first form is $u^2$, to stabilize the first quadratic form, $u$ must be replaced by a multiple of itself.  Thus the action on the second quadratic form is by $\begin{pmatrix} * & *\\ &*\end{pmatrix} \times \GL_1$, and $uv$ is inequivalent with $v^2$ since the factor $u$ cannot be eliminated.  Meanwhile, if $c_2 \neq 0$ then the $uv$ term may be eliminated by replacing $v$ with $v + au$ for an appropriate $a$.  Thus  there are three inequivalent orbits with representatives $0, uv$ and $v^2$ under this action.  When the first form is $u^2 + 2vw$, the first form may be stabilized by adjusting $u$ by $w$ and $v$ by $u$ simultaneously to keep the $uw$ term 0.  Thus the action on the second quadratic form (which does not depend on $w$) is the same, so there are three orbits in this case, also.

To classify the orbits of type c., by subtracting a multiple of the first form from the second it may be assumed that the second form is of type $c_1 u^2 + c_2 v^2$.  To stabilize the first form, modulo $w$ the lines $u = 0$ and $v = 0$ are either fixed or exchanged.  If only one of $c_1, c_2$ is non-zero, it may be assumed that $c_1 \neq 0$ by exchanging $u,v$.  Since the lines are preserved or exchanged, this is inequivalent to the orbits with both $c_1$ and $c_2 \neq 0$, and the orbits where $c_1$ and $c_2$ are related by a square or non-square are inequivalent, since the scaling in individual coordinates is quadratic.

To classify the orbits of type d., by subtracting a multiple of the first form from the second it may be assumed that the second form is of the type $c_1 uv + c_2 v^2$.  To preserve the first quadratic form, the change of variables in $u, v$ takes the form $\begin{pmatrix} c & e\\ \pm  e\ell& \pm c\end{pmatrix}$.  This maps $v^2$ in the second form to 
\begin{equation}
 (eu \pm cv)^2 = e^2 u^2 \pm 2ce uv + c^2 v^2 \equiv \pm 2ce uv + (c^2 +  e^2\ell )v^2 \bmod u^2 - \ell v^2
\end{equation}
after subtracting a multiple of the first form to clear the $u^2$ term.  If $-1 \neq \square$ modulo $p$ then $c^2 +  e^2\ell \not \equiv 0$ so $uv$ and $v^2$ are inequivalent.  Otherwise, $v^2$ and $2k uv + v^2$ are inequivalent since $ce \equiv k(c^2 +  e^2\ell)$ leads to an irreducible quadratic.  These are all of the inequivalent orbits, since testing a non-zero form as equivalent to $v^2$ leads to a quadratic equation which is either soluble (equivalent) or not (inequivalent).

   Note that, since the orbits have already been classified, it suffices to check that each claimed stabilizer does in fact fix the form, since the orbits exactly cover the space.  This verification is performed in \textbf{OD12\_stabilizers.nb}, except for in the case 23b.  In the case of 23 b., the stabilizer is obtained as follows.  To preserve the top quadratic form, the stabilizer takes the form
  \begin{equation}
   \begin{pmatrix} c & e & \\ \pm  e\ell & \pm c &\\ 0&0& * \end{pmatrix}\begin{pmatrix} \frac{1}{c^2 - e^2\ell} & \\ \frac{s}{c^2 - e^2\ell} & t\end{pmatrix}.
  \end{equation}
The $\GL_3$ part maps
\begin{equation}
v(v + 2k u) \mapsto (e u \pm cv) ( (2ck + e)u \pm (2  e\ell k+ c)v) . 
\end{equation}
Subtracting down to eliminate the coefficient on $u^2$, which determines $s$, this becomes
\begin{equation}
 \pm\left(2 c^2 k+ 2ce +2 e^2\ell k\right) uv + (c^2 + 4 ce\ell k+  e^2\ell) v^2.
\end{equation}
To satisfy the stabilizer condition it is necessary and sufficient that the ratio of the $uv$ and $v^2$ coefficients satisfies
\begin{equation}
 \frac{ \pm\left(2 c^2 k+ 2ce +2 e^2\ell k\right)}{c^2 + 4 ce\ell k+  e^2\ell} = 2k.
\end{equation}
For the $+$ condition to be satisfied, it follows that
\begin{equation}
 (1 - 4  \ell k^2) ce = 0,
\end{equation}
which implies that $ce = 0$.  Both $c = 0$ and $e = 0$ lead to solutions fixing $t$.  The minus case obtains
\begin{equation}
 -(c^2 k+ ce +   e^2\ell k) =  c^2 k+ 4  ce\ell k^2+   e^2\ell k.
\end{equation}
This leads to a reducible quadratic equation in $c$ and $e$ with two non-zero projective solutions.
\end{proof}

\section{Evaluation of exponential sums}
By Lemma \ref{full_mod_p_orbit_lemma},  and the classification of maximal and non-maximal $\mod p^2$ orbits in Lemmas \ref{22_1211_122_lemma}, \ref{14_lemma}, \ref{131_lemma}, and \ref{1212_lemma}, in order to evaluate the Fourier transform of the maximal set at frequencies not divisible by $p$ it suffices to consider the following $\mod p^2$ orbits in time domain.
\begin{equation}\label{standard_mod_p_rep}
 \begin{array}{|l|l|l|}
  \hline
  \text{$\mod p$ orbit} & \text{$\mod p^2$ orbit representative} & \text{Justification}\\
  \hline
  \sO_{1^211} & \begin{pmatrix} 0 &0&0\\ &1&0\\ &&-1\end{pmatrix}\begin{pmatrix} 0 &0 & \frac{1}{2}\\ &0&0\\ &&0\end{pmatrix} & \text{This is the only non-maximal orbit}\\
  \hline
  \sO_{1^22} & \begin{pmatrix} 0 &0&0\\ &1&0\\ &&-\ell\end{pmatrix}\begin{pmatrix} 0&0&\frac{1}{2}\\ &0&0\\&&0\end{pmatrix} & \text{This is the only non-maximal orbit}\\
  \hline
  \sO_{2^2} & \begin{pmatrix} 0&0&0\\ &0&0\\&&1\end{pmatrix} \begin{pmatrix}1 &0&0\\ &-\ell &0\\ &&0\end{pmatrix} & \text{This is the only non-maximal orbit}\\
  \hline
  \sO_{1^4} & \begin{array}{l}\begin{pmatrix} \epsilon p &0&0\\&0&0\\&&1\end{pmatrix}\begin{pmatrix} 0 &0&\frac{1}{2}\\ &1&0\\&&0\end{pmatrix},\\ \epsilon \in \bF_p^\times/\{x^4: x \in \bF_p^\times\} \end{array} & \text{These are all of the maximal orbits}\\
  \hline
  \sO_{1^31} & \begin{array}{l} \begin{pmatrix} \epsilon p &0&0\\ &0&\frac{1}{2} \\&&0\end{pmatrix} \begin{pmatrix} 0&0&\frac{1}{2}\\ &1&0\\ &&0\end{pmatrix}, \\ \epsilon \in \bF_p^\times/\{x^3: x \in \bF_p^\times\} \end{array} & \text{These are all of the maximal orbits}\\
  \hline
  \sO_{1^21^2} & \begin{array}{l} \begin{pmatrix} p &0&0\\ &p&0\\ &&1\end{pmatrix} \begin{pmatrix} 0 & \frac{1}{2} &0\\ &0&0\\ &&0\end{pmatrix}, \\ \begin{pmatrix} \ell p &0 &0\\ &p &0\\ &&1\end{pmatrix} \begin{pmatrix} 0 & \frac{1}{2} &0 \\ &0 &0\\ &&0\end{pmatrix},\\ \begin{pmatrix} \ell p & 0&0\\ & \ell p &0\\ & & 1\end{pmatrix} \begin{pmatrix} 0 & \frac{1}{2} &0\\ &0&0\\&&0\end{pmatrix} \end{array} & \text{These are all of the maximal orbits}\\
  \hline
 \end{array}
\end{equation}
The size of the stabilizer in $G(\zed/p^2\zed)$ of each orbit listed above is given in the following table.  These were obtained by multiplying the size of the stabilizer of the corresponding orbit in the $G_x$ action on $V/V_x$ by $p^{\dim G- \dim V_x} = p^{13 - \dim V_x}$, see Lemma \ref{stabilizer_size_lemma}.
\begin{equation}\label{maximal_orbit_stab_size}
 \begin{array}{|l|l|l|}
  \hline
  \text{$\mod p$ orbit} & \text{$\mod p^2$ orbit representative} & \text{Stabilizer size}\\
  \hline
  \sO_{1^211} & \begin{pmatrix} 0 &0&0\\ &1&0\\ &&-1\end{pmatrix}\begin{pmatrix} 0 &0 & \frac{1}{2}\\ &0&0\\ &&0\end{pmatrix} & 2(p-1)^2p^2\\
  \hline
  \sO_{1^22} & \begin{pmatrix} 0 &0&0\\ &1&0\\ &&-\ell\end{pmatrix}\begin{pmatrix} 0&0&\frac{1}{2}\\ &0&0\\&&0\end{pmatrix} & 2(p-1)^2p^2\\
  \hline
  \sO_{2^2} & \begin{pmatrix} 0&0&0\\ &0&0\\&&1\end{pmatrix} \begin{pmatrix}1 &0&0\\ &-\ell &0\\ &&0\end{pmatrix} & 2(p-1)^2p^3(p+1)\\
  \hline
  \sO_{1^4} & \begin{array}{l}\begin{pmatrix} \epsilon p &0&0\\&0&0\\&&1\end{pmatrix}\begin{pmatrix} 0 &0&\frac{1}{2}\\ &1&0\\&&0\end{pmatrix},\\ \epsilon \in \bF_p^\times/\{x^4: x \in \bF_p^\times\} \end{array} & (p-1)p^4\#\{\epsilon^4 = 1\}\\
  \hline
  \sO_{1^31} & \begin{array}{l} \begin{pmatrix} \epsilon p &0&0\\ &0&\frac{1}{2} \\&&0\end{pmatrix} \begin{pmatrix} 0&0&\frac{1}{2}\\ &1&0\\ &&0\end{pmatrix}, \\ \epsilon \in \bF_p^\times/\{x^3: x \in \bF_p^\times\} \end{array} & (p-1)p^3\#\{\epsilon^3 = 1\}\\
  \hline
  \sO_{1^21^2} &\begin{pmatrix} p &0&0\\ &p&0\\ &&1\end{pmatrix} \begin{pmatrix} 0 & \frac{1}{2} &0\\ &0&0\\ &&0\end{pmatrix} & 8(p-1)p^3\\
  \hline
  \sO_{1^21^2} & \begin{pmatrix} \ell p &0 &0\\ &\ell p &0\\ &&1\end{pmatrix} \begin{pmatrix} 0 & \frac{1}{2} &0 \\ &0 &0\\ &&0\end{pmatrix} & 8(p-1)p^3\\
  \hline
  \sO_{1^21^2} &  \begin{pmatrix} \ell p & 0&0\\ & p &0\\ & & 1\end{pmatrix} \begin{pmatrix} 0 & \frac{1}{2} &0\\ &0&0\\&&0\end{pmatrix}  & 4(p-1)p^3\\
  \hline
 \end{array}
\end{equation}

In evaluating the exponential sums write the time domain variable $x = x_0 + px_1$ and frequency variable $\xi = \xi_0 + p \xi_1$ where $x_0$ and $\xi_0$ are standard orbit representatives defined over $\zed$. In this section it is useful to recall the formulas (see Lemma \ref{orb_exp_sum_lemma})
\begin{align}
 S_{p^2}(x, \xi) &=  p^{13} \sS(x,\xi),\\
 \notag \sS(x, \xi)&= \sum_{g \in G_{x, \xi}^t}e_p\left([x_1, g\cdot \xi_0] + [x_0, g\cdot \xi_1] \right)
 \end{align}
 and (see the beginning of Section \ref{orb_exp_section})
 \begin{align} \Sigma_{p^2}(x, \xi) &= \frac{p^{13}\sS(x, \xi)}{\left|\Stab_{G(\zed/p^2\zed)}(x)\right|}\\
 \notag \sM(x, \xi) &= \sum_{\substack{\sO_{x'}\bmod p^2: \sO_{x'} \subset \sO_x \bmod p\\ \text{maximal}}} \Sigma_{p^2}(x', \xi)\\
 \notag &=  -\sum_{\substack{\sO_{x'}\bmod p^2: \sO_{x'} \subset \sO_x \bmod p\\ \text{non-maximal}}} \Sigma_{p^2}(x', \xi).
\end{align}

The following lemma justifies classifying the $G_x \times \GL_1$ action on $V/V_x$ in the case of $\xi$.
\begin{lemma}\label{dilation_lemma}
 Let $x \in \sO_{1^211}, \sO_{1^22}, \sO_{2^2}, \sO_{1^4}, \sO_{1^31}$ or $\sO_{1^21^2} \bmod p$ and write $\xi = \xi_0 + p \xi_1$ satisfy $G_{x, \xi} \neq \emptyset$.  Let $\lambda \in \bF_p^\times$.  Then
 \begin{equation}
  \sM(x, \xi_0 + p\xi_1) = \sM(x, \xi_0 + p \lambda \xi_1).
 \end{equation}

\end{lemma}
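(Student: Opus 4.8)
The plan is to reduce to the explicit formula for $\sS$ from Lemma~\ref{orb_exp_sum_lemma} and to absorb the dilation of $\xi_1$ into a reindexing of the sum over the action set. First, if $\xi_0\equiv 0\bmod p$ the claim is immediate: the element $z_\lambda:=(I_3,\lambda I_2)\in G(\zed/p^2\zed)$ acts on $V(\zed/p^2\zed)$ by $z_\lambda\cdot v=\lambda v$ and is its own transpose, and since the set of maximal forms whose reduction lies in $\sO_x\bmod p$ is $G(\zed/p^2\zed)$-invariant, $\sM(x,\cdot)$ is $G(\zed/p^2\zed)$-invariant, whence $\sM(x,p\xi_1)=\sM(x,z_\lambda^t\cdot(p\xi_1))=\sM(x,p\lambda\xi_1)$. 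So assume $\xi_0\not\equiv 0$, so that $(\sO_x,\sO_{\xi_0})$ is one of the pairs in Lemma~\ref{action_set_lemma} and $x=x_0$, $\xi_0$ are the standard integral representatives. By Lemma~\ref{full_mod_p_orbit_lemma}, $\sM(x,\xi)=\sum_{x'}\Sigma_{p^2}(x',\xi)$ over representatives $x'=x_0+px'_1$ of the maximal $\bmod\,p^2$ orbits above $\sO_{x_0}$, and expanding $\Sigma_{p^2}(x',\xi)=\frac{p^{13}}{|\Stab_{G(\zed/p^2\zed)}(x')|}\sS(x',\xi)$ via Lemma~\ref{orb_exp_sum_lemma} (recall $[x_0,g\cdot\xi_0]=0$ for $g\in G_{x_0,\xi_0}^t$) and interchanging sums,
\[
\sM(x,\xi_0+p\xi_1)=\sum_{g\in G_{x_0,\xi_0}^t}C(g)\,e_p([x_0,g\cdot\xi_1]),\qquad C(g):=\sum_{x'}\frac{p^{13}}{|\Stab_{G(\zed/p^2\zed)}(x')|}\,e_p([x'_1,g\cdot\xi_0]).
\]
Replacing $\xi_1$ by $\lambda\xi_1$ multiplies only the exponent $[x_0,g\cdot\xi_1]=[g^t\cdot x_0,\xi_1]$ by $\lambda$.

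The plan is then to compensate this factor by reindexing the $g$-sum via $g\mapsto z_\mu g$ for $\mu\in\bF_p^\times$, where now $z_\mu=(I_3,\mu I_2)$ is viewed in $G(\bF_p)$. Since $z_\mu$ is central, acts on $V(\bF_p)$ as multiplication by $\mu$, and satisfies $z_\mu^t=z_\mu$, one checks from $G_{x_0,\xi_0}^t=\{g\in G(\bF_p):g\cdot\xi_0\in V_{x_0}^\perp\bmod p\}$ that $g\mapsto z_\mu g$ is a bijection of the action set (because $V_{x_0}^\perp$ is a linear subspace), and that $(z_\mu g)^t\cdot x_0=\mu(g^t\cdot x_0)$. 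The remaining point is $C(z_\mu g)=C(g)$: since $z_\mu\cdot\xi_0=\mu\xi_0$,
\[
C(z_\mu g)=\sum_{x'}\frac{p^{13}}{|\Stab_{G(\zed/p^2\zed)}(x')|}\,e_p([\mu x'_1,g\cdot\xi_0]).
\]
I would prove this equals $C(g)$ by the substitution $x'_1\mapsto\mu x'_1$: by Lemma~\ref{stabilizer_size_lemma} and $\Stab_{G_{x_0}}(\mu x'_1)=\Stab_{G_{x_0}}(x'_1)$ this substitution preserves the weights $p^{13}/|\Stab_{G(\zed/p^2\zed)}(x_0+px'_1)|$, and it is a bijection of the set of maximal orbit representatives \emph{provided} the set of classes in $V(\bF_p)/V_{x_0}$ corresponding to maximal forms is stable under multiplication by $\bF_p^\times$ — for then multiplication by $\mu$, being central, permutes the maximal $G_{x_0}$-orbits, which (via the correspondence between $\bmod\,p^2$ orbits above $\sO_{x_0}$ and $G_{x_0}$-orbits on $V(\bF_p)/V_{x_0}$) permutes the maximal $\bmod\,p^2$ orbits. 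Granting this, $C(z_\mu g)=C(g)$, and combining with the reindexing,
\[
\sM(x,\xi_0+p\lambda\xi_1)=\sum_g C(z_\mu g)\,e_p([(z_\mu g)^t\cdot x_0,\lambda\xi_1])=\sum_g C(g)\,e_p([g^t\cdot x_0,\lambda\mu\,\xi_1])=\sM(x,\xi_0+p\lambda\mu\,\xi_1)
\]
for every $\mu\in\bF_p^\times$; taking $\lambda=1$ and renaming $\mu$ as $\lambda$ gives the lemma.

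The main obstacle, and the only nontrivial input, is the structural fact just used: that ``being maximal'' is a condition on the quotient $V(\bF_p)/V_{x_0}$ which is homogeneous under scaling by $\bF_p^\times$. This does \emph{not} follow from $G(\zed/p^2\zed)$-invariance of maximality alone — the maximal $\bmod\,p^2$ orbits above a fixed $\bmod\,p$ orbit need not be permuted transitively by scaling; for instance above $\sO_{1^4}$ and $\sO_{1^31}$ they split according to fourth- and third-power cosets of $\bF_p^\times$. I would verify the homogeneity case by case from the classifications already obtained: for $\sO_{2^2},\sO_{1^211},\sO_{1^22}$ the non-maximal locus is just the zero class (Lemma~\ref{22_1211_122_lemma}), so its complement is visibly scaling-stable; for $\sO_{1^4}$ (Lemma~\ref{14_lemma}) the maximal orbits are the classes $\epsilon v_1$, $\epsilon\in\bF_p^\times/(\bF_p^\times)^4$, whose union is $G_{x_0}\cdot(\bF_p^\times v_1)$; for $\sO_{1^31}$ (Lemma~\ref{131_lemma}) the maximal orbits are the classes $\epsilon v_1$, $\epsilon\in\bF_p^\times/(\bF_p^\times)^3$, again with union $G_{x_0}\cdot(\bF_p^\times v_1)$; and for $\sO_{1^21^2}$ (Lemma~\ref{1212_lemma}) the maximal locus in the two-dimensional quotient is the set of classes $(a,b)$ with $ab\neq 0$, on which $\bF_p^\times$ acts diagonally. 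Everything else is bookkeeping with the orbit--stabilizer relations of Lemma~\ref{stabilizer_size_lemma}.
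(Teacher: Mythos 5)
Your overall strategy is the same as the paper's — use the central element $z_\mu=(I_3,\mu I_2)$, which preserves the action set, to trade the dilation of $\xi_1$ for a dilation on the time-domain side, and feed in the fact (which you correctly isolate and verify case by case) that the maximal locus in $V/V_{x_0}$ is stable under $\bF_p^\times$-scaling, so that multiplication by $\mu$ permutes the maximal $G_{x_0}$-orbits with unchanged stabilizer sizes. The gap is in the order of operations. After interchanging the sums you need the per-$g$ identity $C(z_\mu g)=C(g)$, and your justification (``the substitution $x_1'\mapsto\mu x_1'$ is a bijection of the set of maximal orbit representatives'') does not give it. Multiplication by $\mu$ carries your fixed transversal $R$ of the maximal $G_{x_0}$-orbits to a \emph{different} transversal $\mu R$, and for a fixed $g$ the summand $e_p([x_1',g\cdot\xi_0])$ is not constant on $G_{x_0}$-orbits — only the full sum over $g\in G_{x_0,\xi_0}^t$ is an orbit invariant — so $\sum_{x_1'\in R}e_p([\mu x_1',g\cdot\xi_0])$ need not equal $\sum_{x_1'\in R}e_p([x_1',g\cdot\xi_0])$. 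Concretely, for the pair $(\sO_{1^4},\sO_{D1^2})$ one has $[x_1',g\cdot\xi_0]=b_{11}a_{11}^2\epsilon$, so $C(g)$ is proportional to $\sum_{\epsilon}e_p(c\epsilon)$ with $c=b_{11}a_{11}^2$ and $\epsilon$ running over the fixed coset representatives of $(\bF_p^\times)^4$, while $C(z_\mu g)$ is the same sum with $c$ replaced by $\mu c$; these differ for generic $\mu$ (for $p=7$, representatives $\{1,3\}$, $c=1$, $\mu=2$: $e_7(1)+e_7(3)\neq e_7(2)+e_7(6)$). Nor can you choose $R$ scaling-stable: any $\mu\in(\bF_p^\times)^4$ fixes every maximal orbit, so stability would force $\mu r=r$ for each representative.

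The argument is repaired by not interchanging the sums, which is exactly the paper's proof. For each fixed maximal representative $x'=x_0+px_1'$, perform the reindexing $g\mapsto z_{\lambda^{-1}}g$ inside the \emph{full} sum $\sS(x',\xi_0+p\lambda\xi_1)$ of Lemma \ref{orb_exp_sum_lemma}; this is legitimate because the whole $g$-sum is invariant under the bijection of the action set, and it yields $\sS(x_0+px_1',\xi_0+p\lambda\xi_1)=\sS(x_0+p\lambda^{-1}x_1',\xi_0+p\xi_1)$. Now sum over $x_1'$: since $\Sigma_{p^2}$ is a genuine orbit invariant and, by your scaling-stability observation together with Lemma \ref{stabilizer_size_lemma}, the classes $\lambda^{-1}x_1'$ again run through one representative of each maximal orbit with the same weights $p^{13}/|\Stab_{G(\zed/p^2\zed)}(x_0+px_1')|$, the total is $\sM(x,\xi_0+p\xi_1)$. (For $\sO_{1^211},\sO_{1^22},\sO_{2^2}$ this is unnecessary: there $\sM$ equals minus the single non-maximal unweighted orbital sum, whose representative has $x_1'=0$.) Your treatment of the $\xi_0\equiv 0$ case and the case-by-case homogeneity check are fine; only the step $C(z_\mu g)=C(g)$ fails as stated.
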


\begin{proof}
 Let $x = x_0 + px_1$ be a $\mod p^2$ orbit representative from (\ref{standard_mod_p_rep}).  Note that, by its definition  the action set $G_{x, \xi}$ is invariant by multiplication by a scalar matrix in $\GL_2$.  By Lemma \ref{orb_exp_sum_lemma}, 
\begin{align}
\sS(x, \xi_0 + p\lambda \xi_1) &= \sum_{g \in G_{x, \xi}^t} e_p\left([x_0,\lambda g \cdot \xi_1] + [x_1, g\cdot \xi_0] \right)\\
\notag &= \sum_{g \in G_{x, \xi}^t} e_p\left([x_0, g \cdot \xi_1] + \left[\frac{1}{\lambda} x_1, g\cdot \xi_0\right] \right)\\
\notag&= \sS(x_0 + p\lambda^{-1}x_1, \xi_0 + p \xi_1). 
\end{align}
In the case of $\sO_{1^211}, \sO_{1^22}$ and $\sO_{2^2}$, $x_1 = 0$, so 
\begin{equation}
 \sM(x, \xi_0 + p\lambda \xi_1) = - \frac{p^{13} \sS(x, \xi_0 + p\lambda \xi_1)}{\left|\Stab_{G(\zed/p^2\zed)}(x)\right|} =  - \frac{p^{13} \sS(x, \xi_0 + p \xi_1)}{\left|\Stab_{G(\zed/p^2\zed)}(x)\right|} = \sM(x, \xi_0 + p\xi_1).
\end{equation}
In the case of $\sO_{1^4}$, $\sO_{1^31}$ and $\sO_{1^21^2}$, multiplying $x_1$ by $\lambda^{-1}$ permutes the maximal $\mod p^2$ orbits above the $\mod p$ orbit, while leaving the stabilizer size unchanged, see (\ref{maximal_orbit_stab_size}).  Since $\sM(x, \xi)$ is a sum of $\sS(x, \xi)$ weighted by $p^{13}$ times the inverse of the stabilizer, summed over these orbits, it follows that $\sM(x, \xi_0 + p \lambda \xi_1) = \sM(x, \xi_0 + p \xi_1)$ in this case, also.
\end{proof}

 \subsection{The exponential sums pair $(\sO_{1^211}, \sO_{D1^2})$}\label{O1211_OD12_section}
 In this case, 
 \begin{equation}x_0 =\begin{pmatrix} 0 & 0 &0\\ &1 &0\\ &&-1\end{pmatrix}\begin{pmatrix} 0 & 0 & \frac{1}{2}\\ &0 &0\\ &&0\end{pmatrix}, \qquad \xi_0 = \begin{pmatrix} 1 & 0 &0\\ &0&0\\&&0\end{pmatrix}\begin{pmatrix} 0&0&0\\ &0&0\\&&0\end{pmatrix} 
 \end{equation}
 and $x_1 = 0$.
 The action set is 
 \begin{equation}
G_{x,\xi}^t= \begin{pmatrix} a_{11} & a_{12} & a_{13}\\ & a & b\\ & c & d\end{pmatrix}\begin{pmatrix} b_{11} & b_{12}\\ & b_{22}\end{pmatrix}.
 \end{equation}
 The stabilizer has size
 \begin{equation}
  \left|\Stab_{G(\zed/p^2\zed)}(x)\right| = 2(p-1)^2p^2.
 \end{equation}

 The $\GL_2$ condition on $\begin{pmatrix} a & b\\c &d\end{pmatrix}$ may be fibered as follows
 \begin{align}
  \sum_{\begin{pmatrix} a & b\\ c & d\end{pmatrix} \in \GL_2} &= \sum_{\begin{pmatrix} a & b\\ c& d\end{pmatrix} \in \bF_p^4} - \sum_{\substack{\begin{pmatrix}  b\lambda & b\\  d\lambda & d\end{pmatrix}\\ \begin{pmatrix} b \\ d \end{pmatrix} \in \bF_p^2, \lambda \in \bF_p^\times}} - \sum_{\substack{\begin{pmatrix} a & 0\\ c & 0\end{pmatrix}\\ \begin{pmatrix} a\\ c\end{pmatrix} \in \bF_p^2}} - \sum_{\substack{ \begin{pmatrix} 0 & b\\ 0 & d \end{pmatrix} \\ \begin{pmatrix} b\\d\end{pmatrix} \in \bF_p^2}} + p \sum_{\begin{pmatrix} 0&0\\0&0\end{pmatrix}}\\
 \notag  &= \Sigma_1 - \Sigma_2 -\Sigma_3 -\Sigma_4 + \Sigma_5.
 \end{align}
 In the summations that follow, unless otherwise indicated summations over matrices are constrained by the fact that the pairs of matrices are contained in $\GL_3 \times \GL_2$.  If $\Sigma_1 - \Sigma_5$ is indicated then $a_{11}, b_{11}, b_{22} \in \bF_p^\times$, $a_{12}, a_{13}, b_{12} \in \bF_p$ and the remaining variables range according to the fibration above.
 
 Representatives and exponential sum pairings are given in the table below. Since this is the single non-maximal orbit,
 \begin{equation}
  \sM(x,\xi) = - \frac{p^{11} \sS(x,\xi)}{2(p-1)^2}.
 \end{equation}
See \textbf{exponential\_sums\_O1211\_OD12.nb}.
 {\tiny
 \begin{equation*}
  \begin{array}{|l|l|l|l|l|}
   \hline
   \text{Orbit} & \xi_1 & [ x_0, g \cdot \xi_1 ] & \sS(x,\xi) & \sM(x,\xi)\\
   \hline
   1. & \begin{pmatrix} 0 &0&0\\&0&0\\&&0\end{pmatrix}\begin{pmatrix} 0&0&0\\&0&0\\&&0\end{pmatrix} & 0 &(p-1)^5p^4(p+1) & -\frac{(p-1)^3 p^{15}(p+1)}{2}\\
   \hline
   2. & \begin{pmatrix} 0 & 0&0\\ &0&0\\ &&1\end{pmatrix}\begin{pmatrix} 0&0&0\\&0&0\\&&0\end{pmatrix} & b_{11} (b^2 - d^2)&  (p-1)^5p^4& - \frac{(p-1)^3 p^{15}}{2} \\
   \hline
   3. & \begin{pmatrix} 0 &0&0\\&0&\frac{1}{2}\\ &&0\end{pmatrix}\begin{pmatrix}0&0&0\\&0&0\\&&0\end{pmatrix} & b_{11}(ab - cd) &-2(p-1)^4p^4 & (p-1)^2 p^{15}\\
   \hline
   4. & \begin{pmatrix} 0 & 0 & 0\\ & -\ell &0\\ &&1\end{pmatrix}\begin{pmatrix} 0 &0&0\\ &0&0\\&&0\end{pmatrix} &  \begin{array}{l} b_{11} (-a^2\ell + b^2+c^2\ell - d^2)\end{array} & 0 &0\\
   \hline
   5. & \begin{pmatrix} 0 &0&0\\ &0&0\\&&1\end{pmatrix}\begin{pmatrix} 0&0&1\\ &0&0\\&&0\end{pmatrix} & \begin{array}{l} b_{11} (b^2-d^2) +b_{22}a_{11}d \end{array}  & -(p-1)^3p^4(2p-1) & \frac{(p-1)p^{15}(2p-1)}{2}\\
   \hline
   6. & \begin{pmatrix} 0 &0&0\\ &1 &0\\&&0\end{pmatrix}\begin{pmatrix}0 &0&1\\ &0&0\\&&0\end{pmatrix} & \begin{array}{l}b_{11}(a^2 - c^2) + b_{22}a_{11}d\end{array} & (p-1)^3p^4 & -\frac{(p-1)p^{15}}{2}\\
   \hline
   7. & \begin{pmatrix} 0 &0 &0\\ &0 & \frac{1}{2}\\ &&0\end{pmatrix}\begin{pmatrix}0 &0&1\\ &0&0\\&&0\end{pmatrix} & \begin{array}{l}b_{11}(ab-cd)+b_{22}a_{11} d \end{array} & 2(p-1)^3p^4 & - (p-1)p^{15}\\
   \hline
   8. & \begin{pmatrix} 0 &0&0\\ &1&1\\&&0\end{pmatrix} \begin{pmatrix} 0&0&1\\&0&0\\&&0\end{pmatrix} &  \begin{array}{l}b_{11} ( a (a + 2b) - c(c+2d))\\+b_{22}a_{11}d \end{array} & -2(p-1)^2p^4 & p^{15}\\
   \hline
   9. & \begin{pmatrix} 0 &0&0\\ &-\ell &0\\ &&1\end{pmatrix}\begin{pmatrix} 0&0&1\\ &0&0\\&&0\end{pmatrix} & \begin{array}{l}b_{11} (-a^2\ell+ b^2 + c^2\ell -d^2)\\+b_{22}a_{11}d  \end{array} &0 &0\\
   \hline
   10. & \begin{pmatrix} 0 &0&0\\ &0&0\\&&0\end{pmatrix}\begin{pmatrix}0 &0&1 \\ & 0&0\\ &&0\end{pmatrix} &   b_{22}a_{11}d &-(p-1)^4p^4 & \frac{(p-1)^2p^{15}}{2}\\
   \hline

   \end{array}
 \end{equation*}

 \begin{equation*}
   \begin{array}{|l|l|l|l|l|}
   \hline
   \text{Orbit} & \xi_1 & [ x_0, g \cdot \xi_1 ] & \sS(x,\xi) &\sM(x,\xi)\\
   \hline   11. & \begin{pmatrix} 0&0&0\\&0&0\\&&0\end{pmatrix}\begin{pmatrix}0&0&0\\&0&0\\&&1\end{pmatrix} &  b_{12}(b^2 -d^2)+b_{22}a_{13}d & 0 &0\\
   \hline
   12. & \begin{pmatrix} 0&0&0\\&0&1\\&&0\end{pmatrix}\begin{pmatrix}0&0&0\\&0&0\\&&1\end{pmatrix} & \begin{array}{l} b_{11}(2ab - 2cd) + b_{12}(b^2-d^2)\\+b_{22}a_{13}d\end{array} &0&0\\
   \hline
   13. & \begin{pmatrix} 0&0&0\\&1&0\\&&0\end{pmatrix}\begin{pmatrix}0&0&0\\&0&0\\&&1\end{pmatrix}& \begin{array}{l} b_{11}(a^2-c^2) + b_{12}(b^2 -d^2) \\+b_{22}a_{13}d \end{array}&0&0\\
   \hline
   14. & \begin{pmatrix} 0&0&0\\&0&0\\&&0\end{pmatrix}\begin{pmatrix}0&1&0\\&0&0\\&&1\end{pmatrix}& \begin{array}{l} b_{12}(b^2 - d^2)\\+b_{22}(a_{11}c+a_{13}d) \end{array}&0&0\\
   \hline
   15. & \begin{pmatrix} 0&0&0\\&0&1\\&&0\end{pmatrix}\begin{pmatrix}0&1&0\\&0&0\\&&1\end{pmatrix}& \begin{array}{l} b_{11}(2ab - 2cd)+ b_{12}(b^2 - d^2)\\+b_{22}(a_{11}c+a_{13}d)  \end{array} & 0&0\\
   \hline
   16. &\begin{pmatrix} 0&0&0\\&1&0\\&&0\end{pmatrix}\begin{pmatrix}0&1&0\\&0&0\\&&1\end{pmatrix} & \begin{array}{l}   b_{11}(a^2 -c^2) + b_{12}(b^2 -d^2)\\+b_{22}(a_{11}c+a_{13}d) \end{array}& 0&0\\
   \hline
   17. & \begin{pmatrix} 0&0&0\\&0&0\\&&0\end{pmatrix}\begin{pmatrix}0&0&0\\&0&1\\&&0\end{pmatrix}&b_{12}(2ab - 2cd)+ b_{22}(a_{12}d + a_{13}c)  &0&0\\
   \hline
   18. &\begin{pmatrix} 0&0&0\\&0&0\\&&1\end{pmatrix}\begin{pmatrix}0&0&0\\&0&1\\&&0\end{pmatrix} & \begin{array}{l}b_{11}(b^2 - d^2)+ b_{12}(2ab - 2cd) \\+b_{22}(a_{12}d + a_{13}c)  \end{array}&0&0\\
   \hline
   19. & \begin{pmatrix} 0&0&0\\&1&0\\&&1\end{pmatrix}\begin{pmatrix}0&0&0\\&0&1\\&&0\end{pmatrix}& \begin{array}{l}b_{11}(a^2 + b^2 - c^2 -d^2)\\+ b_{12}(2ab -2cd)+b_{22}(a_{12}d + a_{13}c)  \end{array}&0&0\\
   \hline
   20. &\begin{pmatrix} 0&0&0\\&\ell&0\\&&1\end{pmatrix}\begin{pmatrix}0&0&0\\&0&1\\&&0\end{pmatrix} & \begin{array}{l}b_{11}(a^2\ell +b^2 - c^2 \ell - d^2 )\\+ b_{12}(2ab-2cd)+b_{22}(a_{12}d + a_{13}c) \end{array} &0&0\\
   \hline
   21. & \begin{pmatrix} 0 &0&0\\&0&0\\&&0\end{pmatrix}\begin{pmatrix}0&0&0\\ &-\ell &0\\&&1\end{pmatrix} & \begin{array}{l}b_{12}(- a^2\ell + b^2 +  c^2\ell-d^2)\\+b_{22}(- a_{12}c\ell + a_{13}d)\end{array} &0&0\\
   \hline
   22. & \begin{pmatrix} 0 &0&0\\&1&0\\&&0\end{pmatrix}\begin{pmatrix}0&0&0\\ &-\ell &0\\&&1\end{pmatrix} & \begin{array}{l}b_{11}(a^2 - c^2)\\ +b_{12}(- a^2\ell + b^2 +  c^2\ell - d^2)\\+b_{22}(- a_{12}c\ell + a_{13}d) \end{array}&0 &0\\
   \hline
   23a. &\begin{pmatrix} 0 &0&0\\&0&1\\&&0\end{pmatrix}\begin{pmatrix}0&0&0\\ &-\ell &0\\&&1\end{pmatrix} & \begin{array}{l} b_{11}(2ab - 2cd)\\ +b_{12}(- a^2\ell + b^2 +  c^2\ell - d^2)\\+ b_{22}(- a_{12}c\ell + a_{13}d) \end{array} &0&0\\
   \hline
   23b. &\begin{pmatrix} 0 &0&0\\&1&k\\&&0\end{pmatrix}\begin{pmatrix}0&0&0\\ &-\ell &0\\&&1\end{pmatrix} & \begin{array}{l} b_{11}(a^2 + 2ab k-c^2 -2cdk)\\ +b_{12}(- a^2\ell + b^2 +  c^2\ell - d^2)\\ +b_{22}(- a_{12}c\ell + a_{13}d) \end{array} &0&0\\
   \hline
  \end{array}  
 \end{equation*}
}

\begin{enumerate}
 \item [1.]  In this case $[ x_0, g \cdot \xi_1 ] = 0$, so
 \begin{equation}
  \sS(x, \xi) = |G_{x, \xi}| = (p-1)^5p^4 (p+1).
 \end{equation}
 \item [2.]  In this case
 \begin{equation}
  [ x_0, g\cdot \xi_1] = b_{11} (b^2 - d^2).
 \end{equation}
The fibration gives
\begin{align*}
 \Sigma_1 &= \sum_{\begin{pmatrix} * & * & *\\ & a & b\\ & c & d\end{pmatrix}\begin{pmatrix} b_{11} & *\\ & *\end{pmatrix}} e_p(b_{11}(b^2-d^2)).
 \end{align*}
 Substitute $b:= b+d$, $d:= b-d$ to obtain
 \begin{align*}
 \Sigma_1 = (p-1)^2p^5 \sum_{b_{11} \in \bF_p^\times, b,d \in \bF_p} e_p(b_{11}bd).
 \end{align*}
 The sum over $d$ vanishes unless $b = 0$, so $\Sigma_1 = (p-1)^3 p^6$.  The remaining summations may be performed similarly,
 \begin{align*}
 \Sigma_2 &= \sum_{\begin{pmatrix} * & * & *\\ & b \lambda  & b\\ & d \lambda  & d\end{pmatrix}\begin{pmatrix} b_{11} & *\\ & *\end{pmatrix}} e_p(b_{11}(b^2-d^2))\\
 \notag &= (p-1)^3 p^3 \sum_{b_{11} \in \bF_p^\times, b,d \in \bF_p} e_p(b_{11}bd)\\
 \notag &= (p-1)^4 p^4\\
 \Sigma_3 &= \sum_{\begin{pmatrix} * & * & *\\ & a & 0\\ & c & 0\end{pmatrix} \begin{pmatrix} b_{11} & *\\ & *\end{pmatrix}} 1 = (p-1)^3p^5\\
 \Sigma_4 &= \sum_{\begin{pmatrix} * & * & *\\ & 0 & b\\ & 0 & d\end{pmatrix} \begin{pmatrix} b_{11} & *\\ & *\end{pmatrix}} e_p(b_{11}(b^2-d^2))\\
 \notag &= (p-1)^2p^3 \sum_{b_{11} \in \bF_p^\times, b, d \in \bF_p} e_p(b_{11}bd)\\
 \notag &= (p-1)^3p^4\\
 \Sigma_5 &= p\sum_{\begin{pmatrix} * & * & *\\ & 0 & 0\\ & 0 & 0\end{pmatrix} \begin{pmatrix} b_{11} & *\\ & *\end{pmatrix}}1=(p-1)^3 p^4.
\end{align*}
Thus
\begin{equation}
 \sS(x, \xi) = \Sigma_1 - \Sigma_2 - \Sigma_3 - \Sigma_4 + \Sigma_5 = (p-1)^5 p^4.
\end{equation}
\item [3.]   In this case
\begin{equation}
 [ x_0, g \cdot \xi_1 ] = b_{11}(ab - cd).
\end{equation}
The fibration gives
\begin{align*}
 \Sigma_1 &= \sum_{\begin{pmatrix} * & * & *\\ & a & b\\ & c & d\end{pmatrix}\begin{pmatrix} b_{11} & *\\ & *\end{pmatrix}} e_p(b_{11}(ab-cd))
 \end{align*}
 Summation in $b, d$ vanish unless $a = c = 0$.  Thus $\Sigma_1 = (p-1)^3p^5.$
 \begin{align*}
 \Sigma_2 &= \sum_{\begin{pmatrix} * & * & *\\ & b \lambda  & b\\ &d \lambda  & d\end{pmatrix} \begin{pmatrix} b_{11} & *\\ & *\end{pmatrix}} e_p(b_{11} (b^2-d^2)\lambda)
\end{align*}
Substitute $b:= b+d$, $d:= b-d$ so $\Sigma_2  = (p-1)^4p^4$.
\begin{align*}
 \Sigma_3 &= \sum_{\begin{pmatrix} * & * & *\\ & a & 0\\ & c &0\end{pmatrix}\begin{pmatrix} * & *\\ & *\end{pmatrix}} 1 = (p-1)^3p^5\\
 \Sigma_4 &= \Sigma_3 = (p-1)^3p^5\\
 \Sigma_5 &= (p-1)^3p^4.
\end{align*}
Thus
\begin{equation}
 \sS(x,\xi) = (p-1)^3p^4 [p - (p-1) -2p + 1] = -2(p-1)^4p^4.
\end{equation}
\item [4.]  In this case
\begin{equation}
 [ x_0, g\cdot \xi_1 ] = b_{11}(- a^2\ell + b^2 + c^2\ell - d^2) = b_{11} (-(a^2-c^2)\ell + b^2 - d^2).
\end{equation}
The fibration gives
\begin{align*}
 \Sigma_1 &= \sum_{\begin{pmatrix} * & * & *\\ & a & b\\ & c & d \end{pmatrix}\begin{pmatrix} b_{11} & *\\ & *\end{pmatrix}}e_p(b_{11}(-(a^2 - c^2)\ell + (b^2 - d^2)))\\
 &= (p-1)^2 p^3 \sum_{a,b,c,d \in \bF_p, b_{11} \in \bF_p^\times} e_p(b_{11}(- ac\ell + bd))\\
 &= (p-1)^3 p^5\\
 \Sigma_2 &= \sum_{\begin{pmatrix} * & * & *\\ & b \lambda  & b\\ & d \lambda  & d \end{pmatrix} \begin{pmatrix} b_{11} & *\\ & *\end{pmatrix}} e_p(b_{11}(b^2-d^2)(- \lambda^2\ell + 1))
\end{align*}
  Since $-\lambda^2 \ell + 1 \neq 0$ this may be absorbed into summation over $b_{11}$.  Substitute $b := b+d$, $d:= b-d$ to obtain $\Sigma_2 = (p-1)^4 p^4$.  Similarly,
\begin{align*}
 \Sigma_3 &= \sum_{\begin{pmatrix} * & * & *\\ & a & 0\\ & c & 0\end{pmatrix} \begin{pmatrix} b_{11} & * \\ & *\end{pmatrix}} e_p(-b_{11}(a^2 -c^2)\ell) = (p-1)^3p^4\\
 \Sigma_4 &= \sum_{\begin{pmatrix} * & * & *\\ & 0 & b\\ & 0 & d\end{pmatrix} \begin{pmatrix} b_{11} & * \\ & *\end{pmatrix}} e_p(b_{11}(b^2-d^2)) = (p-1)^3p^4\\
 \Sigma_5 &= (p-1)^3p^4.
\end{align*}
Thus
\begin{equation}
 \sS(x,\xi) = (p-1)^3p^4 (p - (p-1) - 1) = 0.
\end{equation}
\item[5.]   In this case
\begin{equation}
 [ x_0, g \cdot \xi_1] = b_{11} (b^2-d^2) +b_{22}a_{11}d .
\end{equation}
Summing in $b_{22}$ gives $-1$ if $d \neq 0$ and $p-1$ if $d = 0$.  Thus, 
\begin{align}
 \sS(x,\xi) &= - \frac{1}{p-1} \sum_{g \in G_{x,\xi}^t} e_p(b_{11}(b^2-d^2)) + \frac{p}{p-1} \sum_{\begin{pmatrix} * & * & *\\ & a & b\\ & c &\end{pmatrix} \begin{pmatrix} b_{11} & * \\ & *\end{pmatrix}} e_p(b_{11} b^2)\\
 \notag &= - \frac{ 2.}{p-1} - (p-1)^3 p^5\\
 \notag &= - (p-1)^4p^4 - (p-1)^3 p^5 \\ \notag &= - (p-1)^3p^4 (2p-1).
\end{align}
\item[6.]  In this case
\begin{equation}
 [ x_0, g \cdot \xi_1 ] = b_{11}(a^2 - c^2) + b_{22}a_{11}d.
\end{equation}
Sum over $b_{22}$ as before to obtain
\begin{equation}
 \sS(x,\xi) = -\frac{1}{p-1} \sum_{g \in G_{x,\xi}^t}e_p(b_{11}(a^2-c^2)) + \frac{p}{p-1}\sum_{\begin{pmatrix} * & * & *\\ & a&b\\ & c &\end{pmatrix} \begin{pmatrix} b_{11} & *\\ & *\end{pmatrix}} e_p(b_{11}(a^2-c^2)).
\end{equation}
The $c = 0$ term in the second sum may be added without altering the sum, since \begin{equation}\sum_{b_{11} \in \bF_p^\times, a \in \bF_p} e_p(b_{11}a^2) = 0.\end{equation}  Thus, substituting $a+c$ and $a-c$ for $a$ and $c$ as before,
\begin{align}
\sS(x, \xi) &= - \frac{2.}{p-1} + (p-1)^3 p^5\\
\notag &= -(p-1)^4p^4 + (p-1)^3p^5 \\ \notag &= (p-1)^3p^4.
\end{align}
\item[7.]  In this case
\begin{equation}
 [ x_0, g \cdot \xi_1 ] = b_{11}(ab-cd)+b_{22}a_{11} d .
\end{equation}
Sum in $b_{22}$ to find
\begin{align*}
 \sS(x,\xi) &= - \frac{1}{p-1} \sum_{g \in G_{x, \xi}^t} e_p(b_{11}(ab-cd)) + \frac{p}{p-1} \sum_{\begin{pmatrix} * & * & *\\ & a & b\\ & c &0\end{pmatrix} \begin{pmatrix} b_{11} & *\\ & *\end{pmatrix}} e_p(b_{11}ab)\\
  &= -\frac{3.}{p-1} = 2 (p-1)^3p^4.
\end{align*}
\item[8.]   In this case
\begin{equation}
 [ x_0, g\cdot \xi_1 ] = b_{11} ( a (a + 2b) - c(c+2d))+b_{22}a_{11}d .
\end{equation}
Sum in $b_{22}$ to find
\begin{align*}
 \sS(x,\xi) &= - \frac{1}{p-1} \sum_{g \in G_{x,\xi}^t} e_p(b_{11}(a(a+2b) - c(c+2d)))\\ & \qquad + \frac{p}{p-1} \sum_{\begin{pmatrix} * & * & *\\ & a& b\\ & c& \end{pmatrix} \begin{pmatrix} b_{11} & *\\ & *\end{pmatrix}}e_p(b_{11}(a(a+2b) - c^2))
\end{align*}
The first sum is the sum from 3.  In the second sum, replace $a:= a+b$ to obtain
\begin{align*}
 \sS(x,\xi) &= - \frac{3.}{p-1}  + (p-1)p^4 \sum_{b_{11}, b, c \in \bF_p^\times, a \in \bF_p} e_p(b_{11}(a^2 - b^2 -c^2))
 \end{align*}
 The sum may be evaluated in terms of the Gauss sum \begin{equation}\tau = \sum_{n \in \bF_p} e_p\left(n^2 \right).\end{equation}  Write $\left(\frac{\cdot}{p}\right)$ for the Legendre symbol which is multiplicative and satisfies $\left(\frac{\ell}{p}\right) = -1$.  When $b_{11} \neq 0$, \begin{equation}\sum_{n \in \bF_p} e_p\left(b_{11}n^2\right) = \tau \left(\frac{b_{11}}{p}\right).\end{equation}  Also, \begin{equation}\sum_{b_{11} \in \bF_p^\times} \left(\frac{b_{11}}{p}\right) = 0.\end{equation}  The Gauss sum satisfies \begin{equation}\tau^2 = \left(\frac{-1}{p}\right)p.\end{equation}  With these facts in hand, 
 \begin{align*}
 \sS(x, \xi)&= 2 (p-1)^3 p^4  + (p-1)p^4 \sum_{b_{11} \in \bF_p^\times} \tau \left( \frac{b_{11}}{p}\right)\left(\tau \left(\frac{-b_{11}}{p}\right) - 1\right)^2\\
 &= 2(p-1)^3p^4 - 2 (p-1)^2 p^5\\
 &= -2(p-1)^2p^4.
\end{align*}
\item[9.]   In this case
\begin{equation}
 [ x_0, g \cdot \xi_1 ] = b_{11} (-(a^2 - c^2)\ell+ (b^2-d^2))+b_{22}a_{11}d .
\end{equation}
Sum in $b_{22}$ to find
\begin{align*}
 \sS(x,\xi) &= - \frac{4.}{p-1} + \frac{p}{p-1}\sum_{\begin{pmatrix} * & * & *\\ & a & b\\ & c &0\end{pmatrix} \begin{pmatrix} b_{11} & *\\ & *\end{pmatrix}}e_p(b_{11}(- a^2\ell + b^2 +  c^2\ell))\\
 &= (p-1)p^4 \sum_{b_{11} \in \bF_p^\times} \tau\left(\frac{- b_{11}\ell}{p}\right) \left(\tau \left(\frac{b_{11}}{p}\right) -1\right) \left(\tau \left(\frac{b_{11}\ell}{p}\right) -1\right)\\ &=0.
\end{align*}
\item[10.] In this case
\begin{equation}
 [ x_0, g \cdot \xi_1 ] =  b_{22}a_{11}d.
\end{equation}
Summing in $b_{22}$,
\begin{align*}
 \sS(x, \xi) &= -\frac{|G_{x, \xi}|}{p-1} + p \# \begin{pmatrix} * & * & *\\ & * & *\\ & * &\end{pmatrix}\begin{pmatrix} * & *\\ & 1\end{pmatrix}\\
 & = - (p-1)^4 p^4 (p+1) + (p-1)^4 p^5\\ &=- (p-1)^4 p^4.
\end{align*}
\item[11.-23.] These sums vanish on summing in $a_{12}$, $a_{13}$ and $b_{12}$.
\end{enumerate}

\subsection{The exponential sums pair $(\sO_{1^22}, \sO_{D1^2})$}\label{O122_OD12_section}
 In this case, 
 \begin{equation}x_0 =\begin{pmatrix} 0 & 0 &0\\ &1 &0\\ &&-\ell\end{pmatrix}\begin{pmatrix} 0 & 0 & \frac{1}{2}\\ &0 &0\\ &&0\end{pmatrix}, \qquad \xi_0 = \begin{pmatrix} 1 & 0 &0\\ &0&0\\&&0\end{pmatrix}\begin{pmatrix} 0&0&0\\ &0&0\\&&0\end{pmatrix} 
 \end{equation}
 and $x_1 = 0$.
 The action set is 
 \begin{equation}
G_{x,\xi}^t= \begin{pmatrix} a_{11} & a_{12} & a_{13}\\ & a & b\\ & c & d\end{pmatrix}\begin{pmatrix} b_{11} & b_{12}\\ & b_{22}\end{pmatrix},
 \end{equation}
 which is fibered as before.
The stabilizer has size $\left|\Stab_{G(\zed/p^2\zed)}(x)\right| = 2(p-1)^2p^2.$
 
 Representatives and exponential sum pairings are given in the table below. Since this is the single non-maximal orbit,
 \begin{equation}
  \sM(x,\xi) = -\frac{p^{11}\sS(x,\xi)}{2(p-1)^2}.
 \end{equation}

 See \textbf{exponential\_sums\_O122\_OD12.nb}.
 {\tiny
 \begin{equation*}
  \begin{array}{|l|l|l|l|l|}
   \hline
   \text{Orbit} & \xi_1 & [ x_0, g \cdot \xi_1 ] &\sS(x,\xi)& \sM(x,\xi)\\
   \hline
   1. & \begin{pmatrix} 0 &0&0\\&0&0\\&&0\end{pmatrix}\begin{pmatrix} 0&0&0\\&0&0\\&&0\end{pmatrix} & 0 &(p-1)^5p^4(p+1) & -\frac{(p-1)^3p^{15}(p+1)}{2}\\
   \hline
   2. & \begin{pmatrix} 0 & 0&0\\ &0&0\\ &&1\end{pmatrix}\begin{pmatrix} 0&0&0\\&0&0\\&&0\end{pmatrix} & b_{11} (b^2 - d^2\ell)& -(p-1)^4p^4(p+1)& \frac{(p-1)^2p^{15}(p+1)}{2} \\
   \hline
   3. & \begin{pmatrix} 0 &0&0\\&0&\frac{1}{2}\\ &&0\end{pmatrix}\begin{pmatrix}0&0&0\\&0&0\\&&0\end{pmatrix} & b_{11}(ab - cd\ell) &0 &0\\
   \hline
   4. & \begin{pmatrix} 0 & 0 & 0\\ & -\ell &0\\ &&1\end{pmatrix}\begin{pmatrix} 0 &0&0\\ &0&0\\&&0\end{pmatrix} &  \begin{array}{l} b_{11} (- a^2\ell + b^2 +  c^2\ell^2 -  d^2\ell)\end{array} & 2(p-1)^3p^4(p+1) & -(p-1)p^{15}(p+1) \\
   \hline
   5. & \begin{pmatrix} 0 &0&0\\ &0&0\\&&1\end{pmatrix}\begin{pmatrix} 0&0&1\\ &0&0\\&&0\end{pmatrix} & \begin{array}{l}b_{11} (b^2- d^2\ell)+b_{22}a_{11} d  \end{array} & (p-1)^3p^4 & -\frac{(p-1)p^{15}}{2} \\
   \hline
   6. & \begin{pmatrix} 0 &0&0\\ &1 &0\\&&0\end{pmatrix}\begin{pmatrix}0 &0&1\\ &0&0\\&&0\end{pmatrix} &b_{11}(a^2 -  c^2\ell)+b_{22}a_{11}d  & (p-1)^3p^4 & -\frac{(p-1)p^{15}}{2}\\
   \hline
   7. & \begin{pmatrix} 0 &0 &0\\ &0 & \frac{1}{2}\\ &&0\end{pmatrix}\begin{pmatrix}0 &0&1\\ &0&0\\&&0\end{pmatrix} & b_{11}(ab- cd\ell)+b_{22}a_{11}d  & 0&0\\
   \hline
   8. & \begin{pmatrix} 0 &0&0\\ &1&1\\&&0\end{pmatrix} \begin{pmatrix} 0&0&1\\&0&0\\&&0\end{pmatrix} &  \begin{array}{l}b_{11} ( a (a + 2b) -  c(c+2d)\ell)\\+b_{22}a_{11}d\end{array} &0&0\\
   \hline
   9. & \begin{pmatrix} 0 &0&0\\ &-\ell &0\\ &&1\end{pmatrix}\begin{pmatrix} 0&0&1\\ &0&0\\&&0\end{pmatrix} & \begin{array}{l}b_{11} (- a^2\ell + b^2 +  c^2\ell^2- d^2\ell)\\+b_{22}a_{11}d \end{array} & -2(p-1)^2p^4 & p^{15}\\
   \hline
   10. & \begin{pmatrix} 0 &0&0\\ &0&0\\&&0\end{pmatrix}\begin{pmatrix}0 &0&1 \\ & 0&0\\ &&0\end{pmatrix} &  b_{22}a_{11}d &-(p-1)^4p^4 & \frac{(p-1)^2p^{15}}{2}\\
   \hline
   \end{array}
 \end{equation*}

 \begin{equation*}
   \begin{array}{|l|l|l|l|l|}
   \hline
   \text{Orbit} & \xi_1 & [ x_0, g \cdot \xi_1 ] & \sS(x, \xi) & \sM(x,\xi)\\
   \hline   11. & \begin{pmatrix} 0&0&0\\&0&0\\&&0\end{pmatrix}\begin{pmatrix}0&0&0\\&0&0\\&&1\end{pmatrix} &  b_{12}(b^2 - d^2\ell)+b_{22}a_{13}d &0 &0\\
   \hline
   12. & \begin{pmatrix} 0&0&0\\&0&1\\&&0\end{pmatrix}\begin{pmatrix}0&0&0\\&0&0\\&&1\end{pmatrix} & \begin{array}{l}  b_{11}(2ab - 2 cd\ell) + b_{12}(b^2- d^2\ell)\\+b_{22}a_{13}d\end{array}&0&0\\
   \hline
   13. & \begin{pmatrix} 0&0&0\\&1&0\\&&0\end{pmatrix}\begin{pmatrix}0&0&0\\&0&0\\&&1\end{pmatrix}& \begin{array}{l} b_{11}(a^2- c^2\ell) + b_{12}(b^2 - d^2\ell)\\+b_{22}a_{13}d\end{array}&0&0\\
   \hline
   14. & \begin{pmatrix} 0&0&0\\&0&0\\&&0\end{pmatrix}\begin{pmatrix}0&1&0\\&0&0\\&&1\end{pmatrix}& \begin{array}{l} b_{12}(b^2 -  d^2\ell)\\+b_{22}(a_{11}c+ a_{13}d)\end{array}&0&0\\
   \hline
   15. & \begin{pmatrix} 0&0&0\\&0&1\\&&0\end{pmatrix}\begin{pmatrix}0&1&0\\&0&0\\&&1\end{pmatrix}& \begin{array}{l} b_{11}(2ab - 2 cd\ell)+ b_{12}(b^2 -  d^2\ell)\\+b_{22}(a_{11}c+a_{13}d)  \end{array}&0&0\\
   \hline
   16. &\begin{pmatrix} 0&0&0\\&1&0\\&&0\end{pmatrix}\begin{pmatrix}0&1&0\\&0&0\\&&1\end{pmatrix} & \begin{array}{l} b_{11}(a^2 - c^2\ell)+ b_{12}(b^2 - d^2\ell)\\+b_{22}(a_{11}c+a_{13}d) \end{array} &0&0\\
   \hline
   17. & \begin{pmatrix} 0&0&0\\&0&0\\&&0\end{pmatrix}\begin{pmatrix}0&0&0\\&0&1\\&&0\end{pmatrix}& \begin{array}{l}b_{12}(2ab - 2 cd\ell)\\+b_{22}(a_{12}d + a_{13}c)\end{array} &0&0\\
   \hline
   18. &\begin{pmatrix} 0&0&0\\&0&0\\&&1\end{pmatrix}\begin{pmatrix}0&0&0\\&0&1\\&&0\end{pmatrix} & \begin{array}{l}b_{11}(b^2 -  d^2\ell)+ b_{12}(2ab - 2 cd\ell)\\ +b_{22}(a_{12}d + a_{13}c) \end{array} &0&0\\
   \hline
   19. & \begin{pmatrix} 0&0&0\\&1&0\\&&1\end{pmatrix}\begin{pmatrix}0&0&0\\&0&1\\&&0\end{pmatrix}& \begin{array}{l}b_{11}(a^2 + b^2 - c^2\ell - d^2\ell) \\+ b_{12}(2ab -2 cd\ell)+b_{22}(a_{12}d + a_{13}c) \end{array}&0&0\\
   \hline
   20. &\begin{pmatrix} 0&0&0\\&\ell&0\\&&1\end{pmatrix}\begin{pmatrix}0&0&0\\&0&1\\&&0\end{pmatrix} & \begin{array}{l}b_{11}(a^2\ell +b^2 - c^2 \ell^2 - d^2 \ell )\\+ b_{12}(2ab-2 cd\ell) +b_{22}(a_{12}d + a_{13}c) \end{array}&0&0\\
   \hline
   21. & \begin{pmatrix} 0 &0&0\\&0&0\\&&0\end{pmatrix}\begin{pmatrix}0&0&0\\ &-\ell &0\\&&1\end{pmatrix} & \begin{array}{l} b_{12}(- a^2\ell + b^2 +  c^2\ell^2- d^2\ell)\\+b_{22}(- a_{12}c\ell + a_{13}d)\end{array}&0&0\\
   \hline
   22. & \begin{pmatrix} 0 &0&0\\&1&0\\&&0\end{pmatrix}\begin{pmatrix}0&0&0\\ &-\ell &0\\&&1\end{pmatrix} & \begin{array}{l}b_{11}(a^2 -  c^2\ell)\\+b_{12}(- a^2\ell + b^2 +  c^2\ell^2 -  d^2\ell)\\+b_{22}(- a_{12}c\ell + a_{13}d) \end{array}&0 &0\\
   \hline
   23a. &\begin{pmatrix} 0 &0&0\\&0&1\\&&0\end{pmatrix}\begin{pmatrix}0&0&0\\ &-\ell &0\\&&1\end{pmatrix} & \begin{array}{l}b_{11}(2ab - 2 cd\ell)\\ +b_{12}(- a^2\ell + b^2 + c^2\ell^2  -  d^2\ell)\\+ b_{22}(- a_{12}c\ell + a_{13}d) \end{array}&0&0\\
   \hline
   23b. &\begin{pmatrix} 0 &0&0\\&1&k\\&&0\end{pmatrix}\begin{pmatrix}0&0&0\\ &-\ell &0\\&&1\end{pmatrix} & \begin{array}{l} b_{11}(a^2 + 2ab k- c^2\ell -2 cd\ell k) \\ +b_{12}(- a^2\ell + b^2 +  c^2\ell^2 -  d^2\ell)\\ +b_{22}(- a_{12}c\ell + a_{13}d)\end{array}&0&0\\
   \hline
  \end{array}  
 \end{equation*}
}
 
 The exponential sums are as follows.
\begin{enumerate}
 \item [1.]  In this case $[ x_0, g\cdot \xi_1] = 0$ so that
 \begin{equation}
  \sS(x, \xi) = |G_{x, \xi}| = (p-1)^5p^4 (p+1).
 \end{equation}
 \item [2.]  In this case
 \begin{equation}
  [ x_0, g\cdot \xi_1] = b_{11} (b^2 -  d^2\ell).
 \end{equation}
 Since $b, d$ are not 0 simultaneously, $b^2 -  d^2\ell \neq 0$, and the sum over $b_{11}$ is $-1$, so
 \begin{equation}
  \sS(x,\xi) = -\frac{|G_{x, \xi}|}{p-1} = -(p-1)^4 p^4 (p+1).
 \end{equation}

\item [3.]   In this case
\begin{equation}
 [ x_0, g \cdot \xi_1 ] = b_{11}(ab -   cd\ell).
\end{equation}
The fibration gives
\begin{align*}
 \Sigma_1 &= \sum_{\begin{pmatrix} * & * & *\\ & a & b\\ & c & d\end{pmatrix}\begin{pmatrix} b_{11} & *\\ & *\end{pmatrix}} e_p(b_{11}(ab-  cd\ell))= (p-1)^3 p^5\\
 \Sigma_2 &= \sum_{\begin{pmatrix} * & * & *\\ & b \lambda  & b\\ &d \lambda  & d\end{pmatrix} \begin{pmatrix} b_{11} & *\\ & *\end{pmatrix}} e_p(b_{11} (b^2- d^2\ell)\lambda)
\end{align*}
Summation in $b$ and $d$ gives $\tau^2 \left( \frac{-\ell}{p}\right) = -p$, independent of $b_{11}\lambda$.  Thus $\Sigma_2 = -(p-1)^4p^4.$
\begin{align*}
 \Sigma_3 &= \sum_{\begin{pmatrix} * & * & *\\ & a & 0\\ & c &0\end{pmatrix}\begin{pmatrix} * & *\\ & *\end{pmatrix}} 1 = (p-1)^3p^5\\
 \Sigma_4 &= \Sigma_3 = (p-1)^3p^5\\
 \Sigma_5 &= (p-1)^3p^4
\end{align*}
Thus
\begin{equation}
 \sS(x,\xi) = 0.
\end{equation}
\item [4.]  In this case
\begin{equation}
 [ x_0, g\cdot \xi_1 ] = b_{11}(- a^2\ell + b^2 +  c^2\ell^2 -  d^2\ell) .
\end{equation}
The fibration gives
\begin{align*}
 \Sigma_1 &= \sum_{\begin{pmatrix} * & * & *\\ & a & b\\ & c & d \end{pmatrix}\begin{pmatrix} b_{11} & *\\ & *\end{pmatrix}}e_p(b_{11}(- a^2\ell + b^2 +  c^2\ell^2 -  d^2\ell))
\end{align*}
Expanding in Gauss sums, summation in $a,b,c,d$ gives $p^2$ independent of $b_{11}$, so $\Sigma_1 = (p-1)^3p^5.$
\begin{align*}
 \Sigma_2 &= \sum_{\begin{pmatrix} * & * & *\\ & b \lambda  & b\\ & d \lambda  & d \end{pmatrix} \begin{pmatrix} b_{11} & *\\ & *\end{pmatrix}} e_p(b_{11}(b^2- d^2\ell)(- \lambda^2\ell + 1))
 \end{align*}
 Since $-\lambda^2 \ell + 1 \neq 0$, summation in $b^2$ and $d^2$ obtains $-p$ as in $\Sigma_2$ of 3., so that $\Sigma_2 = -(p-1)^4p^4$. Similarly,
 \begin{align*}
 \Sigma_3 &= \sum_{\begin{pmatrix} * & * & *\\ & a & 0\\ & c & 0\end{pmatrix} \begin{pmatrix} b_{11} & * \\ & *\end{pmatrix}} e_p(b_{11}(- a^2\ell + c^2\ell^2)) = -(p-1)^3p^4\\
 \Sigma_4 &= \sum_{\begin{pmatrix} * & * & *\\ & 0 & b\\ & 0 & d\end{pmatrix} \begin{pmatrix} b_{11} & * \\ & *\end{pmatrix}} e_p(b_{11}(b^2- d^2\ell)) = -(p-1)^3p^4\\
 \Sigma_5 &= (p-1)^3p^4.
\end{align*}
Thus
\begin{equation}
 \sS(x,\xi) = 2(p-1)^3p^4(p+1).
\end{equation}
\item[5.]   In this case
\begin{equation}
 [ x_0, g \cdot \xi_1] = b_{11} (b^2- d^2\ell)+b_{22}a_{11} d .
\end{equation}
The sum over $b_{11}$ is $-1$ since $b^2 -  d^2\ell \neq 0$.  Sum in $b_{22}$ to find
\begin{align*}
 \sS(x,\xi) &= \frac{|G_{x,\xi}|}{(p-1)^2} - \frac{p}{(p-1)^2} \#\begin{pmatrix} * & * & *\\ & * & *\\ & * &\end{pmatrix}\begin{pmatrix} * & *\\ & *\end{pmatrix}\\
 &= (p-1)^3 p^4(p+1) - (p-1)^3p^5 \\
 &= (p-1)^3 p^4.
\end{align*}

\item[6.]  In this case
\begin{equation}
 [ x_0, g \cdot \xi_1 ] = b_{11}(a^2 -  c^2\ell)+b_{22}a_{11}d.
\end{equation}
As for 5., $\sS(x, \xi) = (p-1)^3p^4$.

\item[7.]   In this case
\begin{equation}
 [ x_0, g \cdot \xi_1 ] =b_{11}(ab- cd\ell)+b_{22}a_{11}d .
\end{equation}
Sum in $b_{22}$ to find
\begin{align*}
 \sS(x,\xi) &= - \frac{1}{p-1} \sum_{g \in G_{x, \xi}^t} e_p(b_{11}(ab- cd\ell)) + \frac{p}{p-1} \sum_{\begin{pmatrix} * & * & *\\ & a & b\\ & c &0\end{pmatrix} \begin{pmatrix} b_{11} & *\\ & *\end{pmatrix}} e_p(b_{11}ab)
 \end{align*}
 The first sum is the same as for 3., which is 0, while in the second sum, $b_{11}b \neq 0$ so that the sum over $a$, which now runs over $\bF_p$, is 0.  Thus $\sS(x, \xi) = 0$.

\item[8.]  In this case
\begin{equation}
 [ x_0, g\cdot \xi_1 ] =b_{11} ( a (a + 2b) -  c(c+2d)\ell)+b_{22}a_{11}d .
\end{equation}
Sum in $b_{22}$ to find
\begin{align*}
 \sS(x,\xi) &= - \frac{1}{p-1} \sum_{g \in G_{x,\xi}^t} e_p(b_{11}(a(a+2b) -  c(c+2d)\ell))\\ & \qquad + \frac{p}{p-1} \sum_{\begin{pmatrix} * & * & *\\ & a& b\\ & c& \end{pmatrix} \begin{pmatrix} b_{11} & *\\ & *\end{pmatrix}}e_p(b_{11}(a(a+2b) -  c^2\ell))
 \end{align*}
 After making a column operation on $\GL_2$, the first sum is equal to the sum from 3., which is 0.  The second sum may be evaluated by replacing $a:=a+b$, which is permissible since $a$ ranges in $\bF_p$, so that $a(a+2b)$ becomes $a^2-b^2$.  This obtains
\begin{align*}
 \sS(x,\xi)&=  (p-1)p^4 \sum_{b_{11}, b, c \in \bF_p^\times, a \in \bF_p} e_p(b_{11}(a^2 - b^2 - c^2\ell))\\
 &=  (p-1)p^4 \sum_{b_{11} \in \bF_p^\times} \tau \left( \frac{b_{11}}{p}\right)\left(\tau \left(\frac{-b_{11}}{p}\right) - 1\right)\left(\tau \left(\frac{-b_{11}\ell}{p}\right) - 1\right)\\
 &= 0.
\end{align*}
\item[9.]   In this case
\begin{equation}
 [ x_0, g \cdot \xi_1 ] = b_{11} (- a^2\ell + b^2 +  c^2\ell^2- d^2\ell)+b_{22}a_{11}d .
\end{equation}
Sum in $b_{22}$ to find
\begin{align*}
 \sS(x,\xi) &= - \frac{4.}{p-1} + \frac{p}{p-1}\sum_{\begin{pmatrix} * & * & *\\ & a & b\\ & c &\end{pmatrix} \begin{pmatrix} b_{11} & *\\ & *\end{pmatrix}}e_p(b_{11}(- a^2\ell + b^2 +  c^2\ell^2))\\
 &=-2(p-1)^2p^4(p+1) +  (p-1)p^4 \sum_{b_{11} \in \bF_p^\times} \tau\left(\frac{- b_{11}\ell}{p}\right) \left(\tau \left(\frac{b_{11}}{p}\right) -1\right)^2\\ &=-2(p-1)^2p^4(p+1) + 2(p-1)^2p^5 \\&= -2(p-1)^2p^4.
\end{align*}

\item[10.] In this case
\begin{equation}
 [ x_0, g \cdot \xi_1 ] = b_{22}a_{11}d.
\end{equation}
Summing in $b_{22}$,
\begin{align*}
 \sS(x, \xi) &= -\frac{|G_{x, \xi}|}{p-1} + p \# \begin{pmatrix} * & * & *\\ & * & *\\ & * &\end{pmatrix}\begin{pmatrix} * & *\\ & 1\end{pmatrix}\\
 & = - (p-1)^4 p^4 (p+1) + (p-1)^4 p^5\\ &=- (p-1)^4 p^4.
\end{align*}
\item[11.-23.] These sums vanish on summing in $a_{12}$, $a_{13}$ and $b_{12}$.
\end{enumerate}

\subsection{The exponential sums pair $(\sO_{2^2}, \sO_{D11})$}\label{O22_OD11_section}
For this pair, the standard representatives are
\begin{equation}
 x_0 = \begin{pmatrix} 0 &0&0\\ &0&0\\&&1\end{pmatrix}\begin{pmatrix} 1 &0&0\\&-\ell &0\\&&0\end{pmatrix}, \qquad \xi_0 = \begin{pmatrix} 0 &\frac{1}{2} &0\\ &0&0\\&&0\end{pmatrix}\begin{pmatrix}0&0&0\\&0&0\\&&0\end{pmatrix}.
\end{equation}
The acting set is
\begin{equation}
 G_{x, \xi}^t =  \left\{\begin{pmatrix} a &c \lambda \ell  & a_{13}\\ c & a\lambda & a_{23}\\ && a_{33}\end{pmatrix} \begin{pmatrix} b_{11} & b_{12}\\ & b_{22}\end{pmatrix}\right\}.
\end{equation}
The range of summation is $(a,c) \in \bF_p^2 \setminus \{(0,0)\}$, $\lambda, a_{33}, b_{11}, b_{22} \in \bF_p^\times$, and $a_{13}, a_{23}, b_{12} \in \bF_p$.
Thus $|G_{x, \xi}| = (p-1)^5 p^3 (p+1)$. The stabilizer has size
\begin{equation}
 \left|\Stab_{G(\zed/p^2\zed)}(x)\right| = 2(p-1)^2p^3(p+1).
\end{equation}

Representatives and exponential sum pairings are given in the table below. Since this is the single non-maximal orbit,
\begin{equation}
 \sM(x,\xi) =- \frac{p^{10}\sS(x,\xi)}{2(p-1)^2(p+1)}.
\end{equation}

See \textbf{exponential\_sums\_O22\_OD11.nb}.

{\tiny
 \begin{equation*}
  \begin{array}{|l|l|l|l|l|}
   \hline
   \text{Orbit} & \xi_1 & [ x_0, g \cdot \xi_1 ] & \sS(x,\xi)& \sM(x,\xi)\\
   \hline
   1. & \begin{pmatrix} 0 &0&0\\&0&0\\&&0\end{pmatrix}\begin{pmatrix} 0&0&0\\&0&0\\&&1\end{pmatrix} & b_{12}a_{33}^2+ b_{22}(a_{13}^2  - a_{23}^2\ell)  &0&0\\
   \hline
   2. & \begin{pmatrix} 0 &0&0\\&0&0\\&&0\end{pmatrix}\begin{pmatrix} 0&0&0\\&0&1\\&&1\end{pmatrix} &  \begin{array}{l}b_{12}a_{33}^2+\\b_{22}( a_{13}^2  - a_{23}^2\ell - 2a a_{23}\lambda \ell + 2 a_{13}c \lambda \ell) \end{array}&0&0\\
   \hline
   3. &  \begin{pmatrix} 0 &0&0\\&0&0\\&&0\end{pmatrix}\begin{pmatrix} 0&0&1\\&0&1\\&&1\end{pmatrix} & \begin{array}{l}b_{12}a_{33}^2 +\\ b_{22}( 2 a a_{13} + a_{13}^2  - a_{23}^2  \ell - 2 a_{23}c \ell\\ - 2 a a_{23}  \lambda \ell+ 2 a_{13} c  \lambda\ell) \end{array}&0&0\\
   \hline
   4. & \begin{pmatrix} 0 &0&0\\&0&0\\&&0\end{pmatrix}\begin{pmatrix} 0&0&0\\&-\ell&0\\&&1\end{pmatrix} &  \begin{array}{l} b_{12}a_{33}^2 +\\b_{22}(a_{13}^2 - a_{23}^2 \ell + a^2 \lambda^2\ell^2 - c^2  \lambda^2\ell^3)\end{array}&0&0\\
   \hline
   5. & \begin{pmatrix} 0 &0&0\\&0&0\\&&0\end{pmatrix}\begin{pmatrix} 0&0&1\\&-\ell&0\\&&1\end{pmatrix} &  \begin{array}{l} b_{12}a_{33}^2 +\\b_{22}(2a a_{13} + a_{13}^2 - a_{23}^2\ell-2a_{23}c\ell\\ + a^2  \lambda^2\ell^2 - c^2  \lambda^2\ell^3)\end{array}&0&0\\
   \hline
   6. & \begin{pmatrix} 0 &0&0\\&0&0\\&&0\end{pmatrix}\begin{pmatrix} -\ell&0&0\\&-\ell&0\\&&1\end{pmatrix} & \begin{array}{l} b_{12}a_{33}^2 +\\ b_{22}(a_{13}^2 - a^2 \ell - a_{23}^2 \ell + c^2 \ell^2 \\+ a^2  \lambda^2\ell^2 - c^2  \lambda^2\ell^3) \end{array}&0&0 \\
   \hline
   7. & \begin{pmatrix} 0 &0&0\\&0&0\\&&0\end{pmatrix}\begin{pmatrix} 0&0&0\\&0&1\\&&0\end{pmatrix} & b_{22}(-2aa_{23}\lambda \ell + 2a_{13}c \lambda \ell)&0&0\\
   \hline
   8. & \begin{pmatrix} 0 &0&0\\&0&0\\&&0\end{pmatrix}\begin{pmatrix} 0&0&1\\&0&1\\&&0\end{pmatrix}& b_{22}(a_{13}(2a +2c\lambda \ell)-a_{23}(2a\lambda \ell + 2c\ell))&0&0\\
   \hline
   9. & \begin{pmatrix} 0 &0&0\\&0&0\\&&0\end{pmatrix}\begin{pmatrix} 1&0&0\\&0&1\\&&0\end{pmatrix}& b_{22}(a^2 - c^2 \ell -2a a_{23}\lambda \ell + 2 a_{13}c\lambda \ell)&0&0\\
   \hline
   10. & \begin{pmatrix} 0 &0&0\\&0&0\\&&1\end{pmatrix}\begin{pmatrix} 0&0&0\\&0&1\\&&0\end{pmatrix}& b_{11}a_{33}^2 + b_{22}(-2aa_{23}\lambda \ell + 2a_{13}c\lambda \ell) &0&0\\
   \hline
   11. & \begin{pmatrix} 0 &0&0\\&0&0\\&&1\end{pmatrix}\begin{pmatrix} 0&0&1\\&0&1\\&&0\end{pmatrix}& \begin{array}{l}b_{11}a_{33}^2+\\b_{22}(a_{13}(2a +2c\lambda \ell)-a_{23}(2a\lambda \ell + 2c\ell))\end{array} &0&0\\
   \hline
   12. & \begin{pmatrix} 0 &0&0\\&0&0\\&&1\end{pmatrix}\begin{pmatrix} 1&0&0\\&0&1\\&&0\end{pmatrix}& \begin{array}{l}b_{11}a_{33}^2+\\b_{22}(a^2 - c^2 \ell -2a a_{23} \lambda \ell+ 2 a_{13}c  \lambda\ell) \end{array}&0&0 \\
   \hline
   13. & \begin{pmatrix} 0 &0&0\\&0&0\\&&0\end{pmatrix}\begin{pmatrix} 0&0&0\\&1&0\\&&0\end{pmatrix}& b_{22}(-a^2  + c^2 \ell) \lambda^2\ell & -(p-1)^4p^3(p+1) & \frac{(p-1)^2p^{13}}{2}\\
   \hline
   14. & \begin{pmatrix} 0 &0&0\\&0&0\\&&0\end{pmatrix}\begin{pmatrix} 1&0&0\\&1&0\\&&0\end{pmatrix}& b_{22}(a^2 - c^2 \ell)(1-\lambda^2 \ell) & -(p-1)^4p^3(p+1) & \frac{(p-1)^2p^{13}}{2}\\
   \hline
   15. & \begin{pmatrix} 0 &0&0\\&0&0\\&&0\end{pmatrix}\begin{pmatrix} \ell&0&0\\&1&0\\&&0\end{pmatrix} & b_{22}(a^2  - c^2 \ell)(1- \lambda^2)\ell & (p-1)^3p^3 (p+1)^2& - \frac{(p-1)p^{13}(p+1)}{2}\\
   \hline
   16. & \begin{pmatrix} 0 &0&0\\&0&0\\&&1\end{pmatrix}\begin{pmatrix} 0&0&0\\&1&0\\&&0\end{pmatrix} & b_{11}a_{33}^2  + b_{22}(-a^2  +c^2\ell)\lambda^2 \ell&(p-1)^3p^3(p+1) & -\frac{(p-1)p^{13}}{2}\\
   \hline
   17. & \begin{pmatrix} 0 &0&0\\&0&0\\&&1\end{pmatrix}\begin{pmatrix} 1&0&0\\&1&0\\&&0\end{pmatrix} & b_{11}a_{33}^2  + b_{22}(a^2-c^2\ell)(1-\lambda^2 \ell)&(p-1)^3p^3(p+1)& - \frac{(p-1)p^{13}}{2}\\
   \hline
   18. & \begin{pmatrix} 0 &0&0\\&0&0\\&&1\end{pmatrix}\begin{pmatrix} \ell&0&0\\&1&0\\&&0\end{pmatrix}& b_{11}a_{33}^2 + b_{22}(a^2  -c^2\ell)(1-\lambda^2)\ell&-(p-1)^2p^3(p+1)^2 & \frac{p^{13}(p+1)}{2}\\
   \hline
   19. & \begin{pmatrix} 0 &0&0\\&0&0\\&&1\end{pmatrix}\begin{pmatrix} 0&0&0\\&0&0\\&&0\end{pmatrix}& b_{11}a_{33}^2 &-(p-1)^4p^3(p+1) & \frac{(p-1)^2p^{13}}{2}\\
   \hline
   20. & \begin{pmatrix} 0 &0&0\\&0&0\\&&0\end{pmatrix}\begin{pmatrix} 0&0&0\\&0&0\\&&0\end{pmatrix} &0 & (p-1)^5p^3(p+1) & -\frac{(p-1)^3p^{13}}{2}\\
   \hline
   \end{array}
   \end{equation*}
 }
 
The exponential sums are as follows.
\begin{enumerate}
 \item [1.-12.] These sums vanish.  For sums 1.-6., sum in $b_{12}$ to force $a_{33}=0$.  For sums 7.-12. sum in $a_{13}$ and $a_{23}$ to force $a = c= 0$.  In either case, this makes the acting matrix singular.
 
\item [13.] In this case \begin{equation}[ x_0, g \cdot \xi_1 ] = b_{22}(-a^2  + c^2 \ell) \lambda^2\ell.\end{equation}  Since $(-a^2 +c^2\ell) \neq 0$, sum in $b_{22}$ to obtain
\begin{equation}
 \sS(x,\xi) = -\frac{|G_{x,\xi}|}{p-1} = -(p-1)^4p^3(p+1).
\end{equation}
\item [14.] In this case \begin{equation}[ x_0, g \cdot \xi_1 ] = b_{22}(a^2 - c^2\ell)(1-\lambda^2 \ell).\end{equation} Since $(a^2-c^2\ell)(1-\lambda^2 \ell) \neq 0$, sum in $b_{22}$ to obtain as for 13,
\begin{equation}
 \sS(x,\xi) = -(p-1)^4 p^3 (p+1).
\end{equation}
\item[15.] In this case \begin{equation}[ x_0, g \cdot \xi_1 ] =b_{22}(a^2  - c^2 \ell)(1- \lambda^2)\ell.\end{equation} Changing variables in $b_{22}$,
\begin{align*}
 \sS(x,\xi) &= \sum_{\begin{pmatrix} a &c \lambda \ell  & a_{13}\\ c &a \lambda  & a_{23} \\ && a_{33}\end{pmatrix}\begin{pmatrix} b_{11} & b_{12}\\ & b_{22}\end{pmatrix}} e_p(b_{22}(a^2  - c^2 \ell)(1- \lambda^2)\ell)\\
 &= (p-1)^3 p^3 (p+1) \sum_{b_{22}, \lambda \in (\zed/p\zed)^\times} e_p(b_{22}(1-\lambda^2))\\
 &= (p-1)^3 p^3 (p+1)^2.
\end{align*}
\item[16.] In this case \begin{equation}[ x_0, g \cdot \xi_1 ] = b_{11}a_{33}^2  + b_{22}(-a^2  +c^2\ell)\lambda^2 \ell.\end{equation} Sum in $b_{11}$ to obtain $\sS(x, \xi) = \frac{-13.}{p-1} = (p-1)^3 p^3 (p+1).$
\item[17.] In this case \begin{equation}[ x_0, g \cdot \xi_1 ] = b_{11}a_{33}^2  + b_{22}(a^2-c^2\ell)(1-\lambda^2 \ell).\end{equation} Sum in $b_{11}$ to obtain $\sS(x, \xi) = \frac{-14.}{p-1} = (p-1)^3 p^3 (p+1).$
\item[18.] In this case \begin{equation}[ x_0, g \cdot \xi_1 ] = b_{11}a_{33}^2 + b_{22}(a^2  -c^2\ell)(1-\lambda^2)\ell.\end{equation} Sum in $b_{11}$ to obtain $\sS(x, \xi) = \frac{-15.}{p-1} = -(p-1)^2 p^3 (p+1)^2.$
\item[19.] In this case \begin{equation}[ x_0, g \cdot \xi_1 ] = b_{11}a_{33}^2.\end{equation} Sum in $b_{11}$ to obtain $\sS(x, \xi) = \frac{-|G_{x,\xi}|}{p-1} = -(p-1)^4 p^3 (p+1).$
\item[20.] In this case \begin{equation}[ x_0, g \cdot \xi_1 ] = 0\end{equation} so $
 \sS(x, \xi) = |G_{x, \xi}| = (p-1)^5 p^3 (p+1).$

\end{enumerate}

\subsection{The exponential sums pair $(\sO_{2^2}, \sO_{D2})$}\label{O22_OD2_section}
Take standard representatives
\begin{equation}
 x_0 = \begin{pmatrix}0&0&0\\ &0&0\\ &&1\end{pmatrix}\begin{pmatrix}1 &0&0\\ &-\ell &0\\ &&0\end{pmatrix}, \qquad \xi_0  = \begin{pmatrix} \ell & \beta & 0\\ &1 &0\\ &&0\end{pmatrix}\begin{pmatrix} 0 &0&0\\&0&0\\&&0\end{pmatrix}
\end{equation}
with $\ell u^2 + 2\beta uv + v^2$ irreducible. The acting set is $(g = \begin{pmatrix} a & b\\ c & d\end{pmatrix})$
\begin{equation}
 G_{x,\xi}^t  =\left\{ g=\begin{pmatrix} a & b & a_{13}\\ c & d & a_{23}\\ && a_{33} \end{pmatrix}\begin{pmatrix} b_{11} & b_{12}\\ & b_{22}\end{pmatrix}, [ u^2 - \ell v^2, g\cdot (\ell u^2 + 2 \beta uv + v^2) ] = 0\right\}.
\end{equation}
As determined in Appendix \ref{action_set_appendix}, $|G_{x, \xi}| = (p-1)^4p^3(p+1)^2$.
The stabilizer has size
\begin{equation}
 \left|\Stab_{G(\zed/p^2 \zed)}(x)\right| = 2(p-1)^2p^3(p+1).
\end{equation}

Representatives and exponential sum pairings are given in the table below. Since this is the single non-maximal orbit,
\begin{equation}
 \sM(x,\xi) = -\frac{p^{10} \sS(x,\xi)}{2(p-1)^2(p+1)}.
\end{equation}

See \textbf{exponential\_sums\_O22\_OD2.nb}.

{\tiny 
 \begin{equation*}
  \begin{array}{|l|l|l|l|l|}
   \hline
   \text{Orbit} & \xi_1 & [ x_0, g \cdot \xi_1 ]& \sS(x,\xi) & \sM(x,\xi)\\
   \hline
   1. & \begin{pmatrix} 0 &0&0\\&0&0\\&&0\end{pmatrix}\begin{pmatrix} 0&0&0\\&0&0\\&&1\end{pmatrix} &b_{12}a_{33}^2 +  b_{22}(a_{13}^2 - a_{23}^2\ell)&0 &0\\
   \hline
   2. & \begin{pmatrix} 0 &0&0\\&0&0\\&&0\end{pmatrix}\begin{pmatrix} 0&0&0\\&0&1\\&&1\end{pmatrix} &  \begin{array}{l}b_{12}a_{33}^2\\+b_{22}(a_{13}^2 + 2a_{13}b -a_{23}^2 - 2a_{23}d\ell)\end{array} &0&0\\
   \hline
   3. & \begin{pmatrix} 0 &0&0\\&0&0\\&&0\end{pmatrix}\begin{pmatrix} 0&1&0\\&0&0\\&&1\end{pmatrix} & \begin{array}{l}b_{12}a_{33}^2\\+b_{22}(a_{13}^2 + 2ab - a_{23}^2 \ell -2cd\ell) \end{array} &0&0\\
   \hline
   4. & \begin{pmatrix} 0 &0&0\\&0&0\\&&0\end{pmatrix}\begin{pmatrix} 0&0&0\\&0&1\\&&0\end{pmatrix} & 2b_{22}(a_{13}b - a_{23}d\ell) &0&0\\
   \hline
   5. & \begin{pmatrix} 0 &0&0\\&0&0\\&&1\end{pmatrix}\begin{pmatrix} 0&0&0\\&0&1\\&&0\end{pmatrix} & b_{11}a_{33}^2+ 2b_{22}(a_{13}b - a_{23}d\ell) &0&0\\
   \hline
   6. & \begin{pmatrix} 0 &0&0\\&0&0\\&&0\end{pmatrix}\begin{pmatrix} 0&0&0\\&1&0\\&&0\end{pmatrix} & b_{22}(b^2 - d^2 \ell) &-(p-1)^3p^3(p+1)^2 & \frac{(p-1)p^{13}(p+1)}{2}\\
   \hline
   7. & \begin{pmatrix} 0 &0&0\\&0&0\\&&1\end{pmatrix}\begin{pmatrix} 0&0&0\\&1&0\\&&0\end{pmatrix} &  b_{11}a_{33}^2+ b_{22}( b^2 - d^2\ell) & (p-1)^2p^3(p+1)^2 & -\frac{p^{13}(p+1)}{2}\\
   \hline
   8a. & \begin{pmatrix} 0 &0&0\\&0&0\\&&0\end{pmatrix}\begin{pmatrix} 0&1&0\\&0&0\\&&0\end{pmatrix} & 2b_{22}(ab - cd \ell) & (p-1)^4p^3(p+1) & -\frac{(p-1)^2 p^{13}}{2}\\
   \hline
   8b. & \begin{pmatrix} 0 &0&0\\&0&0\\&&0\end{pmatrix}\begin{pmatrix} 0&k&0\\&1&0\\&&0\end{pmatrix} & b_{22}(b^2 + 2abk -d^2 \ell -2cd\ell k)& (p-1)^4p^3(p+1) & - \frac{(p-1)^2p^{13}}{2}\\
   \hline
   9a. & \begin{pmatrix} 0 &0&0\\&0&0\\&&1\end{pmatrix}\begin{pmatrix} 0&1&0\\&0&0\\&&0\end{pmatrix} & b_{11}a_{33}^2+b_{22}(2ab - 2cd\ell)& -(p-1)^3p^3(p+1) & \frac{(p-1)p^{13}}{2}\\
   \hline
   9b. & \begin{pmatrix} 0 &0&0\\&0&0\\&&1\end{pmatrix}\begin{pmatrix} 0&k&0\\&1&0\\&&0\end{pmatrix} &  \begin{array}{l}b_{11}a_{33}^2+\\b_{22}(b^2 + 2abk -d^2\ell -2cd\ell k) \end{array} & -(p-1)^3p^3(p+1) & \frac{(p-1)p^{13}}{2}\\
   \hline
   10 & \begin{pmatrix} 0 &0&0\\&0&0\\&&1\end{pmatrix}\begin{pmatrix} 0&0&0\\&0&0\\&&0\end{pmatrix} & b_{11}a_{33}^2 & -(p-1)^3p^3(p+1)^2 & \frac{(p-1)p^{13}(p+1)}{2}\\
   \hline
   11 & \begin{pmatrix} 0 &0&0\\&0&0\\&&0\end{pmatrix}\begin{pmatrix} 0&0&0\\&0&0\\&&0\end{pmatrix} & 0 & (p-1)^4p^3(p+1)^2 & - \frac{(p-1)^2p^{13}(p+1)}{2}\\
   \hline
   \end{array}
 \end{equation*}
 }
 The exponential sums are as follows.
\begin{enumerate}
 \item [1.-5.] These sums vanish on summing in $b_{12}$, $a_{13}$ and $a_{23}$.
\item [6.] In this case,
\begin{equation}
 [ x_0, g \cdot \xi_1 ] = b_{22}(b^2 - d^2\ell).
\end{equation}
Since $b^2 - d^2 \ell \neq 0$, summing in $b_{22}$ gives $-1$, so
\begin{equation}
 \sS(x, \xi_1) = \frac{-|G_{x, \xi_1}|}{p-1} = - (p-1)^3 p^3 (p+1)^2.
\end{equation}
\item [7.] In this case,
\begin{equation}
 [ x_0, g \cdot \xi_1 ] = b_{11}a_{33}^2+ b_{22}( b^2 - d^2\ell).
\end{equation}
Summing in $b_{11}$ and $b_{22}$ each give $-1$, so
\begin{equation}
 \sS(x, \xi) = \frac{|G_{x, \xi}|}{(p-1)^2} = (p-1)^2p^3 (p+1)^2.
\end{equation}
\item [8a.] $p \equiv 1 \bmod 4$. In this case,
\begin{equation}
 [ x_0, g \cdot \xi_1 ] = 2b_{22}(ab-cd\ell).
\end{equation}
By summing in $b_{22}$,
\begin{align*}
 \sS(x, \xi) &= - \frac{|G_{x, \xi}|}{p-1}\\
 & \;+ \frac{p}{p-1} \# \left\{ g \in G_{x, \xi}^t: [ u^2 - \ell v^2, g \cdot uv] = 0, [ u^2 - \ell v^2, g\cdot (\ell u^2 + v^2)] = 0\right\}\\
  &= - \frac{|G_{x, \xi}|}{p-1}\\
 & \;+ \frac{p}{p-1} \# \left\{ g \in G_{x, \xi}: [ g\cdot(u^2 - \ell v^2), uv] = 0, [ g\cdot (u^2 - \ell v^2), \ell u^2 + v^2] = 0\right\}
 \end{align*}
 The conditions on $g$ are equivalent to $g$ acting on $u^2 - \ell v^2$ by a scalar.  Thus
 \begin{align*}
 \sS(x, \xi)
 &= -\frac{|G_{x, \xi}|}{p-1} +\frac{p}{p-1}\#\left\{ \begin{pmatrix} c &  e \ell& *\\ \pm e & \pm c & *\\ && *\end{pmatrix} \begin{pmatrix} * & *\\ & *\end{pmatrix}\right\}\\
 &= - (p-1)^3 p^3 (p+1)^2 + 2(p-1)^3p^4(p+1)\\
 &= (p-1)^4 p^3 (p+1).
\end{align*}
\item [9a.] $p \equiv 1 \bmod 4$.  In this case,
\begin{equation}
 [ x_0, g \cdot \xi_1 ] = b_{11}a_{33}^2+b_{22}(2ab - 2cd\ell).
\end{equation} The sum in $b_{11}$ is $-1$, so
\begin{equation}
 \sS(x, \xi) = -\frac{8a.}{p-1} = - (p-1)^3p^3 (p+1).
\end{equation}
\item [10.]  In this case,
\begin{equation}
 [ x_0, g \cdot \xi_1 ] = b_{11}a_{33}^2.
\end{equation} The sum in $b_{11}$ is $-1$, so
\begin{equation}
 \sS(x, \xi) = -\frac{|G_{x, \xi}|}{p-1} = -(p-1)^3p^3(p+1)^2.
\end{equation}
\item [11.] $\xi_1 = 0$.  In this case $\sS(x, \xi) = |G_{x, \xi}| = (p-1)^4p^3(p+1)^2.$

The relation $9b. = -\frac{8b.}{p-1}$ holds as in the a case.  Since the sum of $e_p([x, \xi])$ over $\xi$ in a full $\mod p$ orbit vanishes by Lemma \ref{full_mod_p_orbit_lemma},  $8a. = 8b.$ and $9a. = 9b.$
\end{enumerate}
\subsection{The exponential sums pair $(\sO_{1^21^2}, \sO_{D1^2})$}\label{O1212_OD12_section}
For this pair,
\begin{equation}
 x_0 = \begin{pmatrix}0&0&0\\&0&0\\&&1\end{pmatrix}\begin{pmatrix} 0&\frac{1}{2}&0\\&0&0\\&&0\end{pmatrix}, \qquad \xi_0= \begin{pmatrix} 1&0&0\\&0&0\\&&0\end{pmatrix}\begin{pmatrix} 0&0&0\\&0&0\\&&0\end{pmatrix}
\end{equation}
and
\begin{equation}
 x_1 = \begin{pmatrix}x &0 &0\\&y&0\\&&0\end{pmatrix}\begin{pmatrix}0&0&0\\&0&0\\&&0\end{pmatrix}
\end{equation}
with $x, y$ equal to either $1$ or $\ell$.  The acting set is
\begin{equation}
 G_{x,\xi}^t = \begin{pmatrix} a_{11} & a_{12} & a_{13}\\ & a & b\\ & c & d\end{pmatrix}\begin{pmatrix} b_{11} & b_{12} \\ & b_{22}\end{pmatrix} \sqcup \begin{pmatrix} & a & b\\ a_{21} & a_{22} & a_{23}\\ & c & d\end{pmatrix}\begin{pmatrix} b_{11} & b_{12}\\ & b_{22}\end{pmatrix}.
\end{equation}
Write this as $G_{x, \xi, 1}^t \sqcup G_{x,\xi, 2}^t$. 
This obtains
\begin{equation}
 [ x_1, g \cdot \xi_0 ] = b_{11}(x a_{11}^2 + y a_{21}^2).
\end{equation}

 Note that
\begin{equation}
 G_{x, \xi, 1} = G_{x, \xi, 2} \cdot \begin{pmatrix} 0 & 1 &0\\ 1&0&0\\0&0&1\end{pmatrix} \begin{pmatrix}1 &0\\0&1\end{pmatrix}.
\end{equation}
Since $x_0$ in invariant under $\begin{pmatrix} 0&1&0\\1&0&0\\0&0&1\end{pmatrix}$ while in $x_1$ this exchanges $x$ and $y$, the exponential sums may be written by setting
\begin{equation}
 x_1' = \begin{pmatrix} \epsilon &0&0\\&0&0\\ &&0\end{pmatrix} \begin{pmatrix}0&0&0\\&0&0\\&&0\end{pmatrix},
\end{equation}
$[ x_1', g\cdot \xi_0 ] = b_{11}a_{11}^2  \epsilon$ and
\begin{equation}
 \sS(x,\xi) = \sum_{\epsilon = x, y} \sum_{g \in G_{x, \xi,1}^t} e_p\left([ x_1', g\cdot \xi_0 ] + [ x_0, g\cdot \xi_1] \right).
\end{equation}
The sum over $G_{x, \xi, 1}^t$ is fibered as for the pairs $(\sO_{1^211}, \sO_{D1^2})$ and $(\sO_{1^22}, \sO_{D1^2})$.

The stabilizer has size
\begin{equation}
 \left|\Stab_{G(\zed/p^2\zed)}(x)\right| = \left\{\begin{array}{lll}8(p-1)p^3 && x = y\\ 4(p-1)p^3 && x\neq y\end{array}\right..
\end{equation}

 Representatives and exponential sum pairings are given in the table below. Summing over the three maximal orbits
 \begin{equation}
  \sM(x,\xi) = \sum_{(x,y) \in \{(1,1),(1,\ell), (\ell,\ell)\}} \frac{2^{\delta(x \neq y)}p^{10}\sS(x, \xi)}{8(p-1)}.
 \end{equation}

 See \textbf{exponential\_sums\_O1212\_OD12.nb}.
 {\tiny
 \begin{equation*}
  \begin{array}{|l|l|l|l|l|}
   \hline
   \text{Orbit} & \xi_1 & [ x_0, g \cdot \xi_1 ] + [ x_1', g \cdot \xi_0 ]& \sS(x,\xi)& \sM(x,\xi)\\
   \hline
   1. & \begin{pmatrix} 0 &0&0\\&0&0\\&&0\end{pmatrix}\begin{pmatrix} 0&0&0\\&0&0\\&&0\end{pmatrix} &  b_{11} a_{11}^2 \epsilon & -2(p-1)^4p^4(p+1) & -(p-1)^3 p^{14}(p+1)\\
   \hline
   2. & \begin{pmatrix} 0 & 0&0\\ &0&0\\ &&1\end{pmatrix}\begin{pmatrix} 0&0&0\\&0&0\\&&0\end{pmatrix} & b_{11} (a_{11}^2 \epsilon+d^2) & \begin{array}{l} (p-1)^3p^6\left(\left(\frac{-x}{p}\right) + \left(\frac{-y}{p}\right) \right)\\ + 2 (p-1)^3 p^4 \end{array} & (p-1)^2 p^{14} \\
   \hline
   3. & \begin{pmatrix} 0 &0&0\\&0&\frac{1}{2}\\ &&0\end{pmatrix}\begin{pmatrix}0&0&0\\&0&0\\&&0\end{pmatrix} & b_{11}(a_{11}^2 \epsilon+cd ) & -2(p-1)^4p^4 & -(p-1)^3 p^{14}\\
   \hline
   4. & \begin{pmatrix} 0 & 0 & 0\\ & -\ell &0\\ &&1\end{pmatrix}\begin{pmatrix} 0 &0&0\\ &0&0\\&&0\end{pmatrix} &   b_{11} (a_{11}^2 \epsilon- c^2\ell + d^2 ) & 2(p-1)^3p^4(p+1) & (p-1)^2p^{14}(p+1) \\
   \hline
   5. & \begin{pmatrix} 0 &0&0\\ &0&0\\&&1\end{pmatrix}\begin{pmatrix} 0&0&1\\ &0&0\\&&0\end{pmatrix} & \begin{array}{l}b_{11}(a_{11}^2 \epsilon+d^2 ) +b_{22}a_{11}b \end{array} & 2(p-1)^3p^4 & (p-1)^2 p^{14}\\
   \hline
   6. & \begin{pmatrix} 0 &0&0\\ &1 &0\\&&0\end{pmatrix}\begin{pmatrix}0 &0&1\\ &0&0\\&&0\end{pmatrix} & \begin{array}{l}b_{11}(a_{11}^2 \epsilon+c^2) +b_{22}a_{11}b\end{array} & \begin{array}{l}-2(p-1)^2p^4\\ -(p-1)^2p^5 \left( \left(\frac{-x}{p} \right) + \left(\frac{-y}{p} \right)\right) \end{array}  & -(p-1)p^{14}\\
   \hline
   7. & \begin{pmatrix} 0 &0 &0\\ &0 & \frac{1}{2}\\ &&0\end{pmatrix}\begin{pmatrix}0 &0&1\\ &0&0\\&&0\end{pmatrix} & \begin{array}{l}b_{11}(a_{11}^2 \epsilon+cd )\\ + b_{22}a_{11}b\end{array} & 2(p-1)^3p^4 & (p-1)^2p^{14}\\
   \hline
   8. & \begin{pmatrix} 0 &0&0\\ &1&1\\&&0\end{pmatrix} \begin{pmatrix} 0&0&1\\&0&0\\&&0\end{pmatrix} &  \begin{array}{l}b_{11}(a_{11}^2\epsilon+c(c+2d) )\\ + b_{22}a_{11}b\end{array}& \begin{array}{l} -2(p-1)^2 p^4 \\ -(p-1)^2p^5 \left(\left(\frac{-x}{p}\right) + \left( \frac{-y}{p}\right) \right) \end{array}& -(p-1)p^{14}\\
   \hline
   9. & \begin{pmatrix} 0 &0&0\\ &-\ell &0\\ &&1\end{pmatrix}\begin{pmatrix} 0&0&1\\ &0&0\\&&0\end{pmatrix} & \begin{array}{l}b_{11}(a_{11}^2\epsilon - c^2\ell +d^2)\\ +b_{22}a_{11}b \end{array} & \begin{array}{l} -2(p-1)^2p^4 \\ + (p-1)^2 p^5\left(\left(\frac{x}{p} \right) + \left(\frac{y}{p} \right) \right) \end{array}&-(p-1)p^{14}\\
   \hline
   10. & \begin{pmatrix} 0 &0&0\\ &0&0\\&&0\end{pmatrix}\begin{pmatrix}0 &0&1 \\ & 0&0\\ &&0\end{pmatrix} &  b_{11}a_{11}^2\epsilon + b_{22}a_{11}b& 2(p-1)^3p^4 & (p-1)^2 p^{14}\\
    \hline   
    11. & \begin{pmatrix} 0&0&0\\&0&0\\&&0\end{pmatrix}\begin{pmatrix}0&0&0\\&0&0\\&&1\end{pmatrix} & b_{11}a_{11}^2\epsilon + b_{12}d^2+ b_{22}a_{13}b  &0&0 \\
   \hline
   12. & \begin{pmatrix} 0&0&0\\&0&1\\&&0\end{pmatrix}\begin{pmatrix}0&0&0\\&0&0\\&&1\end{pmatrix} & b_{11}(a_{11}^2\epsilon + 2cd) + b_{12}d^2 + b_{22}a_{13}b &0&0\\
   \hline
   13. & \begin{pmatrix} 0&0&0\\&1&0\\&&0\end{pmatrix}\begin{pmatrix}0&0&0\\&0&0\\&&1\end{pmatrix}& b_{11}(a_{11}^2\epsilon + c^2)+b_{12}d^2+ b_{22}a_{13}b  &0&0\\
   \hline
   14. & \begin{pmatrix} 0&0&0\\&0&0\\&&0\end{pmatrix}\begin{pmatrix}0&1&0\\&0&0\\&&1\end{pmatrix}& b_{11}a_{11}^2\epsilon + b_{12}d^2 + b_{22}(a a_{11} + a_{13}b) &0&0\\
   \hline
   15. & \begin{pmatrix} 0&0&0\\&0&1\\&&0\end{pmatrix}\begin{pmatrix}0&1&0\\&0&0\\&&1\end{pmatrix}& \begin{array}{l}b_{11}(a_{11}^2\epsilon+2cd) + b_{12}d^2 \\+ b_{22}(a_{11}a + a_{13}b)\end{array}&0&0\\
   \hline
   16. &\begin{pmatrix} 0&0&0\\&1&0\\&&0\end{pmatrix}\begin{pmatrix}0&1&0\\&0&0\\&&1\end{pmatrix} & \begin{array}{l}b_{11}(a_{11}^2\epsilon + c^2) + b_{12}d^2 \\+ b_{22}(a_{11}a + a_{13}b)\end{array}&0&0\\
   \hline
   17. & \begin{pmatrix} 0&0&0\\&0&0\\&&0\end{pmatrix}\begin{pmatrix}0&0&0\\&0&1\\&&0\end{pmatrix}& b_{11}a_{11}^2\epsilon + 2b_{12}cd + b_{22}(a_{12}b + a_{13}a)&0&0\\
   \hline
   18. &\begin{pmatrix} 0&0&0\\&0&0\\&&1\end{pmatrix}\begin{pmatrix}0&0&0\\&0&1\\&&0\end{pmatrix} & \begin{array}{l}b_{11}(a_{11}^2\epsilon + d^2) + 2b_{12}cd\\ + b_{22}(a_{12}b + a_{13}a) \end{array}&0&0\\
   \hline
   19. & \begin{pmatrix} 0&0&0\\&1&0\\&&1\end{pmatrix}\begin{pmatrix}0&0&0\\&0&1\\&&0\end{pmatrix}& \begin{array}{l}b_{11}(a_{11}^2 \epsilon + c^2 +d^2) + 2b_{12}cd\\ + b_{22}(a_{12}b + a_{13}a)\end{array}&0&0\\
   \hline
   20. &\begin{pmatrix} 0&0&0\\&\ell&0\\&&1\end{pmatrix}\begin{pmatrix}0&0&0\\&0&1\\&&0\end{pmatrix} & \begin{array}{l}b_{11}(a_{11}^2 \epsilon+c^2 \ell + d^2) + 2 b_{12}cd\\ + b_{22}(a_{12}b + a_{13}a)\end{array}&0&0\\
   \hline
   21. & \begin{pmatrix} 0 &0&0\\&0&0\\&&0\end{pmatrix}\begin{pmatrix}0&0&0\\ &-\ell &0\\&&1\end{pmatrix} & \begin{array}{l}b_{11}a_{11}^2\epsilon + b_{12}(-c^2\ell + d^2)\\ + b_{22}(-a_{12}a\ell + a_{13}b)\end{array}&0&0\\
   \hline
   22. & \begin{pmatrix} 0 &0&0\\&1&0\\&&0\end{pmatrix}\begin{pmatrix}0&0&0\\ &-\ell &0\\&&1\end{pmatrix} & \begin{array}{l} b_{11}(a_{11}^2\epsilon + c^2) + b_{12}(-c^2 \ell + d^2)\\ + b_{22}(-a_{12}a\ell + a_{13}b)\end{array}&0&0 \\
   \hline
   23a. &\begin{pmatrix} 0 &0&0\\&0&1\\&&0\end{pmatrix}\begin{pmatrix}0&0&0\\ &-\ell &0\\&&1\end{pmatrix} & \begin{array}{l} b_{11}(a_{11}^2 \epsilon + 2cd) \\ + b_{12}(-c^2 \ell + d^2)\\ + b_{22}(-a_{12}a\ell + a_{13}b)  \end{array}&0&0\\
   \hline
   23b. &\begin{pmatrix} 0 &0&0\\&1&k\\&&0\end{pmatrix}\begin{pmatrix}0&0&0\\ &-\ell &0\\&&1\end{pmatrix} & \begin{array}{l} b_{11}(a_{11}^2\epsilon + c^2 + 2cdk)\\ +b_{12}(-c^2\ell + d^2)\\ + b_{22}(-a_{12}a\ell + a_{13}b) \end{array}&0&0\\
   \hline
  \end{array}  
 \end{equation*}
}

 The exponential sums are as follows.

\begin{enumerate}
 \item [1.]  In this case \begin{equation}[ x_0, g \cdot \xi_1] + [ x_1', g \cdot \xi_0] = b_{11} a_{11}^2 \epsilon.\end{equation}  The sum in $b_{11}$ gives $-1$, so
 \begin{equation}
  \sS(x,\xi) = -\frac{|G_{x, \xi}|}{p-1} = -2(p-1)^4p^4(p+1).
 \end{equation}
 \item [2.]   In this case \begin{equation}[ x_0, g \cdot \xi_1] + [ x_1', g \cdot \xi_0] = b_{11}(a_{11}^2\epsilon+d^2 ).\end{equation}  The following sums are evaluated by first summing the quadratic terms to obtain Gauss sums.
 \begin{align*}
  \Sigma_1 &= \sum_{\epsilon = x, y}\sum_{\begin{pmatrix} a_{11} & * & *\\ & a & b\\ & c & d\end{pmatrix}\begin{pmatrix} b_{11} & *\\  & *\end{pmatrix}} e_p(b_{11}(a_{11}^2\epsilon  + d^2))\\
  &= (p-1)p^6 \sum_{\epsilon = x, y}\sum_{b_{11} \in \bF_p^\times}\left( \tau \left(\frac{b_{11}\epsilon}{p}\right) - 1\right)\tau \left(\frac{b_{11}}{p}\right)\\
  &= (p-1)^2p^7\left(\left(\frac{-x}{p}\right) + \left(\frac{-y}{p}\right) \right)\\
  \Sigma_2 &= \sum_{\epsilon = x, y}\sum_{\begin{pmatrix} a_{11} & * & *\\ & b \lambda  & b\\ & d \lambda  & d\end{pmatrix} \begin{pmatrix} b_{11} & * \\ & *\end{pmatrix}}e_p(b_{11}(a_{11}^2\epsilon  + d^2))\\
  &= (p-1)^2 p^4\sum_{\epsilon = x, y}  \sum_{b_{11} \in \bF_p^\times} \left( \tau \left( \frac{b_{11}\epsilon}{p}\right)-1\right)\tau \left(\frac{b_{11}}{p}\right)\\& = (p-1)^3p^5 \left(\left(\frac{-x}{p}\right) + \left(\frac{-y}{p}\right) \right)\\
  \Sigma_3 &= \sum_{\epsilon = x, y} \sum_{\begin{pmatrix} a_{11} & * & * \\ & a & 0\\ & c &0\end{pmatrix} \begin{pmatrix} b_{11} & * \\ & *\end{pmatrix}} e_p(b_{11} a_{11}^2\epsilon)\\& = -2(p-1)^2 p^5\\
  \Sigma_4 &= \sum_{\epsilon = x, y} \sum_{\begin{pmatrix} a_{11} & * & *\\ & 0 & b \\ & 0 & d\end{pmatrix}\begin{pmatrix} b_{11} & * \\ & *\end{pmatrix}}e_p(b_{11} (a_{11}^2 \epsilon + d^2))\\
  &= (p-1)p^4\sum_{\epsilon = x, y} \sum_{b_{11} \in \bF_p^\times}  \left( \tau \left( \frac{b_{11}\epsilon}{p}\right)-1\right)\tau \left(\frac{b_{11}}{p}\right)\\
  &= (p-1)^2 p^5 \left(\left(\frac{-x}{p}\right) + \left(\frac{-y}{p}\right) \right)\\
  \Sigma_5 &= p\sum_{\epsilon = x, y}\sum_{\begin{pmatrix} a_{11} & * & *\\ & 0 & 0 \\ & 0 & 0\end{pmatrix}\begin{pmatrix} b_{11} & * \\ & *\end{pmatrix}}e_p(b_{11}a_{11}^2 \epsilon  )\\
  &= -2 (p-1)^2 p^4.
 \end{align*}
Thus
\begin{equation}
 \sS(x,\xi) = (p-1)^3p^6 \left(\left(\frac{-x}{p}\right) + \left(\frac{-y}{p}\right) \right) + 2 (p-1)^3p^4.
\end{equation}
 \item [3.] In this case \begin{equation}[ x_0, g \cdot \xi_1] + [ x_1', g \cdot \xi_0] = b_{11}(a_{11}^2\epsilon+cd ).\end{equation} 
\begin{align*}
 \Sigma_1 &= \sum_{\epsilon = x, y} \sum_{\begin{pmatrix} a_{11} & * & * \\ & a & b\\ & c & d\end{pmatrix}\begin{pmatrix}b_{11} & * \\ & * \end{pmatrix}}e_p(b_{11}(  a_{11}^2\epsilon + cd )) 
\end{align*} 
 First sum in $d$ to force $c = 0$.  The sum in $b_{11}$ is now $-1$.  Thus $\Sigma_1 = - 2 (p-1)^2p^6$.
 \begin{align*}
 \Sigma_2 &= \sum_{\epsilon = x, y}\sum_{\begin{pmatrix} a_{11} & * & *\\ & b \lambda  & b\\ & d \lambda  & d\end{pmatrix}\begin{pmatrix} b_{11} & * \\ & *\end{pmatrix}} e_p(b_{11}(a_{11}^2\epsilon  +  d^2\lambda))
 \end{align*}
 This sum vanishes by summing over $d$ and $\lambda$.
 
 \begin{align*}
 \Sigma_3 &= \sum_{\epsilon = x, y}\sum_{\begin{pmatrix} a_{11} & * & *\\ & a & 0\\ & c &0\end{pmatrix}\begin{pmatrix} b_{11} & *\\ & *\end{pmatrix}} e_p(b_{11}a_{11}^2\epsilon  )\\
 &= -2(p-1)^2p^5\\
 \Sigma_4 &= \sum_{\epsilon = x, y}\sum_{\begin{pmatrix} a_{11} & * & *\\ & 0 & b\\ &0 & d\end{pmatrix}\begin{pmatrix} b_{11} & *\\ & *\end{pmatrix}} e_p( b_{11}a_{11}^2\epsilon )\\
 &= -2(p-1)^2p^5\\
 \Sigma_5 &= p \sum_{\epsilon = x, y} \sum_{\begin{pmatrix} a_{11} & * & *\\ &0&0\\&0&0\end{pmatrix}\begin{pmatrix} b_{11} & *\\ & *\end{pmatrix}} e_p( b_{11}a_{11}^2\epsilon) = -2(p-1)^2p^4.
\end{align*}
Thus
\begin{equation}
 \sS(x, \xi) = -2 (p-1)^4p^4.
\end{equation}
 \item [4.]   In this case \begin{equation}[ x_0, g \cdot \xi_1] + [ x_1', g \cdot \xi_0] = b_{11}(a_{11}^2\epsilon-c^2 \ell + d^2 ).\end{equation} 
\begin{align*}
 \Sigma_1 &= \sum_{\epsilon = x, y} \sum_{\begin{pmatrix} a_{11} & * & *\\ & a & b\\ &c & d\end{pmatrix}\begin{pmatrix} b_{11} & * \\ & *\end{pmatrix}} e_p(b_{11}( a_{11}^2\epsilon -  c^2\ell + d^2))\\
  &= (p-1) p^5 \sum_{\epsilon = x, y}\sum_{b_{11} \in \bF_p^\times} \left(\tau \left(\frac{b_{11}\epsilon }{p}\right) - 1\right)\tau^2 \left( \frac{-\ell}{p}\right)\\
  &= 2(p-1)^2 p^6\\
  \Sigma_2 &= \sum_{\epsilon = x, y}\sum_{\begin{pmatrix} a_{11} & * & *\\  & b \lambda  & b\\ & d \lambda  & d\end{pmatrix}\begin{pmatrix} b_{11} & * \\ & *\end{pmatrix}} e_p(b_{11}(a_{11}^2\epsilon  + d^2(-\lambda^2 \ell + 1) ))
  \end{align*}
  Set apart the $d = 0$ term, which evaluates to $-2(p-1)^3p^4$.  When $d \neq 0$, replace $\lambda := d \lambda $ to obtain
  \begin{align*}
 \Sigma_2 &= -2(p-1)^3p^4 + (p-1)p^4\sum_{\epsilon = x, y} \sum_{b_{11}, a_{11}, \lambda, d \in \bF_p^\times} e_p(b_{11}( a_{11}^2 \epsilon- \lambda^2 \ell + d^2))\\
  &= -2(p-1)^3p^4 \\ &+ (p-1)p^4\sum_{\epsilon = x, y} \sum_{b_{11} \in \bF_p^\times} \left(\tau \left(\frac{ b_{11}\epsilon}{p}\right)-1\right)\left(\tau \left(-\frac{ b_{11}\ell}{p}\right) -1\right)\left(\tau \left(\frac{b_{11}}{p}\right)-1\right)\\
  \end{align*}
  The sum over $b_{11}$ vanishes unless 0 or two factors of $\left(\frac{b_{11}}{p}\right)$ are taken.  Using $\left(\frac{\ell}{p}\right) = -1$ obtains
  \begin{align*}
  \Sigma_2&= -2(p-1)^3p^4 - 2(p-1)^2p^4 + 2(p-1)^2p^5\\
  & \qquad + (p-1)^2p^5 \left(\left(\frac{x}{p}\right) - \left(\frac{-x}{p}\right) + \left(\frac{y}{p}\right) - \left(\frac{-y}{p}\right) \right)\\
  &= (p-1)^2p^5 \left(\left(\frac{x}{p}\right) - \left(\frac{-x}{p}\right) + \left(\frac{y}{p}\right) - \left(\frac{-y}{p}\right) \right)\\
  \Sigma_3 &= \sum_{\epsilon = x, y} \sum_{\begin{pmatrix} a_{11} & * & *\\ & a & 0\\ & c & 0\end{pmatrix}\begin{pmatrix} b_{11} & * \\ & *\end{pmatrix}} e_p(b_{11}( a_{11}^2\epsilon -  c^2\ell))\\
  &=(p-1)p^4 \sum_{\epsilon = x, y}\sum_{b_{11} \in \bF_p^\times}\left( \tau \left(\frac{ b_{11}\epsilon}{p}\right)-1\right)\tau \left(\frac{- b_{11}\ell}{p}\right) \\&= -(p-1)^2p^5 \left(\left(\frac{x}{p}\right) + \left(\frac{y}{p}\right) \right)\\
  \Sigma_4 &= \sum_{\epsilon = x, y} \sum_{\begin{pmatrix} a_{11} & * & *\\ & 0 & b\\ & 0 & d\end{pmatrix}\begin{pmatrix} b_{11} & * \\ & *\end{pmatrix}} e_p(b_{11}( a_{11}^2\epsilon +d^2))\\
  &= (p-1)^2p^5 \left( \left(\frac{-x}{p}\right) + \left(\frac{-y}{p}\right)\right)\\
  \Sigma_5 &= p \sum_{\epsilon = x, y} \sum_{\begin{pmatrix} a_{11} & * & *\\ & 0 & 0\\ & 0 & 0\end{pmatrix}\begin{pmatrix} b_{11} & * \\ & *\end{pmatrix}} e_p(b_{11} a_{11}^2\epsilon)=-2(p-1)^2p^4.
\end{align*}
Thus
\begin{equation}
\sS(x,\xi) =  2 (p-1)^3 p^4 (p+1).
\end{equation}
\item [5.]  In this case \begin{equation}[ x_0, g \cdot \xi_1] + [ x_1', g \cdot \xi_0] = b_{11}(a_{11}^2 \epsilon+d^2 ) +b_{22}a_{11}b.\end{equation} In $\Sigma_1, \Sigma_2$ and $\Sigma_4$ below, sum in $b$ to find that the sum vanishes.  In the remaining sums, sum in $b_{11}$.  This obtains
\begin{align*}
 \Sigma_1 &= \sum_{\epsilon = x, y} \sum_{\begin{pmatrix}a_{11} & * & *\\ & a & b\\ &c &d\end{pmatrix} \begin{pmatrix} b_{11} & *\\ & *\end{pmatrix}}e_p(b_{11}(a_{11}^2\epsilon  + d^2) + b_{22}a_{11}b) = 0\\
 \Sigma_2 &= \sum_{\epsilon = x, y} \sum_{\begin{pmatrix}a_{11} & * & *\\ & b \lambda  & b\\ &d \lambda  &d\end{pmatrix} \begin{pmatrix} b_{11} & *\\ & *\end{pmatrix}}e_p(b_{11}( a_{11}^2\epsilon + d^2) + b_{22}a_{11}b) = 0\\
 \Sigma_3 &= \sum_{\epsilon = x, y} \sum_{\begin{pmatrix}a_{11} & * & *\\ & a & 0\\ &c &0\end{pmatrix} \begin{pmatrix} b_{11} & *\\ & *\end{pmatrix}}e_p( b_{11}a_{11}^2\epsilon ) = -2(p-1)^2p^5\\
 \Sigma_4 &= \sum_{\epsilon = x, y} \sum_{\begin{pmatrix}a_{11} & * & *\\ & 0 & b\\ &0 &d\end{pmatrix} \begin{pmatrix} b_{11} & *\\ & *\end{pmatrix}}e_p(b_{11}(a_{11}^2\epsilon  + d^2) + b_{22}a_{11}b) = 0\\
 \Sigma_5 &= p\sum_{\epsilon = x, y} \sum_{\begin{pmatrix}a_{11} & * & *\\ & 0 & 0\\ &0 &0\end{pmatrix} \begin{pmatrix} b_{11} & *\\ & *\end{pmatrix}}e_p(b_{11}a_{11}^2\epsilon  ) = - 2(p-1)^2p^4.
\end{align*}
Thus
\begin{equation}
 \sS(x, \xi) = 2 (p-1)^3p^4.
\end{equation}
\item[6.] In this case \begin{equation}[ x_0, g \cdot \xi_1] + [ x_1', g \cdot \xi_0] = b_{11}(a_{11}^2\epsilon+c^2)+b_{22}a_{11}b.\end{equation} Here in $\Sigma_1, \Sigma_2$ and $\Sigma_4$, sum in $b$ to conclude that the sums vanish.  In $\Sigma_3$, sum first in $a_{11}$ and $c$.  In $\Sigma_5$, sum in $b_{11}$.  This obtains
\begin{align*}
 \Sigma_1 &= \sum_{\epsilon = x, y} \sum_{\begin{pmatrix}a_{11} & * & *\\ & a & b\\ &c &d\end{pmatrix} \begin{pmatrix} b_{11} & *\\ & *\end{pmatrix}}e_p(b_{11}( a_{11}^2\epsilon + c^2) + b_{22}a_{11}b)= 0\\
 \Sigma_2 &=\sum_{\epsilon = x, y} \sum_{\begin{pmatrix}a_{11} & * & *\\ & b \lambda  & b\\ &d \lambda  &d\end{pmatrix} \begin{pmatrix} b_{11} & *\\ & *\end{pmatrix}}e_p(b_{11}(a_{11}^2 \epsilon +  d^2\lambda^2) + b_{22}a_{11}b) = 0\\
 \Sigma_3 &= \sum_{\epsilon = x, y} \sum_{\begin{pmatrix}a_{11} & * & *\\ & a & 0\\ &c &0\end{pmatrix} \begin{pmatrix} b_{11} & *\\ & *\end{pmatrix}}e_p(b_{11}(a_{11}^2\epsilon  + c^2) )\\&= (p-1)p^4\sum_{\epsilon = x, y} \sum_{b_{11} \in \bF_p^\times} \tau\left(\frac{b_{11}}{p}\right)\left( \tau \left(\frac{b_{11}\epsilon}{p}\right)-1\right)\\& = (p-1)^2p^5 \left( \left(\frac{-x}{p}\right) + \left(\frac{-y}{p}\right)\right)\\ 
  \Sigma_4 &=\sum_{\epsilon = x, y} \sum_{\begin{pmatrix}a_{11} & * & *\\ & 0 & b\\ &0 &d\end{pmatrix} \begin{pmatrix} b_{11} & *\\ & *\end{pmatrix}}e_p( b_{11}a_{11}^2\epsilon  + b_{22}a_{11}b) = 0\\
  \Sigma_5 &= p\sum_{\epsilon = x, y} \sum_{\begin{pmatrix}a_{11} & * & *\\ & 0 & 0\\ &0 &0\end{pmatrix} \begin{pmatrix} b_{11} & *\\ & *\end{pmatrix}}e_p(b_{11}a_{11}^2\epsilon  ) = -2(p-1)^2p^4.
\end{align*}
Thus
\begin{equation}
\sS(x, \xi)= -2(p-1)^2p^4 - (p-1)^2p^5 \left( \left(\frac{-x}{p}\right) + \left(\frac{-y}{p}\right)\right).
\end{equation}

\item[7.]  In this case \begin{equation}[ x_0, g \cdot \xi_1] + [ x_1', g \cdot \xi_0] = b_{11}(a_{11}^2\epsilon+cd) + b_{22}a_{11}b.\end{equation} 
As in 5. and 6., sum in $b$ to show that $\Sigma_1, \Sigma_2, \Sigma_4 = 0$.  In the remaining sums, sum in $b_{11}$.  This obtains,
\begin{align*}
 \Sigma_1 &=  \sum_{\epsilon = x, y} \sum_{\begin{pmatrix}a_{11} & * & *\\ & a & b\\ &c &d\end{pmatrix} \begin{pmatrix} b_{11} & *\\ & *\end{pmatrix}}e_p(b_{11}(a_{11}^2\epsilon  + cd) + b_{22}a_{11}b) =0\\
 \Sigma_2 &=\sum_{\epsilon = x, y} \sum_{\begin{pmatrix}a_{11} & * & *\\ & b \lambda  & b\\ &d \lambda  &d\end{pmatrix} \begin{pmatrix} b_{11} & *\\ & *\end{pmatrix}}e_p(b_{11}(a_{11}^2\epsilon  +  d^2\lambda) + b_{22}a_{11}b) = 0\\
 \Sigma_3 &= \sum_{\epsilon = x, y} \sum_{\begin{pmatrix}a_{11} & * & *\\ & a & 0\\ &c &0\end{pmatrix} \begin{pmatrix} b_{11} & *\\ & *\end{pmatrix}}e_p(b_{11}a_{11}^2\epsilon   ) = - 2(p-1)^2p^5\\
 \Sigma_4 &=\sum_{\epsilon = x, y} \sum_{\begin{pmatrix}a_{11} & * & *\\ & 0 & b\\ &0 &d\end{pmatrix} \begin{pmatrix} b_{11} & *\\ & *\end{pmatrix}}e_p(b_{11}a_{11}^2\epsilon   + b_{22}a_{11}b) = 0\\
 \Sigma_5 &= p\sum_{\epsilon = x, y} \sum_{\begin{pmatrix}a_{11} & * & *\\ & 0 & 0\\ &0 &0\end{pmatrix} \begin{pmatrix} b_{11} & *\\ & *\end{pmatrix}}e_p(b_{11}a_{11}^2\epsilon  ) = -2(p-1)^2p^4.
\end{align*}
Thus
\begin{equation}
 \sS(x,\xi) = 2(p-1)^3p^4.
\end{equation}

\item[8.]  In this case \begin{equation}[ x_0, g \cdot \xi_1] + [ x_1', g \cdot \xi_0] = b_{11}(a_{11}^2\epsilon+c(c+2d) )+b_{22}a_{11}b.\end{equation} Sum in $b$ to show that $\Sigma_1, \Sigma_2$ and $\Sigma_4$ vanish.  In $\Sigma_3$, sum first in $a_{11}$ and $c$, while in $\Sigma_5$ sum in $b_{11}$ first.  This obtains,
\begin{align*}
 \Sigma_1 &= \sum_{\epsilon = x, y} \sum_{\begin{pmatrix}a_{11} & * & *\\ & a & b\\ &c &d\end{pmatrix} \begin{pmatrix} b_{11} & *\\ & *\end{pmatrix}} e_p(b_{11}(a_{11}^2\epsilon  + c(c+2d)) + b_{22}a_{11}b) = 0\\
 \Sigma_2 &=\sum_{\epsilon = x, y} \sum_{\begin{pmatrix}a_{11} & * & *\\ & b \lambda  & b\\ &d \lambda  &d\end{pmatrix} \begin{pmatrix} b_{11} & *\\ & *\end{pmatrix}}e_p(b_{11}( a_{11}^2\epsilon + d^2\lambda(\lambda+2)) + b_{22}a_{11}b) = 0\\
 \Sigma_3 &= \sum_{\epsilon = x, y} \sum_{\begin{pmatrix}a_{11} & * & *\\ & a & 0\\ &c &0\end{pmatrix} \begin{pmatrix} b_{11} & *\\ & *\end{pmatrix}}e_p(b_{11} (a_{11}^2 \epsilon + c^2))\\
 &= (p-1)p^4\sum_{\epsilon = x, y} \sum_{b_{11} \in \bF_p^\times} \tau\left(\frac{b_{11}}{p}\right)\left( \tau \left(\frac{b_{11}\epsilon}{p}\right)-1\right)\\
 &= (p-1)^2p^5 \left(\left(\frac{-x}{p}\right) + \left(\frac{-y}{p}\right) \right)\\
 \Sigma_4 &= \sum_{\epsilon = x, y}\sum_{\begin{pmatrix}a_{11} & * & *\\ & 0 & b\\ &0 &d\end{pmatrix} \begin{pmatrix} b_{11} & *\\ & *\end{pmatrix}}e_p(b_{11}a_{11}^2\epsilon  + b_{22}a_{11}b) = 0\\
 \Sigma_5 &= p\sum_{\epsilon = x, y}\sum_{\begin{pmatrix}a_{11} & * & *\\ & 0 & 0\\ &0 &0\end{pmatrix} \begin{pmatrix} b_{11} & *\\ & *\end{pmatrix}}e_p(b_{11}a_{11}^2\epsilon ) = -2(p-1)^2 p^4.
\end{align*}
Thus
\begin{equation}
 \sS(x, \xi) = - 2(p-1)^2 p^4 - (p-1)^2p^5 \left(\left(\frac{-x}{p}\right) + \left(\frac{-y}{p}\right) \right).
\end{equation}
\item[9.]  In this case \begin{equation}[ x_0, g \cdot \xi_1] + [ x_1', g \cdot \xi_0] = b_{11}(a_{11}^2\epsilon-c^2\ell+d^2)+ b_{22}a_{11}b.\end{equation} Sum in $b$ to show that $\Sigma_1, \Sigma_2$ and $\Sigma_4$ vanish.  In $\Sigma_3$ sum in $a_{11}$ and $c$ first, while in $\Sigma_5$ sum in $b_{11}$ first. This obtains,
\begin{align*}
 \Sigma_1 &= \sum_{\epsilon = x, y} \sum_{\begin{pmatrix}a_{11} & * & *\\ & a & b\\ &c &d\end{pmatrix} \begin{pmatrix} b_{11} & *\\ & *\end{pmatrix}} e_p(b_{11}(a_{11}^2\epsilon -c^2 \ell + d^2) + b_{22}a_{11}b) = 0\\
 \Sigma_2 &= \sum_{\epsilon = x, y} \sum_{\begin{pmatrix}a_{11} & * & *\\ & b \lambda  & b\\ &d \lambda  &d\end{pmatrix} \begin{pmatrix} b_{11} & *\\ & *\end{pmatrix}} e_p(b_{11}(a_{11}^2\epsilon  +  d^2(-\ell\lambda^2 + 1)) + b_{22}a_{11}b) = 0\\
  \Sigma_3 &= \sum_{\epsilon = x, y} \sum_{\begin{pmatrix}a_{11} & * & *\\ & a & 0\\ &c &0\end{pmatrix} \begin{pmatrix} b_{11} & *\\ & *\end{pmatrix}}e_p(b_{11} (a_{11}^2\epsilon  - c^2\ell))\\
  &= (p-1)p^4 \sum_{\epsilon = x, y} \sum_{b_{11} \in \bF_p^\times}\tau\left(\frac{- b_{11}\ell}{p}\right)\left(\tau \left(\frac{b_{11}\epsilon}{p}\right)-1\right)\\
  &= -(p-1)^2p^5 \left(\left(\frac{x}{p}\right) + \left(\frac{y}{p}\right) \right)\\
  \Sigma_4 &= \sum_{\epsilon = x, y} \sum_{\begin{pmatrix}a_{11} & * & *\\ & 0 & b\\ &0 &d\end{pmatrix} \begin{pmatrix} b_{11} & *\\ & *\end{pmatrix}}e_p(b_{11} (a_{11}^2\epsilon   + d^2) + b_{22}a_{11}b) = 0\\
  \Sigma_5&= p\sum_{\epsilon = x, y}\sum_{\begin{pmatrix}a_{11} & * & *\\ & 0 & 0\\ &0 &0\end{pmatrix} \begin{pmatrix} b_{11} & *\\ & *\end{pmatrix}}e_p(b_{11}a_{11}^2\epsilon ) = -2(p-1)^2 p^4.
\end{align*}
Thus
\begin{equation}
 \sS(x, \xi) = -2(p-1)^2 p^4 +  (p-1)^2p^5 \left(\left(\frac{x}{p}\right) + \left(\frac{y}{p}\right) \right).
\end{equation}
\item[10.] In this case \begin{equation}[ x_0, g \cdot \xi_1] + [ x_1', g \cdot \xi_0] = b_{11}a_{11}^2\epsilon+b_{22}a_{11}b.\end{equation} Sum in $b$ in $\Sigma_1, \Sigma_2$ and $\Sigma_4$ to show that these sums vanish. In $\Sigma_3$ and $\Sigma_5$ sum in $b_{11}$ first.  This obtains
\begin{align*}
 \Sigma_1 &= \sum_{\epsilon = x, y} \sum_{\begin{pmatrix}a_{11} & * & *\\ & a & b\\ &c &d\end{pmatrix} \begin{pmatrix} b_{11} & *\\ & *\end{pmatrix}} e_p(b_{11}a_{11}^2\epsilon  + b_{22}a_{11}b) = 0\\
 \Sigma_2 &= \sum_{\epsilon = x, y} \sum_{\begin{pmatrix}a_{11} & * & *\\ & b \lambda  & b\\ &d \lambda  &d\end{pmatrix} \begin{pmatrix} b_{11} & *\\ & *\end{pmatrix}} e_p(b_{11}a_{11}^2\epsilon  + b_{22}a_{11}b) = 0\\
 \Sigma_3 &= \sum_{\epsilon = x, y} \sum_{\begin{pmatrix}a_{11} & * & *\\ & a & 0\\ &c &0\end{pmatrix} \begin{pmatrix} b_{11} & *\\ & *\end{pmatrix}}e_p(b_{11}a_{11}^2 \epsilon  )= -2(p-1)^2p^5\\
  \Sigma_4 &= \sum_{\epsilon = x, y} \sum_{\begin{pmatrix}a_{11} & * & *\\ & 0 & b\\ &0 &d\end{pmatrix} \begin{pmatrix} b_{11} & *\\ & *\end{pmatrix}}e_p(b_{11}a_{11}^2 \epsilon    + b_{22}a_{11}b) = 0\\
  \Sigma_5 &=  p\sum_{\epsilon = x, y}\sum_{\begin{pmatrix}a_{11} & * & *\\ & 0 & 0\\ &0 &0\end{pmatrix} \begin{pmatrix} b_{11} & *\\ & *\end{pmatrix}}e_p(b_{11}a_{11}^2\epsilon ) = -2(p-1)^2 p^4.
\end{align*}
Thus
\begin{equation}
 \sS(x, \xi) = 2(p-1)^3 p^4.
\end{equation}
\item[11.-23.] These sums vanish on summing in $a_{12}$, $a_{13}$ and $b_{12}$. 
\end{enumerate}
\subsection{The exponential sums pair $(\sO_{1^21^2}, \sO_{D11})$}\label{O1212_OD11_section}
The standard representatives in this case are
\begin{equation}
 x_0 = \begin{pmatrix} 0&0&0\\ &0&0\\&&1\end{pmatrix}\begin{pmatrix} 0 & \frac{1}{2} &0\\&0&0\\&&0\end{pmatrix}, \qquad \xi_0 = \begin{pmatrix} 1 &0&0\\&-1&0\\&&0\end{pmatrix}\begin{pmatrix}0&0&0\\&0&0\\&&0\end{pmatrix},
\end{equation}
and
\begin{equation}
 x_1 = \begin{pmatrix} x & 0 &0\\ & y &0\\&&0\end{pmatrix}\begin{pmatrix}0&0&0\\&0&0\\&&0\end{pmatrix}.
\end{equation}
The acting set is the subset 
\begin{equation}
 G_{x, \xi}^t\subset \begin{pmatrix} * & * & a_{13} \\ * & * & a_{23}\\ && a_{33}\end{pmatrix}\begin{pmatrix} b_{11} & b_{12} \\ & b_{22}\end{pmatrix}
\end{equation}
in which the $\GL_3$ factor acts by
\begin{align*}
 u+v & \mapsto \alpha u + \beta v\\
 u-v & \mapsto \lambda(\alpha u - \beta v)
\end{align*}
thus
\begin{equation}
 (u+v)(u-v) \mapsto  \alpha^2\lambda u^2 -  \beta^2\lambda v^2.
\end{equation}
Thus
\begin{equation}
 [ x_1, g \cdot \xi_0 ] = b_{11} (\alpha^2 x - \beta^2 y)\lambda.
\end{equation}

The stabilizer has size
\begin{equation}
 \left|\Stab_{G(\zed/p^2\zed)}(x)\right| = \left\{\begin{array}{lll}8(p-1)p^3 && x = y\\ 4(p-1)p^3 && x\neq y\end{array}\right..
\end{equation}

 Representatives and exponential sum pairings are given in the table below. Summing over the three maximal orbits
 \begin{equation}
  \sM(x,\xi) = \sum_{(x,y) \in \{(1,1),(1,\ell), (\ell,\ell)\}} \frac{2^{\delta(x \neq y)}p^{10}\sS(x,\xi)}{8(p-1)}.
 \end{equation}

 See \textbf{exponential\_sums\_O1212\_OD11.nb}.
 
{\tiny
 \begin{equation*}
  \begin{array}{|l|l|l|l|l|}
   \hline
   \text{Orbit} & \xi_1 & [ x_0, g \cdot \xi_1 ] + [ x_1, g \cdot \xi_0 ] & \sS(x,\xi) &\sM(x,\xi)\\
   \hline
   1. & \begin{pmatrix} 0 &0&0\\&0&0\\&&0\end{pmatrix}\begin{pmatrix} 0&0&0\\&0&0\\&&1\end{pmatrix} & b_{11}(\alpha^2 x - \beta^2 y)\lambda + b_{12}a_{33}^2 + b_{22}a_{13}a_{23} & 0&0\\
   \hline
   2. & \begin{pmatrix} 0 &0&0\\&0&0\\&&0\end{pmatrix}\begin{pmatrix} 0&0&1\\&0&-1\\&&1\end{pmatrix} & \begin{array}{l}b_{11}(\alpha^2 x - \beta^2 y)\lambda + b_{12}a_{33}^2 \\+b_{22}(a_{13}a_{23} - a_{13}\beta\lambda + a_{23}\alpha \lambda)\end{array} & 0&0\\
   \hline
   3. &  \begin{pmatrix} 0 &0&0\\&0&0\\&&0\end{pmatrix}\begin{pmatrix} 0&0&2\\&0&0\\&&1\end{pmatrix} & \begin{array}{l} b_{11}(\alpha^2 x - \beta^2 y)\lambda + b_{12}a_{33}^2 \\+ b_{22}(a_{23}\alpha(1+\lambda) \\+ a_{13}\beta(1-\lambda) + a_{13}a_{23})\end{array}&0&0\\
   \hline
   4. & \begin{pmatrix} 0 &0&0\\&0&0\\&&0\end{pmatrix}\begin{pmatrix} -\ell&\ell&0\\&-\ell&0\\&&1\end{pmatrix} &\begin{array}{l}b_{11}(\alpha^2 x - \beta^2 y)\lambda + b_{12}a_{33}^2 \\+ b_{22}(a_{13}a_{23} + \alpha \beta \lambda^2 \ell) \end{array}&0&0\\
   \hline
   5. & \begin{pmatrix} 0 &0&0\\&0&0\\&&0\end{pmatrix}\begin{pmatrix} -\ell&\ell&1\\&-\ell&1\\&&1\end{pmatrix} &  \begin{array}{l}b_{11}(\alpha^2 x - \beta^2 y)\lambda +b_{12}a_{33}^2 \\ + b_{22}(a_{13}a_{23} + a_{13}\beta + a_{23}\alpha + \alpha\beta \lambda^2 \ell ) \end{array}&0&0\\
   \hline
   6. & \begin{pmatrix} 0 &0&0\\&0&0\\&&0\end{pmatrix}\begin{pmatrix} -2\ell&0&0\\&-2\ell&0\\&&1\end{pmatrix} &  \begin{array}{l}b_{11}(\alpha^2 x - \beta^2 y)\lambda + b_{12}a_{33}^2 \\+ b_{22}(\alpha \beta(\lambda^2-1) \ell  + a_{13}a_{23}) \end{array}&0&0\\
   \hline
   7. & \begin{pmatrix} 0 &0&0\\&0&0\\&&0\end{pmatrix}\begin{pmatrix} 0&0&1\\&0&-1\\&&0\end{pmatrix} & \begin{array}{l}b_{11}(\alpha^2 x - \beta^2 y)\lambda\\ + b_{22}(a_{23}\alpha - a_{13}\beta)\lambda \end{array}&0&0\\
   \hline
   8. & \begin{pmatrix} 0 &0&0\\&0&0\\&&0\end{pmatrix}\begin{pmatrix} 0&0&2\\&0&0\\&&0\end{pmatrix}&\begin{array}{l}b_{11}(\alpha^2 x - \beta^2 y)\lambda \\+ b_{22}(a_{13}\beta(1-\lambda) + a_{23}\alpha(1 + \lambda)) \end{array} &0&0\\
   \hline
   9. & \begin{pmatrix} 0 &0&0\\&0&0\\&&0\end{pmatrix}\begin{pmatrix} 1&1&1\\&1&-1\\&&0\end{pmatrix}&\begin{array}{l}b_{11}(\alpha^2 x - \beta^2 y)\lambda \\+ b_{22}( - a_{13}\beta \lambda + a_{23}\alpha \lambda+\alpha \beta) \end{array} &0 &0\\
   \hline
   10. & \begin{pmatrix} 0 &0&0\\&0&0\\&&1\end{pmatrix}\begin{pmatrix} 1&1&1\\&1&-1\\&&0\end{pmatrix}& \begin{array}{l}b_{11}(a_{33}^2+\alpha^2\lambda x - \beta^2\lambda y ) \\+ b_{22}( -a_{13}\beta \lambda + a_{23}\alpha\lambda+\alpha \beta) \end{array} &0&0\\
   \hline
   11. & \begin{pmatrix} 0 &0&0\\&0&0\\&&1\end{pmatrix}\begin{pmatrix} 0&0&2\\&0&0\\&&0\end{pmatrix}& \begin{array}{l}b_{11}( a_{33}^2+\alpha^2\lambda x - \beta^2\lambda y ) \\+ b_{22}(a_{13}\beta(1 - \lambda)+a_{23}\alpha(1 + \lambda)) \end{array} &0&0\\
   \hline
   12. & \begin{pmatrix} 0 &0&0\\&0&0\\&&1\end{pmatrix}\begin{pmatrix} 1&1&1\\&1&-1\\&&0\end{pmatrix}& \begin{array}{l}b_{11}(a_{33}^2+\alpha^2\lambda x - \beta^2\lambda y  ) \\+ b_{22}( - a_{13}\beta \lambda + a_{23}\alpha \lambda+\alpha\beta) \end{array} &0&0\\
   \hline
   13. & \begin{pmatrix} 0 &0&0\\&0&0\\&&0\end{pmatrix}\begin{pmatrix} 1&-1&0\\&1&0\\&&0\end{pmatrix}& \begin{array}{l}b_{11}(\alpha^2 x - \beta^2 y)\lambda  - b_{22}\alpha \beta \lambda^2 \end{array} & \begin{array}{l} -(p-1)^3p^3\\ -(p-1)^3p^4 \left(\frac{xy}{p}\right)\end{array} & -\frac{(p-1)^2p^{13}}{2}\\
   \hline
   14. & \begin{pmatrix} 0 &0&0\\&0&0\\&&0\end{pmatrix}\begin{pmatrix} 2&0&0\\&2&0\\&&0\end{pmatrix}& \begin{array}{l}b_{11}(\alpha^2 x - \beta^2 y)\lambda  +b_{22} \alpha \beta(1 - \lambda^2) \end{array} & \begin{array}{l} (p-1)^2p^3(p+1)\\ (p-1)^2p^4(p+1)\left(\frac{xy}{p}\right)\end{array} & \frac{(p-1)p^{13}(p+1)}{2}\\
   \hline
   15. & \begin{pmatrix} 0 &0&0\\&0&0\\&&0\end{pmatrix}\begin{pmatrix} 1+\ell&-1+\ell&0\\&1+\ell&0\\&&0\end{pmatrix} & \begin{array}{l}b_{11}(\alpha^2 x - \beta^2 y)\lambda + b_{22}\alpha \beta (\ell - \lambda^2) \end{array} & \begin{array}{l} -(p-1)^3p^3\\ -(p-1)^3p^4 \left(\frac{xy}{p}\right)\end{array} & - \frac{(p-1)^2p^{13}}{2}\\
   \hline
   16. & \begin{pmatrix} 0 &0&0\\&0&0\\&&1\end{pmatrix}\begin{pmatrix} 1&-1&0\\&1&0\\&&0\end{pmatrix} & \begin{array}{l}b_{11}(a_{33}^2+\alpha^2\lambda x - \beta^2\lambda y ) - b_{22}\alpha \beta \lambda^2 \end{array}& \begin{array}{l} (p-1)^2 p^3 \\ + (p-1)^2p^4\left(\frac{xy}{p}\right)\end{array} & \frac{(p-1)p^{13}}{2}\\
   \hline
   17. & \begin{pmatrix} 0 &0&0\\&0&0\\&&1\end{pmatrix}\begin{pmatrix} 2&0&0\\&2&0\\&&0\end{pmatrix} & \begin{array}{l}b_{11}(a_{33}^2+\alpha^2\lambda x - \beta^2\lambda y  )\\ + b_{22}\alpha \beta (1-\lambda^2) \end{array}& \begin{array}{l} -(p-1)p^3(p+1)\\ - (p-1)p^4(p+1)\left(\frac{xy}{p}\right)\\ - (p-1)p^5\\\times \Bigl(\left(\frac{x}{p}\right) + \left(\frac{-x}{p}\right)\\ + \left(\frac{y}{p}\right) + \left(\frac{-y}{p}\right) \Bigr)\end{array}& -\frac{p^{13}(p+1)}{2}\\
   \hline
   18. & \begin{pmatrix} 0 &0&0\\&0&0\\&&1\end{pmatrix}\begin{pmatrix} 1+\ell&-1+\ell&0\\&1+\ell&0\\&&0\end{pmatrix}& \begin{array}{l}b_{11}( a_{33}^2+\alpha^2\lambda x - \beta^2\lambda y ) \\+ b_{22}\alpha \beta (\ell-\lambda^2) \end{array} & \begin{array}{l} (p-1)^2p^3 \\ + (p-1)^2p^4\left(\frac{xy}{p}\right)\end{array} & \frac{(p-1)p^{13}}{2}\\
   \hline
   19. & \begin{pmatrix} 0 &0&0\\&0&0\\&&1\end{pmatrix}\begin{pmatrix} 0&0&0\\&0&0\\&&0\end{pmatrix}& b_{11}(a_{33}^2+\alpha^2\lambda x - \beta^2\lambda y ) & \begin{array}{l} - (p-1)^3p^3 \\ - (p-1)^3p^4 \left(\frac{xy}{p}\right)\end{array}& \frac{-(p-1)^2p^{13}}{2} \\
   \hline
   20. & \begin{pmatrix} 0 &0&0\\&0&0\\&&0\end{pmatrix}\begin{pmatrix} 0&0&0\\&0&0\\&&0\end{pmatrix} & b_{11}(\alpha^2 x - \beta^2 y)\lambda & \begin{array}{l} (p-1)^4 p^3 \\ +(p-1)^4 p^4\left(\frac{xy}{p}\right)\end{array} & \frac{(p-1)^3p^{13}}{2}\\
   \hline
   \end{array}
   \end{equation*}
}

The orbital exponential sums are as follows.
\begin{enumerate}
 \item [1.-12.] These sums vanish on summing over $a_{13}, a_{23}, b_{12}$.

 \item [13.] Here 
 \begin{equation}
  [ x_0, g\cdot \xi_1] + [ x_1, g \cdot \xi_0] = b_{11}(\alpha^2 x - \beta^2 y)\lambda  - b_{22}\alpha \beta \lambda^2.
 \end{equation}
The sum over $b_{22}$ is $-1$ since $\alpha, \beta, \lambda \neq 0$.  Hence,
 \begin{align*}
  \sS(x, \xi)& = \sum_{\begin{pmatrix}\alpha, \beta, \lambda & *\\& *\\& *\end{pmatrix}\begin{pmatrix} b_{11} & * \\ & b_{22} \end{pmatrix}} e_p(b_{11}(\alpha^2 x - \beta^2 y)\lambda  - b_{22}\alpha \beta \lambda^2) \\
  &=-(p-1)^2p^3 \sum_{\alpha, \beta, b_{11} \in \bF_p^\times} e_p(b_{11}(\alpha^2 x - \beta^2 y))\\
  &= -(p-1)^2p^3 \sum_{b_{11} \in \bF_p^\times} \left(\tau \left(\frac{b_{11}x}{p} \right)-1 \right)\left(\tau \left(\frac{-b_{11}y}{p} \right)-1 \right)\\
  &= -(p-1)^3p^3 - (p-1)^3p^4 \left(\frac{xy}{p}\right).
 \end{align*}
 \item [14.] Here 
 \begin{equation}
  [ x_0, g\cdot \xi_1] + [ x_1, g \cdot \xi_0] =b_{11}(\alpha^2 x - \beta^2 y)\lambda  +b_{22} \alpha \beta(1 - \lambda^2) .
 \end{equation}
 Thus,
\begin{align*}
\sS(x, \xi)= &\sum_{\begin{pmatrix}\alpha, \beta, \lambda & *\\& *\\& *\end{pmatrix}\begin{pmatrix} b_{11} & * \\ & b_{22} \end{pmatrix}} e_p(b_{11}(\alpha^2 x - \beta^2 y)\lambda  +b_{22} \alpha \beta(1 - \lambda^2) ).
\end{align*}
Sum in $b_{22}$ and split in to $\lambda = \pm 1$ and otherwise to obtain
\begin{align*}
 \sS(x, \xi) &= 2(p-1)^2 p^3 \sum_{\alpha, \beta, b_{11} \in \bF_p^\times} e_p(b_{11}(\alpha^2 x - \beta^2 y))\\ & \qquad - (p-3)(p-1)p^3 \sum_{\alpha, \beta, b_{11} \in \bF_p^\times} e_p(b_{11}(\alpha^2 x - \beta^2 y))\\
 &= (p-1)p^3(p+1) \sum_{b_{11} \in \bF_p^\times} \left(\tau \left(\frac{b_{11}x}{p} \right) -1\right) \left(\tau \left(\frac{-b_{11}y}{p} \right) -1\right)\\
 &= (p-1)^2p^4(p+1) \left( \frac{xy}{p}\right) + (p-1)^2 p^3 (p+1).
\end{align*}
 \item[15.] Here 
 \begin{equation}
  [ x_0, g\cdot \xi_1] + [ x_1, g \cdot \xi_0] =b_{11}(\alpha^2 x - \beta^2 y)\lambda + b_{22}\alpha \beta (\ell - \lambda^2).
 \end{equation}
 Since $\ell- \lambda^2 \neq 0$, the sum in $b_{22}$ is $-1$.  Hence, as in 13., 
\begin{align*}
 \sS(x, \xi)&= \sum_{\begin{pmatrix}\alpha, \beta, \lambda & *\\& *\\& *\end{pmatrix}\begin{pmatrix} b_{11} & * \\ & b_{22} \end{pmatrix}} e_p(b_{11}(\alpha^2 x - \beta^2 y)\lambda + b_{22}\alpha \beta (\ell - \lambda^2))\\
 &= - (p-1)^2 p^3 \sum_{\alpha, \beta, b_{11} \in \bF_p^\times} e_p(b_{11}(\alpha^2 x - \beta^2 y))\\
 &= - (p-1)^3 p^3 - (p-1)^3p^4 \left(\frac{xy}{p}\right).
\end{align*}
\item[16.] Here 
 \begin{equation}
  [ x_0, g\cdot \xi_1] + [ x_1, g \cdot \xi_0] = b_{11}(a_{33}^2+\alpha^2\lambda x - \beta^2\lambda y ) - b_{22}\alpha \beta \lambda^2.
 \end{equation}
 The sum in $b_{22}$ is $-1$.  Then replace $b_{11}:= b_{11}\lambda$ and sum in $\lambda$ to obtain
\begin{align*}
  \sS(x, \xi)& = \sum_{\begin{pmatrix}\alpha, \beta, \lambda & *\\& *\\& *\end{pmatrix}\begin{pmatrix} b_{11} & * \\ & b_{22} \end{pmatrix}} e_p(b_{11}(a_{33}^2+\alpha^2\lambda x - \beta^2\lambda y ) - b_{22}\alpha \beta \lambda^2) \\
  &= (p-1)p^3 \sum_{\alpha, \beta, b_{11} \in \bF_p^\times} e_p(b_{11}(\alpha^2 x - \beta^2 y))\\
  &= (p-1)^2p^3+ (p-1)^2 p^4 \left( \frac{xy}{p}\right) .
\end{align*}
\item[17.] Here 
 \begin{equation}
  [ x_0, g\cdot \xi_1] + [ x_1, g \cdot \xi_0] = b_{11}(\alpha^2\lambda x - \beta^2\lambda y + a_{33}^2) - b_{22}\alpha \beta (\lambda^2-1).
 \end{equation}
 Sum in $b_{22}$ which obtains $-1$ unless $\lambda = \pm 1$.  Thus
\begin{align*}
 \sS(x, \xi)& = \sum_{\begin{pmatrix}\alpha, \beta, \lambda & *\\& *\\& *\end{pmatrix}\begin{pmatrix} b_{11} & * \\ & b_{22} \end{pmatrix}} e_p(b_{11}( a_{33}^2+\alpha^2\lambda x - \beta^2\lambda y ) + b_{22}\alpha \beta (1-\lambda^2))\\
 &= - \frac{1}{p-1} \sum_{\begin{pmatrix}\alpha, \beta, \lambda & *\\& *\\& *\end{pmatrix}\begin{pmatrix} b_{11} & * \\ & b_{22} \end{pmatrix}} e_p(b_{11} (a_{33}^2+\alpha^2\lambda x - \beta^2\lambda y))\\ & \qquad + \frac{p}{p-1}\sum_{\lambda = \pm } \sum_{\begin{pmatrix}\alpha, \beta & *\\& *\\& *\end{pmatrix}\begin{pmatrix} b_{11} & * \\ & b_{22} \end{pmatrix}} e_p(b_{11} ( a_{33}^2+\alpha^2\lambda x - \beta^2\lambda y ))
 \end{align*}
 In the first sum, substitute $b_{11}:= b_{11}\lambda $, then sum in $\lambda$.  In the second sum, sum in $\alpha$, $\beta$ and $a_{33}$ to obtain Gauss sums.  Hence,
 \begin{align*}
\sS(x, \xi) &= (p-1)p^3  \sum_{\alpha, \beta, b_{11} \in \bF_p^\times} e_p(b_{11}(\alpha^2 x - \beta^2 y))\\
 & \qquad + p^4 \sum_{\lambda = \pm }\sum_{b_{11} \in \bF_p^\times} \left(\tau \left(\frac{b_{11}\lambda x }{p} \right) -1\right)\left(\tau\left(\frac{-b_{11}\lambda y }{p} \right) -1\right)\left(\tau\left(\frac{b_{11}}{p} \right) -1\right)\\
 &= (p-1)^2p^4 \left(\frac{xy}{p}\right) + (p-1)^2p^3 - 2(p-1)p^4 - 2(p-1)p^5 \left(\frac{xy}{p} \right) \\& \qquad - (p-1)p^5\left( \left(\frac{x}{p}\right) + \left(\frac{-x}{p}\right) + \left(\frac{y}{p}\right) + \left(\frac{-y}{p}\right) \right)\\
 &= - (p-1)p^4(p+1)\left(\frac{xy}{p}\right)  - (p-1)p^3(p+1)\\& \qquad -(p-1)p^5\left( \left(\frac{x}{p}\right) + \left(\frac{-x}{p}\right) + \left(\frac{y}{p}\right) + \left(\frac{-y}{p}\right) \right). 
\end{align*}
\item[18.]  Here 
 \begin{equation}
  [ x_0, g\cdot \xi_1] + [ x_1, g \cdot \xi_0] = b_{11}( a_{33}^2+\alpha^2\lambda x - \beta^2\lambda y ) + b_{22}\alpha \beta (\ell-\lambda^2).
 \end{equation}
 The sum in $b_{22}$ is $-1$.  Then replace $b_{11} := b_{11}\lambda$ and sum in $\lambda$.  This obtains
\begin{align*}
 \sS(x, \xi) &= \sum_{\begin{pmatrix}\alpha, \beta, \lambda & *\\& *\\& *\end{pmatrix}\begin{pmatrix} b_{11} & * \\ & b_{22} \end{pmatrix}} e_p(b_{11}( a_{33}^2+\alpha^2\lambda x - \beta^2\lambda y ) + b_{22}\alpha \beta (\ell-\lambda^2))\\
 &= (p-1)p^3 \sum_{\alpha, \beta, b_{11} \in \bF_p^\times} e_p(b_{11}(\alpha^2 x - \beta^2 y))\\&=(p-1)^2p^3+ (p-1)^2p^4 \left(\frac{xy}{p}\right) .
\end{align*}
\item[19.]  Here 
 \begin{equation}
  [ x_0, g\cdot \xi_1] + [ x_1, g \cdot \xi_0] = b_{11}( a_{33}^2+\alpha^2\lambda x - \beta^2\lambda y ) .
 \end{equation}
 This is the same as 18 without the sum over $b_{22}$.  Hence,
\begin{align*}
 \sS(x, \xi)& = \sum_{\begin{pmatrix}\alpha, \beta, \lambda & *\\& *\\& *\end{pmatrix}\begin{pmatrix} b_{11} & * \\ & b_{22} \end{pmatrix}}  e_p(b_{11}( a_{33}^2+\alpha^2\lambda x - \beta^2\lambda y ))\\
 &= - (p-1)^2 p^3 \sum_{\alpha, \beta, b_{11} \in \bF_p^\times} e_p(b_{11}(\alpha^2 x - \beta^2 y))\\
 &= - (p-1)^3 p^3- (p-1)^3p^4 \left(\frac{xy}{p}\right) .
\end{align*}
\item[20.] Here 
 \begin{equation}
  [ x_0, g\cdot \xi_1] + [ x_1, g \cdot \xi_0] = b_{11}(\alpha^2 x- \beta^2 y )\lambda .
 \end{equation}
 Replace $b_{11}:= b_{11}\lambda$.  Hence,
\begin{align*}
 \sS(x, \xi)& = \sum_{\begin{pmatrix}\alpha, \beta, \lambda & *\\& *\\& *\end{pmatrix}\begin{pmatrix} b_{11} & * \\ & b_{22} \end{pmatrix}}  e_p(b_{11}(\alpha^2 x - \beta^2 y)\lambda )\\
 &= (p-1)^4 p^3+(p-1)^4p^4\left(\frac{xy}{p}\right) .
\end{align*}

 \end{enumerate}

\subsection{The exponential sums pair $(\sO_{1^21^2}, \sO_{D2})$}\label{O1212_OD2_section}
Standard representatives are
\begin{equation}
 x_0 = \begin{pmatrix} 0&0&0\\ &0&0\\ &&1\end{pmatrix}\begin{pmatrix} 0 & \frac{1}{2} &0\\&0&0\\&&0\end{pmatrix}, \qquad \xi_0 = \begin{pmatrix} -\ell &0&0\\ &1&0\\&&0\end{pmatrix}\begin{pmatrix}0&0&0\\&0&0\\&&0\end{pmatrix}, 
\end{equation}
and
\begin{equation}
 x_1 = \begin{pmatrix} x &0&0\\&y&0\\&&0\end{pmatrix}\begin{pmatrix}0&0&0\\&0&0\\&&0\end{pmatrix}.
\end{equation}
The acting set is
\begin{equation}
 G_{x, \xi}^t = \left\{\begin{pmatrix} a &  c\ell & a_{13} \\ c\lambda & a\lambda & a_{23}\\ && a_{33}\end{pmatrix}\begin{pmatrix} b_{11} & b_{12} \\ & b_{22}\end{pmatrix}\right\}.
\end{equation}
The range of summation is $(a,c) \in \bF_p^2 \setminus \{(0,0)\}$, $\lambda, a_{33}, b_{11}, b_{22} \in \bF_p^\times$, and $a_{13}, a_{23}, b_{12} \in \bF_p$.

The $\GL_3$ action maps
\begin{align*}
 -\ell u^2 + v^2 \mapsto &-\ell(a u + c\lambda v)^2 + ( c\ell u + a\lambda v)^2\\
 &= (- a^2\ell +  c^2\ell^2) u^2 + (a^2\lambda^2 -  c^2 \lambda^2\ell)v^2 \bmod uv.
\end{align*}
Thus
\begin{equation}
 [ x_1, g\cdot \xi_0] = b_{11}(a^2 -  c^2\ell)(-\ell x + \lambda^2 y).
\end{equation}
The stabilizer has size
\begin{equation}
 \left|\Stab_{G(\zed/p^2\zed)}(x)\right| = \left\{\begin{array}{lll}8(p-1)p^3 && x = y\\ 4(p-1)p^3 && x\neq y\end{array}\right..
\end{equation}

 Representatives and exponential sum pairings are given in the table below. Summing over the three maximal orbits
 \begin{equation}
  \sM(x,\xi) = \sum_{(x,y) \in \{(1,1),(1,\ell), (\ell,\ell)\}} \frac{2^{\delta(x \neq y)}p^{10}\sS(x,\xi)}{8(p-1)}.
 \end{equation}

 See \textbf{exponential\_sums\_O1212\_OD2.nb}.
 
{\tiny
 \begin{equation*}
  \begin{array}{|l|l|l|l|l|}
   \hline
   \text{Orbit} & \xi_1 & [ x_0, g \cdot \xi_1 ] + [ x_1, g \cdot \xi_0 ] & \sS(x,\xi) &\sM(x,\xi)\\
   \hline
   1. & \begin{pmatrix} 0 &0&0\\&0&0\\&&0\end{pmatrix}\begin{pmatrix} 0&0&0\\&0&0\\&&1\end{pmatrix} & \begin{array}{l}-b_{11}(a^2-c^2 \ell)(\ell x - \lambda^2 y)\\ + b_{12}a_{33}^2 + b_{22}a_{13}a_{23}\end{array} &  0 &0\\
   \hline
   2. & \begin{pmatrix} 0 &0&0\\&0&0\\&&0\end{pmatrix}\begin{pmatrix} 0&0&0\\&0&1\\&&1\end{pmatrix} & \begin{array}{l}-b_{11}(a^2-c^2 \ell)(\ell x - \lambda^2 y) \\+ b_{12}a_{33}^2 \\+ b_{22}(a_{13}a_{23} + a_{13} a\lambda + a_{23}c \ell) \end{array} & 0 &0\\
   \hline
   3. & \begin{pmatrix} 0 &0&0\\&0&0\\&&0\end{pmatrix}\begin{pmatrix} 0&1&0\\&0&0\\&&1\end{pmatrix} & \begin{array}{l}-b_{11}(a^2-c^2 \ell)(\ell x - \lambda^2 y)\\+ b_{12}a_{33}^2\\ + b_{22}(a_{13}a_{23} +  a^2\lambda +  c^2 \lambda\ell)\end{array} & 0 &0\\
   \hline
   4. & \begin{pmatrix} 0 &0&0\\&0&0\\&&0\end{pmatrix}\begin{pmatrix} 0&0&0\\&0&1\\&&0\end{pmatrix} & \begin{array}{l}-b_{11}(a^2-c^2 \ell)(\ell x - \lambda^2 y)\\ + b_{22}(a_{13}a \lambda  + a_{23}c\ell)\end{array} &0 &0\\
   \hline
   5. & \begin{pmatrix} 0 &0&0\\&0&0\\&&1\end{pmatrix}\begin{pmatrix} 0&0&0\\&0&1\\&&0\end{pmatrix} & \begin{array}{l}-b_{11}(a^2-c^2 \ell)(\ell x - \lambda^2 y)\\ + b_{11}a_{33}^2 + b_{22}(a_{13} a \lambda + a_{23}c\ell)\end{array} & 0&0\\
   \hline
   6. & \begin{pmatrix} 0 &0&0\\&0&0\\&&0\end{pmatrix}\begin{pmatrix} 0&0&0\\&1&0\\&&0\end{pmatrix} & \begin{array}{l} -b_{11}(a^2-c^2 \ell)(\ell x - \lambda^2 y)\\ + b_{22} a c\lambda \ell \end{array} & \begin{array}{l} (p-1)^3p^3\\ - (p-1)^3p^4\left(\frac{xy}{p}\right)\end{array} & \frac{(p-1)^2p^{13}}{2}\\
   \hline
   7. & \begin{pmatrix} 0 &0&0\\&0&0\\&&1\end{pmatrix}\begin{pmatrix} 0&0&0\\&1&0\\&&0\end{pmatrix} &\begin{array}{l} -b_{11}(a^2-c^2 \ell)(\ell x - \lambda^2 y)\\ +b_{11}a_{33}^2 + b_{22}  a c\lambda\ell\end{array} & \begin{array}{l} -(p-1)^2p^3\\ + (p-1)^2p^4 \left(\frac{xy}{p}\right)\\ + (p-1)^2p^5 \Bigl(\left(\frac{x}{p}\right) - \left(\frac{-x}{p}\right)\\ + \left(\frac{y}{p}\right) - \left(\frac{-y}{p}\right)\Bigr)\end{array} & -\frac{(p-1)p^{13}}{2}\\
   \hline
   8a. & \begin{pmatrix} 0 &0&0\\&0&0\\&&0\end{pmatrix}\begin{pmatrix} 0&1&0\\&0&0\\&&0\end{pmatrix} & \begin{array}{l} -b_{11}(a^2-c^2 \ell)(\ell x - \lambda^2 y)\\ + b_{22}(a^2\lambda  +  c^2\lambda \ell)\end{array} & \begin{array}{l} - (p-1)^2p^3 (p+1)\\ + (p-1)^2p^4(p+1)\left(\frac{xy}{p}\right)\end{array} & -\frac{(p-1)p^{13}(p+1)}{2}\\
   \hline
   8b. & \begin{pmatrix} 0 &0&0\\&0&0\\&&0\end{pmatrix}\begin{pmatrix} 0&k&0\\&1&0\\&&0\end{pmatrix} &\begin{array}{l} -b_{11}(a^2-c^2 \ell)(\ell x - \lambda^2 y) \\+ b_{22}( a c \lambda \ell +  a^2 \lambda k +  c^2\lambda \ell k)\end{array} & \begin{array}{l} -(p-1)^2 p^3 (p+1)\\ + (p-1)^2 p^4 (p+1)\left(\frac{xy}{p}\right)\end{array}& -\frac{(p-1)p^{13}(p+1)}{2}\\
   \hline
   9a. & \begin{pmatrix} 0 &0&0\\&0&0\\&&1\end{pmatrix}\begin{pmatrix} 0&1&0\\&0&0\\&&0\end{pmatrix} &\begin{array}{l} -b_{11}(a^2-c^2 \ell)(\ell x - \lambda^2 y)\\ + b_{11}a_{33}^2 + b_{22}( a^2\lambda +  c^2\lambda \ell)\end{array} & \begin{array}{l} (p-1)p^3(p+1)\\ - (p-1)p^4(p+1)\left(\frac{xy}{p}\right)\end{array} & \frac{p^{13}(p+1)}{2}\\
   \hline
   9b. & \begin{pmatrix} 0 &0&0\\&0&0\\&&1\end{pmatrix}\begin{pmatrix} 0&k&0\\&1&0\\&&0\end{pmatrix} & \begin{array}{l}-b_{11}(a^2-c^2 \ell)(\ell x - \lambda^2 y)\\ + b_{11}a_{33}^2\\ + b_{22}( a c\lambda \ell +  a^2\lambda k + c^2\lambda  \ell k) \end{array}& \begin{array}{l} (p-1)p^3(p+1)\\ - (p-1)p^4(p+1)\left(\frac{xy}{p}\right)\end{array}& \frac{p^{13}(p+1)}{2}\\
   \hline
   10. & \begin{pmatrix} 0 &0&0\\&0&0\\&&1\end{pmatrix}\begin{pmatrix} 0&0&0\\&0&0\\&&0\end{pmatrix} &\begin{array}{l} -b_{11}(a^2-c^2 \ell)(\ell x - \lambda^2 y)\\ + b_{11}a_{33}^2\end{array}& \begin{array}{l} -(p-1)^2p^3(p+1)\\ + (p-1)^2 p^4 (p+1)\left(\frac{xy}{p}\right)\end{array} &-\frac{(p-1)p^{13}(p+1)}{2}\\
   \hline
   11. & \begin{pmatrix} 0 &0&0\\&0&0\\&&0\end{pmatrix}\begin{pmatrix} 0&0&0\\&0&0\\&&0\end{pmatrix} & -b_{11}(a^2-c^2 \ell)(\ell x - \lambda^2 y) & \begin{array}{l} (p-1)^3p^3(p+1)\\ - (p-1)^3p^4(p+1)\left(\frac{xy}{p}\right)\end{array}& \frac{(p-1)^2p^{13}(p+1)}{2} \\
   \hline
   \end{array}
 \end{equation*}
 }
\begin{enumerate}
 \item [1.-5.] These sums vanish on summing in $b_{12}, a_{13}$ and $a_{23}$.

 \item [6.] Here
 \begin{equation}
  [ x_0, g \cdot \xi_1] + [ x_1, g\cdot \xi_0] = -b_{11}(a^2 - c^2\ell)(\ell x - \lambda^2 y) +  b_{22}a c\lambda\ell .
 \end{equation}

 Note that $a^2 - c^2 \ell \neq 0$.  Thus summation over $b_{11}$ obtains $-1$ if $\ell x - \lambda^2 y \neq 0$ and $p-1$ otherwise.  Thus, 
 \begin{align*}
  \sS(x, \xi) &= \sum_{\begin{pmatrix} a &c \ell  & * \\ c\lambda & a \lambda & * \\ && *\end{pmatrix}\begin{pmatrix} b_{11} & *\\ & b_{22}\end{pmatrix}} e_p(b_{11}(a^2-c^2 \ell )(-\ell x + \lambda^2 y) +  b_{22}  a c\lambda\ell)\\
  &= - (p-1)p^3 \sum_{(a,c) \neq (0,0), \lambda, b_{22} \in \bF_p^\times} e_p( b_{22} a c\lambda \ell)\\
  & \qquad +(p-1)p^4 \sum_{\substack{\lambda^2 \equiv \ell x y^{-1} \\ (a, c) \neq (0,0), b_{22} \in \bF_p^\times}} e_p( b_{22}  a c\lambda\ell).
  \end{align*}
  Since the number of solutions to $\ell x - \lambda^2 y = 0$ is $1 - \left(\frac{xy}{p}\right)$, and since the sum over $a$ is 0 unless $c = 0$, in which case it is $p-1$, it follows that 
  
  \begin{align*}
   \sS(x, \xi) &=  -(p-1)^4p^3 + (p-1)^3p^4 \left(1 - \left(\frac{xy}{p} \right) \right)\\ &=(p-1)^3p^3 - (p-1)^3p^4 \left(\frac{xy}{p}\right).
 \end{align*}

 \item[7.] Here
 \begin{equation}
  [ x_0, g \cdot \xi_1] + [ x_1, g\cdot \xi_0] = -b_{11}(a^2 - c^2\ell)(\ell x - \lambda^2 y) +b_{11}a_{33}^2+ b_{22} a c\lambda\ell.
 \end{equation}
  Thus
 \begin{align*}
  \sS(x, \xi) &= \sum_{\begin{pmatrix} a & c \ell & * \\ c \lambda & a \lambda & * \\ && *\end{pmatrix}\begin{pmatrix} b_{11} & *\\ & b_{22}\end{pmatrix}} e_p(b_{11}((a^2- c^2\ell )(-\ell x + \lambda^2 y) + a_{33}^2) +  b_{22} a c\lambda\ell)
  \end{align*}
  Sum in $b_{22}$ to obtain 
  \begin{align*}
   \sS(x,\xi) &= -\frac{1}{p-1}\sum_{\begin{pmatrix} a & c \ell & * \\ c \lambda & a\lambda & * \\ && *\end{pmatrix}\begin{pmatrix} b_{11} & *\\ & b_{22}\end{pmatrix}} e_p(b_{11}(( a^2- c^2\ell )(-\ell x + \lambda^2 y) + a_{33}^2) )\\
   & \qquad + \frac{p}{p-1}\sum_{\begin{pmatrix} a & c \ell & * \\ c\lambda & a\lambda & * \\ && *\end{pmatrix}\begin{pmatrix} b_{11} & *\\ & b_{22}\end{pmatrix}, a c = 0} e_p(b_{11}(( a^2-c^2 \ell )(-\ell x + \lambda^2 y) + a_{33}^2) )\\
   &= - \frac{1}{p-1}\Sigma_1 + \frac{p}{p-1}\Sigma_2.
  \end{align*}
  Here, setting apart the case $-\ell x +\lambda^2 y = 0$,  and adding and subtracting the $(a,c) = (0,0)$ term,  
  \begin{align*}
   \Sigma_1 &= (p-1)^2p^3(p+1) \sum_{\substack{\lambda^2 \equiv \ell x y^{-1}\\ b_{11}, a_{33} \in \bF_p^\times}}e_p(b_{11}a_{33}^2)\\ & \qquad + (p-1)p^3 \sum_{\substack{\lambda^2 \not \equiv \ell x y^{-1}\\ b_{11}, a_{33} \in \bF_p^\times \\ a, c \in \bF_p}} e_p(b_{11}(( a^2- c^2\ell )(-\ell x + \lambda^2 y) + a_{33}^2))\\
    &\qquad - (p-1)p^3 \sum_{\substack{\lambda^2 \not \equiv \ell x y^{-1} \\ b_{11}, a_{33} \in \bF_p^\times}}e_p(b_{11}a_{33}^2)
  \end{align*}  
   In the first and third sums, summation over $b_{11}$ gives $-1$, while in the second sum, summation over $a, c$ gives $-p$ via the Gauss sums, followed by summation over $b_{11}$ which gives $-1$ again once the term involving $a$ and $c$ has been removed.  This obtains
   \begin{align*}
    &= -(p-1)^3p^3(p+1) \left(1 - \left(\frac{xy}{p}\right)\right) + (p-1)^2 p^4 \left(p-2 +\left(\frac{xy}{p}\right)\right)\\ &\qquad + (p-1)^2p^3\left(p-2 + \left(\frac{xy}{p}\right)\right)\\
    &= -(p-1)^2p^3(p+1) + (p-1)^2 p^4 (p+1) \left(\frac{xy}{p}\right).
  \end{align*}
  To evaluate $\Sigma_2$, set apart the $c = 0$ and $a = 0$ terms to obtain
  \begin{align*}
   \Sigma_2 &= (p-1)p^3\sum_{\lambda,a,b_{11}, a_{33} \in \bF_p^\times} e_p(b_{11}(- a^2\ell x +  a^2 \lambda^2 y + a_{33}^2)) \\& \qquad + (p-1)p^3 \sum_{\lambda,c,b_{11}, a_{33} \in \bF_p^\times} e_p(b_{11}(  c^2 \ell^2 x - c^2\lambda^2 \ell y + a_{33}^2)).
   \end{align*}
   In these sums, replace $\lambda:= \lambda a$ and $\lambda := \lambda c$, then sum the squared variables to obtain Gauss sums.  Thus,
   \begin{align*}
   \Sigma_2&= (p-1)p^3\sum_{b_{11} \in \bF_p^\times}\left(\tau \left(\frac{-b_{11}\ell x }{p}\right) - 1 \right)\left(\tau \left(\frac{b_{11}y }{p}\right) -1\right)\left(\tau \left(\frac{b_{11}}{p}\right) -1\right)\\
   &\qquad + (p-1)p^3 \sum_{b_{11}\in \bF_p^\times} \left(\tau \left(\frac{-b_{11}\ell y }{p}\right) - 1 \right)\left(\tau \left(\frac{b_{11}x }{p}\right) -1\right)\left(\tau \left(\frac{b_{11}}{p}\right) -1\right)\\
   &= -2(p-1)^2p^3 + (p-1)^2p^4\left\{2\left(\frac{xy}{p}\right) + \left(\frac{x}{p}\right) - \left(\frac{-x}{p}\right) + \left(\frac{y}{p}\right) - \left(\frac{-y}{p}\right) \right\}
  \end{align*}
  Hence
  \begin{align*}
   \sS(x, \xi) &= -(p-1)^2p^3 + (p-1)^2p^4 \left(\frac{xy}{p}\right) \\& \qquad + (p-1)p^5\left\{\left(\frac{x}{p}\right)- \left(\frac{-x}{p}\right) + \left(\frac{y}{p}\right) - \left(\frac{-y}{p}\right)\right\}.
  \end{align*}

\item[8a.]  $-1 = \square$.  Here
 \begin{equation}
  [ x_0, g \cdot \xi_1] + [ x_1, g\cdot \xi_0] = -b_{11}(a^2 - c^2\ell)(\ell x - \lambda^2 y) +b_{22}( a^2\lambda +  c^2\lambda \ell).
 \end{equation}
Here $ a^2\lambda +  c^2\lambda \ell \neq 0$ so the sum in $b_{22}$ is $-1$.  Since the sum in $b_{11}$ is $-1$ if $\ell x- \lambda^2 y \neq 0$ and $p-1$ otherwise, it follows that
\begin{align*}
 \sS(x, \xi) &= \sum_{\begin{pmatrix} a & c \ell & *\\ c\lambda & a\lambda & *\\ & & * \end{pmatrix} \begin{pmatrix} b_{11} & * \\ & b_{22}\end{pmatrix}}e_p(b_{11}(a^2-c^2 \ell )(-\ell x + \lambda^2 y)  + b_{22} (a^2\lambda +  c^2 \lambda\ell))\\
 &= - (p-1)p^3 \sum_{(a,c) \neq (0,0),\lambda,b_{22} \in \bF_p^\times} e_p(b_{22}(a^2 \lambda +  c^2 \lambda\ell))\\
 & \qquad + (p-1)p^4 \sum_{\substack{\lambda^2 \equiv \ell xy^{-1}\\ (a, c) \neq (0,0), b_{22} \in \bF_p^\times}} e_p(b_{22}(a^2 \lambda +  c^2 \lambda\ell))\\
 &= (p-1)^3p^3(p+1) - (p-1)^2p^4(p+1) \left(1 - \left(\frac{xy}{p}\right)\right)\\
 &= -(p-1)^2p^3(p+1) + (p-1)^2p^4(p+1) \left(\frac{xy}{p}\right).
\end{align*}
\item[8b.]  $-1 \neq \square$, $\ell - 4k^2 = \square$. Here
\begin{equation}
 [ x_0, g \cdot \xi_1] + [ x_1, g \cdot \xi_0 ]=-b_{11}(a^2 - c^2\ell)(\ell x - \lambda^2 y)+ b_{22}( (a^2 \lambda +  c^2 \lambda\ell)k +   a c\lambda\ell).
\end{equation}
Note that as a quadratic form in $a, c$, the discriminant is $\lambda^2\ell (\ell - 4k^2) \neq \square$ so that this form is irreducible. Thus   the sum in $b_{22}$ is $-1$ as in 8a, which reduces to that case,
\begin{align*}
\sS(x, \xi)= -(p-1)^2p^3(p+1) + (p-1)^2p^4(p+1)\left(\frac{xy}{p}\right).
\end{align*}

\item[9a.]  $-1 = \square$.  Here
\begin{equation}
 [ x_0, g \cdot \xi_1 ]+ [ x_1, g\cdot \xi_0] = -b_{11}(a^2 - c^2\ell)(\ell x - \lambda^2 y)+b_{11} a_{33}^2 + b_{22}(a^2\lambda +  c^2 \lambda\ell).
\end{equation}
Since $a^2 \lambda +  c^2 \lambda\ell \neq 0$, summing in $b_{22}$ obtains $-1$, so
\begin{align*}
 \sS(x, \xi) &= -\frac{1}{p-1} \sum_{\begin{pmatrix} a & c \ell & *\\ c\lambda & a\lambda & *\\ & & * \end{pmatrix} \begin{pmatrix} b_{11} & * \\ & b_{22}\end{pmatrix}} e_p(b_{11}((a^2- c^2\ell )(-\ell x + \lambda^2 y)+a_{33}^2))\\
 &=(p-1)p^3(p+1) - (p-1)p^4(p+1)\left(\frac{xy}{p}\right),
\end{align*}
see $\Sigma_1$ of 7.
\item[9b.]  Here
\begin{equation}
 [ x_0, g \cdot \xi_1 ]+ [ x_1, g\cdot \xi_0] = -b_{11}(a^2 - c^2\ell)(\ell x - \lambda^2 y)+b_{11} a_{33}^2 + b_{22}( a c\lambda\ell +  a^2\lambda k +  c^2 \lambda\ell k).
\end{equation} This is equal to 9a., see the calculation in 8b.
\item[10.]  Here
\begin{equation}
 [ x_0, g \cdot \xi_1 ]+ [ x_1, g\cdot \xi_0] = -b_{11}(a^2 - c^2\ell)(\ell x - \lambda^2 y)+b_{11} a_{33}^2 .
\end{equation}

This is $\Sigma_1$ of 7., so
\begin{align*}
 \sS(x, \xi) &=  \sum_{\begin{pmatrix} a & c \ell & *\\ c\lambda & a\lambda & *\\ & & * \end{pmatrix} \begin{pmatrix} b_{11} & * \\ & b_{22}\end{pmatrix}} e_p(b_{11}((a^2-c^2 \ell )(-\ell x + \lambda^2 y)+a_{33}^2))\\
 &= -(p-1)^2p^3(p+1) + (p-1)^2p^4(p+1)\left(\frac{xy}{p}\right).
\end{align*}
\item[11.]  Here
\begin{equation}
 [ x_0, g \cdot \xi_1 ]+ [ x_1, g\cdot \xi_0] = -b_{11}(a^2 - c^2\ell)(\ell x - \lambda^2 y).
\end{equation} Thus, splitting on $\ell x - \lambda^2 y = 0$ or not, 
\begin{align*}
 \sS(x,\xi) &= \sum_{\begin{pmatrix} a & c \ell & *\\ c\lambda & a\lambda & *\\ & & * \end{pmatrix} \begin{pmatrix} b_{11} & * \\ & b_{22}\end{pmatrix}} e_p(b_{11}(a^2-c^2 \ell )(-\ell x + \lambda^2 y))\\
 &= -(p-1)^4 p^3(p+1) + (p-1)^3p^4(p+1)\left(1 - \left(\frac{xy}{p}\right)\right)\\
 &= (p-1)^3p^3(p+1) - (p-1)^3p^4(p+1)\left(\frac{xy}{p}\right).
\end{align*}

\end{enumerate}

\subsection{The exponential sums pair $(\sO_{1^31}, \sO_{D1^2})$}\label{O131_OD12_section}
Standard representatives are
\begin{equation}
 x_0 = \begin{pmatrix} 0&0&0\\&0&\frac{1}{2}\\ &&0\end{pmatrix}\begin{pmatrix} 0&0&\frac{1}{2}\\ &1&0\\&&0\end{pmatrix}, \qquad \xi_0 = \begin{pmatrix} 1 &0&0\\&0&0\\&&0\end{pmatrix}\begin{pmatrix}0&0&0\\&0&0\\&&0\end{pmatrix},
\end{equation}
and
\begin{equation}
 x_1 = \begin{pmatrix} \epsilon &0&0\\&0&0\\&&0\end{pmatrix}\begin{pmatrix}0&0&0\\&0&0\\&&0\end{pmatrix}, \qquad \epsilon \in \bF_p^\times/\{x^3: x \in \bF_p^\times\}.
\end{equation}
The acting set is
\begin{equation}
 G_{x,\xi}^t = \begin{pmatrix} a_{11} & a_{12} & a_{13}\\ & a & b\\ & c & d\end{pmatrix}\begin{pmatrix} b_{11} & b_{12} \\ & b_{22}\end{pmatrix}.
\end{equation}
Thus
\begin{equation}
 [ x_1, g\cdot \xi_0] =  b_{11} a_{11}^2\epsilon.
\end{equation}
$\GL_2$ is fibered as for the pairs $(\sO_{1^211}, \sO_{D1^2})$, $(\sO_{1^22}, \sO_{D1^2})$ and $(\sO_{1^21^2}, \sO_{D1^2})$.

The stabilizer has size
\begin{equation}
\left|\Stab_{G(\zed/p^2\zed)}(x)\right| = (p-1)p^3\#\{\epsilon^3=1\}. 
\end{equation}

Representatives and exponential sum pairings are given in the table below. Summing over the maximal orbits,
\begin{equation}
 \sM(x,\xi) = \sum_{\epsilon \in \bF_p^\times/\{x^3: x \in \bF_p^\times\}} \frac{p^{10}\sS(x,\xi)}{(p-1)\#\{\epsilon^3 =1 \}}.
\end{equation}

See \textbf{exponential\_sums\_O131\_OD12.nb}.

{\tiny
 \begin{equation*}
  \begin{array}{|l|l|l|l|l|}
   \hline
   \text{Orbit} & \xi_1 & [ x_0, g \cdot \xi_1 ]+ [ x_1, g \cdot \xi_0 ] & \sS(x,\xi) & \sM(x,\xi)\\
   \hline
   1. & \begin{pmatrix} 0 &0&0\\&0&0\\&&0\end{pmatrix}\begin{pmatrix} 0&0&0\\&0&0\\&&0\end{pmatrix} & b_{11}a_{11}^2 \epsilon & -(p-1)^4p^4(p+1)  & -(p-1)^3 p^{14}(p+1)\\
   \hline
   2. & \begin{pmatrix} 0 & 0&0\\ &0&0\\ &&1\end{pmatrix}\begin{pmatrix} 0&0&0\\&0&0\\&&0\end{pmatrix} &  b_{11}(a_{11}^2 \epsilon+ bd) & -(p-1)^4p^4 & -(p-1)^3 p^{14}\\
   \hline
   3. & \begin{pmatrix} 0 &0&0\\&0&\frac{1}{2}\\ &&0\end{pmatrix}\begin{pmatrix}0&0&0\\&0&0\\&&0\end{pmatrix} & b_{11}\left(a_{11}^2 \epsilon+\frac{1}{2}(ad+bc )\right)  & 2(p-1)^3p^4 & 2(p-1)^2p^{14}\\
   \hline
   4. & \begin{pmatrix} 0 & 0 & 0\\ & -\ell &0\\ &&1\end{pmatrix}\begin{pmatrix} 0 &0&0\\ &0&0\\&&0\end{pmatrix} & b_{11}(a_{11}^2 \epsilon - ac\ell+bd) &0 &0 \\
   \hline
   5. & \begin{pmatrix} 0 &0&0\\ &0&0\\&&1\end{pmatrix}\begin{pmatrix} 0&0&1\\ &0&0\\&&0\end{pmatrix} & b_{11}(a_{11}^2\epsilon+ bd)  + b_{22}a_{11}d & -(p-1)^4p^4 & -(p-1)^3p^{14}\\
   \hline
   6. & \begin{pmatrix} 0 &0&0\\ &1 &0\\&&0\end{pmatrix}\begin{pmatrix}0 &0&1\\ &0&0\\&&0\end{pmatrix} & b_{11}(a_{11}^2\epsilon+ac) + b_{22}a_{11}d  & (p-1)^3p^4 & (p-1)^2p^{14} \\
   \hline
   7. & \begin{pmatrix} 0 &0 &0\\ &0 & \frac{1}{2}\\ &&0\end{pmatrix}\begin{pmatrix}0 &0&1\\ &0&0\\&&0\end{pmatrix} & b_{11}\left(a_{11}^2\epsilon + \frac{1}{2}(ad+bc)\right) + b_{22}a_{11}d  & (p-1)^2p^4(p-2) & (p-1)p^{14}(p-2)\\
   \hline
   8. & \begin{pmatrix} 0 &0&0\\ &1&1\\&&0\end{pmatrix} \begin{pmatrix} 0&0&1\\&0&0\\&&0\end{pmatrix} & \begin{array}{l}b_{11}(a_{11}^2\epsilon+ac  + ad+ bc) \\+ b_{22}a_{11}d \end{array} &-2(p-1)^2p^4 & -2(p-1)p^{14}\\
   \hline
   9. & \begin{pmatrix} 0 &0&0\\ &-\ell &0\\ &&1\end{pmatrix}\begin{pmatrix} 0&0&1\\ &0&0\\&&0\end{pmatrix} & b_{11}(a_{11}^2\epsilon- ac\ell+bd ) + b_{22}a_{11}d  &0&0\\
   \hline
   10. & \begin{pmatrix} 0 &0&0\\ &0&0\\&&0\end{pmatrix}\begin{pmatrix}0 &0&1 \\ & 0&0\\ &&0\end{pmatrix} & b_{11}a_{11}^2 \epsilon+b_{22}a_{11}d  &(p-1)^3p^4 & (p-1)^2 p^{14}\\
%
%
   \hline   11. & \begin{pmatrix} 0&0&0\\&0&0\\&&0\end{pmatrix}\begin{pmatrix}0&0&0\\&0&0\\&&1\end{pmatrix} &\begin{array}{l}b_{11}a_{11}^2 \epsilon+ b_{12}bd \\+ b_{22}( a_{13}d+b^2 )\end{array} &(p-1)^3p^4 &(p-1)^2p^{14}\\
   \hline
   12. & \begin{pmatrix} 0&0&0\\&0&1\\&&0\end{pmatrix}\begin{pmatrix}0&0&0\\&0&0\\&&1\end{pmatrix} & \begin{array}{l}b_{11}(a_{11}^2\epsilon+ad+bc) + b_{12}bd \\+ b_{22}( a_{13}d+b^2 ) \end{array}& -(p-1)^2p^4 & -(p-1)p^{14}\\
   \hline
   13. & \begin{pmatrix} 0&0&0\\&1&0\\&&0\end{pmatrix}\begin{pmatrix}0&0&0\\&0&0\\&&1\end{pmatrix}& \begin{array}{l}b_{11}(a_{11}^2\epsilon+ac) + b_{12}bd\\ + b_{22}( a_{13}d+b^2 )\end{array} &0&0\\
   \hline
   14. & \begin{pmatrix} 0&0&0\\&0&0\\&&0\end{pmatrix}\begin{pmatrix}0&1&0\\&0&0\\&&1\end{pmatrix}& \begin{array}{l}b_{11}a_{11}^2 \epsilon+b_{12}bd \\+ b_{22}(a_{11}c + a_{13}d+b^2  ) \end{array} &-(p-1)^2p^4 & -(p-1)p^{14}\\
   \hline
   15. & \begin{pmatrix} 0&0&0\\&0&1\\&&0\end{pmatrix}\begin{pmatrix}0&1&0\\&0&0\\&&1\end{pmatrix}& \begin{array}{l}b_{11}(a_{11}^2 \epsilon+ad+ bc ) + b_{12}bd \\+ b_{22}(a_{11}c+ a_{13}d + b^2 ) \end{array} & \begin{array}{l} - (p-1)^2p^4(p+1)\\ + (p-1)p^6\\\times \#\{x \in \bF_p^\times: x^3 \equiv \epsilon\}\end{array}& p^{14}\\
   \hline
   16. &\begin{pmatrix} 0&0&0\\&1&0\\&&0\end{pmatrix}\begin{pmatrix}0&1&0\\&0&0\\&&1\end{pmatrix} & \begin{array}{l}b_{11}(a_{11}^2\epsilon+ac) + b_{12}bd\\ + b_{22}(a_{11}c + a_{13}d+b^2 )\end{array}&0&0\\
   \hline
   17. & \begin{pmatrix} 0&0&0\\&0&0\\&&0\end{pmatrix}\begin{pmatrix}0&0&0\\&0&1\\&&0\end{pmatrix}& \begin{array}{l}b_{11}a_{11}^2\epsilon+b_{12}(ad+bc)\\ + b_{22}( a_{12}d+ a_{13}c +2ab )\end{array} &0&0\\
   \hline
   18. &\begin{pmatrix} 0&0&0\\&0&0\\&&1\end{pmatrix}\begin{pmatrix}0&0&0\\&0&1\\&&0\end{pmatrix} & \begin{array}{l}b_{11}(a_{11}^2\epsilon+bd) + b_{12}(ad+bc ) \\+ b_{22}(a_{12}d+a_{13}c + 2ab  ) \end{array} &0&0\\
   \hline
   19. & \begin{pmatrix} 0&0&0\\&1&0\\&&1\end{pmatrix}\begin{pmatrix}0&0&0\\&0&1\\&&0\end{pmatrix}& \begin{array}{l}b_{11}(a_{11}^2\epsilon+ac+bd) + b_{12}(ad+bc)\\ + b_{22}(a_{12}d+a_{13}c + 2ab) \end{array}&0&0\\
   \hline
   20. &\begin{pmatrix} 0&0&0\\&\ell&0\\&&1\end{pmatrix}\begin{pmatrix}0&0&0\\&0&1\\&&0\end{pmatrix} &\begin{array}{l}b_{11}(a_{11}^2\epsilon+ ac\ell+bd) + b_{12}(ad+bc)\\ + b_{22}(a_{12}d +a_{13}c +  2ab) \end{array}&0&0 \\
   \hline
   21. & \begin{pmatrix} 0 &0&0\\&0&0\\&&0\end{pmatrix}\begin{pmatrix}0&0&0\\ &-\ell &0\\&&1\end{pmatrix} & \begin{array}{l}b_{11}a_{11}^2\epsilon+b_{12}(-ac\ell+bd)\\ + b_{22}(-a_{12}c\ell+a_{13}d-a^2\ell+b^2)\end{array}&0&0\\
   \hline
   22. & \begin{pmatrix} 0 &0&0\\&1&0\\&&0\end{pmatrix}\begin{pmatrix}0&0&0\\ &-\ell &0\\&&1\end{pmatrix} & \begin{array}{l} b_{11}(a_{11}^2\epsilon+ac) +b_{12}(-ac\ell+bd)\\ + b_{22}(-a_{12}c\ell+a_{13}d-a^2\ell+b^2)\end{array}&0&0\\
   \hline
   23a. &\begin{pmatrix} 0 &0&0\\&0&1\\&&0\end{pmatrix}\begin{pmatrix}0&0&0\\ &-\ell &0\\&&1\end{pmatrix} & \begin{array}{l}b_{11}(a_{11}^2\epsilon+ad + bc) + b_{12}(-ac\ell+bd)\\ + b_{22}(-a_{12}c\ell+a_{13}d-a^2\ell+b^2) \end{array}&0&0\\
   \hline
   23b. &\begin{pmatrix} 0 &0&0\\&1&k\\&&0\end{pmatrix}\begin{pmatrix}0&0&0\\ &-\ell &0\\&&1\end{pmatrix} & \begin{array}{l} b_{11}(a_{11}^2\epsilon+ac+ adk + bck )\\ + b_{12}(-ac\ell+bd) \\+ b_{22}(-a_{12}c\ell+a_{13}d-a^2\ell+b^2)\end{array}&0&0\\
   \hline
  \end{array}  
 \end{equation*}
}
 
The exponential sums are as follows.
\begin{enumerate}
 \item [1.] Here
 \begin{equation}
  [ x_0, g\cdot \xi_1] + [ x_1, g \cdot \xi_0] = b_{11}a_{11}^2 \epsilon.
 \end{equation}
Thus
 \begin{equation}
  \sum_{\begin{pmatrix} * & * & *\\ & * & *\\  & * & *\end{pmatrix}\begin{pmatrix} * & *\\ & *\end{pmatrix}} e_p( b_{11}a_{11}^2\epsilon) = -\frac{|G_{x,\xi}|}{p-1} = - (p-1)^4 p^4 (p+1).
 \end{equation}
 \item [2.] Here
 \begin{equation}
  [ x_0, g\cdot \xi_1] + [ x_1, g \cdot \xi_0] = b_{11}(a_{11}^2 \epsilon+ bd).
 \end{equation}
\begin{align*}
 \Sigma_1 &= \sum_{\begin{pmatrix} * & * & * \\ & a & b\\ & c & d\end{pmatrix} \begin{pmatrix} b_{11} & * \\ & *\end{pmatrix}} e_p(b_{11}(a_{11}^2 \epsilon + bd)) = - (p-1)^2p^6\\
 \Sigma_2 &= \sum_{\begin{pmatrix} * & * & * \\ & b \lambda  & b\\ & d \lambda  & d\end{pmatrix} \begin{pmatrix} b_{11} & *\\& *\end{pmatrix}}e_p(b_{11}(a_{11}^2 \epsilon + bd)) = -(p-1)^3p^4\\
 \Sigma_3 &= \sum_{\begin{pmatrix} * & * & * \\ & a & 0\\ & c &0\end{pmatrix} \begin{pmatrix} b_{11} & * \\ & * \end{pmatrix}} e_p(b_{11} a_{11}^2 \epsilon) = -(p-1)^2 p^5\\
 \Sigma_4 &= \sum_{\begin{pmatrix} * & * & * \\ & 0 & b\\ & 0 &d\end{pmatrix} \begin{pmatrix} b_{11} & * \\ & * \end{pmatrix}} e_p(b_{11}(a_{11}^2 \epsilon + bd)) = -(p-1)^2 p^4\\
 \Sigma_5 &= p \sum_{\begin{pmatrix} * & * & *\\ &0&0\\&0&0\end{pmatrix}\begin{pmatrix} b_{11} & *\\ & *\end{pmatrix}} e_p(b_{11} a_{11}^2 \epsilon) = - (p-1)^2p^4.
\end{align*}
Thus
\begin{equation}
 \sS(x,\xi) = \Sigma_1 - \Sigma_2 -\Sigma_3 - \Sigma_4 +\Sigma_5 = -(p-1)^4p^4.
\end{equation}
\item [3.]  Here
 \begin{equation}
  [ x_0, g\cdot \xi_1] + [ x_1, g \cdot \xi_0] = b_{11}\left(a_{11}^2 \epsilon+\frac{1}{2}(ad+bc )\right).
 \end{equation}
\begin{align*}
 \Sigma_1 &= \sum_{\begin{pmatrix} * & * & * \\ & a & b\\ & c & d\end{pmatrix} \begin{pmatrix} b_{11} & * \\ & *\end{pmatrix}} e_p\left(b_{11}\left(a_{11}^2 \epsilon+\frac{1}{2}(ad+bc )\right)\right)\\
 &= -(p-1)^2p^5\\
 \Sigma_2 &= \sum_{\begin{pmatrix} * & * & *\\ & b \lambda  & b\\ & d \lambda  & d\end{pmatrix} \begin{pmatrix} b_{11} & * \\ & *\end{pmatrix}} e_p(b_{11}(a_{11}^2\epsilon  + bd \lambda) ) \\&= -(p-1)^3p^4\\
 \Sigma_3 &= \sum_{\begin{pmatrix} * & * & *\\ & a& 0\\ &c &0\end{pmatrix}\begin{pmatrix} b_{11} & *\\& *\end{pmatrix}} e_p(b_{11} a_{11}^2 \epsilon) = - (p-1)^2 p^5\\
 \Sigma_4 &= \sum_{\begin{pmatrix} * & * & *\\ & 0 &b\\ &0&d \end{pmatrix}\begin{pmatrix} b_{11} & *\\& *\end{pmatrix}} e_p(b_{11} a_{11}^2 \epsilon) = - (p-1)^2 p^5\\
 \Sigma_5 &= p \sum_{\begin{pmatrix} * & * & * \\ & 0 &0\\&0&0\end{pmatrix}\begin{pmatrix} b_{11} & *\\ & *\end{pmatrix}} e_p(b_{11} a_{11}^2 \epsilon) = -(p-1)^2p^4.
\end{align*}
Thus
\begin{equation}
 \sS(x,\xi) = 2 (p-1)^3 p^4.
\end{equation}
\item [4.]   Here
 \begin{equation}
  [ x_0, g\cdot \xi_1] + [ x_1, g \cdot \xi_0] =  b_{11}(a_{11}^2 \epsilon - ac\ell+bd) .
 \end{equation}
\begin{align*}
 \Sigma_1 &= \sum_{\begin{pmatrix} * & * & *\\ & a & b\\ & c & d\end{pmatrix}\begin{pmatrix} b_{11} & *\\ & *\end{pmatrix}} e_p(b_{11}( a_{11}^2 \epsilon -ac \ell  + bd))= -(p-1)^2 p^5\\
 \Sigma_2 &= \sum_{\begin{pmatrix} * & * & *\\ &b \lambda  & b\\ & d \lambda  & d\end{pmatrix} \begin{pmatrix} b_{11} & *\\ & *\end{pmatrix}}e_p(b_{11} (a_{11}^2 \epsilon + bd(-\lambda^2 \ell + 1))) = -(p-1)^3p^4\\
 \Sigma_3 &= \sum_{\begin{pmatrix} * & * & *\\ & a & 0\\ & c &0\end{pmatrix}\begin{pmatrix} b_{11} & *\\ & *\end{pmatrix}} e_p(b_{11}( a_{11}^2 \epsilon  -ac\ell)) = -(p-1)^2p^4\\
 \Sigma_4 &= \sum_{\begin{pmatrix} * & * & *\\ & 0 & b\\ & 0 &d\end{pmatrix}\begin{pmatrix} b_{11} & *\\ & *\end{pmatrix}} e_p(b_{11} (a_{11}^2 \epsilon +  bd)) = -(p-1)^2p^4\\
 \Sigma_5 &= p\sum_{\begin{pmatrix} * & * & *\\ & 0& 0 \\ &0 &0\end{pmatrix} \begin{pmatrix} b_{11} & * \\ & * \end{pmatrix}} e_p(b_{11} a_{11}^2 \epsilon ) = -(p-1)^2 p^4.
\end{align*}
Thus
\begin{equation*}
 \sS(x, \xi) =  0.
\end{equation*}
\item [5.]  Here
 \begin{equation}
  [ x_0, g\cdot \xi_1] + [ x_1, g \cdot \xi_0] = b_{11}(a_{11}^2\epsilon+ bd)  + b_{22}a_{11}d.
 \end{equation}
In $\Sigma_1, \Sigma_2$ and $\Sigma_4$, summation in $b$ vanishes unless $d=0$.  Thus,
\begin{align*}
 \Sigma_1 &= \sum_{\begin{pmatrix} * & * & *\\ & a & b\\ & c & d\end{pmatrix}\begin{pmatrix} b_{11} & *\\ & *\end{pmatrix}} e_p(b_{11}(a_{11}^2\epsilon+ bd)  + b_{22}a_{11}d) = -(p-1)^2p^6\\
 \Sigma_2 &= \sum_{\begin{pmatrix} * & * & *\\ &b \lambda  & b\\ & d \lambda  & d\end{pmatrix} \begin{pmatrix}b_{11} & *\\ & b_{22}\end{pmatrix}}e_p(b_{11}(a_{11}^2\epsilon+ bd)  + b_{22}a_{11}d) = -(p-1)^3p^4\\
 \Sigma_3 &= \sum_{\begin{pmatrix} * & * & *\\ & a &0\\ & c &0\end{pmatrix}\begin{pmatrix} b_{11} & * \\ & *\end{pmatrix}} e_p(b_{11} a_{11}^2 \epsilon) = -(p-1)^2 p^5\\
 \Sigma_4 &= \sum_{\begin{pmatrix} * & * & *\\ & 0 & b\\ &0 & d\end{pmatrix}\begin{pmatrix} b_{11} & * \\ & b_{22}\end{pmatrix}} e_p(b_{11}( a_{11}^2 \epsilon +  bd) + b_{22} a_{11}d) = -(p-1)^2p^4\\
 \Sigma_5 &= p\sum_{\begin{pmatrix} * & * & *\\ &0&0\\&0&0\end{pmatrix}\begin{pmatrix} * & *\\ & *\end{pmatrix}} e_p(b_{11} a_{11}^2 \epsilon) = - (p-1)^2p^4.
\end{align*}
Thus
\begin{equation}
 \sS(x, \xi) = -(p-1)^4p^4.
\end{equation}
\item[6.] Here
 \begin{equation}
  [ x_0, g\cdot \xi_1] + [ x_1, g \cdot \xi_0] = b_{11}(a_{11}^2\epsilon+ac) + b_{22}a_{11}d .
 \end{equation}
\begin{align*}
 \Sigma_1 &= \sum_{\begin{pmatrix} * & * & *\\ & a&b\\ &c&d\end{pmatrix}\begin{pmatrix}b_{11} & * \\ & b_{22}\end{pmatrix}}e_p(b_{11}(a_{11}^2\epsilon+ac) + b_{22}a_{11}d ) = 0\\
 \Sigma_2 &= \sum_{\begin{pmatrix} * & * & *\\ & b \lambda  & b\\ & d \lambda  & d\end{pmatrix}\begin{pmatrix}b_{11} & * \\ & b_{22}\end{pmatrix}} e_p( b_{11}(a_{11}^2\epsilon+bd\lambda^2) + b_{22}a_{11}d ) = -(p-1)^3 p^4\\
 \Sigma_3 &=\sum_{\begin{pmatrix} * & * & *\\ & a &0\\ & c &0\end{pmatrix}\begin{pmatrix} b_{11} & * \\ & *\end{pmatrix}} e_p(b_{11} (a_{11}^2 \epsilon + ac)) = -(p-1)^2p^4\\
 \Sigma_4 &=\sum_{\begin{pmatrix} * & * & *\\ & 0 & b\\ &0 & d\end{pmatrix}\begin{pmatrix} b_{11} & * \\ & b_{22}\end{pmatrix}}e_p( b_{11}a_{11}^2\epsilon + b_{22}a_{11}d) = 0\\
 \Sigma_5 &= p\sum_{\begin{pmatrix} * & * & *\\ &0&0\\&0&0\end{pmatrix}\begin{pmatrix} * & *\\ & *\end{pmatrix}} e_p(b_{11} a_{11}^2 \epsilon) = - (p-1)^2p^4.
\end{align*}
Thus
\begin{equation}
 \sS(x, \xi) = (p-1)^3p^4.
\end{equation}
\item[7.]  Here
 \begin{equation}
  [ x_0, g\cdot \xi_1] + [ x_1, g \cdot \xi_0] = b_{11}\left(a_{11}^2\epsilon + \frac{1}{2}(ad+bc)\right) + b_{22}a_{11}d.
 \end{equation}
Here
\begin{align*}
 \Sigma_1 &= \sum_{\begin{pmatrix} * & * & *\\ & a & b\\ & c & d\end{pmatrix}\begin{pmatrix}b_{11} & *\\ & b_{22}\end{pmatrix}} e_p\left( b_{11}\left(a_{11}^2\epsilon + \frac{1}{2}(ad+bc)\right) + b_{22}a_{11}d\right) = -(p-1)^2p^5\\
 \Sigma_2 &= \sum_{\begin{pmatrix} * & * & *\\ & b \lambda  & b\\ & d \lambda  & d\end{pmatrix}\begin{pmatrix}b_{11} & * \\ & b_{22}\end{pmatrix}} e_p( b_{11}\left(a_{11}^2\epsilon + bd\lambda\right) + b_{22}a_{11}d) = -(p-1)^3p^4\\
 \Sigma_3 &=\sum_{\begin{pmatrix} * & * & *\\ & a &0\\ & c &0\end{pmatrix}\begin{pmatrix} b_{11} & * \\ & *\end{pmatrix}}e_p(b_{11} a_{11}^2 \epsilon) = -(p-1)^2p^5\\
 \Sigma_4 &=\sum_{\begin{pmatrix} * & * & *\\ & 0 & b\\ &0 & d\end{pmatrix}\begin{pmatrix} b_{11} & * \\ & b_{22}\end{pmatrix}} e_p(b_{11} a_{11}^2 \epsilon + b_{22}a_{11}d) = 0\\
 \Sigma_5 &= p\sum_{\begin{pmatrix} * & * & *\\ &0&0\\&0&0\end{pmatrix}\begin{pmatrix} * & *\\ & *\end{pmatrix}} e_p(b_{11} a_{11}^2 \epsilon) = - (p-1)^2p^4.
\end{align*}
Thus
\begin{equation}
 \sS(x,\xi) = (p-1)^2p^4(p-2).
\end{equation}

\item[8.] Here
 \begin{equation}
  [ x_0, g\cdot \xi_1] + [ x_1, g \cdot \xi_0] =b_{11}(a_{11}^2 \epsilon +ac+ad + bc)+  b_{22}a_{11}d.
 \end{equation}
Thus (in $\Sigma_2$, summation in $d$ vanishes if $\lambda = -2$)
\begin{align*}
 \Sigma_1 &= \sum_{\begin{pmatrix} a_{11} & * & *\\ & a&b\\&c&d\end{pmatrix}\begin{pmatrix} b_{11} & *\\ & b_{22}\end{pmatrix}} e_p(b_{11}(a_{11}^2 \epsilon + ac+ ad + bc ) + b_{22}a_{11}d)\\
 &= -(p-1)^2p^5\\
 \Sigma_2 &= \sum_{\begin{pmatrix} * & * & *\\ & b \lambda  & b\\ & d \lambda  & d\end{pmatrix}\begin{pmatrix}b_{11} & * \\ & b_{22}\end{pmatrix}} e_p(b_{11}(a_{11}^2 \epsilon + bd(\lambda^2 + 2\lambda)) + b_{22}a_{11}d )\\
 &= -(p-2)p^4(p-1)^2\\
 \Sigma_3 &= \sum_{\begin{pmatrix} * & * & *\\ & a &0\\ & c &0\end{pmatrix}\begin{pmatrix} b_{11} & * \\ & *\end{pmatrix}} e_p(b_{11}(a_{11}^2 \epsilon + ac)) = -(p-1)^2p^4\\
 \Sigma_4 &= \sum_{\begin{pmatrix} * & * & *\\ & 0 & b\\ &0 & d\end{pmatrix}\begin{pmatrix} b_{11} & * \\ & b_{22}\end{pmatrix}}e_p(b_{11}a_{11}^2 \epsilon + b_{22}a_{11}d) = 0\\
 \Sigma_5 &= p\sum_{\begin{pmatrix} * & * & *\\ &0&0\\&0&0\end{pmatrix}\begin{pmatrix} * & *\\ & *\end{pmatrix}} e_p(b_{11} a_{11}^2 \epsilon) = - (p-1)^2p^4.
\end{align*}
Thus
\begin{align*}
 \sS(x,\xi) = -2(p-1)^2p^4.
\end{align*}
\item[9.] Here
 \begin{equation}
  [ x_0, g\cdot \xi_1] + [ x_1, g \cdot \xi_0] =b_{11}(a_{11}^2\epsilon- ac\ell+bd ) + b_{22}a_{11}d.
 \end{equation}
Thus
\begin{align*}
 \Sigma_1 &= \sum_{\begin{pmatrix} a_{11} & * & *\\ & a&b\\&c&d\end{pmatrix}\begin{pmatrix} b_{11} & *\\ & b_{22}\end{pmatrix}}e_p(b_{11}(a_{11}^2\epsilon- ac\ell+bd ) + b_{22}a_{11}d) = -(p-1)^2p^5\\
 \Sigma_2 &= \sum_{\begin{pmatrix} * & * & *\\ & b \lambda  & b\\ & d \lambda  & d\end{pmatrix}\begin{pmatrix}b_{11} & * \\ & b_{22}\end{pmatrix}}e_p(b_{11}(a_{11}^2 \epsilon + bd(-\lambda^2 \ell + 1)) + b_{22}a_{11}d) = -(p-1)^3p^4\\
 \Sigma_3 &= \sum_{\begin{pmatrix} * & * & *\\ & a &0\\ & c &0\end{pmatrix}\begin{pmatrix} b_{11} & * \\ & *\end{pmatrix}}e_p(b_{11}(a_{11}^2 \epsilon - ac\ell))= -(p-1)^2p^4\\
 \Sigma_4 &= \sum_{\begin{pmatrix} * & * & *\\ & 0 & b\\ &0 & d\end{pmatrix}\begin{pmatrix} b_{11} & * \\ & b_{22}\end{pmatrix}} e_p(b_{11}(a_{11}^2 \epsilon + bd)+ b_{22}a_{11}d) = -(p-1)^2p^4\\
 \Sigma_5 &= p\sum_{\begin{pmatrix} * & * & *\\ &0&0\\&0&0\end{pmatrix}\begin{pmatrix} * & *\\ & *\end{pmatrix}} e_p(b_{11} a_{11}^2 \epsilon) = - (p-1)^2p^4.
\end{align*}
Thus $\sS(x,\xi) = 0$.
\item[10.] Here
 \begin{equation}
  [ x_0, g\cdot \xi_1] + [ x_1, g \cdot \xi_0] = b_{11}a_{11}^2 \epsilon + b_{22}a_{11}d.
 \end{equation}
Thus 
\begin{align*}
 \Sigma_1 &= \sum_{\begin{pmatrix} a_{11} & * & *\\ & a&b\\&c&d\end{pmatrix}\begin{pmatrix} b_{11} & *\\ & b_{22}\end{pmatrix}} e_p(b_{11}a_{11}^2 \epsilon + b_{22}a_{11}d) = 0\\
 \Sigma_2 &= \sum_{\begin{pmatrix} * & * & *\\ & b \lambda  & b\\ & d \lambda  & d\end{pmatrix}\begin{pmatrix}b_{11} & * \\ & b_{22}\end{pmatrix}} e_p(b_{11}a_{11}^2 \epsilon + b_{22}a_{11}d) = 0\\
 \Sigma_3 &= \sum_{\begin{pmatrix} * & * & *\\ & a &0\\ & c &0\end{pmatrix}\begin{pmatrix} b_{11} & * \\ & *\end{pmatrix}}e_p(b_{11}a_{11}^2 \epsilon) = -(p-1)^2 p^5\\
 \Sigma_4 &= \sum_{\begin{pmatrix} * & * & *\\ & 0 & b\\ &0 & d\end{pmatrix}\begin{pmatrix} b_{11} & * \\ & b_{22}\end{pmatrix}}e_p(b_{11}a_{11}^2 \epsilon + b_{22}a_{11}d) = 0\\
 \Sigma_5 &= p\sum_{\begin{pmatrix} * & * & *\\ &0&0\\&0&0\end{pmatrix}\begin{pmatrix} * & *\\ & *\end{pmatrix}} e_p(b_{11} a_{11}^2 \epsilon) = - (p-1)^2p^4.
\end{align*}
Thus
\begin{equation}
 \sS(x, \xi) = (p-1)^3p^4.
\end{equation}
\item[11.] Here
\begin{equation}
[ x_0, g\cdot \xi_1 ] + [ x_1, g \cdot \xi_0]=b_{11}a_{11}^2 \epsilon+ b_{12}bd + b_{22}( a_{13}d+b^2 ) . 
\end{equation}
Summing in $a_{13}$ forces $d = 0$.  Thus, summing in $b_{11}$ and $b_{22}$,
\begin{align*}
 \sS(x, \xi) &= \sum_{\begin{pmatrix} * & *& * \\ & a & b\\ & c & \end{pmatrix}\begin{pmatrix} * & *\\ & *\end{pmatrix}} e_p(b_{11} a_{11}^2 \epsilon + b_{22}b^2)\\
 &= (p-1)^3p^4.
\end{align*}
\item[12.] Here
\begin{equation}
 [ x_0, g\cdot \xi_1 ] + [ x_1, g \cdot \xi_0]= b_{11}(a_{11}^2\epsilon+ad+bc) + b_{12}bd + b_{22}( a_{13}d+b^2 ) . 
\end{equation}
Summing in $a_{13}$ forces $d = 0$.  Thus, summing in $b_{22}, b,$ then $b_{11}$,
\begin{align*}
 \sS(x,\xi) &= \sum_{\begin{pmatrix} * & * & *\\ & a & b\\ &c &\end{pmatrix}\begin{pmatrix}b_{11} & b_{12}\\ & b_{22}\end{pmatrix}} e_p(b_{11}( a_{11}^2 \epsilon+bc) + b_{22}b^2 )\\
 &= - (p-1)^2p^4.
\end{align*}
\item[13.] Here
\begin{equation}
 [ x_0, g \cdot \xi_1 ] + [ x_1, g \cdot \xi_0] = b_{11}(a_{11}^2\epsilon+ac) + b_{12}bd + b_{22}( a_{13}d+b^2 )  .
\end{equation}
Sum in $a_{13}$ to force $d = 0$.  Now sum in $a$ to force $c = 0$.  Thus the sum vanishes.
\item[14.] Here
\begin{equation}
 [ x_0, g \cdot \xi_1 ]+ [ x_1, g \cdot \xi_0] = b_{11}a_{11}^2 \epsilon+b_{12}bd + b_{22}(a_{11}c + a_{13}d+b^2  ) .
\end{equation}
Sum in $a_{13}$ to force $d = 0$.  Hence, summing in $b_{11}, c$ and then $b_{22}$,
\begin{equation}
 \sS(x, \xi) = \sum_{\begin{pmatrix} * & * & *\\ &a & b\\ &c & \end{pmatrix}\begin{pmatrix}* & *\\ & *\end{pmatrix}} e_p(b_{11} a_{11}^2 \epsilon + b_{22}(a_{11}c+b^2)) = -(p-1)^2p^4.
\end{equation}
\item[15.] Here
\begin{equation}
 [ x_0, g \cdot \xi_1 ]+ [ x_1, g \cdot \xi_0] =b_{11}(a_{11}^2 \epsilon+ad+ bc ) + b_{12}bd + b_{22}(a_{11}c+ a_{13}d + b^2 ) .
\end{equation}
Sum in $a_{13}$ to force $d = 0$.  Thus
\begin{align*}
 \sS(x,\xi) &= \sum_{\begin{pmatrix} * & * & *\\ & a & b\\ &c &\end{pmatrix}\begin{pmatrix} b_{11} & * \\ & * \end{pmatrix}} e_p(b_{11} (a_{11}^2 \epsilon+bc) + b_{22} (a_{11}c+b^2)).
 \end{align*}
 Sum in $b_{22}$; when $b^2\neq -a_{11}c $ the sum is $-1$, otherwise $p-1$.  Hence,
 \begin{align*}
 \sS(x,\xi)&= -\frac{1}{p-1} \sum_{\begin{pmatrix} * & * & *\\ & a & b\\ &c &\end{pmatrix}\begin{pmatrix} b_{11} & * \\ & * \end{pmatrix}} e_p(b_{11} a_{11}^2 \epsilon +  b_{11}bc )\\ 
 & \qquad + \frac{p}{p-1} \sum_{\begin{pmatrix} * & * & *\\ & a & b\\ &c &\end{pmatrix}\begin{pmatrix} b_{11} & * \\ & * \end{pmatrix}, b^2 = -a_{11}c}e_p(b_{11} a_{11}^2 \epsilon +  b_{11}bc )
 \end{align*}
 In the first sum, sum first in $b$ then in $b_{11}$ obtaining $-1$ in both sums.  In the second sum, replace $c = -\frac{b^2}{a_{11}}$, then replace $b_{11}:= \frac{b_{11}}{a_{11}}$ and sum in $b_{11}$ to obtain
 \begin{align*}
 \sS(x, \xi)&= -(p-1)^2p^4 + \frac{p}{p-1} \sum_{\begin{pmatrix} * & * & *\\ & a & b\\ &c &\end{pmatrix}\begin{pmatrix} b_{11} & * \\ & * \end{pmatrix}, b^2 =- a_{11}c}e_p(b_{11}(\epsilon a_{11}^3 - b^3))\\
 &= -(p-1)^2p^4 - (p-1)^2p^5 + \#\{(a_{11}, b) \in (\bF_p^\times)^2: \epsilon a_{11}^3 \equiv  b^3 \} p^6\\
 &= -(p-1)^2p^4(p+1) + (p-1)p^6 \#\{x \in \bF_p^\times: x^3 \equiv \epsilon\}.
\end{align*}
\item[16.] Here
\begin{equation}
 [ x_0, g \cdot \xi_1 ] + [ x_1, g \cdot \xi_0]= b_{11}(a_{11}^2\epsilon+ac) + b_{12}bd + b_{22}(a_{11}c + a_{13}d+b^2 ).
\end{equation}
Sum in $a_{13}$ to force $d = 0$.  Sum in $a$ to force $c = 0$.  Thus the sum vanishes.
\item[17.-23.] 
Summing in $a_{12}, a_{13}$ forces $c = d= 0$ so these sums vanish.
\end{enumerate}

\subsection{The exponential sums pair $(\sO_{1^31}, \sO_{Cs})$}\label{O131_OCs_section}
Standard representatives are
\begin{equation}
 x_0 = \begin{pmatrix} 0&0&0\\&0&\frac{1}{2}\\ &&0\end{pmatrix}\begin{pmatrix}0&0&\frac{1}{2}\\ &1&0\\&&0\end{pmatrix}, \qquad \xi_0 = \begin{pmatrix}0 & -1&0\\&0&0\\&&0\end{pmatrix}\begin{pmatrix}1&0&0\\&0&0\\&&0\end{pmatrix}
\end{equation}
and
\begin{equation}
 x_1 = \begin{pmatrix} \epsilon &0&0\\&0&0\\&&0\end{pmatrix}\begin{pmatrix}0&0&0\\&0&0\\&&0\end{pmatrix}, \epsilon \in \bF_p^\times/\{x^3: x \in \bF_p^\times\}.
\end{equation}
The acting set is
\begin{equation}
 G_{x,\xi}^t = \begin{pmatrix} a & a_{12} & a_{13}\\ &b & a_{23}\\ & & a_{33} \end{pmatrix}\begin{pmatrix} c  &b_{12}\\ & \frac{bc}{a}\end{pmatrix}, 
\end{equation}
and
\begin{equation}
  [ x_1, g \cdot \xi_0] = (-2ac a_{12} + b_{12}a^2)\epsilon.
\end{equation}

The stabilizer has size
\begin{equation}
\left|\Stab_{G(\zed/p^2\zed)}(x)\right| = (p-1)p^3\#\{\epsilon^3=1\}. 
\end{equation}

Representatives and exponential sum pairings are given in the table below. Summing over the maximal orbits,
\begin{equation}
 \sM(x,\xi) = \sum_{\epsilon \in \bF_p^\times/\{x^3: x \in \bF_p^\times\}} \frac{p^{10}\sS(x,\xi)}{(p-1)\#\{\epsilon^3 =1 \}}.
\end{equation}

See \textbf{exponential\_sums\_O131\_OCs.nb}.

 {\tiny 
 \begin{equation*}
  \begin{array}{|l|l|l|l|l|}
   \hline
   \text{Orbit} & \xi_1 & [ x_0, g \cdot \xi_1 ]+ [ x_1, g \cdot \xi_0 ] & \sS(x,\xi)& \sM(x,\xi)\\
   \hline
   1. & \begin{pmatrix} 0 &0&0\\&0&0\\&&0\end{pmatrix}\begin{pmatrix} 0&0&0\\&0&0\\&&1\end{pmatrix} & \begin{array}{l}b_{12}a_{23}a_{33} + \frac{bc}{a}(a_{23}^2 + a_{13}a_{33}) \\+(-2aca_{12} + b_{12}a^2 )\epsilon \end{array} & 0&0 \\
   \hline
   2. & \begin{pmatrix} 0 &0&0\\&0&0\\&&0\end{pmatrix}\begin{pmatrix} 0&0&0\\&1&0\\&&0\end{pmatrix} & \frac{b^3c}{a} +(-2aca_{12} + b_{12}a^2 )\epsilon &0 &0\\
   \hline
   3. & \begin{pmatrix} 0 &0&0\\&0&0\\&&0\end{pmatrix}\begin{pmatrix} 0&0&1\\&1&0\\&&0\end{pmatrix} & bc\left(\frac{b^2}{a} + a_{33} \right) +(-2aca_{12} + b_{12}a^2 )\epsilon &0 &0\\
   \hline
   4. & \begin{pmatrix} 0 &0&0\\&0&0\\&&1\end{pmatrix}\begin{pmatrix} 0&0&0\\&1&0\\&&0\end{pmatrix} &c\left(\frac{b^3}{a} + a_{23}a_{33} \right) +(-2aca_{12} + b_{12}a^2 )\epsilon &0&0 \\
   \hline
   5. & \begin{pmatrix} 0 &0&0\\&0&0\\&&0\end{pmatrix}\begin{pmatrix} 0&0&0\\&0&\frac{1}{2}\\&&1\end{pmatrix} & \begin{array}{l}\frac{bc}{a}\left(ba_{23} + \frac{a_{12}a_{33}}{2} + a_{13}a_{33} + a_{23}^2 \right)\\ + b_{12}\left(a_{23}a_{33} +\frac{a_{33}b}{2}\right)\\ +(-2aca_{12} + b_{12}a^2 )\epsilon \end{array} &0&0\\
   \hline
   6. & \begin{pmatrix} 0 &0&0\\&0&0\\&&0\end{pmatrix}\begin{pmatrix} 0&0&0\\&0&1\\&&0\end{pmatrix} &\begin{array}{l}\frac{bc}{a}(2ba_{23} + a_{12}a_{33}) + bb_{12}a_{33}\\+(-2aca_{12} + b_{12}a^2 )\epsilon\end{array} & 0&0 \\
   \hline
   7. & \begin{pmatrix} 0 &0&0\\&0&0\\&&1\end{pmatrix}\begin{pmatrix} 0&0&0\\&0&1\\&&0\end{pmatrix} & \begin{array}{l}\frac{bc}{a}(2ba_{23} + a_{12}a_{33}) + bb_{12}a_{33} + ca_{23}a_{33}\\+(-2aca_{12} + b_{12}a^2 )\epsilon\end{array} & 0 &0\\
   \hline
   8. & \begin{pmatrix} 0 &0&0\\&0&0\\&&0\end{pmatrix}\begin{pmatrix} 0&0&0\\&-\ell&0\\&&1\end{pmatrix} & \begin{array}{l}\frac{bc}{a}(-b^2\ell + a_{13}a_{33} + a_{23}^2) + b_{12}a_{23}a_{33}\\  +(-2aca_{12} + b_{12}a^2 )\epsilon\end{array} & 0 &0\\
   \hline
   9. & \begin{pmatrix} 0 &0&0\\&0&0\\&&0\end{pmatrix}\begin{pmatrix} 0&0&1\\&0&0\\&&0\end{pmatrix} & bca_{33} +(-2aca_{12} + b_{12}a^2 )\epsilon &0&0\\
   \hline
   10. & \begin{pmatrix} 0 &0&0\\&0&0\\&&1\end{pmatrix}\begin{pmatrix} 0&0&0\\&0&0\\&&0\end{pmatrix} & c a_{23}a_{33} +(-2aca_{12} + b_{12}a^2 )\epsilon &0&0\\
   \hline
   11. & \begin{pmatrix} 0 &0&0\\&0&0\\&&0\end{pmatrix}\begin{pmatrix} 0&0&0\\&0&0\\&&0\end{pmatrix} &  (-2aca_{12} + b_{12}a^2 )\epsilon &0&0\\
   \hline
   \end{array}
 \end{equation*}
}

Each of these sums vanishes on summing in the indicated variables.
\begin{enumerate}
 \item [1.-4.] $a_{12}$
\item [5.]  $a_{13}$
\item [6.]  $a_{23}$
\item[7.] $a_{12}, b_{12}$.  The dependence on these variables is $a_{12}\left(\frac{bc}{a} a_{33} - 2ac\epsilon\right)$ and $b_{12}\left(ba_{33} + a^2 \epsilon \right)$.  The corresponding linear forms are independent since $p>3$ is assumed.
\item[8.]  $a_{13}$
\item[9.-11.] $a_{12}$
\end{enumerate}

\subsection{The exponential sums pair $(\sO_{1^4}, \sO_{D1^2})$}\label{O14_OD12_section}
Standard representatives
\begin{equation}
 x_0 = \begin{pmatrix} 0&0&0\\&0&0\\&&1\end{pmatrix}\begin{pmatrix}0&0&\frac{1}{2}\\ &1&0\\&&0\end{pmatrix}, \qquad \xi_0 = \begin{pmatrix}1 &0&0\\&0&0\\&&0\end{pmatrix}\begin{pmatrix}0&0&0\\&0&0\\&&0\end{pmatrix},
\end{equation}
and
\begin{equation}
 x_1 = \begin{pmatrix} \epsilon &0&0\\&0&0\\&&0\end{pmatrix}\begin{pmatrix}0&0&0\\&0&0\\&&0\end{pmatrix}, \epsilon \in \bF_p^\times/\{x^4: x \in \bF_p^\times\}.
\end{equation}
The acting set is
\begin{equation}
 G_{x,\xi}^t = \begin{pmatrix} a_{11} & a_{12} & a_{13}\\ & a& b\\ & c & d\end{pmatrix}\begin{pmatrix} b_{11} & b_{12}\\ & b_{22}\end{pmatrix},
\end{equation}
which is fibered as for the other exponential sums involving $\sO_{D1^2}$, 
and
\begin{equation}
 [ x_1, g\cdot \xi_0 ] = b_{11} a_{11}^2 \epsilon.
\end{equation}

The stabilizer has size
\begin{equation}
 \left|\Stab_{G(\zed/p^2\zed)}(x)\right| = (p-1)p^4 \#\{\epsilon^4 = 1\}.
\end{equation}

Representatives and exponential sum pairings are given in the table below. Summing over the maximal orbits,
\begin{equation}
 \sM(x,\xi) = \sum_{\epsilon \in \bF_p^\times/\{x^4: x \in \bF_p^\times\}} \frac{p^9 \sS(x,\xi)}{(p-1)\#\{\epsilon^4 = 1\}}.
\end{equation}

See \textbf{exponential\_sums\_O14\_OD12.nb}.

{\tiny
 \begin{equation*}
  \begin{array}{|l|l|l|l|l|}
   \hline
   \text{Orbit} & \xi_1 & [ x_0, g \cdot \xi_1 ]+ [ x_1, g \cdot \xi_0 ]& \sS(x,\xi) & \sM(x,\xi)\\
   \hline
   1. & \begin{pmatrix} 0 &0&0\\&0&0\\&&0\end{pmatrix}\begin{pmatrix} 0&0&0\\&0&0\\&&0\end{pmatrix} & b_{11}a_{11}^2\epsilon & - (p-1)^4p^4(p+1) & -(p-1)^3p^{13}(p+1)\\
   \hline
   2. & \begin{pmatrix} 0 & 0&0\\ &0&0\\ &&1\end{pmatrix}\begin{pmatrix} 0&0&0\\&0&0\\&&0\end{pmatrix} &  b_{11}( a_{11}^2\epsilon+d^2) &\begin{array}{l} (p-1)^3p^4 \\ + \left(\frac{-\epsilon}{p}\right)(p-1)^3p^6\end{array} & (p-1)^2p^{13}\\
   \hline
   3. & \begin{pmatrix} 0 &0&0\\&0&\frac{1}{2}\\ &&0\end{pmatrix}\begin{pmatrix}0&0&0\\&0&0\\&&0\end{pmatrix} & b_{11}( a_{11}^2\epsilon+cd) & - (p-1)^4p^4 & -(p-1)^3 p^{13}\\
   \hline
   4. & \begin{pmatrix} 0 & 0 & 0\\ & -\ell &0\\ &&1\end{pmatrix}\begin{pmatrix} 0 &0&0\\ &0&0\\&&0\end{pmatrix} & b_{11}( a_{11}^2\epsilon-c^2\ell + d^2)& (p-1)^3p^4(p+1) & (p-1)^2 p^{13}(p+1)\\
   \hline
   5. & \begin{pmatrix} 0 &0&0\\ &0&0\\&&1\end{pmatrix}\begin{pmatrix} 0&0&1\\ &0&0\\&&0\end{pmatrix} & b_{11}(a_{11}^2\epsilon+d^2)+b_{22}a_{11}d  & \begin{array}{l}-(p-1)^2p^4(p^2-p+1)\\ -(p-1)^2p^6\left(\frac{-\epsilon}{p}\right) \end{array} &-(p-1)p^{13}(p^2-p+1)\\
   \hline
   6. & \begin{pmatrix} 0 &0&0\\ &1 &0\\&&0\end{pmatrix}\begin{pmatrix}0 &0&1\\ &0&0\\&&0\end{pmatrix} & b_{11}( a_{11}^2\epsilon+ c^2)+b_{22}a_{11}d  & (p-1)^3p^4 & (p-1)^2p^{13}\\
   \hline
   7. & \begin{pmatrix} 0 &0 &0\\ &0 & \frac{1}{2}\\ &&0\end{pmatrix}\begin{pmatrix}0 &0&1\\ &0&0\\&&0\end{pmatrix} & b_{11}(a_{11}^2\epsilon+cd)+b_{22}a_{11}d &-(p-1)^4p^4 & - (p-1)^3 p^{13}\\
   \hline
   8. & \begin{pmatrix} 0 &0&0\\ &1&1\\&&0\end{pmatrix} \begin{pmatrix} 0&0&1\\&0&0\\&&0\end{pmatrix} &  b_{11}(a_{11}^2\epsilon+c^2 + 2cd)+ b_{22}a_{11}d  & \begin{array}{l} -(p-1)^4p^4\\ \left(1 + \left(\frac{-\epsilon}{p} \right) \right)(p-1)^2p^6 \end{array} & (p-1)p^{13}(2p-1)\\
   \hline
   9. & \begin{pmatrix} 0 &0&0\\ &-\ell &0\\ &&1\end{pmatrix}\begin{pmatrix} 0&0&1\\ &0&0\\&&0\end{pmatrix} &  b_{11}( a_{11}^2\epsilon-c^2\ell+d^2)+b_{22}a_{11}d & \begin{array}{l} -(p-1)^2p^4\\
   -(p-1)^2p^6\left(\frac{\epsilon}{p}\right)\end{array}&-(p-1)p^{13}\\
   \hline
   10. & \begin{pmatrix} 0 &0&0\\ &0&0\\&&0\end{pmatrix}\begin{pmatrix}0 &0&1 \\ & 0&0\\ &&0\end{pmatrix} & b_{11}a_{11}^2\epsilon+b_{22}a_{11}d  & (p-1)^3p^4&(p-1)^2p^{13}\\
   \hline
   \end{array}
 \end{equation*}

 \begin{equation*}
   \begin{array}{|l|l|l|l|l|}
   \hline
   \text{Orbit} & \xi_1 & [ x_0, g \cdot \xi_1 ] + [ x_1, g \cdot \xi_0 ] & \sS(x,\xi)& \sM(x,\xi)\\
   \hline   11. & \begin{pmatrix} 0&0&0\\&0&0\\&&0\end{pmatrix}\begin{pmatrix}0&0&0\\&0&0\\&&1\end{pmatrix} &\begin{array}{l} b_{11}a_{11}^2\epsilon   + b_{12}d^2\\ +b_{22}( a_{13}d+b^2)\end{array}& (p-1)^3p^4 & (p-1)^2p^{13}\\
   \hline
   12. & \begin{pmatrix} 0&0&0\\&0&1\\&&0\end{pmatrix}\begin{pmatrix}0&0&0\\&0&0\\&&1\end{pmatrix} & \begin{array}{l}b_{11}( a_{11}^2\epsilon+2cd) +b_{12}d^2\\ + b_{22}(a_{13}d+b^2 )\end{array} & (p-1)^3p^4 & (p-1)^2p^{13}\\
   \hline
   13. & \begin{pmatrix} 0&0&0\\&1&0\\&&0\end{pmatrix}\begin{pmatrix}0&0&0\\&0&0\\&&1\end{pmatrix}& \begin{array}{l}b_{11}( a_{11}^2\epsilon+c^2) + b_{12}d^2 \\+b_{22}(a_{13}d+b^2 ) \end{array}& \begin{array}{l} - (p-1)^2p^4\\ - (p-1)^2p^5 \left(\frac{-\epsilon}{p}\right)\end{array}& -(p-1)p^{13}\\
   \hline
   14. & \begin{pmatrix} 0&0&0\\&0&0\\&&0\end{pmatrix}\begin{pmatrix}0&1&0\\&0&0\\&&1\end{pmatrix}& \begin{array}{l}b_{11}a_{11}^2\epsilon + b_{12}d^2 \\+b_{22}( a_{11}c + a_{13}d+b^2)  \end{array} & -(p-1)^2p^4 & -(p-1)p^{13}\\
   \hline
   15. & \begin{pmatrix} 0&0&0\\&0&1\\&&0\end{pmatrix}\begin{pmatrix}0&1&0\\&0&0\\&&1\end{pmatrix}& \begin{array}{l}b_{11}( a_{11}^2\epsilon + 2cd) + b_{12}d^2 \\+b_{22}( a_{11}c + a_{13}d+ b^2) \end{array} & -(p-1)^2p^4 & -(p-1)p^{13}\\
   \hline
   16. &\begin{pmatrix} 0&0&0\\&1&0\\&&0\end{pmatrix}\begin{pmatrix}0&1&0\\&0&0\\&&1\end{pmatrix} & \begin{array}{l}b_{11}( a_{11}^2\epsilon+c^2) + b_{12}d^2\\ +b_{22}( a_{11}c + a_{13}d+b^2) \end{array} & \begin{array}{l} -(p-1)^2p^4(p+1)\\ - (p-1)^2p^5\left(\frac{-\epsilon}{p}\right)\\ + (p-1)p^6 \\ \times \#\{x:x^4 \equiv -\epsilon \bmod p\}                                                                                                                                                                             \end{array} & p^{13}
\\
   \hline
   17. & \begin{pmatrix} 0&0&0\\&0&0\\&&0\end{pmatrix}\begin{pmatrix}0&0&0\\&0&1\\&&0\end{pmatrix}& \begin{array}{l}  b_{11}a_{11}^2\epsilon+2b_{12}cd\\ +b_{22}(a_{12}d + a_{13}c +2ab)\end{array} &0&0\\
   \hline
   18. &\begin{pmatrix} 0&0&0\\&0&0\\&&1\end{pmatrix}\begin{pmatrix}0&0&0\\&0&1\\&&0\end{pmatrix} & \begin{array}{l}b_{11}( a_{11}^2\epsilon + d^2)+ 2b_{12}cd \\+b_{22}( a_{12}d + a_{13}c+2ab )\end{array} &0&0\\
   \hline
   19. & \begin{pmatrix} 0&0&0\\&1&0\\&&1\end{pmatrix}\begin{pmatrix}0&0&0\\&0&1\\&&0\end{pmatrix}& \begin{array}{l}b_{11}( a_{11}^2\epsilon+c^2 + d^2)+2b_{12}cd \\ +b_{22}( a_{12}d+a_{13}c +2ab )\end{array}&0&0\\
   \hline
   20. &\begin{pmatrix} 0&0&0\\&\ell&0\\&&1\end{pmatrix}\begin{pmatrix}0&0&0\\&0&1\\&&0\end{pmatrix} & \begin{array}{l} b_{11}( a_{11}^2\epsilon+c^2\ell + d^2)+ 2b_{12}cd\\+b_{22}(a_{12}d+ a_{13}c + 2ab)  \end{array} &0&0\\
   \hline
   21. & \begin{pmatrix} 0 &0&0\\&0&0\\&&0\end{pmatrix}\begin{pmatrix}0&0&0\\ &-\ell &0\\&&1\end{pmatrix} & \begin{array}{l}b_{11}a_{11}^2\epsilon+b_{12}(-c^2\ell + d^2)  \\ +b_{22}( - a_{12}c\ell+ a_{13}d - a^2\ell+b^2 )\end{array}&0&0\\
   \hline
   22. & \begin{pmatrix} 0 &0&0\\&1&0\\&&0\end{pmatrix}\begin{pmatrix}0&0&0\\ &-\ell &0\\&&1\end{pmatrix} & \begin{array}{l} b_{11}( a_{11}^2\epsilon+c^2) + b_{12}(-c^2\ell + d^2)\\+b_{22}(- a_{12}c\ell + a_{13}d - a^2\ell+b^2 ) \end{array}&0&0\\
   \hline
   23a. &\begin{pmatrix} 0 &0&0\\&0&1\\&&0\end{pmatrix}\begin{pmatrix}0&0&0\\ &-\ell &0\\&&1\end{pmatrix} & \begin{array}{l}b_{11}( a_{11}^2\epsilon + 2cd)\\ + b_{12}(-c^2\ell + d^2) \\+b_{22}(- a_{12}c\ell + a_{13}d  -a^2\ell+b^2 ) \end{array}&0&0\\
   \hline
   23b. &\begin{pmatrix} 0 &0&0\\&1&k\\&&0\end{pmatrix}\begin{pmatrix}0&0&0\\ &-\ell &0\\&&1\end{pmatrix} &\begin{array}{l} b_{11}( a_{11}^2\epsilon+c(c+2dk))\\ + b_{12}(-c^2\ell + d^2)\\+ b_{22}(- a_{12}c\ell+a_{13}d-a^2\ell+b^2 )\end{array}&0&0\\
   \hline
  \end{array}  
 \end{equation*}
}

The exponential sums are as follows.
\begin{enumerate}
 \item [1.] Here
 \begin{equation}
   [ x_0, g \cdot \xi_1 ]+ [ x_1, g \cdot \xi_0 ] = b_{11} a_{11}^2 \epsilon.
 \end{equation}
Thus, summing in $b_{11}$,
 \begin{equation}
  \sS(x, \xi) = \sum_{g \in G_{x, \xi}^t} e_p(b_{11} a_{11}^2 \epsilon) = -\frac{|G_{x,\xi}|}{p-1} = -(p-1)^4p^4(p+1).
 \end{equation}
 \item [2.] Here
 \begin{equation}
   [ x_0, g \cdot \xi_1 ]+ [ x_1, g \cdot \xi_0 ] = b_{11} (a_{11}^2 \epsilon+d^2).
 \end{equation}
Thus
\begin{align*}
  \Sigma_1 &= \sum_{\begin{pmatrix} * & * & *\\ & a & b\\ & c & d\end{pmatrix} \begin{pmatrix} b_{11} & *\\ & *\end{pmatrix}} e_p(b_{11}(a_{11}^2 \epsilon + d^2))\\
  &= (p-1)p^6 \sum_{b_{11} \in \bF_p^\times}\left(\tau \left(\frac{b_{11}\epsilon}{p}\right)-1\right)\tau \left(\frac{b_{11}}{p}\right)\\& = (p-1)^2p^7 \left(\frac{-\epsilon}{p}\right)\\
  \Sigma_2 &= \sum_{\begin{pmatrix} * & * & * \\ & b \lambda  & b\\ & d \lambda  & d\end{pmatrix}\begin{pmatrix} b_{11} & *\\ & *\end{pmatrix}} e_p(b_{11}(a_{11}^2 \epsilon + d^2))\\
  &= (p-1)^2p^4 \sum_{b_{11} \in \bF_p^\times} \left(\tau \left(\frac{b_{11}\epsilon}{p}\right)-1\right)\tau\left(\frac{b_{11}}{p}\right)= (p-1)^3p^5\left(\frac{-\epsilon}{p}\right)\\
  \Sigma_3 &= \sum_{\begin{pmatrix} * & * & *\\ & a & 0\\ & c & 0\end{pmatrix} \begin{pmatrix} b_{11} & * \\ & * \end{pmatrix}} e_p(b_{11}a_{11}^2 \epsilon)\\
  &= - (p-1)^2p^5\\
  \Sigma_4 &= \sum_{\begin{pmatrix} * & * & * \\ & 0 & b\\ & 0 & d\end{pmatrix} \begin{pmatrix} b_{11} & * \\ & *\end{pmatrix}} e_p(b_{11}(a_{11}^2 \epsilon + d^2))\\
  &= (p-1)p^4 \sum_{b_{11} \in \bF_p^\times}\left(\tau \left(\frac{b_{11}\epsilon}{p}\right) - 1\right)\tau\left(\frac{b_{11}}{p}\right) = (p-1)^2 p^5\left(\frac{-\epsilon}{p}\right)\\
  \Sigma_5 &= p \sum_{\begin{pmatrix} * & * & *\\ & 0 &0\\ &0&0\end{pmatrix}\begin{pmatrix} b_{11} & *\\ & * \end{pmatrix}} e_p(b_{11} a_{11}^2 \epsilon) = -(p-1)^2p^4.
\end{align*}
Thus
\begin{equation}
 \sS(x, \xi) = (p-1)^3 p^4 + (p-1)^3 p^6\left(\frac{-\epsilon}{p}\right).
\end{equation}

\item [3.] Here
\begin{equation}
 [ x_0, g\cdot \xi_1 ] + [ x_1, g\cdot \xi_0] = b_{11}(a_{11}^2 \epsilon + cd).
\end{equation}
\begin{align*}
 \Sigma_1 &= \sum_{\begin{pmatrix} * & * & *\\ & a & b\\ & c & d\end{pmatrix} \begin{pmatrix} b_{11} & *\\ & *\end{pmatrix}} e_p(b_{11}(a_{11}^2 \epsilon + cd))\\
 &=-(p-1)^2 p^6\\
 \Sigma_2 &= \sum_{\begin{pmatrix} * & * & * \\ & b \lambda  & b\\ & d \lambda  & d\end{pmatrix}\begin{pmatrix} b_{11} & *\\ & *\end{pmatrix}} e_p(b_{11}(a_{11}^2 \epsilon +  d^2\lambda)) = 0\\
 \Sigma_3 &= \sum_{\begin{pmatrix} * & * & *\\ & a & 0\\ & c & 0\end{pmatrix} \begin{pmatrix} b_{11} & * \\ & * \end{pmatrix}} e_p(b_{11} a_{11}^2 \epsilon) = -(p-1)^2p^5\\
 \Sigma_4 &= \sum_{\begin{pmatrix} * & * & *\\ & 0 & b\\ & 0 & d\end{pmatrix} \begin{pmatrix} b_{11} & * \\ & * \end{pmatrix}} e_p(b_{11} a_{11}^2 \epsilon) = -(p-1)^2p^5\\
 \Sigma_5 &= p \sum_{\begin{pmatrix} * & * & *\\ & 0 &0\\ &0&0\end{pmatrix}\begin{pmatrix} b_{11} & *\\ & * \end{pmatrix}} e_p(b_{11} a_{11}^2 \epsilon) = -(p-1)^2p^4.
\end{align*}
Thus
\begin{equation}
 \sS(x, \xi) = -(p-1)^4p^4.
\end{equation}
\item[4.] Here 
\begin{equation}
 [ x_1, g \cdot \xi_0 ] + [ x_0, g \cdot \xi_1 ] = b_{11} (a_{11}^2 \epsilon -c^2 \ell + d^2).
\end{equation}
\begin{align*}
 \Sigma_1 &= \sum_{\begin{pmatrix} * & * & *\\ & a & b\\ & c & d\end{pmatrix} \begin{pmatrix} b_{11} & *\\ & *\end{pmatrix}}e_p(b_{11}(a_{11}^2 \epsilon -c^2 \ell + d^2))\\
 &= (p-1)p^5 \sum_{b_{11} \in \bF_p^\times} \left( \tau \left(\frac{ b_{11}\epsilon}{p}\right)-1\right)\tau^2 \left(\frac{-\ell}{p}\right)\\
 &= (p-1)^2p^6\\
 \Sigma_2 &= \sum_{\begin{pmatrix} * & * & * \\ & b \lambda  & b\\ & d \lambda  & d\end{pmatrix}\begin{pmatrix} b_{11} & *\\ & *\end{pmatrix}} e_p(b_{11}(a_{11}^2 \epsilon + d^2(1 - \lambda^2 \ell) ))
 \end{align*}
 Split off the $d = 0$ term.  In the $d \neq 0$ terms replace $\lambda:= d \lambda $.  This obtains
 \begin{align*}
\Sigma_2 &=(p-1)^2p^4 \sum_{a_{11}, b_{11} \in \bF_p^\times}e_p(b_{11}a_{11}^2 \epsilon) + (p-1)p^4 \sum_{\lambda, a_{11}, b_{11}, d \in \bF_p^\times} e_p(b_{11}(a_{11}^2 \epsilon + d^2 - \lambda^2 \ell))\\
 &= -(p-1)^3p^4 +(p-1)p^4\sum_{b_{11} \in \bF_p^\times} \left(\tau \left(\frac{ b_{11}\epsilon}{p}\right)-1\right)\left(\tau \left(\frac{b_{11}}{p}\right)-1\right)\left(\tau \left(\frac{- b_{11}\ell}{p}\right)-1\right)\\
 &= -(p-1)^3p^4 - (p-1)^2p^4 -(p-1)^2 p^5 \left\{ \left(\frac{-\epsilon}{p}\right) - \left(\frac{\epsilon}{p}\right) -1\right\}\\
 &= -(p-1)^2 p^5 \left\{ \left(\frac{-\epsilon}{p}\right) - \left(\frac{\epsilon}{p}\right) \right\}\\
 \Sigma_3 &= \sum_{\begin{pmatrix} * & * & *\\ & a & 0\\ & c & 0\end{pmatrix} \begin{pmatrix} b_{11} & * \\ & * \end{pmatrix}} e_p(b_{11}(a_{11}^2 \epsilon - c^2 \ell))\\
 &= (p-1)p^4 \sum_{b_{11} \in \bF_p^\times} \left(\tau \left(\frac{ b_{11}\epsilon}{p}\right) - 1\right) \tau \left(\frac{- b_{11}\ell}{p}\right) \\&= -(p-1)^2p^5\left(\frac{\epsilon}{p}\right)\\
 \Sigma_4 &= \sum_{\begin{pmatrix} * & * & *\\ & 0 & b\\ & 0 & d\end{pmatrix} \begin{pmatrix} b_{11} & * \\ & * \end{pmatrix}} e_p( b_{11}(a_{11}^2\epsilon +d^2)) \\
 &= (p-1)^2 p^5 \left(\frac{-\epsilon}{p}\right)\\
 \Sigma_5 &= p \sum_{\begin{pmatrix} * & * & *\\ & 0 &0\\ &0&0\end{pmatrix}\begin{pmatrix} b_{11} & *\\ & * \end{pmatrix}} e_p( b_{11}a_{11}^2\epsilon) = -(p-1)^2p^4.
\end{align*}
Thus
\begin{equation}
 \sS(x, \xi) = (p-1)^3p^4(p+1).
\end{equation}
\item[5.] Here  
\begin{equation}
 [ x_0, g \cdot \xi_1 ] + [ x_1, g \cdot \xi_0 ] =b_{11}(a_{11}^2\epsilon+d^2)+b_{22}a_{11}d.
\end{equation}
Summing in $b_{22}$ obtains $-1$ if $d \neq 0$, $p-1$ if $d = 0$, so that 
\begin{align*}
 \sS(x,\xi) &= -\frac{2.}{p-1} + \frac{p}{p-1}\sum_{\begin{pmatrix} * & * & *\\ & * & *\\ &* &\end{pmatrix} \begin{pmatrix} * & * \\ & *\end{pmatrix}} e_p(b_{11}a_{11}^2 \epsilon)\\
 &= -(p-1)^2 p^4 - (p-1)^2 p^6 \left(\frac{-\epsilon}{p}\right) -(p-1)^3p^5\\
 &= - (p-1)^2p^4 (p^2-p+1) - (p-1)^2p^6 \left(\frac{-\epsilon}{p}\right).
\end{align*}
\item [6.] Here
\begin{equation}
 [ x_0, g\cdot \xi_1 ] + [ x_1, g \cdot \xi_0 ] = b_{11}( a_{11}^2\epsilon+ c^2)+b_{22}a_{11}d.
\end{equation}
Summing in $b_{22}$ obtains $-1$ if $d \neq 0$ and $p-1$ if $d = 0$, so that,
\begin{align*}
 \sS(x, \xi) &= - \frac{2.}{p-1} + \frac{p}{p-1}\sum_{\begin{pmatrix} * & * & *\\ & a & b\\ & c &\end{pmatrix}\begin{pmatrix} * & *\\ & *\end{pmatrix}}e_p(b_{11}(a_{11}^2 \epsilon + c^2))\\
 &= -\frac{2.}{p-1} + (p-1)p^5 \sum_{b_{11} \in \bF_p^\times} \left( \tau \left(\frac{ b_{11}\epsilon}{p}\right)-1\right)\left(\tau\left(\frac{b_{11}}{p}\right)-1\right)\\
 &= - \frac{2.}{p-1} + (p-1)^2 p^5 + (p-1)^2 p^6 \left(\frac{-\epsilon}{p}\right)\\
 &=(p-1)^3p^4.
\end{align*}
\item [7.] Here
\begin{equation}
 [ x_0, g \cdot \xi_1] + [ x_1, g \cdot \xi_0 ] =b_{11}(a_{11}^2\epsilon+cd)+b_{22}a_{11}d.
\end{equation}
Thus, summing in $b_{22}$ as above,
\begin{align*}
 \sS(x,\xi) &= - \frac{3.}{p-1} + \frac{p}{p-1}\sum_{\begin{pmatrix} * & * & *\\ & * & *\\ & * & \end{pmatrix}\begin{pmatrix} * & *\\ & *\end{pmatrix}}e_p(b_{11} a_{11}^2 \epsilon)\\
 &= -\frac{3.}{p-1}- (p-1)^3p^5\\
 &= -(p-1)^4p^4.
\end{align*}
\item[8.] Here
\begin{equation}
 [ x_0, g \cdot \xi_1 ] + [ x_1, g \cdot \xi_0 ] = b_{11}(a_{11}^2\epsilon+c^2 + 2cd)+ b_{22}a_{11}d 
\end{equation}
Sum in $b_{22}$ to find
\begin{align*}
 \sS(x, \xi) &= -\frac{3.}{p-1} + \frac{p}{p-1}\sum_{\begin{pmatrix} * & * & *\\ & a & b\\ & c &\end{pmatrix}\begin{pmatrix} * & *\\ & *\end{pmatrix}} e_p(b_{11}(a_{11}^2 \epsilon + c^2))\\
 &= -\frac{3.}{p-1} + (p-1)p^5 \sum_{b_{11} \in \bF_p^\times} \left(\tau\left(\frac{b_{11}\epsilon}{p}\right) -1\right)\left(\tau \left(\frac{b_{11}}{p}\right)-1\right)\\
 &= (p-1)^3p^4 + (p-1)^2p^5 + (p-1)^2 p^6 \left( \frac{-\epsilon}{p}\right)\\
 &= -(p-1)^4 p^4 + \left(1 + \left(\frac{-\epsilon}{p}\right)\right)(p-1)^2p^6.
\end{align*}
\item[9.] Here
\begin{equation}
 [ x_0, g \cdot \xi_1 ] + [ x_1, g \cdot \xi_0 ] =b_{11}( a_{11}^2\epsilon-c^2\ell+d^2)+b_{22}a_{11}d.
\end{equation}

Sum in $b_{22}$ to find
\begin{align*}
 \sS(x, \xi) &= -\frac{4.}{p-1} + \frac{p}{p-1}\sum_{\begin{pmatrix} * & * & *\\ & a & b\\ & c &\end{pmatrix}\begin{pmatrix} * & *\\ & *\end{pmatrix}} e_p(b_{11}(a_{11}^2 \epsilon - c^2 \ell))\\
 &= -(p-1)^2 p^4 (p+1)\\ &\qquad + (p-1)p^5 \sum_{b_{11} \in \bF_p^\times}\left(\tau \left(\frac{ b_{11}\epsilon}{p}\right)-1\right)\left(\tau \left(\frac{- b_{11}\ell}{p}\right)-1\right)\\
 &= -(p-1)^2 p^4 (p+1) + (p-1)^2p^5 - (p-1)^2 p^6 \left(\frac{\epsilon}{p}\right)\\
 &= -(p-1)^2 p^4 - (p-1)^2 p^6 \left(\frac{\epsilon}{p}\right).
\end{align*}
\item[10.] Here
\begin{equation}
[ x_0, g \cdot \xi_1 ] + [ x_1, g \cdot \xi_0 ] = b_{11}a_{11}^2\epsilon+b_{22}a_{11}d.
\end{equation}

Sum in $b_{22}$ to find
\begin{align*}
 \sS(x, \xi) &= - \frac{1.}{p-1} + \frac{p}{p-1} \sum_{\begin{pmatrix} * & * & *\\ & * & *\\ & * &\end{pmatrix}\begin{pmatrix} * & *\\ & *\end{pmatrix}} e_p(b_{11} a_{11}^2 \epsilon)\\
 &= (p-1)^3 p^4(p+1)-(p-1)^3p^5 = (p-1)^3p^4.
\end{align*}

In sums 11-16 sum over $b_{12}$ to force $d = 0$.
\item[11.] Here  
\begin{equation}
 [ x_0, g \cdot \xi_1 ] + [ x_1, g \cdot \xi_0] =  b_{11}a_{11}^2\epsilon   + b_{12}d^2 +b_{22}( a_{13}d+b^2).
\end{equation}
Thus
\begin{equation}
 \sS(x,\xi) = \sum_{\begin{pmatrix} * & * & *\\ & a & b\\ & c & \end{pmatrix} \begin{pmatrix} * & *\\ & *\end{pmatrix}} e_p(b_{11} a_{11}^2 \epsilon + b_{22}b^2) = (p-1)^3p^4,
\end{equation}
since summation in $b_{11}$ and $b_{22}$ each give $-1$.
\item[12.] Here  
\begin{equation}
 [ x_0, g \cdot \xi_1 ] + [ x_1, g \cdot \xi_0] = b_{11}( a_{11}^2\epsilon+2cd) +b_{12}d^2 + b_{22}(a_{13}d+b^2 ).
\end{equation}
Thus
\begin{equation}
 \sS(x, \xi) = \sum_{\begin{pmatrix} * & * & *\\ & a & b\\ & c & \end{pmatrix} \begin{pmatrix} * & *\\ & *\end{pmatrix}} e_p(b_{11} a_{11}^2 \epsilon + b_{22}b^2) = (p-1)^3p^4,
\end{equation}
since summation in $b_{11}$ and $b_{22}$ each give $-1$.
\item[13.] Here  
\begin{equation}
 [ x_0, g \cdot \xi_1 ] + [ x_1, g \cdot \xi_0] = b_{11}( a_{11}^2\epsilon+c^2) + b_{12}d^2 +b_{22}(a_{13}d+b^2 ).
\end{equation}
Now sum in $b_{22}$, obtaining $-1$, and sum in $a_{11}$ and $c$ to obtain Gauss sums, to find 
\begin{align*}
 \sS(x, \xi) &= \sum_{\begin{pmatrix} * & * & *\\ & a & b\\ & c & \end{pmatrix} \begin{pmatrix} * & *\\ & *\end{pmatrix}} e_p(b_{11} (a_{11}^2 \epsilon + c^2) + b_{22}b^2)\\
 &= -(p-1)p^4 \sum_{b_{11} \in \bF_p^\times}\left(\tau\left(\frac{b_{11}\epsilon}{p}\right)-1\right)\left(\tau\left(\frac{b_{11}}{p}\right)-1\right)\\
 &= - (p-1)^2p^4 - (p-1)^2p^5\left(\frac{-\epsilon}{p}\right).
\end{align*}
\item[14.] Here  
\begin{equation}
 [ x_0, g \cdot \xi_1 ] + [ x_1, g \cdot \xi_0] =b_{11}a_{11}^2\epsilon + b_{12}d^2 +b_{22}( a_{11}c + a_{13}d+b^2).
\end{equation}
Now sum in $b_{11}$, obtaining $-1$, then sum in $a_{11}$, obtaining $-1$, and finally sum in $b_{22}$, obtaining $-1$, to obtain
\begin{align*}
 \sS(x, \xi) &= \sum_{\begin{pmatrix} * & * & *\\ & a & b\\ & c & \end{pmatrix} \begin{pmatrix} * & *\\ & *\end{pmatrix}} e_p(b_{11} a_{11}^2 \epsilon + b_{22}(b^2 + a_{11}c)) \\
 &= -(p-1)^2 p^4.
\end{align*}
\item[15.] Here  
\begin{equation}
 [ x_0, g \cdot \xi_1 ] + [ x_1, g \cdot \xi_0] = b_{11}( a_{11}^2\epsilon + 2cd) + b_{12}d^2 \\+b_{22}( a_{11}c + a_{13}d+ b^2).
\end{equation}
After setting $d = 0$ this is the same sum as in 14., so 
\begin{equation}
 \sS(x, \xi) = 14. = -(p-1)^2 p^4.
\end{equation}
\item[16.] Here  
\begin{equation}
 [ x_0, g \cdot \xi_1 ] + [ x_1, g \cdot \xi_0] = b_{22}(b^2 + a_{11}c + a_{13}d) + b_{12}d^2 + b_{11}(c^2 + a_{11}^2\epsilon).
\end{equation}
Thus
\begin{align*}
 \sS(x, \xi) &= \sum_{\begin{pmatrix} * & *& *\\ & a & b\\ & c &\end{pmatrix}\begin{pmatrix} * & *\\ & *\end{pmatrix}} e_p(b_{11}(a_{11}^2 \epsilon + c^2) + b_{22}(b^2 + a_{11}c))\\
 &= p^4 \sum_{b_{11}, b_{22}, a_{11}, b, c \in \bF_p^\times} e_p(b_{11} (a_{11}^2 \epsilon + c^2) + b_{22}(b^2 + a_{11}c)).
 \end{align*}
 Summing in $b_{22}$ obtains $-1$ if $b^2 + a_{11}c \neq 0$ and $p-1$ otherwise.  Splitting the sum according to this and substituting $b_{11} := \frac{b_{11}}{c^2}$ in the second sum obtains
 \begin{align*}
 \sS(x, \xi)&= - p^4 \sum_{b_{11},  a_{11}, b, c \in \bF_p^\times} e_p(b_{11} (a_{11}^2 \epsilon + c^2))\\
  & \qquad + p^5 \sum_{b_{11}, b, c \in \bF_p^\times} e_p(b_{11}(\epsilon b^4 + c^4))\\
 &= - (p-1)p^4 \sum_{b_{11} \in \bF_p^\times} \left(\tau \left(\frac{ b_{11}\epsilon}{p}\right)-1\right)\left( \tau \left(\frac{b_{11}}{p}\right)-1\right)\\
 & \qquad - (p-1)^2p^5 + (p-1)p^6 \#\{x: x^4 \equiv -\epsilon \bmod p\}\\
 &= -(p-1)^2 p^4 (p+1) - (p-1)^2 p^5 \left(\frac{-\epsilon}{p}\right) + (p-1)p^6 \#\{x: x^4 \equiv -\epsilon \bmod p\}.
\end{align*}
\item[17.-23.] These sums vanish since summing in $a_{12}$ and $a_{13}$ forces $c = d= 0$.
\end{enumerate}

\subsection{The exponential sums pair $(\sO_{1^4}, \sO_{D11})$}\label{O14_OD11_section}
Standard representatives are
\begin{equation}
 x_0 = \begin{pmatrix} 0&0&0\\ &0&0\\&&1\end{pmatrix}\begin{pmatrix}0&0&\frac{1}{2}\\&1&0\\&&0\end{pmatrix}, \qquad \xi_0=\begin{pmatrix}0&\frac{1}{2}&0\\&0&0\\&&0\end{pmatrix}\begin{pmatrix}0&0&0\\&0&0\\&&0\end{pmatrix},
\end{equation}
and
\begin{equation}
 x_1 = \begin{pmatrix} \epsilon &0&0\\&0&0\\&&0\end{pmatrix}\begin{pmatrix}0&0&0\\&0&0\\&&0\end{pmatrix}, \epsilon \in \bF_p^\times/\{x^4: x \in \bF_p^\times\}.
\end{equation}
The acting set is
\begin{equation}
 G_{x,\xi}^t = \begin{pmatrix} a_{11} & a_{12} & a_{13}\\ & a_{22} & a_{23}\\ && a_{33}\end{pmatrix}\begin{pmatrix} b_{11} & b_{12} \\ & b_{22}\end{pmatrix} \sqcup \begin{pmatrix} a_{11} & a_{12} & a_{13}\\ a_{21} &  & a_{23}\\&& a_{33}\end{pmatrix}\begin{pmatrix} b_{11} & b_{12}\\ & b_{22}\end{pmatrix}.
\end{equation}
Here
\begin{equation}
 [ x_1, g \cdot \xi_0 ] =  b_{11}a_{11}a_{12}\epsilon.
\end{equation}

The stabilizer has size
\begin{equation}
 \left|\Stab_{G(\zed/p^2\zed)}(x)\right| = (p-1)p^4 \#\{\epsilon^4 = 1\}.
\end{equation}

Representatives and exponential sum pairings are given in the table below. Summing over the maximal orbits,
\begin{equation}
 \sM(x,\xi) = \sum_{\epsilon \in \bF_p^\times/\{x^4: x \in \bF_p^\times\}} \frac{p^9 \sS(x,\xi)}{(p-1)\#\{\epsilon^4 = 1\}}.
\end{equation}

See \textbf{exponential\_sums\_O14\_OD11.nb}.
{\tiny
 \begin{equation*}
  \begin{array}{|l|l|l|l|l|}
   \hline
   \text{Orbit} & \xi_1 & [ x_0, g \cdot \xi_1 ] + [ x_1, g\cdot x_0] & \sS(x,\xi)& \sM(x,\xi)\\
   \hline
   1. & \begin{pmatrix} 0 &0&0\\&0&0\\&&0\end{pmatrix}\begin{pmatrix} 0&0&0\\&0&0\\&&1\end{pmatrix} &b_{11}a_{11}a_{12}\epsilon+ b_{12}a_{33}^2 + b_{22}( a_{13}a_{33}+a_{23}^2)  &0&0\\
   \hline
   2. & \begin{pmatrix} 0 &0&0\\&0&0\\&&0\end{pmatrix}\begin{pmatrix} 0&0&0\\&0&1\\&&1\end{pmatrix} & \begin{array}{l}b_{11}a_{11}a_{12}\epsilon+  b_{12}a_{33}^2\\ +b_{22}(a_{12}a_{33} + a_{13}a_{33} + 2a_{22}a_{23} + a_{23}^2) \end{array} &0&0\\
   \hline
   3. &  \begin{pmatrix} 0 &0&0\\&0&0\\&&0\end{pmatrix}\begin{pmatrix} 0&0&1\\&0&1\\&&1\end{pmatrix} & \begin{array}{l}  b_{11}a_{11}a_{12}\epsilon+b_{12}a_{33}^2  \\+b_{22}(a_{23}(2a_{21} + 2a_{22} + a_{23}) + a_{33}(a_{11} + a_{12} + a_{13}))\\ \end{array}&0&0\\
   \hline
   4. & \begin{pmatrix} 0 &0&0\\&0&0\\&&0\end{pmatrix}\begin{pmatrix} 0&0&0\\&-\ell&0\\&&1\end{pmatrix} & b_{11}a_{11}a_{12}\epsilon+ b_{12}a_{33}^2 + b_{22}(a_{13}a_{33}- a_{22}^2\ell + a_{23}^2 )  &0&0\\
   \hline
   5. & \begin{pmatrix} 0 &0&0\\&0&0\\&&0\end{pmatrix}\begin{pmatrix} 0&0&1\\&-\ell&0\\&&1\end{pmatrix} & \begin{array}{l} b_{11}a_{11}a_{12}\epsilon+ b_{12}a_{33}^2 \\ +b_{22}(a_{11}a_{33}+a_{13}a_{33} + 2a_{21}a_{23}- a_{22}^2\ell + a_{23}^2 ) \end{array}&0&0\\
   \hline
   6. & \begin{pmatrix} 0 &0&0\\&0&0\\&&0\end{pmatrix}\begin{pmatrix} -\ell&0&0\\&-\ell&0\\&&1\end{pmatrix} & \begin{array}{l}b_{11}a_{11}a_{12}\epsilon+ b_{12}a_{33}^2 \\+ b_{22}(a_{13}a_{33}  -(a_{21}^2 + a_{22}^2)\ell+ a_{23}^2) \end{array}&0&0 \\
   \hline
   7. & \begin{pmatrix} 0 &0&0\\&0&0\\&&0\end{pmatrix}\begin{pmatrix} 0&0&0\\&0&1\\&&0\end{pmatrix} & b_{11}a_{11}a_{12}\epsilon+ b_{22}( a_{12}a_{33}+2a_{22}a_{23})  &0&0\\
   \hline
   8. & \begin{pmatrix} 0 &0&0\\&0&0\\&&0\end{pmatrix}\begin{pmatrix} 0&0&1\\&0&1\\&&0\end{pmatrix}& \begin{array}{l}b_{11}a_{11}a_{12}\epsilon\\ +b_{22}(2a_{23}(a_{21} + a_{22}) + a_{33}(a_{11} + a_{12})) \end{array} & 0&0 \\
   \hline
   9. & \begin{pmatrix} 0 &0&0\\&0&0\\&&0\end{pmatrix}\begin{pmatrix} 1&0&0\\&0&1\\&&0\end{pmatrix}& b_{11}a_{11}a_{12}\epsilon+b_{22}(a_{12}a_{33}+a_{21}^2 + 2a_{22}a_{23} )  &0 &0\\
   \hline
   10. & \begin{pmatrix} 0 &0&0\\&0&0\\&&1\end{pmatrix}\begin{pmatrix} 0&0&0\\&0&1\\&&0\end{pmatrix}& b_{11}( a_{11}a_{12}\epsilon+a_{33}^2)+b_{22}(a_{12}a_{33}+2a_{22}a_{23} )   &0&0\\
   \hline
   11. & \begin{pmatrix} 0 &0&0\\&0&0\\&&1\end{pmatrix}\begin{pmatrix} 0&0&1\\&0&1\\&&0\end{pmatrix}& \begin{array}{l}b_{11}( a_{11}a_{12}\epsilon+a_{33}^2)\\+b_{22}(2a_{23}(a_{21} + a_{22}) + a_{33}(a_{11} + a_{12})\end{array} &0&0\\
   \hline
   12. & \begin{pmatrix} 0 &0&0\\&0&0\\&&1\end{pmatrix}\begin{pmatrix} 1&0&0\\&0&1\\&&0\end{pmatrix}& b_{11}( a_{11}a_{12}\epsilon+a_{33}^2)+b_{22}(a_{12}a_{33}+a_{21}^2 + 2a_{22}a_{23}  ) &0&0\\
   \hline
   13. & \begin{pmatrix} 0 &0&0\\&0&0\\&&0\end{pmatrix}\begin{pmatrix} 0&0&0\\&1&0\\&&0\end{pmatrix}&b_{11}a_{11}a_{12}\epsilon+ b_{22}a_{22}^2 &0&0\\
   \hline
   14. & \begin{pmatrix} 0 &0&0\\&0&0\\&&0\end{pmatrix}\begin{pmatrix} 1&0&0\\&1&0\\&&0\end{pmatrix}&b_{11}a_{11}a_{12}\epsilon+ b_{22}(a_{21}^2 + a_{22}^2) &0&0\\
   \hline
   15. & \begin{pmatrix} 0 &0&0\\&0&0\\&&0\end{pmatrix}\begin{pmatrix} \ell&0&0\\&1&0\\&&0\end{pmatrix} &b_{11}a_{11}a_{12}\epsilon+ b_{22}( a_{21}^2\ell+a_{22}^2) &0&0\\
   \hline
   16. & \begin{pmatrix} 0 &0&0\\&0&0\\&&1\end{pmatrix}\begin{pmatrix} 0&0&0\\&1&0\\&&0\end{pmatrix} & b_{11}( a_{11}a_{12}\epsilon+a_{33}^2)+b_{22}a_{22}^2 &0&0\\
   \hline
   17. & \begin{pmatrix} 0 &0&0\\&0&0\\&&1\end{pmatrix}\begin{pmatrix} 1&0&0\\&1&0\\&&0\end{pmatrix} & b_{11}( a_{11}a_{12}\epsilon+a_{33}^2)+b_{22}(a_{21}^2 + a_{22}^2) &0&0\\
   \hline
   18. & \begin{pmatrix} 0 &0&0\\&0&0\\&&1\end{pmatrix}\begin{pmatrix} \ell&0&0\\&1&0\\&&0\end{pmatrix}& b_{11}( a_{11}a_{12}\epsilon+a_{33}^2)+b_{22}( a_{21}^2\ell+a_{22}^2 ) &0&0\\
   \hline
   19. & \begin{pmatrix} 0 &0&0\\&0&0\\&&1\end{pmatrix}\begin{pmatrix} 0&0&0\\&0&0\\&&0\end{pmatrix}& b_{11}(a_{11}a_{12}\epsilon+a_{33}^2 )&0&0\\
   \hline
   20. & \begin{pmatrix} 0 &0&0\\&0&0\\&&0\end{pmatrix}\begin{pmatrix} 0&0&0\\&0&0\\&&0\end{pmatrix} & b_{11}a_{11}a_{12}\epsilon&0&0\\
   \hline
   \end{array}
   \end{equation*}
}

All of the orbital exponential sums vanish.  This is checked as follows.
\begin{enumerate}
 \item[1.-6.] These sums vanish on summing over $b_{12}$.
 \item[7.-12.] These sums vanish on summing over $a_{23}$ and $a_{11}$. The summation in $a_{23}$ forces $a_{22} = 0$ in each case, so that $a_{11}$ ranges in $\bF_p$.
\item[13.-20.] These sums vanish on summing over $a_{11}$ or $a_{12}$. 
\end{enumerate}

\subsection{The exponential sums pair $(\sO_{1^4}, \sO_{1^4})$}\label{O14_O14_section}
Standard representatives are
\begin{equation}
 x_0 = \begin{pmatrix} 0&0&0\\ &0&0\\&&1\end{pmatrix}\begin{pmatrix}0&0&\frac{1}{2}\\ &1&0\\&&0\end{pmatrix}, \qquad \xi_0 = \begin{pmatrix}0&0&-1\\&1&0\\&&0\end{pmatrix}\begin{pmatrix}2 &0&0\\&0&0\\&&0\end{pmatrix}
\end{equation}
and
\begin{equation}
 x_1 = \begin{pmatrix} \epsilon &0&0\\&0&0\\&&0\end{pmatrix}\begin{pmatrix}0&0&0\\&0&0\\&&0\end{pmatrix}, \epsilon \in \bF_p^\times/\{x^4: x \in \bF_p^\times\}.
\end{equation}
The acting set is
\begin{equation}
 G_{x, \xi}^t = \begin{pmatrix} a & a_{12} & a_{13}\\ & b & a_{23}\\ &&c \end{pmatrix}\begin{pmatrix} d & b_{12}\\ & e \end{pmatrix}, b^2 = ac, cd = ae.
\end{equation}
Here
\begin{equation}
 [ x_1, g\cdot \xi_0 ] = (d (a_{12}^2 - 2 a a_{13}) + 2 b_{12}a^2)\epsilon.
\end{equation}
The stabilizer has size
\begin{equation}
 \left|\Stab_{G(\zed/p^2\zed)}(x)\right| = (p-1)p^4 \#\{\epsilon^4 = 1\}.
\end{equation}

Representatives and exponential sum pairings are given in the table below. Summing over the maximal orbits,
\begin{equation}
 \sM(x,\xi) = \sum_{\epsilon \in \bF_p^\times/\{x^4: x \in \bF_p^\times\}} \frac{p^9 \sS(x,\xi)}{(p-1)\#\{\epsilon^4 = 1\}}.
\end{equation}

See \textbf{exponential\_sums\_O14\_O14.nb}.
{\tiny
 \begin{equation*}
  \begin{array}{|l|l|l|l|l|}
   \hline
   \text{Orbit} & \xi_1 & [ x_0, g \cdot \xi_1 ] + [ x_1, g\cdot x_0] & \sS(x,\xi)& \sM(x,\xi)\\
   \hline
   1. & \begin{pmatrix} 0 &0&0\\&0&0\\&&0\end{pmatrix}\begin{pmatrix} 0&0&0\\&0&0\\&&1\end{pmatrix} & b_{12}c^2 + e(a_{13}c + a_{23}^2) +  (d (a_{12}^2 - 2 a a_{13}) + 2 b_{12}a^2)\epsilon &0&0\\
   \hline
   2. & \begin{pmatrix} 0 &0&0\\&0&0\\&&0\end{pmatrix}\begin{pmatrix} 0&0&0\\&0&1\\&&0\end{pmatrix} & e(2a_{23}b + a_{12}c) + (d (a_{12}^2 - 2 a a_{13}) + 2 b_{12}a^2)\epsilon &0&0\\
   \hline
   3. &  \begin{pmatrix} 0 &0&0\\&0&0\\&&1\end{pmatrix}\begin{pmatrix} 0&0&-1\\&1&0\\&&0\end{pmatrix} & c^2d + e(-ac+b^2) + (d (a_{12}^2 - 2 a a_{13}) + 2 b_{12}a^2)\epsilon &0&0\\
   \hline
   4. & \begin{pmatrix} 0 &0&0\\&0&0\\&&0\end{pmatrix}\begin{pmatrix} 0&0&0\\&0&0\\&&0\end{pmatrix} & (d (a_{12}^2 - 2 a a_{13}) + 2 b_{12}a^2)\epsilon &0&0\\
   \hline
   \end{array}
   \end{equation*}
}

All of the orbital exponential sums vanish, as is checked below.
\begin{enumerate}
 \item [1.] $\xi_1 = \begin{pmatrix} 0&0&0\\ &0&0\\&&0\end{pmatrix}\begin{pmatrix} 0&0&0\\ &0&0\\&&1 \end{pmatrix}$.  Here
 \begin{align*}
  [ x_0, g \cdot \xi_1 ] &= b_{12}c^2 + e(a_{13}c + a_{23}^2)\\
  [ x_0, g \cdot \xi_1 ] + [ x_1, g \cdot \xi_0 ] &=   b_{12}c^2 + e(a_{13}c + a_{23}^2)+(d (a_{12}^2 - 2 a a_{13}) + 2 b_{12}a^2)\epsilon.
 \end{align*}
 Sum in $b_{12}$ to find $2 a^2\epsilon + c^2 = 0$ or $2 a^2\epsilon + \left(\frac{ae}{d}\right)^2 = 0$.  Sum in $a_{13}$ to find $2 ad\epsilon = ce$ or $2 ad\epsilon = \frac{ae^2}{d}$.  These two conditions are inconsistent, as the first implies $2 d^2\epsilon = -e^2$ and the second implies $2 d^2\epsilon = e^2$, so the sum vanishes.
 \item[2.] $\xi_1 = \begin{pmatrix} 0&0&0\\&0&0\\&&0\end{pmatrix}\begin{pmatrix}0&0&0\\&0&1\\ &&0\end{pmatrix}$.
 The sum vanishes on summing over $b_{12}$.
 \item[3.] $\xi_1 = \begin{pmatrix} 0&0&0\\&0&0\\&&1\end{pmatrix}\begin{pmatrix}0&0&-1\\&1&0\\&&0\end{pmatrix}$.  The sum vanishes on summing over $b_{12}$.
 \item[4.] $\xi_1 = 0$.  The sum vanishes on summing over $b_{12}$.
\end{enumerate}

\section{Summation}\label{summation_section}
Combining the previous sections proves the following theorem. 

\begin{theorem}
 The Fourier transform of the maximal set is supported on the mod $p$ orbits $\sO_0, \sO_{D1^2}, \sO_{D11}$ and $\sO_{D2}$.  It is given explicitly in the following tables.
 \begin{enumerate} 
  \item Case $\sO_0$, $\xi = p\xi_0$.
 \begin{equation}
 \begin{array}{|l|l|l|}
  \hline
  \text{Orbit} & p^{-12}\widehat{\one_{\max}}(p \xi_0) & \text{Orbit size}\\
  \hline
    \sO_0 & (p-1)^4p(p+1)^2(p^5 + 2p^4+4p^3+4p^2+3p+1) & 1\\
  \hline
  \sO_{D1^2} & -(p-1)^3p(p+1)^4 & (p-1)(p+1)(p^2+p+1)\\
  \hline
  \sO_{D11} & -(p-1)^3p(2p^3 + 6p^2 + 4p + 1) & (p-1)p(p+1)^2(p^2+p+1)/2\\
  \hline
  \sO_{D2} & (p-1)^2p(2p^2 + 3p+1)& (p-1)^2p(p+1)(p^2+p+1)/2\\
  \hline
  \sO_{Dns} & (p-1)^2p(2p^2+3p+1)  & (p-1)^2p^2(p+1)(p^2+p+1)\\
  \hline
  \sO_{Cs} & -p^7+5p^5-3p^4-3p^3+p^2 + p & (p-1)^2p(p+1)^2(p^2+p+1)\\
  \hline
  \sO_{Cns} & (p-1)^2p(2p^2 + 3p + 1) & (p-1)^2p^3(p+1)(p^2+p+1)\\
  \hline
  \sO_{B11} & (p-1)^2p(2p^2 + 3p + 1)& (p-1)^2p^2(p+1)^2(p^2+p+1)/2\\
  \hline
  \sO_{B2} & (p-1)^2p(2p^2 + 3p + 1) & (p-1)^3p^2(p+1)(p^2+p+1)/2\\
  \hline
  \sO_{1^4} & p(p^3 -3p^2 +p+1) & (p-1)^3p^2(p+1)^2(p^2+p+1)\\
  \hline
  \sO_{1^31} & p(p^3 -3p^2 +p+1) & (p-1)^3p^3(p+1)^2(p^2+p+1)\\
  \hline
  \sO_{1^21^2} & (p-1)^2p(3p+1) & (p-1)^2p^4(p+1)^2(p^2 + p+ 1)/2\\
  \hline
  \sO_{2^2} & -(p-1)p(p+1)^2 & (p-1)^3p^4(p+1)(p^2+p+1)/2\\
  \hline
  \sO_{1^211} & p(p^3-3p^2 + p+1) & (p-1)^3p^4(p+1)^2(p^2 + p + 1)/2\\
  \hline
  \sO_{1^2 2} & p(p^3 - 3p^2 +p+1) & (p-1)^3p^4(p+1)^2(p^2+p+1)/2\\
  \hline
  \sO_{1111} & -p^3+p^2 + p & (p-1)^4p^4(p+1)^2(p^2 + p + 1)/24\\
  \hline
  \sO_{112} & -p^3 + p^2 + p& (p-1)^4p^4(p+1)^2(p^2 + p + 1)/4\\
  \hline
  \sO_{22} & -p^3 + p^2 + p & (p-1)^4p^4(p+1)^2(p^2 + p + 1)/8\\
  \hline
  \sO_{13} & -p^3 + p^2 + p& (p-1)^4p^4(p+1)^2(p^2 + p + 1)/3\\
  \hline
  \sO_{4} & -p^3 + p^2 + p& (p-1)^4p^4(p+1)^2(p^2 + p + 1)/4\\
  \hline
 \end{array}
\end{equation}
  \item Case $\sO_{D1^2}$.
   \begin{equation}
 \begin{array}{|l|l|l|}
  \hline
  \text{Orbit} & p^{-12}\widehat{\one_{\max}}( \xi)& \text{Orbit size}\\
  \hline
    1. & -(p-1)^3p(p+1)^3 & (p-1)p^4(p+1)(p^2+p+1)\\
    \hline
    2. & (p-1)^2 p (2p+1) & (p-1)^2p^4(p+1)^2(p^2+p+1)\\
    \hline
    3. & (p-1)^2 p(2p+1) & (p-1)^2p^5(p+1)^2(p^2+p+1)/2\\
    \hline
    4. & -(p-1)p(p+1)& (p-1)^3p^5(p+1)(p^2+p+1)/2\\
    \hline
    5. & p (p^3 - 2p^2 + 1) & (p-1)^3p^4(p+1)^2(p^2+p+1)\\
    \hline
    6. & -(p-1)p(p+1) & (p-1)^3p^5(p+1)^2(p^2+p+1)\\
    \hline
    7. & -(p-1)p(p+1) & (p-1)^3p^5(p+1)^2(p^2+p+1)\\
    \hline
    8. & p & (p-1)^4p^5(p+1)^2(p^2+p+1)/2\\
    \hline
    9. & p & (p-1)^4p^5(p+1)^2(p^2+p+1)/2\\
    \hline
    10. & (p-1)^2 p (p+1)^2 & (p-1)^2p^4(p+1)^2(p^2+p+1)\\
    \hline
    11. &(p-1)^2 p (p+1) & (p-1)^2p^6(p+1)^2(p^2+p+1)\\
    \hline
    12. &- (p-1)p & (p-1)^3p^6(p+1)^2(p^2+p+1)\\
    \hline
    13. & -(p-1)p & (p-1)^3p^7(p+1)^2(p^2+p+1)\\
    \hline
    14. & -(p-1)p(p+1) & (p-1)^3p^6(p+1)^2(p^2+p+1)\\
    \hline
    15. & p & (p-1)^4p^6(p+1)^2(p^2+p+1)\\
    \hline
    16. & p & (p-1)^4p^7(p+1)^2(p^2+p+1)\\
    \hline
    17. & 0 & (p-1)^2p^8(p+1)^2(p^2+p+1)/2\\
    \hline
    18. & 0 & (p-1)^3p^8(p+1)^2(p^2+p+1)\\
    \hline
    19. &0 & (p-1)^4p^8(p+1)^2(p^2+p+1)/4\\
    \hline
    20. &0 & (p-1)^4p^8(p+1)^2(p^2+p+1)/4\\
    \hline
    21. &0 & (p-1)^3p^8(p+1)(p^2+p+1)/2\\
    \hline
    22. &0 & (p-1)^4p^8(p+1)^2(p^2+p+1)/4 \\
    \hline
    23. &0& (p-1)^4p^8(p+1)^2(p^2+p+1)/4\\
  \hline
 \end{array}
\end{equation}
  \item Case $\sO_{D11}$.
   \begin{equation}
 \begin{array}{|l|l|l|}
  \hline
  \text{Orbit} & p^{-12}\widehat{\one_{\max}}( \xi) & \text{Orbit size}\\
  \hline
    1. & 0 & (p-1)^2p^{10}(p+1)^2(p^2+p+1)/2\\
    \hline
    2. & 0 & (p-1)^3p^{10}(p+1)^2(p^2+p+1)/2\\
    \hline
    3. & 0 & (p-1)^4p^{10}(p+1)^2(p^2+p+1)/8\\
    \hline
    4. & 0 & (p-1)^3p^{10}(p+1)^2(p^2+p+1)/2\\
    \hline
    5. & 0 & (p-1)^4p^{10}(p+1)^2(p^2+p+1)/4\\
    \hline
    6. & 0 & (p-1)^4p^{10}(p+1)^2(p^2+p+1)/8\\
    \hline
    7. & 0 & (p-1)^2p^8(p+1)^2(p^2+p+1)\\
    \hline
    8. & 0 & (p-1)^3p^9(p+1)^2(p^2+p+1)/2\\
    \hline
    9. & 0 & (p-1)^3p^8(p+1)^2(p^2+p+1)\\
    \hline
    10. & 0 & (p-1)^3p^8(p+1)^2(p^2+p+1)\\
    \hline
    11. & 0 & (p-1)^4p^9(p+1)^2(p^2+p+1)/2\\
    \hline
    12. & 0 & (p-1)^4p^8(p+1)^2(p^2+p+1)\\
    \hline
    13. & 0 & (p-1)^2p^7(p+1)^2(p^2+p+1)\\
    \hline
    14. & (p-1)p^2 & (p-1)^3p^7(p+1)^2(p^2+p+1)/4\\
    \hline
    15. & -(p-1)p^2 & (p-1)^3p^7(p+1)^2(p^2+p+1)/4\\
    \hline
    16. & 0 & (p-1)^3p^7(p+1)^2(p^2+p+1)\\
    \hline
    17. & -p^2 & (p-1)^4p^7(p+1)^2(p^2+p+1)/4\\
    \hline
    18. & p^2 & (p-1)^4p^7(p+1)^2(p^2+p+1)/4\\
    \hline
    19. &0 & (p-1)^2p^7(p+1)^2(p^2+p+1)/2\\
    \hline
    20. &0 & (p-1)p^7(p+1)^2(p^2+p+1)/2\\
  \hline
 \end{array}
\end{equation}
\item Case $\sO_{D2}$.
   \begin{equation}
 \begin{array}{|l|l|l|}
  \hline
  \text{Orbit} & p^{-12}\widehat{\one_{\max}}( \xi)& \text{Orbit size}\\
  \hline
    1. & 0 & (p-1)^3p^{10}(p+1)(p^2+p+1)/2\\
  \hline
    2. & 0 & (p-1)^4p^{10}(p+1)^2(p^2+p+1)/4\\
  \hline
    3. & 0 & (p-1)^4p^{10}(p+1)^2(p^2+p+1)/4\\
  \hline
    4. & 0 & (p-1)^3p^9(p+1)^2(p^2+p+1)/2\\
  \hline
    5. &0 & (p-1)^4p^9(p+1)^2(p^2+p+1)/2\\
  \hline
    6. & (p-1)p^2 & (p-1)^3p^7(p+1)^2(p^2+p+1)/4 \\
  \hline
    7. & -p^2 & (p-1)^4p^7(p+1)^2(p^2+p+1)/4\\
  \hline 
    8. & -(p-1)p^2 & (p-1)^3p^7(p+1)^2(p^2+p+1)/4\\ 
  \hline
    9. & p^2 & (p-1)^4p^7(p+1)^2(p^2+p+1)/4\\
  \hline
    10. & 0 & (p-1)^3p^7(p+1)(p^2+p+1)/2\\
  \hline
    11. &0 & (p-1)^2p^7(p+1)(p^2+p+1)/2\\
  \hline
 \end{array}
\end{equation}
 \end{enumerate}
\end{theorem}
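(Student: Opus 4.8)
The plan is to assemble the theorem from the identities and orbit-by-orbit exponential sum computations of the previous sections; the conceptual input is already in place, so the remaining work is bookkeeping. I would separate the regimes $\xi\equiv 0\bmod p$ and $\xi\not\equiv 0\bmod p$.

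For $\xi=p\xi_0$ one has $e_{p^2}([x,p\xi_0])=e_p([x,\xi_0])$, which depends only on $x\bmod p$, while maximality of $x$ depends on $x\bmod p^2$; summing over the $p^{12}$ lifts of each $\bar x\in V(\bF_p)$ gives $\widehat{\one_{\max}}(p\xi_0)=p^{12}\sum_{\sO\bmod p}\dens(\sO)\,\Sigma_p(\sO,\xi_0)$, where $\dens(\sO)$ is $0$ or $1$ for the pure $\bmod p$ orbits, is given by Lemma~\ref{mod_p_density_lemma} for the five mixed orbits, and $\Sigma_p(\sO,\xi_0)=\sum_{\bar x\in\sO}e_p([\bar x,\xi_0])$ is the unweighted $\bmod p$ orbital exponential sum $S_p(x,\xi_0)/|\Stab_{G(\bF_p)}(x)|$ of \cite{TT16}. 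Substituting those sums and collecting the polynomial in $p$ for $\xi_0$ in each of the twenty $\bmod p$ orbits produces the first table; that $\widehat{\one_{\max}}(p\xi_0)$ is constant on the $G(\bF_p)$-orbit of $\xi_0$, being a character sum against a $G(\bF_p)$-invariant weight, justifies indexing by orbit.

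For $\xi\not\equiv 0\bmod p$, Lemma~\ref{full_mod_p_orbit_lemma} gives $\widehat{\one_{\max}}(\xi)=\sum_x\sM(x,\xi)$ with $x$ running over standard representatives of $\sO_{1^211},\sO_{1^22},\sO_{1^21^2},\sO_{2^2},\sO_{1^31},\sO_{1^4}$, the $\bmod p$ orbits carrying both maximal and non-maximal elements. By Lemma~\ref{action_set_lemma}, $\sM(x,\xi)=0$ unless $(\sO_x,\sO_\xi)$ appears in that table, so $\sO_\xi\in\{\sO_{D1^2},\sO_{D11},\sO_{D2},\sO_{Cs},\sO_{1^4}\}$; and Sections~\ref{O131_OCs_section} and~\ref{O14_O14_section} show that every exponential sum for the pairs $(\sO_{1^31},\sO_{Cs})$ and $(\sO_{1^4},\sO_{1^4})$ vanishes. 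Since these are the only pairings in which $\sO_\xi$ equals $\sO_{Cs}$ or $\sO_{1^4}$, it follows that $\widehat{\one_{\max}}(\xi)=0$ unless $\xi\bmod p\in\sO_{D1^2}\cup\sO_{D11}\cup\sO_{D2}$, which together with the previous paragraph yields the support claim. For $\xi$ in each of these three orbits I would then run over the $\bmod p^2$ orbits above it (the $23$, $20$, and $11$ orbits classified in Section~\ref{quotient_action_section}), fix the representative $\xi=\xi_0+p\xi_1$, and sum the relevant $\sM(x,\xi)$ over the paired time-domain orbits: $x\in\{\sO_{1^211},\sO_{1^22},\sO_{1^21^2},\sO_{1^31},\sO_{1^4}\}$ for $\sO_{D1^2}$; $x\in\{\sO_{1^21^2},\sO_{2^2},\sO_{1^4}\}$ for $\sO_{D11}$, the $\sO_{1^4}$ term being zero by Section~\ref{O14_OD11_section}; and $x\in\{\sO_{1^21^2},\sO_{2^2}\}$ for $\sO_{D2}$. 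Because the rows of the pairwise tables in Sections~\ref{O1211_OD12_section}--\ref{O22_OD2_section} are already indexed by this same enumeration of $\bmod p^2$ orbits of $\xi$, each value is a row-by-row addition and simplification of the $\sM$ columns, and the orbit sizes are those of Lemma~\ref{stabilizer_size_lemma}.

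The only genuine obstacle is arithmetic reliability: keeping the enumerations of $\bmod p^2$ orbits above $\sO_{D1^2}$, $\sO_{D11}$, $\sO_{D2}$ in exact bijection with the $\xi_1$-rows of the several pairwise tables, and correctly combining and simplifying the many polynomial-in-$p$ contributions, most delicately in the $\xi\equiv 0$ case where the answer is a density-weighted combination of all twenty $\bmod p$ orbital sums. This is the role of the accompanying Mathematica notebooks. As a final consistency check one can sum the tables weighted by the orbit sizes to recover $\|\widehat{\one_{\nonmax}}\|_1$, $\|\widehat{\one_{\nonmax}}\|_2^2$ and $|\supp\widehat{\one_{\nonmax}}|$ of Theorem~\ref{norm_theorem}, using $\widehat{\one_{\nonmax}}=p^{24}\delta_0-\widehat{\one_{\max}}$.
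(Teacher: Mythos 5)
Your proposal is correct and follows essentially the same route as the paper: the $\xi\equiv 0\bmod p$ case by density-weighted $\bmod p$ orbital sums from \cite{TT16}, the support restriction via Lemmas \ref{full_mod_p_orbit_lemma} and \ref{action_set_lemma} together with the vanishing of the $(\sO_{1^31},\sO_{Cs})$ and $(\sO_{1^4},\sO_{1^4})$ sums, and the tables by row-by-row addition of the $\sM(x,\xi)$ columns with orbit sizes from Lemma \ref{stabilizer_size_lemma}. The remaining work is exactly the bookkeeping the paper delegates to the accompanying notebooks.
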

\begin{proof}
Since $\one_{\max}$ is  $G(\zed/p^2\zed)$-invariant, it is the sum of the indicator functions of some $G(\zed/p^2\zed)$ orbits on $V(\zed/p^2\zed)$.

When $\xi \in \sO_0$ so that $\xi = p\xi_1$, 
\begin{align}
 \widehat{\one_{\max}}(p\xi_1) &= \sum_{x \in V(\zed/p^2\zed)} \one_{\max}(x)e_{p^2}([x,p\xi_1])\\
 \notag &= \sum_{\sO \in G(\zed/p\zed)\backslash V(\zed/p\zed)} \sum_{x \in \sO \bmod p} e_p([x, \xi_1]) \left(\sum_{x_1 \in V(\zed/p\zed)} \one_{\max}(x + px_1) \right).
\end{align}
The density \begin{equation}\mu(\sO)=p^{-12} \sum_{x_1 \in V(\zed/p\zed)} \one_{\max}(x + px_1)\end{equation} of maximal orbits above $x$ depends only on the orbit $\sO$, and is given in the following table.
 \begin{equation}
  \begin{array}{|l|l|}
   \hline
   \text{Orbit} & \text{Density}\\
   \hline
   \sO_{1^4} & \frac{p-1}{p}\\
   \hline
   \sO_{1^31} & \frac{p-1}{p}\\
   \hline
   \sO_{1^21^2} & \left(\frac{p-1}{p}\right)^2\\
   \hline
   \sO_{2^2} & \frac{p^2-1}{p^2}\\
   \hline
   \sO_{1^211} & \frac{p-1}{p}\\
   \hline
   \sO_{1^2 2} & \frac{p-1}{p}\\
   \hline
   \sO_{1111} & 1\\
   \hline
   \sO_{112} & 1\\
   \hline
   \sO_{22} & 1\\
   \hline
   \sO_{13} & 1\\
   \hline
   \sO_{4} & 1 \\
   \hline
  \end{array}
 \end{equation}
 The Fourier transform above $\sO_0$ was thus calculated according to the formula
 \begin{equation}
  \widehat{\one_{\max}}(p\xi_1) = p^{12}\sum_{\sO \in G(\zed/p\zed)\backslash V(\zed/p\zed)}\mu(\sO) \sum_{x \in \sO \bmod p} e_p([x, \xi_1]), 
 \end{equation}
 see \textbf{modp\_sums.nb}.
The orbital exponential sums $\sum_{x \in \sO} e_p([x, \xi_1])$ are obtained as the columns of the matrix $M$ in \cite{TT16} p.27, as a function of the orbits of $\xi_1$ under the $G(\zed/p\zed)$ action.
 The orbit sizes of the $\mod p$ orbits were also determined in \cite{TT16}.

 The orbit sizes of the orbits in $V/V_x$ under the $G_x$ action were obtained in Lemmas 19, 17, and 16. The orbit sizes of the orbits in $V(\zed/p^2\zed)$ under $G(\zed/p^2\zed)$ were obtained from the formula
  \begin{equation}
  \left|\sO_\xi \bmod p^2\right| = \left|\sO_{\xi_0 \bmod p}\right| \cdot p^{\dim V_\xi} \cdot \left|\sO_{\xi_1} \subset V/V_\xi\right|,
 \end{equation}
 from Lemma \ref{stabilizer_size_lemma}. 
 
 When $\xi \not \equiv 0 \bmod p$, it was verified in Lemma \ref{full_mod_p_orbit_lemma} that for $\sO \in G(\zed/p\zed)\backslash V(\zed/p\zed)$ with density of maximal elements 0 or 1,
 \begin{equation}
  \sum_{\substack{x \in V(\zed/p^2\zed)\\ x \in \sO \bmod p}}\one_{\max}(x)e_{p^2}([x,\xi]) = \sM(x, \xi) = 0.
 \end{equation}
Hence, in the formula
\begin{align}
 \widehat{\one_{\max}}(\xi) &= \sum_{x \in V(\zed/p^2\zed)}\one_{\max}(x)e_{p^2}([x, \xi])\\
 \notag &= \sum_{\sO \in G(\zed/p\zed)\backslash V(\zed/p\zed)}\sM(x, \xi)
\end{align}
the sum may be restricted to $\sO \in \{\sO_{1^211}, \sO_{1^22}, \sO_{1^21^2}, \sO_{2^2}, \sO_{1^31}, \sO_{1^4}\}$. This sum is further restricted, since by Lemma \ref{orb_exp_sum_lemma}, $\sM(x, \xi) = 0$ if $G_{x, \xi} = \emptyset$. This proves that $\widehat{\one_{\max}}(\xi) = 0$ unless $\xi \in \sO_0, \sO_{D1^2}, \sO_{D11}, \sO_{D2}, \sO_{Cs}$ or $\sO_{1^4}$. For each of those pairs $x, \xi\neq 0 \bmod p$ such that $G_{x, \xi} \neq \emptyset$, $\sM(x, \xi)$ was calculated.  This demonstrated that in fact the Fourier transform vanishes over $\sO_{Cs}$ and $\sO_{1^4}$, also.  The Fourier transform displayed is the result of adding the columns $\sM(x, \xi)$ from the previous section, Sections \ref{O1211_OD12_section}, \ref{O122_OD12_section}, \ref{O1212_OD12_section}, \ref{O131_OD12_section} and \ref{O14_OD12_section} for the Fourier transform above $\sO_{D1^2}$, Sections \ref{O22_OD11_section} and \ref{O1212_OD11_section} for the Fourier transform above $\sO_{D11}$, and Sections \ref{O22_OD2_section} and \ref{O1212_OD2_section} for the Fourier transform above $\sO_{D2}$.

For the calculations in this proof, see \textbf{summation.nb}.

\end{proof}

\begin{proof}[Proof of Theorem \ref{norm_theorem}]
 The Fourier transform of $\one_{\nonmax}$ is $p^{24}\delta_{\xi = 0} - \widehat{\one_{\max}}(\xi)$. 
The formulas obtained in Theorem \ref{norm_theorem} are the result of combining the formulas for the Fourier transform with the size of the orbit of each frequency.  These calculations are performed in \textbf{summation.nb}.
\end{proof}

\appendix
\section{Computation of annihilator subspaces}\label{annihilator_appendix}
The annihilator subspaces for standard representatives are as follows.  For the computations in this Appendix, see \textbf{annihilator\_spaces.nb}.

\begin{enumerate}
 \item Case of $\sO_{D1^2}$, $x = (0, w^2)$. 
 {\tiny
 \begin{align*}
  &\left(I + p\begin{pmatrix} b_{11} & b_{12} \\ b_{21} & b_{22}\end{pmatrix} \right)\left(\left( I + p \begin{pmatrix} a_{11} & a_{12} & a_{13}\\ a_{21} & a_{22} & a_{23}\\ a_{31} & a_{32} & a_{33}\end{pmatrix}\right) \begin{array}{l} \begin{pmatrix} 0&0&0\\0&0&0\\0&0&0\end{pmatrix}\\ \begin{pmatrix} 0&0&0\\0&0&0\\0&0&1\end{pmatrix} \end{array} \left(I + p \begin{pmatrix} a_{11} & a_{21} & a_{31}\\ a_{12} & a_{22} & a_{32}\\ a_{13} & a_{23} & a_{33}\end{pmatrix}\right)\right)\\
  &= \begin{array}{l} \begin{pmatrix} 0&0&0\\&0&0\\&&0\end{pmatrix}\\ \begin{pmatrix} 0&0&0\\&0&0\\&&1\end{pmatrix} \end{array} + p \begin{array}{l} \begin{pmatrix} 0&0&0\\&0&0\\&&b_{12}\end{pmatrix}\\ \begin{pmatrix} 0&0&a_{13}\\&0&a_{23}\\&&2 a_{33}+b_{22}\end{pmatrix} \end{array}.
 \end{align*}
}
Thus
\begin{equation*}
 V_x = \begin{array}{l} \begin{pmatrix} 0&0&0\\&0&0\\&&*\end{pmatrix}\\ \begin{pmatrix} 0&0& *\\&0& *\\  && *\end{pmatrix} \end{array}.
\end{equation*}
 \item Case of $\sO_{D11}$, $x = (0, vw)$.
  {\tiny
 \begin{align*}
  &\left(I + p\begin{pmatrix} b_{11} & b_{12} \\ b_{21} & b_{22}\end{pmatrix} \right)\left(\left( I + p \begin{pmatrix} a_{11} & a_{12} & a_{13}\\ a_{21} & a_{22} & a_{23}\\ a_{31} & a_{32} & a_{33}\end{pmatrix}\right) \begin{array}{l} \begin{pmatrix} 0&0&0\\0&0&0\\0&0&0\end{pmatrix}\\ \begin{pmatrix} 0&0&0\\0&0&\frac{1}{2}\\0&\frac{1}{2}&0\end{pmatrix} \end{array} \left(I + p \begin{pmatrix} a_{11} & a_{21} & a_{31}\\ a_{12} & a_{22} & a_{32}\\ a_{13} & a_{23} & a_{33}\end{pmatrix}\right)\right)\\
  &= \begin{array}{l} \begin{pmatrix} 0&0&0\\&0&0\\&&0\end{pmatrix}\\ \begin{pmatrix} 0&0&0\\&0&\frac{1}{2}\\&&0\end{pmatrix} \end{array} + p \begin{array}{l} \begin{pmatrix} 0&0&0\\&0&\frac{b_{12}}{2}\\&&0\end{pmatrix}\\ \begin{pmatrix} 0&\frac{a_{13}}{2}&\frac{a_{12}}{2}\\&a_{23}&\frac{a_{22} + a_{33} + b_{22}}{2}\\&&a_{32}\end{pmatrix} \end{array}.
 \end{align*}
}
Thus
\begin{equation*}
 V_x = \begin{array}{l} \begin{pmatrix} 0&0&0\\&0&*\\&&0\end{pmatrix}\\ \begin{pmatrix} 0&*& *\\&*& *\\  & & *\end{pmatrix} \end{array}.
\end{equation*}
\item Case of $\sO_{D2}$, $x = (0, v^2 - \ell w^2)$.
  {\tiny
 \begin{align*}
  &\left(I + p\begin{pmatrix} b_{11} & b_{12} \\ b_{21} & b_{22}\end{pmatrix} \right)\left(\left( I + p \begin{pmatrix} a_{11} & a_{12} & a_{13}\\ a_{21} & a_{22} & a_{23}\\ a_{31} & a_{32} & a_{33}\end{pmatrix}\right) \begin{array}{l} \begin{pmatrix} 0&0&0\\0&0&0\\0&0&0\end{pmatrix}\\ \begin{pmatrix} 0&0&0\\0&1&0\\0&0&-\ell\end{pmatrix} \end{array} \left(I + p \begin{pmatrix} a_{11} & a_{21} & a_{31}\\ a_{12} & a_{22} & a_{32}\\ a_{13} & a_{23} & a_{33}\end{pmatrix}\right)\right)\\
  &= \begin{array}{l} \begin{pmatrix} 0&0&0\\&0&0\\&&0\end{pmatrix}\\ \begin{pmatrix} 0&0&0\\&1&0\\&&-\ell\end{pmatrix} \end{array} + p \begin{array}{l} \begin{pmatrix} 0&0&0\\&b_{12}&0\\&&- b_{12}\ell\end{pmatrix}\\ \begin{pmatrix} 0&a_{12}&- a_{13}\ell\\&2a_{22} + b_{22}&- a_{23}\ell + a_{32}\\&&-2  a_{33}\ell -  b_{22}\ell\end{pmatrix} \end{array}.
 \end{align*}
}
Thus
\begin{equation*}
 V_x = \begin{array}{l} \begin{pmatrix} 0&0&0\\&z&0\\&&- z\ell\end{pmatrix}\\ \begin{pmatrix} 0&*& *\\&*& *\\  & & *\end{pmatrix} \end{array}.
\end{equation*}

\item Case of $\sO_{Dns}$, $x = (0, u^2 - vw)$.
  {\tiny
 \begin{align*}
  &\left(I + p\begin{pmatrix} b_{11} & b_{12} \\ b_{21} & b_{22}\end{pmatrix} \right)\left(\left( I + p \begin{pmatrix} a_{11} & a_{12} & a_{13}\\ a_{21} & a_{22} & a_{23}\\ a_{31} & a_{32} & a_{33}\end{pmatrix}\right) \begin{array}{l} \begin{pmatrix} 0&0&0\\0&0&0\\0&0&0\end{pmatrix}\\ \begin{pmatrix} 1&0&0\\0&0&-\frac{1}{2}\\0&-\frac{1}{2}&0\end{pmatrix} \end{array} \left(I + p \begin{pmatrix} a_{11} & a_{21} & a_{31}\\ a_{12} & a_{22} & a_{32}\\ a_{13} & a_{23} & a_{33}\end{pmatrix}\right)\right)\\
  &= \begin{array}{l} \begin{pmatrix} 0&0&0\\&0&0\\&&0\end{pmatrix}\\ \begin{pmatrix} 1&0&0\\&0&-\frac{1}{2}\\&&0\end{pmatrix} \end{array} + p \begin{array}{l} \begin{pmatrix} b_{12}&0&0\\&0&-\frac{b_{12}}{2}\\&&0\end{pmatrix}\\ \begin{pmatrix} 2 a_{11}+b_{22}&- \frac{a_{13}}{2} + a_{21}&-\frac{a_{12}}{2} + a_{31}\\&-a_{23}&-\frac{a_{22} + a_{33}+b_{22}}{2}\\&&-a_{32}\end{pmatrix} \end{array}.
 \end{align*}
}
Thus
\begin{equation*}
 V_x = \begin{array}{l} \begin{pmatrix} z&0&0\\&0&-\frac{z}{2}\\&&0\end{pmatrix}\\ \begin{pmatrix} *&*& *\\&*& *\\  & & *\end{pmatrix} \end{array}.
\end{equation*}

\item Case of $\sO_{Cs}$, $x = (w^2, vw)$.
  {\tiny
 \begin{align*}
  &\left(I + p\begin{pmatrix} b_{11} & b_{12} \\ b_{21} & b_{22}\end{pmatrix} \right)\left(\left( I + p \begin{pmatrix} a_{11} & a_{12} & a_{13}\\ a_{21} & a_{22} & a_{23}\\ a_{31} & a_{32} & a_{33}\end{pmatrix}\right) \begin{array}{l} \begin{pmatrix} 0&0&0\\0&0&0\\0&0&1\end{pmatrix}\\ \begin{pmatrix} 0&0&0\\0&0&\frac{1}{2}\\0&\frac{1}{2}&0\end{pmatrix} \end{array} \left(I + p \begin{pmatrix} a_{11} & a_{21} & a_{31}\\ a_{12} & a_{22} & a_{32}\\ a_{13} & a_{23} & a_{33}\end{pmatrix}\right)\right)\\
  &= \begin{array}{l} \begin{pmatrix} 0&0&0\\&0&0\\&&1\end{pmatrix}\\ \begin{pmatrix} 0&0&0\\&0&\frac{1}{2}\\&&0\end{pmatrix} \end{array} + p \begin{array}{l} \begin{pmatrix} 0&0&a_{13}\\&0&a_{23} + \frac{b_{12}}{2}\\&&2a_{33} + b_{11}\end{pmatrix}\\ \begin{pmatrix} 0&\frac{a_{13}}{2}&\frac{a_{12}}{2}\\&a_{23}&\frac{a_{22} + a_{33} + b_{22}}{2}\\&&a_{32}+b_{21}\end{pmatrix} \end{array}.
 \end{align*}
}
Thus
\begin{equation*}
 V_x = \begin{array}{l} \begin{pmatrix} 0&0&z\\&0& *\\&& *\end{pmatrix}\\ \begin{pmatrix} 0& \frac{z}{2}& *\\& * & *\\  & & *\end{pmatrix} \end{array}.
\end{equation*}

\item Case of $\sO_{Cns}$, $x = (vw, uw)$.
  {\tiny
 \begin{align*}
  &\left(I + p\begin{pmatrix} b_{11} & b_{12} \\ b_{21} & b_{22}\end{pmatrix} \right)\left(\left( I + p \begin{pmatrix} a_{11} & a_{12} & a_{13}\\ a_{21} & a_{22} & a_{23}\\ a_{31} & a_{32} & a_{33}\end{pmatrix}\right) \begin{array}{l} \begin{pmatrix} 0&0&0\\0&0&\frac{1}{2}\\0&\frac{1}{2}&0\end{pmatrix}\\ \begin{pmatrix} 0&0&\frac{1}{2}\\0&0&0\\\frac{1}{2}&0&0\end{pmatrix} \end{array} \left(I + p \begin{pmatrix} a_{11} & a_{21} & a_{31}\\ a_{12} & a_{22} & a_{32}\\ a_{13} & a_{23} & a_{33}\end{pmatrix}\right)\right)\\
  &= \begin{array}{l} \begin{pmatrix} 0&0&0\\&0&\frac{1}{2}\\&&0\end{pmatrix}\\ \begin{pmatrix} 0&0&\frac{1}{2}\\&0&0\\&&0\end{pmatrix} \end{array} + p \begin{array}{l} \begin{pmatrix} 0&\frac{a_{13}}{2}&\frac{a_{12} + b_{12}}{2}\\&a_{23}&\frac{ a_{22} + a_{33}+b_{11}}{2}\\&&a_{32}\end{pmatrix}\\ \begin{pmatrix} a_{13}&\frac{a_{23}}{2}&\frac{a_{11} + a_{33} + b_{22}}{2}\\&0&\frac{  a_{21}+b_{21}}{2}\\&&a_{31}\end{pmatrix} \end{array}.
 \end{align*}
}
Thus
\begin{equation*}
 V_x = \begin{array}{l} \begin{pmatrix} 0&\frac{z}{2}&*\\&y& *\\&& *\end{pmatrix}\\ \begin{pmatrix}z & \frac{y}{2}& *\\& 0 & *\\  & & *\end{pmatrix} \end{array}.
\end{equation*}

\item Case of $\sO_{B11}$, $x = (v^2, w^2)$.
  {\tiny
 \begin{align*}
  &\left(I + p\begin{pmatrix} b_{11} & b_{12} \\ b_{21} & b_{22}\end{pmatrix} \right)\left(\left( I + p \begin{pmatrix} a_{11} & a_{12} & a_{13}\\ a_{21} & a_{22} & a_{23}\\ a_{31} & a_{32} & a_{33}\end{pmatrix}\right) \begin{array}{l} \begin{pmatrix} 0&0&0\\0&1&0\\0&0&0\end{pmatrix}\\ \begin{pmatrix} 0&0&0\\0&0&0\\0&0&1\end{pmatrix} \end{array} \left(I + p \begin{pmatrix} a_{11} & a_{21} & a_{31}\\ a_{12} & a_{22} & a_{32}\\ a_{13} & a_{23} & a_{33}\end{pmatrix}\right)\right)\\
  &= \begin{array}{l} \begin{pmatrix} 0&0&0\\&1&0\\&&0\end{pmatrix}\\ \begin{pmatrix} 0&0&0\\&0&0\\&&1\end{pmatrix} \end{array} + p \begin{array}{l} \begin{pmatrix} 0& a_{12}&0\\& 2 a_{22}+b_{11} &a_{32}\\&& b_{12}\end{pmatrix}\\ \begin{pmatrix} 0&0&a_{13}\\&b_{21}&a_{23}\\&& 2a_{33}+b_{22}\end{pmatrix} \end{array}.
 \end{align*}
}
Thus
\begin{equation*}
 V_x = \begin{array}{l} \begin{pmatrix} 0& *& 0\\& *& *\\&& *\end{pmatrix}\\ \begin{pmatrix} 0& 0& *\\& * & *\\  & & *\end{pmatrix} \end{array}.
\end{equation*}

\item Case of $\sO_{B2}$, $x = (vw, v^2 + \ell w^2)$.
  {\tiny
 \begin{align*}
  &\left(I + p\begin{pmatrix} b_{11} & b_{12} \\ b_{21} & b_{22}\end{pmatrix} \right)\left(\left( I + p \begin{pmatrix} a_{11} & a_{12} & a_{13}\\ a_{21} & a_{22} & a_{23}\\ a_{31} & a_{32} & a_{33}\end{pmatrix}\right) \begin{array}{l} \begin{pmatrix} 0&0&0\\0&0&\frac{1}{2}\\0&\frac{1}{2}&0\end{pmatrix}\\ \begin{pmatrix} 0&0&0\\0&1&0\\0&0&\ell\end{pmatrix} \end{array} \left(I + p \begin{pmatrix} a_{11} & a_{21} & a_{31}\\ a_{12} & a_{22} & a_{32}\\ a_{13} & a_{23} & a_{33}\end{pmatrix}\right)\right)\\
  &= \begin{array}{l} \begin{pmatrix} 0&0&0\\&0&\frac{1}{2}\\&&0\end{pmatrix}\\ \begin{pmatrix} 0&0&0\\&1&0\\&&\ell\end{pmatrix} \end{array} + p \begin{array}{l} \begin{pmatrix} 0& \frac{a_{13}}{2}& \frac{a_{12}}{2}\\&   a_{23}+b_{12} &  \frac{a_{22} + a_{33}+b_{11}}{2}\\&&   a_{32}+b_{12}\ell\end{pmatrix}\\ \begin{pmatrix} 0&a_{12}& a_{13}\ell\\& 2 a_{22}+b_{22} &  a_{23}\ell + a_{32} + \frac{b_{21}}{2}\\&&   2 a_{33}\ell+b_{22}\ell\end{pmatrix} \end{array}.
 \end{align*}
}
Thus
\begin{equation*}
 V_x = \begin{array}{l} \begin{pmatrix} 0& \frac{z}{2}& \frac{y}{2}\\& *& *\\&& *\end{pmatrix}\\ \begin{pmatrix} 0& y&  z\ell \\& * & *\\  & & *\end{pmatrix} \end{array}.
\end{equation*}

\item Case of $\sO_{1^4}$, $(w^2, uw + v^2)$.
  {\tiny
 \begin{align*}
  &\left(I + p\begin{pmatrix} b_{11} & b_{12} \\ b_{21} & b_{22}\end{pmatrix} \right)\left(\left( I + p \begin{pmatrix} a_{11} & a_{12} & a_{13}\\ a_{21} & a_{22} & a_{23}\\ a_{31} & a_{32} & a_{33}\end{pmatrix}\right) \begin{array}{l} \begin{pmatrix} 0&0&0\\0&0&0\\0&0&1\end{pmatrix}\\ \begin{pmatrix} 0&0&\frac{1}{2}\\0&1&0\\\frac{1}{2}&0&0\end{pmatrix} \end{array} \left(I + p \begin{pmatrix} a_{11} & a_{21} & a_{31}\\ a_{12} & a_{22} & a_{32}\\ a_{13} & a_{23} & a_{33}\end{pmatrix}\right)\right)\\
  &= \begin{array}{l} \begin{pmatrix} 0&0&0\\&0&0\\&&1\end{pmatrix}\\ \begin{pmatrix} 0&0&\frac{1}{2}\\&1&0\\&&0\end{pmatrix} \end{array} + p \begin{array}{l} \begin{pmatrix} 0& 0& a_{13} + \frac{b_{12}}{2}\\&b_{12}  & a_{23} \\&&  2a_{33}+b_{11}\end{pmatrix}\\ \begin{pmatrix} a_{13}&a_{12} + \frac{a_{23}}{2}& \frac{a_{11} + a_{33} + b_{22}}{2}\\& 2 a_{22}+b_{22} &  \frac{a_{21}}{2}+a_{32}\\&& a_{31}+b_{21}\end{pmatrix} \end{array}.
 \end{align*}
}
Thus
\begin{equation*}
 V_x = \begin{array}{l} \begin{pmatrix} 0& 0& y + \frac{z}{2}\\& z& *\\&& *\end{pmatrix}\\ \begin{pmatrix} y& *& * \\& * & *\\  & & *\end{pmatrix} \end{array}.
\end{equation*}

\item Case of $\sO_{1^31}$, $(vw, uw + v^2)$.
  {\tiny
 \begin{align*}
  &\left(I + p\begin{pmatrix} b_{11} & b_{12} \\ b_{21} & b_{22}\end{pmatrix} \right)\left(\left( I + p \begin{pmatrix} a_{11} & a_{12} & a_{13}\\ a_{21} & a_{22} & a_{23}\\ a_{31} & a_{32} & a_{33}\end{pmatrix}\right) \begin{array}{l} \begin{pmatrix} 0&0&0\\0&0&\frac{1}{2}\\0&\frac{1}{2}&0\end{pmatrix}\\ \begin{pmatrix} 0&0&\frac{1}{2}\\0&1&0\\\frac{1}{2}&0&0\end{pmatrix} \end{array} \left(I + p \begin{pmatrix} a_{11} & a_{21} & a_{31}\\ a_{12} & a_{22} & a_{32}\\ a_{13} & a_{23} & a_{33}\end{pmatrix}\right)\right)\\
  &= \begin{array}{l} \begin{pmatrix} 0&0&0\\&0&\frac{1}{2}\\&&0\end{pmatrix}\\ \begin{pmatrix} 0&0&\frac{1}{2}\\&1&0\\&&0\end{pmatrix} \end{array} + p \begin{array}{l} \begin{pmatrix} 0& \frac{a_{13}}{2}&  \frac{a_{12} + b_{12}}{2}\\&a_{23}+b_{12}   & \frac{a_{22} + a_{33} +b_{11}}{2} \\&& a_{32}\end{pmatrix}\\ \begin{pmatrix} a_{13}&a_{12} + \frac{a_{23}}{2}& \frac{a_{11} + a_{33} + b_{22}}{2}\\& 2 a_{22}+b_{22} & a_{32} + \frac{a_{21}+b_{21}}{2}\\&& a_{31}\end{pmatrix} \end{array}.
 \end{align*}
}
Thus
\begin{equation*}
 V_x = \begin{array}{l} \begin{pmatrix} 0& \frac{z}{2}& *\\& *& *\\&& *\end{pmatrix}\\ \begin{pmatrix} z& *& * \\& * & *\\  & & *\end{pmatrix} \end{array}.
\end{equation*}

\item Case of $\sO_{1^21^2}$, $(w^2, uv)$.
  {\tiny
 \begin{align*}
  &\left(I + p\begin{pmatrix} b_{11} & b_{12} \\ b_{21} & b_{22}\end{pmatrix} \right)\left(\left( I + p \begin{pmatrix} a_{11} & a_{12} & a_{13}\\ a_{21} & a_{22} & a_{23}\\ a_{31} & a_{32} & a_{33}\end{pmatrix}\right) \begin{array}{l} \begin{pmatrix} 0&0&0\\0&0&0\\0&0&1\end{pmatrix}\\ \begin{pmatrix} 0&\frac{1}{2}&0\\\frac{1}{2}&0&0\\0&0&0\end{pmatrix} \end{array} \left(I + p \begin{pmatrix} a_{11} & a_{21} & a_{31}\\ a_{12} & a_{22} & a_{32}\\ a_{13} & a_{23} & a_{33}\end{pmatrix}\right)\right)\\
  &= \begin{array}{l} \begin{pmatrix} 0&0&0\\&0&0\\&&0\end{pmatrix}\\ \begin{pmatrix} 0&\frac{1}{2}&0\\&0&0\\&&0\end{pmatrix} \end{array} + p \begin{array}{l} \begin{pmatrix} 0& \frac{b_{12}}{2}&  a_{13}\\&0  & a_{23} \\&&  2a_{33}+b_{11}\end{pmatrix}\\ \begin{pmatrix} a_{12}&\frac{a_{11} + a_{22} + b_{22}}{2}& \frac{a_{32}}{2}\\&a_{21} &  \frac{a_{31}}{2}\\&& b_{21}\end{pmatrix} \end{array}.
 \end{align*}
}
Thus
\begin{equation*}
 V_x = \begin{array}{l} \begin{pmatrix} 0& *& *\\& 0& *\\&& *\end{pmatrix}\\ \begin{pmatrix} *& *& * \\& * & *\\  & & *\end{pmatrix} \end{array}.
\end{equation*}

\item Case of $\sO_{2^2}$, $(w^2, u^2 - \ell v^2)$.
  {\tiny
 \begin{align*}
  &\left(I + p\begin{pmatrix} b_{11} & b_{12} \\ b_{21} & b_{22}\end{pmatrix} \right)\left(\left( I + p \begin{pmatrix} a_{11} & a_{12} & a_{13}\\ a_{21} & a_{22} & a_{23}\\ a_{31} & a_{32} & a_{33}\end{pmatrix}\right) \begin{array}{l} \begin{pmatrix} 0&0&0\\0&0&0\\0&0&1\end{pmatrix}\\ \begin{pmatrix} 1&0&0\\0&-\ell&0\\0&0&0\end{pmatrix} \end{array} \left(I + p \begin{pmatrix} a_{11} & a_{21} & a_{31}\\ a_{12} & a_{22} & a_{32}\\ a_{13} & a_{23} & a_{33}\end{pmatrix}\right)\right)\\
  &= \begin{array}{l} \begin{pmatrix} 0&0&0\\&0&0\\&&1\end{pmatrix}\\ \begin{pmatrix} 1&0&0\\&-\ell &0\\&&0\end{pmatrix} \end{array} + p \begin{array}{l} \begin{pmatrix} b_{12}& 0&  a_{13}\\&- b_{12}\ell  & a_{23} \\&&  2a_{33}+b_{11}\end{pmatrix}\\ \begin{pmatrix} 2a_{11}+b_{22}&- a_{12}\ell+a_{21}& a_{31}\\& -2 a_{22}\ell- b_{22}\ell & - a_{32}\ell\\&& b_{21}\end{pmatrix} \end{array}.
 \end{align*}
}
Thus
\begin{equation*}
 V_x = \begin{array}{l} \begin{pmatrix} z& 0& *\\& - z\ell& *\\&& *\end{pmatrix}\\ \begin{pmatrix} *& *& * \\& * & *\\  & & *\end{pmatrix} \end{array}.
\end{equation*}

\item Case of $\sO_{1^211}$, $(v^2 -w^2, uw)$.
  {\tiny
 \begin{align*}
  &\left(I + p\begin{pmatrix} b_{11} & b_{12} \\ b_{21} & b_{22}\end{pmatrix} \right)\left(\left( I + p \begin{pmatrix} a_{11} & a_{12} & a_{13}\\ a_{21} & a_{22} & a_{23}\\ a_{31} & a_{32} & a_{33}\end{pmatrix}\right) \begin{array}{l} \begin{pmatrix} 0&0&0\\0&1&0\\0&0&-1\end{pmatrix}\\ \begin{pmatrix} 0&0&\frac{1}{2}\\0&0&0\\\frac{1}{2}&0&0\end{pmatrix} \end{array} \left(I + p \begin{pmatrix} a_{11} & a_{21} & a_{31}\\ a_{12} & a_{22} & a_{32}\\ a_{13} & a_{23} & a_{33}\end{pmatrix}\right)\right)\\
  &= \begin{array}{l} \begin{pmatrix} 0&0&0\\&1&0\\&&-1\end{pmatrix}\\ \begin{pmatrix} 0&0&\frac{1}{2}\\&0 &0\\&&0\end{pmatrix} \end{array} + p \begin{array}{l} \begin{pmatrix} 0& a_{12}&  -a_{13} + \frac{b_{12}}{2}\\& 2a_{22} +b_{11} & -a_{23}+a_{32} \\&& -2a_{33}-b_{11}\end{pmatrix}\\ \begin{pmatrix}a_{13}&\frac{a_{23}}{2}& \frac{a_{11} + a_{33} + b_{22}}{2}\\&b_{21} & \frac{a_{21}}{2}\\&& a_{31}-b_{21}\end{pmatrix} \end{array}.
 \end{align*}
}
Thus
\begin{equation*}
 V_x = \begin{array}{l} \begin{pmatrix} 0& *& *\\& *& *\\&& *\end{pmatrix}\\ \begin{pmatrix} *& *& * \\& * & *\\  & & *\end{pmatrix} \end{array}.
\end{equation*}

\item Case of $\sO_{1^22}$, $(v^2 - \ell w^2, uw)$.
  {\tiny
 \begin{align*}
  &\left(I + p\begin{pmatrix} b_{11} & b_{12} \\ b_{21} & b_{22}\end{pmatrix} \right)\left(\left( I + p \begin{pmatrix} a_{11} & a_{12} & a_{13}\\ a_{21} & a_{22} & a_{23}\\ a_{31} & a_{32} & a_{33}\end{pmatrix}\right) \begin{array}{l} \begin{pmatrix} 0&0&0\\0&1&0\\0&0&-\ell\end{pmatrix}\\ \begin{pmatrix} 0&0&\frac{1}{2}\\0&0&0\\\frac{1}{2}&0&0\end{pmatrix} \end{array} \left(I + p \begin{pmatrix} a_{11} & a_{21} & a_{31}\\ a_{12} & a_{22} & a_{32}\\ a_{13} & a_{23} & a_{33}\end{pmatrix}\right)\right)\\
  &= \begin{array}{l} \begin{pmatrix} 0&0&0\\&1&0\\&&-\ell\end{pmatrix}\\ \begin{pmatrix} 0&0&\frac{1}{2}\\&0 &0\\&&0\end{pmatrix} \end{array} + p \begin{array}{l} \begin{pmatrix} 0& a_{12}&  - a_{13}\ell + \frac{b_{12}}{2}\\& 2a_{22}+b_{11}  & - a_{23}\ell+ a_{32} \\&& -2 a_{33}\ell- b_{11}\ell\end{pmatrix}\\ \begin{pmatrix}a_{13}&\frac{a_{23}}{2}& \frac{a_{11} + a_{33} + b_{22}}{2}\\&b_{21} & \frac{a_{21}}{2}\\&& a_{31}- b_{21}\ell\end{pmatrix} \end{array}.
 \end{align*}
}
Thus
\begin{equation*}
 V_x = \begin{array}{l} \begin{pmatrix} 0& *& *\\& *& *\\&& *\end{pmatrix}\\ \begin{pmatrix} *& *& * \\& * & *\\  & & *\end{pmatrix} \end{array}.
\end{equation*}
\end{enumerate}

\section{Determination of action sets}\label{action_set_appendix}
For each of the pairings listed in Lemma \ref{action_set_lemma} the action set $G_{x,\xi}$ is calculated. The calculations in this Appendix are performed in \textbf{action\_sets.nb}.
\begin{enumerate}
 \item [1.] Case $(\sO_{1^211}, \sO_{D1^2})$: For standard representatives 
 \begin{equation}
  x = \begin{pmatrix} 0&0&0\\ & 1 &0\\ &&-1\end{pmatrix} \begin{pmatrix} 0 &0 &\frac{1}{2}\\ &0 &0\\&&0\end{pmatrix}, \qquad \xi = \begin{pmatrix} 1 &0 &0\\ &0&0\\ &&0\end{pmatrix} \begin{pmatrix}0&0&0\\&0&0\\&&0\end{pmatrix}
 \end{equation}
 and 
 \begin{equation}
  V_x = \begin{pmatrix} 0 & * & *\\ & * & *\\ & & *\end{pmatrix} \begin{pmatrix} * & * & *\\ & * & *\\ && *\end{pmatrix}, \qquad V_\xi = \begin{pmatrix} * & * & * \\ & 0 & 0 \\ && 0\end{pmatrix}\begin{pmatrix} * & 0 & 0\\ & 0 &0\\ && 0\end{pmatrix}.
 \end{equation}
 Since the second form is eliminated by $V_x$, in the action on $\xi$, it may be assumed that the $\GL_2$ action acts on the second quadratic form by an arbitrary scalar.  
 
 The action on $\xi$ is thus given by 
 {\tiny
 \begin{equation*}
 \left(\begin{pmatrix} a_{11} & a_{12} & a_{13}\\ a_{21} & a_{22} & a_{23}\\ a_{31} & a_{32} & a_{33}\end{pmatrix},
  \begin{pmatrix} b_{11} & b_{12} \\  & b_{22}\end{pmatrix}\right) \cdot \xi =  \begin{pmatrix} b_{11}a_{11}^2 & b_{11}a_{11}a_{21} & b_{11}a_{11}a_{31}\\ & b_{11}a_{21}^2  & b_{11}a_{21}a_{31}\\ && b_{11}a_{31}^2  \end{pmatrix} \begin{pmatrix} 0 & 0 & 0\\ & 0 & 0\\ && 0 \end{pmatrix}.
 \end{equation*}
} This forces $ a_{21} = a_{31} = 0$, so the action set is
\begin{equation}
 G_{x, \xi}^t = \begin{pmatrix} * & * & * \\ & * & *\\ & * & *\end{pmatrix} \begin{pmatrix} * & *\\ & *\end{pmatrix}, \qquad |G_{x, \xi}| = (p-1)^5 p^4 (p+1).
\end{equation}
 \item [2.] Case $(\sO_{1^2 2}, \sO_{D1^2})$: For standard representatives
 \begin{equation}
  x = \begin{pmatrix} 0&0&0\\ & 1 &0\\ &&-\ell\end{pmatrix} \begin{pmatrix} 0 &0 &\frac{1}{2}\\ &0 &0\\&&0\end{pmatrix}, \qquad \xi = \begin{pmatrix} 1 &0 &0\\ &0&0\\ &&0\end{pmatrix} \begin{pmatrix}0&0&0\\&0&0\\&&0\end{pmatrix}
 \end{equation}
 and 
 \begin{equation}
  V_x = \begin{pmatrix} 0 & * & *\\ & * & *\\ & & *\end{pmatrix} \begin{pmatrix} * & * & *\\ & * & *\\ && *\end{pmatrix}, \qquad V_\xi = \begin{pmatrix} * & * & * \\ & 0 & 0 \\ && 0\end{pmatrix}\begin{pmatrix} * & 0 & 0\\ & 0 &0\\ && 0\end{pmatrix}
 \end{equation}
the action set is, as for 1., 
\begin{equation}
 G_{x, \xi}^t = \begin{pmatrix} * & * & * \\ & * & *\\ & * & *\end{pmatrix} \begin{pmatrix} * & *\\ & *\end{pmatrix}, \qquad |G_{x, \xi}| = (p-1)^5 p^4 (p+1).
\end{equation}
 \item [3.] Case $(\sO_{2^2}, \sO_{D11})$: For standard representatives
 \begin{equation}
  x = \begin{pmatrix} 0 & 0 &0\\ &0&0\\ &&1\end{pmatrix}\begin{pmatrix}1 &0&0\\ &-\ell &0\\&&0\end{pmatrix}, \qquad \xi = \begin{pmatrix} 0 & 1 &0\\ &0&0\\&&0\end{pmatrix}\begin{pmatrix} 0&0&0\\ &0&0\\&&0\end{pmatrix}
 \end{equation}
and
\begin{equation}
 V_x = \begin{pmatrix} z &0&*\\&-z \ell  &*\\ &&*\end{pmatrix}\begin{pmatrix} * & * & *\\ & * & *\\ && *\end{pmatrix}, \qquad V_\xi= \begin{pmatrix} * & * & *\\ & * & *\\ && 0\end{pmatrix} \begin{pmatrix} 0 & * & 0 \\ & 0 & 0\\ && 0\end{pmatrix}.
\end{equation}
 Since the second form is eliminated by $V_x$, in the action on $\xi$, it may be assumed that the $\GL_2$ action acts on the second quadratic form by an arbitrary scalar.  
 
 The action on $\xi$ is thus given by {\tiny
 \begin{align*}
 &\left(\begin{pmatrix} a & b & a_{13}\\ c & d & a_{23}\\ a_{31} & a_{32} & a_{33}\end{pmatrix},
  \begin{pmatrix} b_{11} & b_{12} \\  & b_{22}\end{pmatrix}\right) \cdot \xi  =  \begin{pmatrix} 2b_{11}ab  & b_{11}(ad + bc) & b_{11}(aa_{32} + ba_{31})\\ & 2b_{11}cd  & b_{11}(ca_{32} + da_{31})\\ && 2b_{11}a_{31}a_{32}  \end{pmatrix} \begin{pmatrix} 0 & 0 & 0\\ & 0 & 0\\ && 0 \end{pmatrix}.
 \end{align*}
} 
This forces $ a_{31} = a_{32} = 0$ and $ab = cd\ell $, so that
\begin{equation}
 G_{x, \xi}^t = \begin{pmatrix} a & b & *\\ c & d & * \\ && *\end{pmatrix} \begin{pmatrix} * & *\\ & *\end{pmatrix}, \qquad ab = cd\ell .
\end{equation}
There are $(p-1)^3$ choices of $a,b,c,d$ with $ab \neq 0$ and $2(p-1)^2$ choices with $ab = 0$, so $|G_{x,\xi}| = (p-1)^5 p^3 (p+1)$.  The two parametrizations are equivalent since the cardinalities are the same.

The action set may be parametrized,
\begin{equation}
 \begin{pmatrix} a & c\lambda \ell  & a_{13}\\ c &a\lambda   & a_{23}\\  &  & a_{33}\end{pmatrix},
  \begin{pmatrix} b_{11} & b_{12} \\  & b_{22}\end{pmatrix}, \qquad \lambda \in \bF_p^\times, (a,c) \in \bF_p^2 \setminus \{(0,0)\}.
\end{equation}

\item [4.] Case $(\sO_{2^2}, \sO_{D2})$: For standard representatives
\begin{equation}
 x = \begin{pmatrix} 0 & 0 &0 \\ &0 &0\\ &&1\end{pmatrix}\begin{pmatrix} 1 &0 &0\\ & - \ell & 0\\ &&0\end{pmatrix}, \qquad \xi = \begin{pmatrix} \ell & \beta & 0\\ & 1 &0\\ && 0\end{pmatrix} \begin{pmatrix} 0&0&0\\ &0&0\\ &&0\end{pmatrix},
\end{equation}
with $\ell u^2 + 2\beta uv + v^2$ irreducible the annihilated spaces are
\begin{equation}
 V_x = \begin{pmatrix} z & 0 & *\\ & -z\ell  & *\\ & & *\end{pmatrix}\begin{pmatrix} * & * & *\\ & * & *\\ & & *\end{pmatrix}, \qquad V_\xi = \begin{pmatrix} * & * & *\\ & * & *\\ && 0\end{pmatrix} \begin{pmatrix} z \ell & z \beta & 0\\ & z & 0\\ && 0\end{pmatrix}.
\end{equation}
  Since the second form is eliminated by $V_x$, in the action on $\xi$, it may be assumed that the $\GL_2$ action acts on the second quadratic form by an arbitrary scalar.  
 
 The action on $\xi$ is thus given by {\tiny
 \begin{align*}
 &\left(\begin{pmatrix} a & b & a_{13}\\ c & d & a_{23}\\ a_{31} & a_{32} & a_{33}\end{pmatrix},
  \begin{pmatrix} b_{11} & b_{12} \\  & b_{22}\end{pmatrix}\right) \cdot \xi \\&\notag =  \begin{pmatrix} b_{11}(a^2\ell + 2ab\beta + b^2)  & b_{11}(ac\ell + ad\beta + bc\beta + bd) & b_{11}(a a_{31}\ell + a a_{32}\beta + ba_{31}\beta + ba_{32})\\ & b_{11}(c^2\ell + 2cd\beta + d^2)  & b_{11}(ca_{31}\ell + ca_{32}\beta + da_{31}\beta + da_{32})\\ && b_{11}(a_{31}^2\ell + 2a_{31}a_{32}\beta + a_{32}^2)  \end{pmatrix} \begin{pmatrix} 0& 0 & 0\\ & 0 &  0\\ && 0 \end{pmatrix}.
 \end{align*}
} 
Since the lower right entry in the first quadratic form is a quadratic irreducible on $a_{31}$ and $a_{32}$ which must vanish, this forces $ a_{31}=a_{32}= 0$.
The action set is thus
\begin{equation}
 G_{x,\xi}^t = \begin{pmatrix} a & b & * \\ c & d & * \\ & & *\end{pmatrix} \begin{pmatrix} * & *\\ & *\end{pmatrix}, \qquad \left[ \begin{pmatrix} a & b\\ c & d\end{pmatrix} \cdot (\ell u^2 + 2\beta uv + v^2), u^2 - \ell v^2 \right] = 0.
\end{equation}

To count $G_{x,\xi}$, first note that the number of quadratic irreducibles in $\sym^2(\bF_p)$ is $\frac{(p-1)^2p}{2}$.  The number of irreducibles which are orthogonal to $u^2 - \ell v^2$ is
\begin{align}
 &\#\{(\lambda, \beta): \lambda (\ell u^2 + v^2) + 2 \beta uv \text{ irreducible}\}\\
 \notag &= \#\{(\lambda, \beta): 4\beta^2 - 4\lambda^2 \ell \neq \square\}.
\end{align}
As $\lambda$ runs through $\zed/p\zed$, together $4 \lambda^2 \ell$ and $4 \lambda^2$ run through $\zed/p\zed$ twice.  Thus
\begin{equation}
  \#\{(\lambda, \beta): 4\beta^2 - 4\lambda^2 \ell \neq \square\} + \#\{(\lambda, \beta): 4\beta^2 - 4\lambda^2 \neq \square\} = (p-1)p.
\end{equation}
Since
\begin{align}
 \#\{(\lambda, \beta): 4\beta^2 - 4\lambda^2 \neq \square\} & =\#\{(a,b): ab \neq \square\} = \frac{(p-1)^2}{2},
\end{align}
it follows that
\begin{equation}
 \#\{(\lambda, \beta): 4\beta^2 - 4\lambda^2 \ell \neq \square\} =\frac{(p-1)(p+1)}{2}.
\end{equation}

Since the quadratic irreducibles are a single $\GL_2 \times \GL_1$ orbit in $\sym^2(\bF_p)$, which are uniformly covered by the action of the group on a representative, it follows that
\begin{align}
 &\#\left\{g \in \GL_2(\zed/p\zed): \left[ g \cdot (\ell u^2 + 2 \beta uv + v^2), u^2 - \ell v^2\right] = 0 \right\}\\ \notag &= \frac{\frac{(p-1)(p+1)}{2}}{\frac{(p-1)^2 p}{2}} (p^2-p)(p^2-1) \\ \notag &= (p-1)(p+1)^2.
\end{align}
Thus $|G_{x,\xi}| = (p-1)^4 p^3 (p+1)^2.$
\item [5.] Case $(\sO_{1^2 1^2}, \sO_{D1^2})$: For standard representatives
\begin{equation}
 x = \begin{pmatrix} 0&0&0\\ &0&0\\ &&1\end{pmatrix}\begin{pmatrix} 0 & \frac{1}{2} & 0\\ &0&0\\&&0\end{pmatrix}, \qquad \xi = \begin{pmatrix} 1  & 0 &0\\ &0&0\\&&0\end{pmatrix}\begin{pmatrix} 0 & 0 & 0\\ &0&0\\&&0\end{pmatrix}
\end{equation}
and
\begin{equation}
 V_x = \begin{pmatrix} 0 & * & *\\ & 0 & *\\ & & *\end{pmatrix}\begin{pmatrix} * & * & *\\ & * & *\\ && *\end{pmatrix}, \qquad V_\xi = \begin{pmatrix} * & * & *\\ & 0 & 0\\ && 0\end{pmatrix} \begin{pmatrix} * & 0 & 0\\ & 0 &0\\ &&0\end{pmatrix}.
\end{equation}
Since the second form is eliminated by $V_x$, in the action on $\xi$, it may be assumed that the $\GL_2$ action acts on the second quadratic form by an arbitrary scalar.  
 
 The action on $\xi$ is thus given by  {\tiny
 \begin{align*}
 &\left(\begin{pmatrix} a_{11} & a_{12} & a_{13}\\ a_{21} & a_{22} & a_{23}\\ a_{31} & a_{32} & a_{33}\end{pmatrix},
  \begin{pmatrix} b_{11} & b_{12} \\  & b_{22}\end{pmatrix}\right) \cdot \xi =  \begin{pmatrix} b_{11}a_{11}^2 & b_{11}a_{11}a_{21} & b_{11}a_{11}a_{31}\\ & b_{11}a_{21}^2  & b_{11}a_{21}a_{31}\\ && b_{11}a_{31}^2  \end{pmatrix} \begin{pmatrix} 0& 0 & 0\\ & 0 & 0\\ && 0 \end{pmatrix}.
 \end{align*}
} 
This forces $ a_{31} = a_{11}a_{21}=0$.
The action set is 
\begin{equation}
 G_{x,\xi}^t = \begin{pmatrix} * & * & *\\  & * & *\\  & * & *\end{pmatrix}\begin{pmatrix} * & *\\  & * \end{pmatrix} \sqcup \begin{pmatrix}  & * & *\\ * & * & *\\  & * & *\end{pmatrix} \begin{pmatrix} * & *\\ & *\end{pmatrix}, \qquad |G_{x, \xi}| = 2 (p-1)^5 p^4 (p+1).
\end{equation}

\item[6.] Case $(\sO_{1^21^2}, \sO_{D11})$: For standard representatives
\begin{equation}
 x = \begin{pmatrix} 0 & 0 & 0\\ &0 &0\\ &&1\end{pmatrix} \begin{pmatrix} 0 & \frac{1}{2} & 0\\ &0 &0\\ && 0\end{pmatrix}, \qquad \xi = \begin{pmatrix} 1 & 0 & 0\\ & -1 & 0\\ && 0\end{pmatrix}\begin{pmatrix} 0 & 0 & 0\\ &0 &0\\ &&0\end{pmatrix}
\end{equation}
and
\begin{equation}
 V_x = \begin{pmatrix} 0 & * & *\\ & 0 & *\\ && *\end{pmatrix}\begin{pmatrix} * & * & *\\ & * & *\\ && *\end{pmatrix}, \qquad V_\xi = \begin{pmatrix} * & * & *\\ & * & *\\ && 0\end{pmatrix}\begin{pmatrix} z & 0 &0\\ & -z & 0 \\ && 0\end{pmatrix}.
\end{equation}
Since the second form is eliminated by $V_x$, in the action on $\xi$, it may be assumed that the $\GL_2$ action acts on the second quadratic form by an arbitrary scalar.  
 
 The action on $\xi$ is thus given by {\tiny
 \begin{align*}
 &\left(\begin{pmatrix} a_{11} & a_{12} & a_{13}\\ a_{21} & a_{22} & a_{23}\\ a_{31} & a_{32} & a_{33}\end{pmatrix},
  \begin{pmatrix} b_{11} & b_{12} \\  & b_{22}\end{pmatrix}\right) \cdot \xi \\&\notag =  \begin{pmatrix} b_{11}(a_{11}^2 -a_{12}^2) & b_{11}(a_{11}a_{21} - a_{12}a_{22}) & b_{11}(a_{11}a_{31}-a_{12}a_{32})\\ & b_{11}(a_{21}^2-a_{22}^2)  & b_{11}(a_{21}a_{31}-a_{22}a_{32})\\ && b_{11}(a_{31}^2-a_{32}^2)  \end{pmatrix}
  \begin{pmatrix} 0 & 0 & 0\\ & 0 & 0\\ && 0 \end{pmatrix}.
 \end{align*}
} 
Since the first two columns of the $\GL_3$ matrix are linearly independent, vanishing of the $w$ dependence in the first quadratic form forces $a_{31}=a_{32}=0$.
The action set maps $u^2 - v^2 = (u+v)(u-v)$ to a form $c_1u^2 + c_2 v^2 = c_1(u+\gamma v)(u - \gamma v)$ and thus satisfies
\begin{equation}
 G_{x,\xi}^t \subset \begin{pmatrix} * & * & *\\ * & * & *\\ && *\end{pmatrix} \begin{pmatrix} * & *\\ & *\end{pmatrix}, \begin{array}{l} u + v \mapsto \alpha u + \beta v\\ u-v \mapsto \lambda(\alpha u - \beta v)\end{array}, \alpha, \beta, \lambda \in (\zed/p\zed)^\times.
\end{equation}
The size is $|G_{x,\xi}| = (p-1)^6p^3.$

\item[7.] Case $(\sO_{1^21^2}, \sO_{D2})$: For standard representatives
\begin{equation}
 x = \begin{pmatrix} 0 & 0 & 0\\ & 0 &0\\ && 1\end{pmatrix}\begin{pmatrix}0 & \frac{1}{2} & 0\\ & 0 & 0\\ && 0\end{pmatrix}, \qquad \xi = \begin{pmatrix}-\ell &0 &0\\ & 1 & 0\\ & &0\end{pmatrix}\begin{pmatrix}0 &0 &0\\ &0 &0\\ & & 0\end{pmatrix}
\end{equation}
and
\begin{equation}
 V_x = \begin{pmatrix}0 & * & *\\ & 0 & *\\ & & *\end{pmatrix}\begin{pmatrix} * & * & *\\ & * & *\\ && *\end{pmatrix}, \qquad V_\xi = \begin{pmatrix} * & * & *\\ & * & *\\ & & 0\end{pmatrix}\begin{pmatrix} - z\ell & 0 & 0\\ &  z & 0 \\ &&0\end{pmatrix}.
\end{equation}
Since the second form is eliminated by $V_x$, in the action on $\xi$, it may be assumed that the $\GL_2$ action acts on the second quadratic form by an arbitrary scalar.  
 
 The action on $\xi$ is thus given by {\tiny
 \begin{align*}
 &\left(\begin{pmatrix} a & b & a_{13}\\ c & d & a_{23}\\ a_{31} & a_{32} & a_{33}\end{pmatrix},
  \begin{pmatrix} b_{11} & b_{12} \\ & b_{22}\end{pmatrix}\right) \cdot \xi \\&\notag =  \begin{pmatrix} b_{11}(-a^2\ell +b^2) & b_{11}(-ac\ell + bd) & b_{11}(-aa_{31}\ell+ba_{32})\\ & b_{11}(-c^2\ell+d^2)  & b_{11}(-ca_{31}\ell+da_{32})\\ && b_{11}(-a_{31}^2\ell+a_{32}^2)  \end{pmatrix} 
  \begin{pmatrix} 0 & 0 & 0\\ & 0 & 0\\ && 0 \end{pmatrix}.
 \end{align*}
} 
This forces $a_{31}=a_{32}=0$.
The action set is
\begin{equation}
 G_{x,\xi}^t = \begin{pmatrix} a & b& * \\ c & d & *\\ & & *\end{pmatrix} \begin{pmatrix} * & *\\ & * \end{pmatrix}, \qquad  ac\ell =  bd.
\end{equation}
As for 3., $|G_{x, \xi}| = (p-1)^5 p^3 (p+1),$
and the action set has parametrization,
\begin{equation}
 \begin{pmatrix} a & c\ell& * \\c \lambda &a \lambda & *\\ & & *\end{pmatrix} \begin{pmatrix} * & *\\ & * \end{pmatrix}, \qquad \lambda \in \bF_p^\times, (a,c) \in \bF_p^2 \setminus \{(0,0)\}.
\end{equation}

\item[8.] Case $(\sO_{1^31}, \sO_{D1^2})$: For standard representatives
\begin{equation}
 x = \begin{pmatrix} 0 & 0 & 0\\ & 0 & \frac{1}{2} \\ & & 0\end{pmatrix}\begin{pmatrix} 0 & 0 &\frac{1}{2}\\ & 1 & 0\\ & & 0\end{pmatrix}, \qquad \xi = \begin{pmatrix} 1 & 0 & 0\\ & 0 &0\\ & & 0\end{pmatrix}\begin{pmatrix} 0 & 0 & 0\\ & 0 &0\\ && 0\end{pmatrix},
\end{equation}
and
\begin{equation}
 V_x = \begin{pmatrix} 0 & \frac{z}{2} & * \\ & * & *\\ && *\end{pmatrix}\begin{pmatrix} z & * & *\\ & * & * \\ & & *\end{pmatrix}, \qquad V_\xi = \begin{pmatrix} * & * & *\\ & 0 & 0 \\ & & 0\end{pmatrix}\begin{pmatrix} * & 0 & 0\\ & 0&0\\ &&0\end{pmatrix}.
\end{equation}
Under the action on $\xi$ the two forms are proportional, and this can be orthogonal to $V_x$ only if the second form is 0, and thus it may be assumed that the second form is acted on by a scalar under the $\GL_2$ action.

The action on $\xi$ is thus given by {\tiny
 \begin{equation*}
 \left(\begin{pmatrix} a_{11} & a_{12} & a_{13}\\ a_{21} & a_{22} & a_{23}\\ a_{31} & a_{32} & a_{33}\end{pmatrix},
  \begin{pmatrix} b_{11} & b_{12} \\  & b_{22}\end{pmatrix}\right) \cdot \xi =  \begin{pmatrix} b_{11}a_{11}^2 & b_{11}a_{11}a_{21} & b_{11}a_{11}a_{31}\\ & b_{11}a_{21}^2  & b_{11}a_{21}a_{31}\\ && b_{11}a_{31}^2  \end{pmatrix} \begin{pmatrix} 0 & 0 & 0\\ & 0 & 0\\ && 0 \end{pmatrix}.
 \end{equation*}
} The $v^2$ and $w^2$ coefficients force $a_{21} = a_{31} = 0$, so the action set is
\begin{equation}
 G_{x,\xi}^t = \begin{pmatrix} * & * & *\\ & * & *\\ & * & *\end{pmatrix}\begin{pmatrix} * & *\\ & *\end{pmatrix}, \qquad |G_{x, \xi}| = (p-1)^5 p^4 (p+1).
\end{equation}

\item[9.] Case $(\sO_{1^31}, \sO_{Cs})$: For standard representatives
\begin{equation}
 x = \begin{pmatrix} 0 & 0& 0\\ & 0 & \frac{1}{2}\\ & & 0\end{pmatrix}\begin{pmatrix}0 & 0 & \frac{1}{2} \\ &1 & 0\\ & & 0\end{pmatrix}, \qquad \xi = \begin{pmatrix} 0 & -1 & 0\\ &0 &0\\ & & 0\end{pmatrix}\begin{pmatrix} 1 & 0 &0\\ & 0 &0\\ & & 0\end{pmatrix}
\end{equation}
and
\begin{equation}
 V_x = \begin{pmatrix} 0 & \frac{z}{2} & *\\ & * & *\\ & & *\end{pmatrix}\begin{pmatrix} z & * & *\\ & * & *\\ & & *\end{pmatrix}, \qquad V_\xi = \begin{pmatrix} * & * & *\\ & * & -z \\ & & 0\end{pmatrix}\begin{pmatrix} * & * & z\\ & 0 & 0\\& & 0\end{pmatrix}
\end{equation}
Since $V_x$ annihilates the dependence on $w$ in both forms, the action on $\xi$ has no $w$ dependence in both forms, and thus this remains true of any linear combination of the forms.  Thus the action by $\GL_2$ can be inverted with this still the case, so that the $\GL_3$ action must carry each form to a form independent of $w$.  From the second form, $u^2$, it follows that $u$ is mapped by the $\GL_3$ action to a linear form not containing $w$.  Now from the first form, $2uv$, it follows that $v$ is mapped to a linear form not containing $w$.

 The action on $\xi$ is given by {\tiny
 \begin{align*}
 &\left(\begin{pmatrix} a_{11} & a_{12} & a_{13}\\ a_{21} & a_{22} & a_{23}\\  &  & a_{33}\end{pmatrix},
  \begin{pmatrix} b_{11} & b_{12} \\ b_{21} & b_{22}\end{pmatrix}\right) \cdot \xi\\&\notag =  
  \begin{pmatrix} -2b_{11}a_{11}a_{12} + b_{12}a_{11}^2 & b_{11}(-a_{11}a_{22}-a_{12}a_{21}) + b_{12}a_{11}a_{21} & 0\\ & -2b_{11}a_{21}a_{22}  + b_{12}a_{21}^2 & 0\\ && 0 \end{pmatrix} \\ \notag &
  \begin{pmatrix} -2b_{21}a_{11}a_{12} + b_{22}a_{11}^2 & b_{21}(-a_{11}a_{22}-a_{12}a_{21}) + b_{22}a_{11}a_{21} & 0\\ & -2b_{21}a_{21}a_{22}  + b_{22}a_{21}^2 & 0\\ && 0 \end{pmatrix} .
 \end{align*}
}
 Considering the $v^2$ coefficients gives
\begin{equation}
 a_{21}(-2b_{11}a_{22} + b_{12}a_{21}) = 0, \qquad a_{21}(-2b_{21}a_{22} + b_{22}a_{21}) = 0.
\end{equation}
This forces $a_{21} = 0$, since otherwise either $a_{21} = a_{22} = 0$ or the matrix $\begin{pmatrix} b_{11} & b_{12} \\ b_{21} & b_{22}\end{pmatrix}$ is singular, both of which are impossible.  It now follows from the $uv$ coefficient in the second quadratic form that $-a_{11}a_{22}b_{21} = 0$.  Since $a_{21} = 0$, the matrix on $\GL_3$ would be singular if either $a_{11}= 0$ or $a_{22} = 0$, so that $b_{21} = 0$.

The action has now been simplified to
{\tiny
 \begin{align*}
 &\left(\begin{pmatrix} a & a_{12} & a_{13}\\  & b & a_{23}\\  &  & a_{33}\end{pmatrix},
  \begin{pmatrix} c & b_{12} \\  & d\end{pmatrix}\right) \cdot \xi =  
  \begin{pmatrix} a^2 b_{12} - 2aa_{12}c & -abc & 0\\ & 0 & 0\\ && 0 \end{pmatrix} 
  \begin{pmatrix} a^2d & 0 & 0\\ & 0 & 0\\ && 0 \end{pmatrix} .
 \end{align*}
}
Thus $abc = a^2 d$ so that 
the action set is
\begin{equation}
 G_{x,\xi}^t = \begin{pmatrix} a & * & *\\ & b & * \\ & & *\end{pmatrix}\begin{pmatrix} c & *\\ & \frac{bc}{a}\end{pmatrix}, \qquad |G_{x,\xi}| = (p-1)^4 p^4.
\end{equation}
\item[10.] Case $(\sO_{1^4}, \sO_{D1^2})$: For standard representatives
\begin{equation}
 x = \begin{pmatrix} 0 & 0 & 0\\ & 0 & 0\\ & & 1\end{pmatrix}\begin{pmatrix} 0 & 0 & \frac{1}{2}\\ & 1 & 0 \\ & & 0\end{pmatrix}, \qquad \xi = \begin{pmatrix}1 & 0 & 0\\ & 0 & 0\\ & & 0\end{pmatrix}\begin{pmatrix} 0 & 0 & 0\\ & 0 & 0\\ & & 0\end{pmatrix}
\end{equation}
and
\begin{equation}
 V_x = \begin{pmatrix} 0 & 0 & y + \frac{z}{2}\\ & z & * \\ & & *\end{pmatrix}\begin{pmatrix} y & * & *\\ & * & * \\ & & *\end{pmatrix}, \qquad V_\xi = \begin{pmatrix} * & * & * \\ & 0 & 0\\ && 0\end{pmatrix}\begin{pmatrix} * & 0 & 0\\ & 0 & 0\\ & & 0\end{pmatrix}.
\end{equation}
Since the two forms in the action on $\xi$ are linearly dependent, to be orthogonal to $V_x$ it is necessary that the second form is 0, and thus it may be assumed that the second form is acted on by a scalar under the $\GL_2$ action.

The action on $\xi$ is thus given by {\tiny
 \begin{align*}
 &\left(\begin{pmatrix} a_{11} & a_{12} & a_{13}\\ a_{21} & a_{22} & a_{23}\\ a_{31} & a_{32} & a_{33}\end{pmatrix},
  \begin{pmatrix} b_{11} & b_{12} \\ & b_{22}\end{pmatrix}\right) \cdot \xi =  \begin{pmatrix} b_{11}a_{11}^2 & b_{11}a_{11}a_{21} & b_{11}a_{11}a_{31}\\ & b_{11}a_{21}^2  & b_{11}a_{21}a_{31}\\ && b_{11}a_{31}^2  \end{pmatrix} \begin{pmatrix} 0 & 0 & 0\\ & 0 & 0\\ && 0 \end{pmatrix}.
 \end{align*}
} 
From the $w^2$ coefficient it follows that $a_{31} = 0$.  It now follows from the $v^2$ coefficient that $a_{21}^2 = 0$.
The action set is
\begin{equation}
 G_{x,\xi}^t = \begin{pmatrix} * & * & *\\ & * & *\\ & * & *\end{pmatrix}\begin{pmatrix} * & * \\ & *\end{pmatrix}, \qquad |G_{x,\xi}| = (p-1)^5p^4 (p+1).
\end{equation}

\item[11.] Case $(\sO_{1^4}, \sO_{D11})$: For standard representatives
\begin{equation}
 x = \begin{pmatrix} 0 & 0 & 0\\ & 0 &0\\ & & 1\end{pmatrix}\begin{pmatrix} 0 & 0 & \frac{1}{2}\\ &1&0\\&&0\end{pmatrix}, \qquad \xi = \begin{pmatrix}0 & 1 & 0\\ & 0 &0\\& & 0\end{pmatrix}\begin{pmatrix}0 & 0 &0\\ &0&0\\ &&0\end{pmatrix},
\end{equation}
and
\begin{equation}
 V_x = \begin{pmatrix} 0 & 0 & y + \frac{z}{2}\\ & z & *\\ && *\end{pmatrix}\begin{pmatrix} y & * & *\\ & * & *\\ & & *\end{pmatrix}, \qquad V_\xi = \begin{pmatrix} * & * & *\\ & * & *\\ &&0\end{pmatrix}\begin{pmatrix} 0 & * & 0\\ &0&0\\ &&0\end{pmatrix}.
\end{equation}
Since the action on $\xi$ results in two forms which are linearly dependent, to be orthogonal to $V_x$ the second form is 0, and thus it may be assumed that the second form is acted on by a scalar under the $\GL_2$ action.

The action on $\xi$ is thus given by {\tiny
 \begin{align*}
 &\left(\begin{pmatrix} a_{11} & a_{12} & a_{13}\\ a_{21} & a_{22} & a_{23}\\ a_{31} & a_{32} & a_{33}\end{pmatrix},
  \begin{pmatrix} b_{11} & b_{12} \\  & b_{22}\end{pmatrix}\right) \cdot \xi \\&\notag =  \begin{pmatrix} 2b_{11}a_{11}a_{12}  & b_{11}(a_{11}a_{22} + a_{12}a_{21}) & b_{11}(a_{11}a_{32} + a_{12}a_{31})\\ & 2b_{11}a_{21}a_{22}  & b_{11}(a_{21}a_{32} + a_{22}a_{31})\\ && 2b_{11}a_{31}a_{32}  \end{pmatrix}
  \begin{pmatrix} 0 &0 & 0\\ & 0 &0\\ && 0 \end{pmatrix}.
 \end{align*}
}  It follows from $w^2$ coefficient that one of $a_{31}$ and $a_{32}$ is 0.  Now considering the $uw$, $vw$  coordinates, both $a_{31}$ and $a_{32}$ are zero, or else the $\GL_3$ matrix would have a 0 column.  It follows from the $v^2$ coefficient that $a_{21}a_{22} = 0$, either of which can hold.
The action set is thus 
\begin{equation}
 G_{x,\xi}^t = \begin{pmatrix} * & * & *\\ & * & *\\ && *\end{pmatrix}\begin{pmatrix} * & *\\ & *\end{pmatrix} \sqcup \begin{pmatrix} * & * & *\\ * & & *\\ && *\end{pmatrix}\begin{pmatrix} * & *\\ & *\end{pmatrix},  \qquad |G_{x,\xi}| = 2 (p-1)^5 p^4.
\end{equation}

\item [12.] Case $(\sO_{1^4}, \sO_{1^4})$: For standard representatives
\begin{equation}
 x = \begin{pmatrix} 0 & 0 & 0\\ & 0 & 0\\ & & 1\end{pmatrix}\begin{pmatrix} 0 & 0 & \frac{1}{2}\\ & 1 &0\\ & & 0\end{pmatrix}, \qquad \xi = \begin{pmatrix} 0 & 0 & -1\\ &1 & 0\\ & & 0\end{pmatrix}\begin{pmatrix}2 & 0 &0\\&0 &0\\&&0\end{pmatrix},
\end{equation}
and
\begin{equation}
V_x= \begin{pmatrix} 0 & 0 & y + \frac{z}{2}\\ & z & * \\ & & *\end{pmatrix}\begin{pmatrix} y & * & *\\ & * & *\\ & & *\end{pmatrix}, \qquad V_\xi = \begin{pmatrix} * & * & *\\ & * & *\\ & & y\end{pmatrix} \begin{pmatrix} * & * & -z + y\\ & z &0\\ & & 0\end{pmatrix},
\end{equation}
any combination $\lambda_1 (v^2 - 2uw) + 2\lambda_2 u^2$ with $\lambda_1 \neq 0$ cannot be a double line, and hence $b_{21} = 0$.  By considering the $v^2$ and $w^2$ coefficients in the second quadratic form, it follows that $a_{21} = a_{31}= 0$. 
The action on $\xi$ is thus given by {\tiny
 \begin{align*}
 &\left(\begin{pmatrix} a_{11} & a_{12} & a_{13}\\ & a_{22} & a_{23}\\ & a_{32} & a_{33}\end{pmatrix},
  \begin{pmatrix} b_{11} & b_{12} \\  & b_{22}\end{pmatrix}\right) \cdot \xi \\&\notag =  \begin{pmatrix} b_{11}(a_{12}^2 - 2a_{11}a_{13})  + 2b_{12}a_{11}^2  & b_{11}(-a_{11}a_{23} +a_{12}a_{22})  & b_{11}(-a_{11}a_{33}+a_{12}a_{32})\\ & b_{11}a_{22}^2  & b_{11}a_{22}a_{32}\\ &&  b_{11}a_{32}^2  \end{pmatrix}
  \begin{pmatrix}  2b_{22}a_{11}^2  & 0 & 0\\ & 0 & 0\\ && 0 \end{pmatrix}.
 \end{align*}
} 
Considering the $w^2$ coefficient of the first quadratic form implies $a_{32} = 0$. The action thus reduces to
\begin{align}
 &\left(\begin{pmatrix} a & * & *\\ & b & *\\ && c\end{pmatrix}, \begin{pmatrix} d & *\\ & e\end{pmatrix} \right) \cdot \xi \\\notag &= \begin{pmatrix} 2a^2 b_{12} -2aa_{13}d + a_{12}^2d & -aa_{23}d + a_{12}bd & -acd\\ & b^2 d &0\\&&0\end{pmatrix}\begin{pmatrix} 2a^2e &0&0\\&0&0\\&&0\end{pmatrix}.
\end{align}
The action set is thus
\begin{equation}
 G_{x,\xi}^t = \begin{pmatrix} a & * & *\\ & b & *\\ && c\end{pmatrix} \begin{pmatrix} d & *\\ & e\end{pmatrix}, \qquad b^2 = ac, b^2d = a^2 e, \qquad |G_{x,\xi}| = (p-1)^3p^4.
\end{equation}

\end{enumerate}

\bibliographystyle{plain}

\end{document}